\newcommand\org@hypertarget{}
\let\org@hypertarget\hypertarget
\renewcommand\hypertarget[2]{%
  \Hy@raisedlink{\org@hypertarget{#1}{}}#2%
} 
\newtheorem{theorem}{Theorem}[section]
\newtheorem{lemma}[theorem]{Lemma}
\newtheorem{corollary}[theorem]{Corollary}
\newtheorem{proposition}[theorem]{Proposition}
\theoremstyle{definition}
\newtheorem{definition}[theorem]{Definition}
\newtheorem{remark}[theorem]{Remark}
\newtheorem{example}[theorem]{Example}
\newcommand{\xysquare}[8]{
\[\xymatrix{
#1 \ar@{#5}[r] \ar@{#6}[d] & #2 \ar@{#7}[d]\\
#3 \ar@{#8}[r] & #4
}\]
}
\DeclareMathOperator*{\projlimf}{``\varprojlim''}
\newcommand{\al}{\alpha}
\newcommand{\bb}{\mathbb}
\newcommand{\blob}{\bullet}
\newcommand{\comment}[1]{}
\newcommand{\comp}{{\hat{\phantom{o}}}}
\newcommand{\ep}{\varepsilon}
\newcommand{\into}{\hookrightarrow}
\newcommand{\isofrom}{\stackrel{\simeq}{\leftarrow}}
\newcommand{\isoto}{\stackrel{\simeq}{\to}}
\newcommand{\Isoto}{\stackrel{\simeq}{\longrightarrow}}
\newcommand{\op}{\operatorname}
\newcommand{\pid}[1]{\langle #1 \rangle}
\renewcommand{\phi}{\varphi}
\newcommand{\quis}{\stackrel{\sim}{\to}}
\newcommand{\res}{\overline}
\newcommand{\roi}{\mathcal{O}}
\newcommand{\sub}[1]{{\mbox{\rm\scriptsize #1}}}
\newcommand{\To}{\longrightarrow}
\newcommand{\ul}[1]{\underline{#1}}
\newcommand{\xto}{\xrightarrow}
\renewcommand{\cal}{\mathcal}
\renewcommand{\hat}{\widehat}
\renewcommand{\frak}{\mathfrak}
\newcommand{\indlim}{\varinjlim}
\renewcommand{\tilde}{\widetilde}
\renewcommand{\Im}{\operatorname{Im}}
\renewcommand{\ker}{\operatorname{Ker}}
\renewcommand{\projlim}{\varprojlim}
\DeclareMathOperator{\Ann}{Ann}
\DeclareMathOperator{\dlog}{dlog}
\DeclareMathOperator{\Ext}{Ext}
\DeclareMathOperator{\Frac}{Frac}
\DeclareMathOperator{\Gal}{Gal}
\DeclareMathOperator{\Hom}{Hom}
\DeclareMathOperator{\Spec}{Spec}
\DeclareMathOperator{\Spa}{Spa}
\DeclareMathOperator{\Spf}{Spf}
\DeclareMathOperator{\Sp}{Sp}
\DeclareMathOperator{\Tor}{Tor}
\newcommand{\lb}[1]{}
\newcommand{\dotimes}{\otimes^{\bb L}}
\newcommand{\xTo}[1]{\stackrel{#1}{\To}}
\begin{document}
\itemsep0pt

\title{Notes on the $\bb A_\sub{inf}$-cohomology of {\em Integral $p$-adic Hodge theory}}

\author{M.~Morrow}

\date{}

\maketitle

\begin{abstract}
\noindent We present a detailed overview of the construction of the $\bb A_\sub{inf}$-cohomology theory from the preprint ``Integral $p$-adic Hodge theory'', joint with B.~Bhatt and P.~Scholze. We focus particularly on the $p$-adic analogue of the Cartier isomorphism via relative de Rham--Witt complexes.
\end{abstract}

\tableofcontents

\newpage\noindent These are expanded notes of a mini-course, given at l'Institut des Math\'ematiques de Jussieu, 15 Jan.~-- 1 Feb., 2016,  detailing some of the main results of the preprint
\begin{quote}
[BMS] B.~Bhatt, M.~Morrow, P.~Scholze, {\em Integral $p$-adic Hodge theory}, \href{http://arxiv.org/abs/1602.03148}{\tt arXiv:1602.03148}~(2016). \nocite{Bhatt_Morrow_Scholze2}
\end{quote}
More precisely, the goal of these notes is to give a detailed, and largely self-contained, presentation of the construction of the  $\bb A_\sub{inf}$-cohomology theory from [BMS], focussing on the $p$-adic analogue of the Cartier isomorphism via relative de Rham--Witt complexes. By restricting attention to this particular aspect of [BMS], we hope to have made the construction more accessible. However, the reader should only read these notes in conjunction with [BMS] itself and is strongly advised also to consult the surveys \cite{Bhatt2016, Scholze2016} by the other authors, which cover complementary aspects of the theory. In particular, in these notes we do not discuss $q$-de Rham complexes, cotangent complex calculations, Breuil--Kisin(--Fargues) modules, or the crystalline and de Rham comparison theorems of [BMS, \S12--14], as these topics are not strictly required for the construction of the $\bb A_\sub{inf}$-cohomology theory.\footnote{To be precise, there is one step in the construction, namely the equality (dim$_\frak X$) in the proof of Theorem \ref{theorem_main_with_proofs2}, where we will have to assume that the $p$-adic scheme $\frak X$ is defined over a discretely valued field; this assumption can be overcome using the crystalline comparison theorems of [BMS].}


We now offer a brief layout of the notes. Section \ref{section_intro} recalls some classical problems and results of $p$-adic Hodge theory before stating the main theorem of the course, namely the existence of a new cohomology theory for $p$-adic schemes which integrally interpolates \'etale, crystalline and de Rham cohomologies.

Section \ref{section_decalage} introduces the d\'ecalage functor, which modifies a given complex by a small amount of torsion. This functor is absolutely essential to our constructions, as it kills the ``junk torsion'' which so often appears in $p$-adic Hodge theory and thus allows us to establish results integrally. An example of this annihilation of torsion, in the context of Faltings' almost purity theorem, is given in \S\ref{subsection_faltings_purity}.

Section \ref{section_perfectoid} develops the elementary theory of perfectoid rings, emphasising the importance of certain maps $\theta_r,\tilde\theta_r$ which generalise Fontaine's usual map $\theta$ of $p$-adic Hodge theory and are central to the later constructions.

Section \ref{section_pro_etale} is a minimal summary of Scholze's theory of pro-\'etale cohomology for rigid analytic varieties. In particular, in \S\ref{subsection_CL} we explain the usual technique by which the pro-\'etale manifestation of the almost purity theorem allows the pro-\'etale cohomology of ``small'' rigid affinoids to be (almost) calculated in terms of group cohomology related to perfectoid rings; this technique is fundamental to our constructions.

In Section \ref{section_main} we revisit the main theorem and define the new cohomology theory as the hypercohomology of a certain complex $\bb A\Omega_{\frak X/\roi}$. In Theorem \ref{theorem_p_adic_Cartier} we state the {\em $p$-adic Cartier isomorphism}, which identifies the cohomology sheaves of the base change of $\bb A\Omega_{\frak X/\roi}$ along $\tilde\theta_r$ with Langer--Zink's relative de Rham--Witt complex of the $p$-adic scheme $\frak X$. We then deduce all main properties of the new cohomology theory from this $p$-adic Cartier isomorphism.

Section \ref{section_constructing_Witt} reviews Langer--Zink's theory of the relative de Rham--Witt complex, which may be seen as the initial object in the category of Witt complexes, i.e., families of differential graded algebras over the Witt vectors which are equipped with compatible Restriction, Frobenius, and Verschiebung maps. In \S\ref{subsection_constructing_Witt} we present one of  our main constructions, namely building Witt complexes from the data of a commutative algebra (in a derived sense), equipped with a Frobenius, over the infinitesimal period ring $\bb A_\sub{inf}$. In  \S\ref{subsection_local_Cartier} we apply this construction to the group cohomology of a Laurent polynomial algebra and prove that the result is precisely the relative de Rham--Witt complex itself; this is the local calculation which underlies the $p$-adic Cartier isomorphism.

Finally, Section \ref{section_Cartier} sketches the proof of the $p$-adic Cartier isomorphism by reducing to the final calculation of the previous paragraph. This reduction is based on various technical lemmas that the d\'ecalage functor behaves well under base change and taking cohomology, and that it transforms certain almost quasi-isomorphisms into quasi-isomorphisms.
\
\\
\

\noindent{\bf Acknowledgements.}
It is a pleasure to take this chance to thank my coauthors Bhargav Bhatt and Peter Scholze for the discussions and collaboration underlying [BMS], from which all results in these notes are taken. I am also grateful to the participants of the mini-course at l'IMJ on which these notes are based, including J.-F.~Dat, C.~Cornut, L.~Fargues, J.-M.~Fontaine, M.-H.~Nicole, and B.~Klingler, for their many helpful comments and insightful questions.

\newpage
\section{Introduction}\label{section_intro}
\subsection{Mysterious functor and Crystalline Comparison}\label{subsection_mysterious}
Here in \S\ref{subsection_mysterious} we consider the following common situation:
\begin{itemize}\itemsep0pt
\item $K$ a complete discrete valuation field of mixed characteristic; ring of integers $\roi_K$; perfect residue field $k$.
\item $\frak X$ a proper, smooth scheme over $\roi_K$.
\end{itemize}

For $\ell\neq p$, proper base change in \'etale cohomology gives a canonical isomorphism \[H^i_\sub{\'et}(\frak X_{\res k},\bb Z_\ell)\cong H^i_\sub{\'et}(\frak X_{\res K},\bb Z_\ell)\] which is compatible with Galois actions.\footnote{To be precise, the isomorphism depends only on a choice of specialisation of geometric points of $\Spec \roi_K$. A consequence of the compatibility with Galois actions is that the action of $G_K$ on $H^i_\sub{\'et}(\frak X_{\res K},\bb Z_\ell)$ is unramified.} Grothendieck's question of the mysterious functor is often now interpreted as asking what happens in the case $\ell =p$. More precisely, how are $H^i_\sub{\'et}(\frak X_{\res K}):=H^i_\sub{\'et}(\frak X_{\res K}, \bb Z_p)$ and $H^i_\sub{crys}(\frak X_k):=H^i_\sub{crys}(\frak X_k/W(k))$ related? In other words, how does $p$-adic cohomology of $\frak X$ degenerate from the generic to the special fibre?

Grothendieck's question is answered after inverting $p$ by the Crystalline Comparison Theorem (Fontaine--Messing, Bloch--Kato, Faltings, Tsuji, Nizio\l,...), stating that there are natural isomorphisms \[H^i_\sub{crys}(\frak X_k)\otimes_{W(k)}\bb B_\sub{crys}\cong H^i_\sub{\'et}(\frak X_{\res K})\otimes_{\bb Z_p}\bb B_\sub{crys}\] which are compatible with Galois and Frobenius actions (and filtrations after base changing to $\bb B_\sub{dR}$), where $\bb B_\sub{crys}$ and $\bb B_\sub{dR}$ are Fontaine's period rings (which we emphasise contain $1/p$). Hence general theory of period rings implies that \[H^i_\sub{crys}(\frak X_k)[\tfrac1p]=(H^i_\sub{\'et}(\frak X_{\res K})\otimes_{\bb Z_p}\bb B_\sub{crys})^{G_K}\] (i.e., the crystalline Dieudonn\'e module of $H^i_\sub{\'et}(\frak X_{\res K})[\tfrac1p]$, by definition) with $\phi$ on the left induced by $1\otimes\phi$ on the right. In summary, $(H^n_\sub{\'et}(\frak X_{\res K})[\tfrac1p], G_K)$ determines $(H^n_\sub{crys}(\frak X_k)[\tfrac1p], \phi)$. Similarly, in the other direction, $(H^n_\sub{\'et}(\frak X_{\res K})[\tfrac1p], G_K)$ is determined by $(H^n_\sub{crys}(\frak X_k)[\tfrac1p], \phi, \text{Hodge fil.})$.

But what if we do not invert $p$? There are many partial results in the literature, under various hypotheses, which we do not attempt to summarise here, except by offering the simplification that ``everything seems to work integrally if $ie<p-1$'',\footnote{Our results can presumably make this more precise.} where $e$ is the absolute ramification degree of $K$. With no assumptions on ramification degree, dimension, value of $p$, etc., we prove in [BMS] results of the following form:

\begin{enumerate}
\item The torsion in $H^i_\sub{\'et}(\frak X_{\res K})$ is ``less than'' that of $H^i_\sub{crys}(\frak X_k)$. To be precise, \[\op{length}_{\bb Z_p}H^i_\sub{\'et}(\frak X_{\res K})/p^r\le \op{length}_{W(k)}H^i_\sub{crys}(\frak X_k)/p^r\] for all $r\ge1$, as one would expect for a degenerating family of cohomologies. In particular, if $H^i_\sub{crys}(\frak X_k)$ is torsion-free then so is $H^i_\sub{\'et}(\frak X_{\res K})$.
\item If $H^*_\sub{crys}(\frak X_k)$ is torsion-free for $*=i,i+1$, then $(H^i_\sub{\'et}(\frak X_{\res K}), G_K)$ determines $(H^i_\sub{crys}(\frak X_k), \phi)$.
\end{enumerate}

It really is possible that additional torsion appears when degenerating the $p$-adic cohomology from the generic fibre to the special fibre, as the following example indicates (which is labeled a theorem as there seems to be no case of an $\frak X$ as above for which $H^i_\sub{\'et}(\frak X_{\res K})\otimes_{\bb Z_p}W(k)$ and $H^i_\sub{crys}(\frak X)$ were previously known to have non-isomorphic torsion submodules):

\setcounter{theorem}{-1}
\begin{theorem}
There exists a smooth projective relative surface $\frak X$ over $\bb Z_2$ such that $H^i_\sub{\'et}(\frak X_{\res K})$ is torsion-free for all $i\ge0$ but such that $H^2_\sub{crys}(\frak X_k)$ contains non-trivial $2$-torsion.\footnote{In [BMS, Thm.~2.10] we also give an example for which $H^2_\sub{\'et}(\frak X_{\res K})_\sub{tors}=\bb Z/p^2\bb Z$ and $H^2_\sub{crys}(\frak X_k)_\sub{tors}=k\oplus k$.}
\end{theorem}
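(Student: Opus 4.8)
The plan is to exhibit a suitable $\frak X$ by hand and then to read off the two assertions, each being a formal consequence of a standard principle once $\frak X$ is in place. I would look for a smooth projective relative surface $\frak X$ over $\bb Z_2$ (so $K=\bb Q_2$ and $k=\bb F_2$) for which (i) the geometric generic fibre $\frak X_{\res K}$ is simply connected, and (ii) $H^1_\sub{dR}(\frak X_k/k)\neq 0$, and I will argue that (i) and (ii) already yield the statement.

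\emph{The \'etale part, from (i).} If $\pi_1^\sub{\'et}(\frak X_{\res K})=1$ then, fixing an embedding $\res K=\res{\bb Q}_2\into\bb C$ and comparing \'etale with singular cohomology of the associated complex surface, $H^i_\sub{\'et}(\frak X_{\res K},\bb Z_2)$ is torsion-free for all $i$: in degrees $0$, $1$, $4$ this is automatic; the torsion of $H^2$ is the $2$-primary torsion of $\pi_1^\sub{\'et}(\frak X_{\res K})^\sub{ab}$, hence trivial; and $H^3$ is handled by Poincar\'e duality, its torsion being isomorphic to that of $H^2$. (Weaker still, it would suffice that $\pi_1^\sub{\'et}(\frak X_{\res K})^\sub{ab}$ carry no $2$-torsion.)

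\emph{The crystalline part, from (i) and (ii).} Because $\frak X/\bb Z_2$ is smooth and proper, $H^1_\sub{crys}(\frak X_k/W(k))\cong H^1_\sub{dR}(\frak X/\bb Z_2)$ (de Rham cohomology of a smooth proper lift), a $\bb Z_2$-module whose rank is $\dim_{\res K}H^1_\sub{dR}(\frak X_{\res K}/\res K)=b_1(\frak X_{\res K})=0$ by (i); being torsion-free (a standard fact about $H^0$ and $H^1$ of crystalline cohomology of a smooth proper variety), it vanishes. The universal coefficient sequence attached to $R\Gamma_\sub{dR}(\frak X_k/k)=R\Gamma_\sub{crys}(\frak X_k/W(k))\dotimes_{W(k)}k$ then degenerates in degree $1$ to an isomorphism $H^1_\sub{dR}(\frak X_k/k)\isoto H^2_\sub{crys}(\frak X_k/W(k))[2]$ onto the $2$-torsion subgroup. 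By (ii) this is non-zero, which is the assertion; dually, $H^3_\sub{crys}$ acquires the same torsion.

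\emph{The construction, and the main obstacle.} It remains to produce one $\frak X/\bb Z_2$ satisfying (i) and (ii) at the same time. Condition (ii) asks for $\frak X_k$ to be a characteristic-$2$ surface with a nonzero one-dimensional de Rham class while $b_1(\frak X_k)=0$ — an Igusa-type pathology, visible already through $h^{1,0}(\frak X_k)>0$, equivalently through a non-reduced Picard scheme $\Pic^0_{\frak X_k}$ — which cannot occur over $\bb C$ and hence not on the simply connected generic fibre demanded by (i). I would therefore hunt, in the menagerie of characteristic-$2$ pathologies (unirational, quasi-elliptic, or inseparable-cover surfaces with non-reduced Picard scheme), for a surface $\frak X_k$ that is moreover algebraically simply connected, lift it formally over $W(k)$ via its deformation theory, algebraize by lifting an ample class, and check that the generic fibre is still simply connected — using that $\pi_1^\sub{\'et}$ only shrinks under specialisation, plus an explicit analysis of the chosen family. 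The hard part will be precisely this: arranging that the special fibre has the ``extra'' de Rham cohomology of (ii) while the generic fibre stays simply connected as in (i). These two demands pull against each other — the global-one-form / non-reduced-Picard phenomenon is a genuinely characteristic-$2$ effect that must be engineered inside a family whose generic member is as close to being rigid and simply connected as possible — and it is here, rather than in the two cohomological deductions above, that the real work lies.
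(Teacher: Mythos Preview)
Your reduction to conditions (i) and (ii) is correct and clean, and the two cohomological deductions you give are valid. However, the proposal stops short of a proof: you explicitly defer the construction of an $\frak X$ satisfying (i) and (ii) to a ``hunt'' among characteristic-$2$ pathologies, and you yourself flag this as where ``the real work lies.'' That is indeed the entire content of the theorem.

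The paper takes a quite different and entirely explicit route, and in particular its $\frak X$ does \emph{not} satisfy your condition (i). The construction is as follows. One begins with a ``singular'' Enriques surface $S_0/\bb F_2$ (meaning $\Pic^\tau_{S_0}\cong\bm\mu_2$, so that $S_0$ has a $\bb Z/2\bb Z$-Galois K3 cover $\tilde S_0$), which is shown to exist and to lift to $S/\bb Z_2$ by Lang--Ogus; Illusie's computation $H^2_\sub{crys}(S_0/W(\bb F_2))\cong\bb Z_2^{10}\oplus\bb Z/2\bb Z$ is the source of the crystalline torsion. One then takes an ordinary elliptic curve $E/\bb Z_2$, uses the map $\bb Z/2\bb Z\to\bm\mu_2\hookrightarrow E$ (an isomorphism generically, zero on the special fibre), and forms the $E$-torsor $D:=E\times_{\bb Z/2\bb Z}\tilde S\to S$, a smooth projective $3$-fold over $\bb Z_2$. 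The point of the twist is that $D_{\bb F_2}\cong E_{\bb F_2}\times_{\bb F_2}S_0$ is a \emph{product}, so K\"unneth transports the $2$-torsion of $H^2_\sub{crys}(S_0)$ into $H^2_\sub{crys}(D_{\bb F_2})$; meanwhile one checks directly that $H^*_\sub{\'et}(D_{\res{\bb Q}_2},\bb Z_2)$ is free for $*\le 2$. Finally $\frak X\subset D$ is a sufficiently ample smooth hypersurface: weak Lefschetz in crystalline cohomology pushes the $2$-torsion into $H^2_\sub{crys}(\frak X_{\bb F_2})$, and freeness of $H^*_\sub{\'et}(\frak X_{\res{\bb Q}_2},\bb Z_2)$ for all $*$ is verified via the localisation sequence for $U=D\setminus\frak X$, Artin's affine vanishing bound, and Poincar\'e duality.

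Note that since $D_{\bb F_2}\cong E_{\bb F_2}\times S_0$, one has $b_1(D)=2$, and hence $b_1(\frak X)=2$ by Lefschetz; so the paper's $\frak X$ has $H^1_\sub{\'et}\cong\bb Z_2^2$ and is far from simply connected. The paper neither uses nor proves any analogue of your (i) or (ii); the crystalline torsion is exhibited directly via Illusie and K\"unneth rather than through a universal-coefficient argument with $H^1_\sub{dR}$. Your framework would demand a genuinely different example (e.g.\ a liftable, algebraically simply connected surface in characteristic $2$ with non-reduced $\Pic^0$ whose lift to $\bb Q_2$ remains simply connected), and establishing that such a thing exists is not obviously easier than the paper's construction.
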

\begin{proof}[Sketch of proof]
Step 1: Let $S_0$ be any ``singular'' Enriques surface over $\bb F_2$. Here ``singular'' means that the torsion subgroup of the Picard scheme of $S_0$ is isomorphic to the group scheme $\bm\mu_2$, or equivalently that the universal cover of $S_0$ is provided by a $\bb Z/2\bb Z$-Galois cover $\tilde S_0\to S_0$ where $\tilde S_0$ is a $K3$ surface. It is in fact not clear that singular Enriques surfaces exist over $\bb F_2$, but we construct an example. A theorem of Lang--Ogus \cite{Lang1983} states that $S_0$ lifts to a smooth projective scheme $S$ over $\bb Z_2$, and $\tilde S_0$ then of course lifts to a $\bb Z/2\bb Z$-Galois cover $\tilde S\to S$.

The reason we start with an Enriques surface is that Illusie \cite[Prop.~7.3.5]{Illusie1979} has calculated that $H^2_\sub{crys}(S_0/W(\bb F_2))=\bb Z_2^{10}\oplus\bb Z/2\bb Z$, which will be the source of the torsion in crystalline cohomology.

Step 2: Let $E$ be an ordinary elliptic curve over $\bb Z_2$. There is a canonical copy of $\bm \mu_2$ inside the $2$-torsion $E[2]$ (Proof: ordinarity of $E$ implies that $E[2]$ is a twisted form of the group scheme $\bm\mu_2\oplus \bb Z/2\bb Z$; normally $\bm\mu_p$ has $p-1$ twisted forms, but here $p=2$ and so $E[2]\supset\bm\mu_2$). Let $\eta:\bb Z/2\bb Z\to\bm\mu_2$ be the morphism of group schemes over $\bb Z_2$ which is generically an isomorphism but is zero on the special fibre, and consider the composition \[\bb Z/2\bb Z\to\bm\mu_2\into E.\] Now push out the $\bb Z/2\bb Z$-Galois cover $\tilde S\to S$ along this map to form an $E$-torsor \[D:=E\times_{\bb Z/2\bb Z}\tilde S\To S,\] where $D$ is a smooth projective relative $3$-fold over $\bb Z_2$.

Step 3: Calculate the $p$-adic cohomologies of $D$. On the special fibre the torsor becomes trivial by construction, i.e., $D_{\bb F_p}=E_{\bb F_p}\times_{\bb F_p}S_0$, and so the K\"unneth formula implies that $H^2_\sub{crys}(D_{\bb F_2}/W(\bb F_2))$ contains a copy of $H^2_\sub{crys}(S_0/W(\bb F_2))\supset \bb Z/2\bb Z$. Conversely, on the generic fibre, it can be shown that $H^*_\sub{\'et}(D_{\res{\bb Q}_2},\bb Z_2)$ is free for $*=0,1,2$ (the cases $*=0,1$ are automatic, but the case $*=2$ is probably the hardest part of the whole construction, for which we refer to [Prop.~2.2(i), BMS]).

Step 4: Let $\frak X\subseteq D$ be any sufficiently ample smooth hypersurface (this is possible over $\bb F_2$ by Bertini theorems of Gabber and Poonen). The sufficiently ampleness condition ensures that the weak Lefschetz theorem in crystalline cohomology holds integrally, so that $\bb Z/2\bb Z\subset H^2_\sub{crys}(D_{\bb F_2}/W(\bb F_2))\into H^2_\sub{crys}(\frak X_{\bb F_2}/W(\bb F_2))$. To complete the proof it remains only to show that $H^*_\sub{\'et}(\frak X_{\res{\bb Q}_2},\bb Z_2)$ is free for $*=0,1,2,3,4$; the cases $*=0,1$ are standard, and we will check $*=2$ in a moment, whence the cases $*=3,4$ follow from Poincar\'e duality. It remains only to check $*=2$, for which we let $U = D\setminus X$, which is affine, and consider the localisation sequence
\[
H_{c,\sub{\'et}}^2(U_{\res{\bb Q}_2},\bb Z_2)\to H^2_\sub{\'et}(D_{\res{\bb Q}_2},\bb Z_2)\to H^2_\sub{\'et}(\frak X_{\res{\bb Q}_2},\bb Z_2)\to H_{c,\sub{\'et}}^3(U_{\res{\bb Q}_2},\bb Z_2)\to \ldots\ .
\]
Since $U$ is affine, smooth and $3$-dimensional, Artin's cohomological bound (and Poincar\'e duality) implies that $H_{c,\sub{\'et}}^*(U_{\res{\bb Q}_2},\bb Z_2)$ and $H_{c,\sub{\'et}}^*(U_{\res{\bb Q}_2},\bb Z/2\bb Z)$ vanish for $*<3$. It follows that $H_{c,\sub{\'et}}^3(U_{\res{\bb Q}_2},\bb Z_2)$ is free over $\bb Z_2$, whence the displayed localisation sequence and the proved freeness of $H^2_\sub{\'et}(D_{\res{\bb Q}_2},\bb Z_2)$ imply the freeness of $H^2_\sub{\'et}(\frak X_{\res{\bb Q}_2},\bb Z_2)$, as desired.
\end{proof}


\subsection{Statement of main theorem, and outline}\label{subsection_intro}
The following notation will be used repeatedly in these notes:
\begin{itemize}\itemsep0pt
\item $\bb C$ is a complete, non-archimedean, algebraically closed field of mixed characteristic;\footnote{More general, most of the theory which we will present works for any perfectoid field of mixed characteristic which contains all $p$-power roots of unity.} ring of integers~$\roi$; residue field $k$. 
\item $\roi^\flat:=\projlim_\phi\roi/p\roi$ is the {\em tilt} (using Scholze's language \cite{Scholze2012} -- or $\cal R_\roi$ in Fontaine's original notation \cite{Fontaine1994}) of $\roi$; so $\roi^\flat$ is a perfect ring of characteristic $p$ which is the ring of integers of $\bb C^\flat:=\Frac\roi$, which is a complete, non-archimedean, algebraically closed field with residue field $k$.
\item $\bb A_\sub{inf}:=W(\roi^\flat)$ is the first period ring of Fontaine;\footnote{A brief introduction to $\roi^\flat$ and $\bb A_\sub{inf}$ may be found at the beginning of Appendix \ref{appendix_A_inf}.} it is equipped with the usual Witt vector Frobenius $\phi$. There are three key specialisation maps:
\[\xymatrix@C=2cm{
&W(\bb C^\flat)&\\
\roi&\bb A_\sub{inf}\ar@{->>}[l]^{\sub{Fontaine's map }\theta}_{\sub{de Rham}}\ar@{->>}[r]^{\sub{crystalline}}\ar@{^(->}[u]^{\sub{\'etale}}&W(k)
}\]
where Fontaine's map $\theta$ will be discussed in detail, and in greater generality, in Section \ref{section_perfectoid}.
\end{itemize}
The goal of this notes is to provide a detailed overview of the proof of the following theorem, proving the existence of a cohomology theory, taking values in $\bb A_\sub{inf}$-modules, which integrally interpolates the \'etale, crystalline, and de Rham cohomologies of a $p$-adic scheme:

\begin{theorem}
\label{thm:IntroGlobalCompThm}
For any proper, smooth (possibly formal) scheme $\frak X$ over $\mathcal{O}$, there is a perfect complex $R\Gamma_{\bb A}(\frak X)$ of $\bb A_\sub{inf}$-modules, functorial in $\frak X$ and equipped with a $\phi$-semi-linear endomorphism $\phi$, with the following specialisations (which are compatible with Frobenius actions where they exist):
\begin{enumerate}
\item \'Etale: $R\Gamma_{\bb A}(\frak X) \dotimes_{\bb A_\sub{\rm inf}} W(\bb C^\flat) \simeq R\Gamma_\sub{\rm\' et}(X,\mathbb{Z}_p) \dotimes_{\mathbb{Z}_p} W(\bb C^\flat)$, where $X:=\frak X_\bb C$ is the generic fibre of $\frak X$ (viewed as a rigid analytic variety over $\bb C$ in the case that $\frak X$ is a formal scheme)
\item Crystalline: $R\Gamma_{\bb A}(\frak X) \dotimes_{\bb A_\sub{\rm inf}} W(k) \simeq R\Gamma_{\mathrm{crys}}(\mathfrak{X}_k/W(k))$.
\item de Rham: $R\Gamma_{\bb A}(\frak X) \dotimes_{\bb A_\sub{\rm inf}} \mathcal{O} \simeq R\Gamma_{\mathrm{dR}}(\mathfrak{X}/\mathcal{O})$.
\end{enumerate}
The individual cohomology groups \[H^i_\bb A(\frak X):=H^i(R\Gamma_\bb A(\frak X))\] have the following properties:
\begin{enumerate}\setcounter{enumi}{3}
\item $H^i_\bb A(\frak X)$ is a finitely presented $\bb A_\sub{\rm inf}$-module;
\item $H^i_\bb A(\frak X)[\tfrac1p]$ is finite free over $\bb A_\sub{\rm inf}[\tfrac1p]$;
\item $H^i_\bb A(\frak X)$ is equipped with a Frobenius semi-linear endomorphism $\phi$ which becomes an isomorphism after inverting any generator $\xi\in\bb A_\sub{\rm inf}$ of $\ker\theta$, i.e., $\phi:H^i_\bb A(\frak X)[\tfrac1\xi]\isoto H^i_\bb A(\frak X)[\tfrac1{\phi(\xi)}]$;
\item \'Etale: $H^i_\bb A(\frak X)\otimes_{\bb A_\sub{\rm inf}}W(\bb C^\flat)\cong H^i_\sub{\rm \'et}(X,\bb Z_p)\otimes_{\bb Z_p}W(\bb C^\flat)$, whence \[(H^i_\bb A(\frak X)\otimes_{\bb A_\sub{\rm inf}}W(\bb C^\flat))^{\phi=1}\cong H^i_\sub{\rm \'et}(X,\bb Z_p).\]
\item Crystalline: there is a short exact sequence \[0\To H^i_\bb A(\frak X)\otimes_{\bb A_\sub{\rm inf}}W(k)\to H^i_\sub{\rm crys}(\frak X_k/W(k))\To\Tor_1^{\bb A_\sub{\rm inf}}(H^{i+1}_\bb A(\frak X), W(k))\To 0,\] where the $\Tor_1$ term is killed by a power of $p$.
\item de Rham: there is a short exact sequence \[0\To H^i_\bb A(\frak X)\otimes_{\bb A_\sub{\rm inf}}\roi\to H^i_\sub{\rm dR}(\frak X/\roi)\To H^{i+1}_\bb A(\frak X)[\xi]\To 0,\] where the third term is again killed by a power of $p$.
\item If $H^i_\sub{\rm crys}(\frak X_k/W(k))$ or $H^i_\sub{\rm dR}(\frak X/\roi)$ is torsion-free, then $H^i_\bb A(\frak X)$ is a finite free $\bb A_\sub{inf}$-module.

\end{enumerate}

\end{theorem}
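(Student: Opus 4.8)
The plan is to realise $R\Gamma_\bb A(\frak X)$ as the hypercohomology of a complex $\bb A\Omega_{\frak X/\roi}$ on $\frak X$ built by the d\'ecalage functor out of the pro-\'etale cohomology of the generic fibre, and then to extract \emph{every} assertion from the $p$-adic Cartier isomorphism of Theorem~\ref{theorem_p_adic_Cartier}. Concretely: on $X=\frak X_\bb C$ one has the pro-\'etale period sheaf $\bb A_{\sub{inf},X}$ and the ``nearby cycles'' map $\nu\colon X_\sub{pro-\'et}\to\frak X_\sub{Zar}$; one sets $\bb A\Omega_{\frak X/\roi}:=L\eta_\mu\,R\nu_*\bb A_{\sub{inf},X}$ with $\mu=[\epsilon]-1$, and $R\Gamma_\bb A(\frak X):=R\Gamma(\frak X,\bb A\Omega_{\frak X/\roi})$. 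Functoriality in $\frak X$ is immediate; the $\phi$-semi-linear Frobenius is induced by the Witt-vector Frobenius on $\bb A_\sub{inf}$ together with a canonical identification of $\bb A\Omega_{\frak X/\roi}$ with its Frobenius twist after inverting $\xi$, which simultaneously yields (vi). The only local input is the computation of $R\nu_*\bb A_{\sub{inf},X}$ over ``small'' affines via the almost-purity/Cartier--Leray technique of Section~\ref{section_pro_etale}, to which $L\eta_\mu$ is then applied.

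Granting the $p$-adic Cartier isomorphism
\[
H^n\!\big(\bb A\Omega_{\frak X/\roi}\dotimes_{\bb A_\sub{inf},\tilde\theta_r}W_r(\roi)\big)\;\cong\;W_r\Omega^n_{\frak X/\roi}\qquad(r\ge1)
\]
(up to a Frobenius twist), finiteness comes out quickly. Properness of $\frak X$ and coherence of the Langer--Zink complex show that each $\bb A\Omega_{\frak X/\roi}\dotimes_{\tilde\theta_r}W_r(\roi)$ has bounded, perfect hypercohomology; since $\bb A\Omega_{\frak X/\roi}$ is derived $p$-complete and these base changes detect everything, a Nakayama-type completeness argument upgrades this to: $R\Gamma_\bb A(\frak X)$ is a \emph{perfect} complex of $\bb A_\sub{inf}$-modules, which is (iv). Inverting $p$ and invoking the Frobenius isomorphism of (vi) then forces $H^i_\bb A(\frak X)[\tfrac1p]$ to be finite free over $\bb A_\sub{inf}[\tfrac1p]$, giving (v).

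For the specialisations: (i) and (vii) use that $L\eta_\mu$ is the identity after inverting $\mu$, so $R\Gamma_\bb A(\frak X)[\tfrac1\mu]^\wedge\simeq R\Gamma_\sub{pro-\'et}(X,\bb A_{\sub{inf},X})[\tfrac1\mu]^\wedge$; base-changing further along $\bb A_\sub{inf}\to W(\bb C^\flat)$ (which factors through $\bb A_\sub{inf}[\tfrac1\mu]$) and applying Scholze's primitive comparison theorem identifies the result with $R\Gamma_\sub{\'et}(X,\bb Z_p)\dotimes_{\bb Z_p}W(\bb C^\flat)$, while the $\phi=1$-statement uses $W(\bb C^\flat)^{\phi=1}=\bb Z_p$ together with the finiteness above. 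For (iii) and (x), the $r=1$ case gives that $\bb A\Omega_{\frak X/\roi}\dotimes_\theta\roi$ has cohomology sheaves $\Omega^n_{\frak X/\roi}$; one then checks --- the delicate point of the deduction --- that the associated spectral sequence is exactly the hypercohomology spectral sequence of the de Rham complex, by identifying its Bockstein differential with $d$ via the de Rham--Witt comparison, whence $R\Gamma_\bb A(\frak X)\dotimes_{\bb A_\sub{inf}}\roi\simeq R\Gamma_\sub{dR}(\frak X/\roi)$; since $\roi=\bb A_\sub{inf}/\xi$ with $\xi$ a non-zero-divisor, the universal-coefficients sequence becomes $0\to H^i_\bb A(\frak X)/\xi\to H^i_\sub{dR}(\frak X/\roi)\to H^{i+1}_\bb A(\frak X)[\xi]\to 0$, the last term killed by a power of $p$ because $H^{i+1}_\bb A(\frak X)[\tfrac1p]$ is $\xi$-torsion-free by (v). For (ii) and (ix), one passes to the limit $r\to\infty$ modulo $p$: $W_r\Omega^\bullet_{\frak X/\roi}$ reduces to the de Rham--Witt complex $W_r\Omega^\bullet_{\frak X_k}$ of the special fibre, whose inverse-limit hypercohomology is $R\Gamma_\sub{crys}(\frak X_k/W(k))$ by Illusie, giving $R\Gamma_\bb A(\frak X)\dotimes_{\bb A_\sub{inf}}W(k)\simeq R\Gamma_\sub{crys}(\frak X_k/W(k))$ after tracking $\phi$-twists, and hence (ix) from the corresponding universal-coefficients sequence (the crystalline specialisation map has $\Tor$-dimension $1$), with the $\Tor_1$ term again $p$-power torsion.

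Finally, (xi): if $H^i_\sub{dR}(\frak X/\roi)$ (resp. $H^i_\sub{crys}(\frak X_k/W(k))$) is torsion-free, then the sequence (x) (resp. (ix)) exhibits $H^i_\bb A(\frak X)/\xi$ (resp. $H^i_\bb A(\frak X)\otimes_{\bb A_\sub{inf}}W(k)$) as torsion-free; combined with (iv) and (v) one invokes the structure theory of finitely presented $\bb A_\sub{inf}$-modules --- such a module which becomes free after inverting $p$ and is torsion-free after reduction at a suitable primitive element has no nonzero torsion summand, hence is finite free. The genuine obstacle behind all of this is, of course, the $p$-adic Cartier isomorphism itself: it rests on the local identification (Section~\ref{section_constructing_Witt}) of the $L\eta$-modified group cohomology of a perfectoid Laurent-polynomial algebra with the relative de Rham--Witt complex, together with the compatibilities of the d\'ecalage functor with base change, with truncation, and with the passage from almost quasi-isomorphisms to honest ones (Section~\ref{section_Cartier}).
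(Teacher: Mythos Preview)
Your outline follows the paper's strategy faithfully: define $\bb A\Omega_{\frak X/\roi}=L\eta_\mu R\nu_*\bb A_{\sub{inf},X}$, reduce everything to the $p$-adic Cartier isomorphism, and deduce the specialisations and finiteness statements from there. The treatment of (i)--(iii), (vi)--(ix), and (x) is essentially correct (your numbering is off by one from (viii) onwards, but this is cosmetic).

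There is, however, a genuine gap in your argument for (v). You write that ``inverting $p$ and invoking the Frobenius isomorphism of (vi) then forces $H^i_\bb A(\frak X)[\tfrac1p]$ to be finite free over $\bb A_\sub{inf}[\tfrac1p]$''. This is not true: the existence of a Frobenius-semi-linear isomorphism after inverting $\xi$ does \emph{not} by itself force a finitely presented $\bb A_\sub{inf}[\tfrac1p]$-module to be free. The paper's argument (Theorem~\ref{theorem_main_with_proofs2} and Corollary~\ref{corollary_Fitting_ideal_trick}) runs a descending induction on $i$ and, at each step, applies a Fitting-ideal trick which requires three ingredients: freeness of $H^i_\bb A(\frak X)[\tfrac1{p\mu}]$ (from the \'etale specialisation), the Frobenius isogeny, \emph{and} the equality of ranks
\[
\dim_{\bb Q_p}H^i_\sub{\'et}(X,\bb Q_p)=\dim_{W(k)[\tfrac1p]}H^i_\sub{crys}(\frak X_k/W(k))[\tfrac1p].
\]
This last equality is a nontrivial input (in full generality it needs the crystalline comparison theorem of [BMS, \S12--14], or in the discretely-valued case the classical comparison), and you have omitted it entirely. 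Relatedly, perfectness of the complex $R\Gamma_\bb A(\frak X)$ does not directly give finite presentation of each $H^i_\bb A(\frak X)$, i.e.\ (iv); this also comes out of the descending induction, using that the already-established freeness of $H^{>i}_\bb A(\frak X)[\tfrac1p]$ makes those modules perfect over $\bb A_\sub{inf}$ (Theorem~\ref{theorem_modules_over_A_inf}(ii)), so that $\tau^{\le i}R\Gamma_\bb A(\frak X)$ is perfect and its top cohomology finitely presented.
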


\begin{corollary}
Let $\frak X$ be as in the previous theorem, fix $i\ge 0$, and assume $H^i_\sub{crys}(\frak X_k/W(k))$ is torsion-free. Then $H^i_\sub{\rm \'et}(X,\bb Z_p)$ is also torsion-free. If we assume further that $H^{i+1}_\sub{\rm crys}(\frak X_k/W(k))$ is torsion-free, then
\[H^i_\bb A(\frak X)\otimes_{\bb A_\sub{\rm inf}}W(k)=H^i_\sub{\rm crys}(\frak X_k/W(k))\qquad\text{and}\qquad
H^i_\bb A(\frak X)\otimes_{\bb A_\sub{\rm inf}}\roi=H^i_\sub{\rm dR}(\frak X/\roi).\]
\end{corollary}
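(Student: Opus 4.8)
The plan is to deduce the corollary formally from Theorem~\ref{thm:IntroGlobalCompThm}, the engine being property~(x): torsion-freeness of crystalline cohomology in a given degree forces the $\bb A_\sub{inf}$-cohomology in that degree to be \emph{finite free}. Once freeness is available, all the comparison statements (vii)--(ix) simplify.

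First I would treat the \'etale torsion-freeness. Assuming $H^i_\sub{crys}(\frak X_k/W(k))$ is torsion-free, property~(x) makes $H^i_\bb A(\frak X)$ finite free over $\bb A_\sub{inf}$, so its base change $H^i_\bb A(\frak X)\otimes_{\bb A_\sub{inf}}W(\bb C^\flat)$ is finite free, hence torsion-free, over $W(\bb C^\flat)$; by the \'etale comparison~(vii) the same holds for $H^i_\sub{\'et}(X,\bb Z_p)\otimes_{\bb Z_p}W(\bb C^\flat)$. It then remains to descend torsion-freeness along $\bb Z_p\to W(\bb C^\flat)$, which works because $W(\bb C^\flat)$ is $p$-torsion-free, hence $\bb Z_p$-flat, and faithfully flat since $W(\bb C^\flat)/p=\bb C^\flat\ne0$: the natural map $M\to M\otimes_{\bb Z_p}W(\bb C^\flat)$ is then injective, so a $p$-power-torsion element of $H^i_\sub{\'et}(X,\bb Z_p)$ would produce a nonzero torsion element downstairs, a contradiction. (Alternatively one may invoke finite generation of $H^i_\sub{\'et}(X,\bb Z_p)$ over $\bb Z_p$ and note that a nonzero torsion summand stays nonzero after $\otimes_{\bb Z_p}W(\bb C^\flat)$.)

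For the second assertion, assume in addition that $H^{i+1}_\sub{crys}(\frak X_k/W(k))$ is torsion-free; property~(x) in degree $i+1$ now makes $H^{i+1}_\bb A(\frak X)$ finite free over $\bb A_\sub{inf}$. This kills the two error terms: $\Tor_1^{\bb A_\sub{inf}}(H^{i+1}_\bb A(\frak X),W(k))=0$ because a free module is flat, and $H^{i+1}_\bb A(\frak X)[\xi]=0$ because $\bb A_\sub{inf}=W(\roi^\flat)$ is a domain and $\xi$ is a nonzero element, so a free $\bb A_\sub{inf}$-module has no $\xi$-torsion. Feeding these into the crystalline sequence~(viii) and the de Rham sequence~(ix) leaves exactly the claimed isomorphisms $H^i_\bb A(\frak X)\otimes_{\bb A_\sub{inf}}W(k)\isoto H^i_\sub{crys}(\frak X_k/W(k))$ and $H^i_\bb A(\frak X)\otimes_{\bb A_\sub{inf}}\roi\isoto H^i_\sub{dR}(\frak X/\roi)$.

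I do not expect any genuine obstacle: the statement is a bookkeeping consequence of Theorem~\ref{thm:IntroGlobalCompThm}. The only step requiring any thought beyond unwinding exact sequences is the descent of torsion-freeness in the second paragraph, i.e.\ the (faithful) flatness of $W(\bb C^\flat)$ over $\bb Z_p$; everything else is immediate once property~(x) supplies the freeness of $H^i_\bb A(\frak X)$ and $H^{i+1}_\bb A(\frak X)$.
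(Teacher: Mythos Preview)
Your proposal is correct and follows essentially the same route as the paper: use property~(x) to get freeness of $H^i_\bb A(\frak X)$ (and then of $H^{i+1}_\bb A(\frak X)$), and read off the conclusions from (vii), (viii), (ix). The only cosmetic difference is in the \'etale step: the paper recovers $H^i_\sub{\'et}(X,\bb Z_p)$ directly as the $\phi$-invariants inside the finite free $W(\bb C^\flat)$-module $H^i_\bb A(\frak X)\otimes_{\bb A_\sub{inf}}W(\bb C^\flat)$ (using the ``whence'' clause of~(vii)), whereas you descend torsion-freeness along the faithfully flat map $\bb Z_p\to W(\bb C^\flat)$; both arguments are equally short and standard.
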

\begin{proof}
We first mention that the ``whence'' assertion of part (vii) of the previous theorem is the following general, well-known assertion: if $M$ is a finitely generated $\bb Z_p$-module and $K$ is any field of characteristic $p$, then $(M\otimes_{\bb Z_p}W(K))^{\phi=1}=M$ (where $\phi$  really means $1\otimes\phi$).

Now assume $H^i_\sub{crys}(\frak X_k/W(k))$ is torsion-free. Then part (x) of the previous theorem implies that $H_\bb A^i(\frak X)$ is finite free; so from part (vii) we see that $H^i_\sub{\'et}(X,\bb Z_p)$ cannot have torsion. If we also assume $H^{i+1}_\sub{crys}(\frak X_k/W(k))$ is torsion-free, then $H_\bb A^{i+1}(\frak X)$ is again finite free by (x), and so no torsion terms appear in the short exact sequences in parts (viii) and (ix) of the previous theorem.
\end{proof}





Having stated the main theorem, we now give a very brief outline of the ideas which will be used to construct the $\bb A_\sub{inf}$-cohomology theory.
\begin{enumerate}
\item We will define $R\Gamma_\bb A(\frak X)$ to be the Zariski hypercohomology of the following complex of sheaves of $\bb A_\sub{inf}$-modules on the formal scheme $\frak X$: \[\bb A\Omega_{\frak X/\roi}:=L\eta_\mu (R\nu_*(\bb A_{\sub{inf},X}))\] where:
\begin{itemize}\itemsep0pt
\item $\bb A_{\sub{inf},X}$ is a certain period sheaf of $\bb A_\sub{inf}$-modules on the pro-\'etale site $X_\sub{pro\'et}$ of the rigid analytic variety $X$ (note that even if $\frak X$ is an honest scheme over $\roi$, we must view its generic fibre as a rigid analytic variety);
\item $\nu:X_\sub{pro\'et}\to \frak X_\sub{Zar}$ is the projection map to the Zariski site of $\frak X$;
\item $L\eta$ is the d\'ecalage functor which modifies a given complex by a small amount of torsion (in this case with respect to a prescribed element $\mu\in\bb A_\sub{inf}$).
\end{itemize}
\item Parts (ii) and (iii) of Theorem \ref{thm:IntroGlobalCompThm} are proved simultaneously by relating $\bb A\Omega_{\frak X/\roi}$ to Langer--Zink's relative de Rham--Witt complex $W_r\Omega_{\frak X/\roi}^\blob$; indeed, this equals $\Omega_{\frak X/\roi}^\blob$ if $r=1$ (which computes de Rham cohomology of $\frak X$) and satisfies $W_r\Omega_{\frak X/\roi}^\blob\otimes_{W_r(\roi)}W_r(k)=W_r\Omega_{\frak X_k/k}^\blob$ (where $W_r\Omega_{\frak X_k/k}^\blob$ is the classical de Rham--Witt complex of Bloch--Deligne--Illusie computing crystalline cohomology of $\frak X_k$).
\item If $\Spf R$ is an affine open of $\frak X$ (so $R$ is a $p$-adically complete, formally smooth $\roi$-algebra\footnote{Throughout these notes we follow the convention that {\em formally smooth/\'etale} includes the condition of being topologically finitely presented, i.e., a quotient of $\roi\pid{T_1,\dots,T_N}$ by a finitely generated ideal. Under this convention formal smoothness implies flatness. In fact, according to a result of Elkik \cite[Thm~7]{Elkik1973} (see Rmq.~2 on p.~587 for elimination of the Noetherian hypothesis), a $p$-adically complete $\roi$-algebra is formally smooth if and only if it is the $p$-adic completion of a smooth $\roi$-algebra.
\comment{The following three sets of hypotheses on a $p$-adically complete $\roi$-algebra $R$ are equivalent:
\begin{enumerate}[(i)]\itemsep0pt
\item $R$ is the $p$-adic completion of a smooth $\roi$-algebra;
\item $R$ is flat over $\roi$, and $R/p$ is smooth over $\roi/p$;
\item $R$ is formally smooth and topologically of finite type over $\roi$.
\end{enumerate}
The implication (iii)$\Rightarrow$(i) is due to 
}
}) which is {\em small}, i.e., formally \'etale over $\roi\pid{T_1^{\pm1},\dots,T_d^{\pm1}}$ ($:=$ the $p$-adic completion of $\roi[T_1^{\pm1},\dots,T_d^{\pm1}]$), then we will use the almost purity theorem to explicitly calculate $R\Gamma_\sub{Zar}(\Spf R,\bb A\Omega_{\frak X/\roi})$ in terms of group cohomology and Koszul complexes. These calculations can be rephrased using ``$q$-de Rham complexes'' over $\bb A_\sub{inf}$ (=deformations of the de Rham complex), but we do not do so in these notes.
\comment{
\item The following is therefore the rough plan of the notes:
\begin{itemize}\itemsep0pt
\item The d\'ecalage functor $L\eta$.
\item Perfectoid rings.
\item Pro \'etale cohomology.
\item Construction of $\bb A\Omega_{\frak X/\roi}$ and $R\Gamma_\bb A(\frak X)$.
\item Relative de Rham--Witt complexes.
\item q-de Rham complexes.
\end{itemize}
}
\item Two notes on the history of the results: \begin{itemize} \item Early motivation for the existence of $R\Gamma_\bb A(\frak X)$ (e.g., as discussed by Scholze at Harris' 2014 MSRI birthday conference) came from topological cyclic homology. These notes say nothing about that point of view.
\item At the time of writing the announcement of our results \cite{Bhatt_Morrow_Scholze}, we only knew that the definition of $R\Gamma_\bb A(\frak X)$ in part (i) of the remark almost (in the precise sense of Faltings' almost mathematics) had the desired properties of Theorem \ref{thm:IntroGlobalCompThm}, so it was necessary to modify the definition slightly; this modification is no longer necessary.
\end{itemize}
\end{enumerate}

\section{The d\'ecalage functor $L\eta$: modifying torsion}\label{section_decalage}
For a ring $A$ and non-zero divisor $f\in A$, we define the {\em d\'ecalage functor} which was introduced first by Berthelot--Ogus \cite[Chap.~8]{BerthelotOgus1978} following a suggestion of Deligne. It will play a fundamental role in our constructions.

\begin{definition}\label{definition_decalage}
Suppose that $C$ is a cochain complex of $f$-torsion-free $A$-modules. Then we denote by $\eta_fC$ the subcomplex of $C[\tfrac1f]$ defined as \[(\eta_fC)^i:=\{x\in f^iC^i:dx\in f^{i+1}C^{i+1}\}\]
i.e., $\eta_fC$ is the largest subcomplex of $C[\tfrac1f]$ which in degree $i$ is contained in $f^iC^i$ for all $i\in\bb Z$.
\end{definition}

It is easy to compute the cohomology of $\eta_fC$:

\begin{lemma}\label{lemma_calculation_of_cohomol}
The map $C^i\to (\eta_f C)^i$ given by $m\to f^im$ induces a natural isomorphism \[H^i(C)/H^i(C)[f]\isoto H^i(\eta_fC).\]
\end{lemma}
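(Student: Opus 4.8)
The plan is to compute both sides explicitly using the cycle/boundary description of cohomology and the defining formula $(\eta_f C)^i = \{x \in f^i C^i : dx \in f^{i+1} C^{i+1}\}$. First I would unwind the map: for $m \in C^i$ a cocycle (so $dm = 0$), the element $f^i m$ lies in $f^i C^i$ and has differential $0 \in f^{i+1} C^{i+1}$, hence $f^i m \in (\eta_f C)^i$ is a cocycle of $\eta_f C$; this shows the assignment $m \mapsto f^i m$ sends $Z^i(C)$ into $Z^i(\eta_f C)$ and is clearly natural in $C$. So I get a well-defined map on a quotient of $Z^i(C)$; the content is to identify its kernel and show it is surjective onto $H^i(\eta_f C)$.

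For \emph{surjectivity}: take a cocycle $x \in (\eta_f C)^i$, so $x \in f^i C^i$ and $dx \in f^{i+1}C^{i+1}$, but as an element of $\eta_f C$ we have $dx = 0$ in $(\eta_f C)^{i+1} \subseteq C^{i+1}[\tfrac1f]$, which (since $C^{i+1}$ is $f$-torsion-free) forces $dx = 0$ in $C^{i+1}$. Write $x = f^i m$ with $m \in C^i$; then $f^i dm = d(f^i m) = dx = 0$, and $f$-torsion-freeness of $C^{i+1}$ gives $dm = 0$, so $m \in Z^i(C)$ and $x$ is the image of $[m]$. For the \emph{kernel}: suppose $m \in Z^i(C)$ and $f^i m = d y$ for some $y \in (\eta_f C)^{i-1} \subseteq f^{i-1}C^{i-1}$; write $y = f^{i-1} n$ with $n \in C^{i-1}$, so $f^i m = d(f^{i-1} n) = f^{i-1} dn$, whence $f m = dn$ in $C^i$ (torsion-freeness again). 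Thus $[m] \in H^i(C)[f]$, since $f[m] = [dn] = 0$. Conversely, if $[m] \in H^i(C)[f]$, say $fm = dn$ with $n \in C^{i-1}$, then $f^{i-1} n \in f^{i-1}C^{i-1}$ has differential $f^{i-1} dn = f^i m \in f^i C^i$, so $f^{i-1} n \in (\eta_f C)^{i-1}$ and its differential witnesses that $f^i m$ is a boundary in $\eta_f C$, i.e.\ $[m]$ maps to $0$. This identifies the kernel of $Z^i(C) \to H^i(\eta_f C)$ with the preimage of $H^i(C)[f]$, giving the claimed isomorphism $H^i(C)/H^i(C)[f] \xrightarrow{\ \sim\ } H^i(\eta_f C)$.

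The only mild subtlety — and the step I would be most careful about — is the repeated use of $f$-torsion-freeness of the terms $C^j$ to cancel powers of $f$ and to pass between "equal in $C^j[\tfrac1f]$" and "equal in $C^j$"; this is exactly why the définition of $\eta_f$ is stated only for $f$-torsion-free complexes, and it makes every cancellation above legitimate. Everything else is bookkeeping with the two degree shifts (an $f^i$ appearing on cocycles in degree $i$, an $f^{i-1}$ on the degree $i-1$ terms contributing boundaries), and no diagram chase or spectral sequence is needed. Naturality is immediate since the map is given by the single formula $m \mapsto f^i m$, which commutes with any map of $f$-torsion-free complexes.
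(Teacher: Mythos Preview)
Your proof is correct and follows the same approach as the paper's, which gives only a two-line sketch (``It is easy to see that the map induces $H^i(C)\to H^i(\eta_fC)$, and the kernel corresponds to those $x\in C^i$ such that $dx=0$ and $fx\in d(C^{i-1})$, i.e., $H^i(C)[f]$''); you have simply spelled out the surjectivity and kernel computations in full, including the repeated uses of $f$-torsion-freeness that the paper leaves implicit.
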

\begin{proof}
It is easy to see that the map induces $H^i(C)\to H^i(\eta_fC)$, and the kernel corresponds to those $x\in C^i$ such that $dx=0$ and $fx\in d(C^{i-1})$, i.e., $H^i(C)[f]$.
\end{proof}

\begin{corollary}
If $C\quis C'$ is a quasi-isomorphism of complexes of $f$-torsion-free $A$-modules, then the induced map $\eta_fC\to\eta_fC'$ is also a quasi-isomorphism.
\end{corollary}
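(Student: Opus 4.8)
The plan is to read this off from Lemma~\ref{lemma_calculation_of_cohomol} together with the naturality of the isomorphism it produces. First I would check that a chain map $g\colon C\to C'$ really does induce a chain map $\eta_fC\to\eta_fC'$: since $C,C'$ are $f$-torsion-free we have $C\subseteq C[\tfrac1f]$ and $C'\subseteq C'[\tfrac1f]$, and inverting $f$ gives an $A[\tfrac1f]$-linear chain map $g[\tfrac1f]\colon C[\tfrac1f]\to C'[\tfrac1f]$; for $x\in f^iC^i$ we then get $g[\tfrac1f](x)\in f^iC'^i$ and $d(g[\tfrac1f](x))=g[\tfrac1f](dx)\in f^{i+1}C'^{i+1}$, so $g[\tfrac1f]$ restricts to the desired map $\eta_fC\to\eta_fC'$.

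Next, for each $i$, I would contemplate the square
\[
\begin{CD}
H^i(C)/H^i(C)[f] @>>> H^i(\eta_fC)\\
@VVV @VVV\\
H^i(C')/H^i(C')[f] @>>> H^i(\eta_fC')
\end{CD}
\]
whose horizontal maps are the isomorphisms of Lemma~\ref{lemma_calculation_of_cohomol}, both induced by the formula $m\mapsto f^im$; whose right vertical map is induced by $\eta_fC\to\eta_fC'$; and whose left vertical map is the one induced by $H^i(g)\colon H^i(C)\to H^i(C')$ after passing to the quotient by the $f$-torsion submodule (legitimate precisely because any homomorphism carries $f$-torsion to $f$-torsion). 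Commutativity of the square is exactly the naturality assertion in Lemma~\ref{lemma_calculation_of_cohomol}, and in any event is immediate from the common formula $m\mapsto f^im$ defining the two horizontal maps. Now $C\quis C'$ says $H^i(g)$ is an isomorphism; hence it restricts to an isomorphism $H^i(C)[f]\isoto H^i(C')[f]$ and so descends to an isomorphism on the quotients, i.e.\ the left vertical arrow is an isomorphism. Since the two horizontals are isomorphisms as well, the right vertical arrow $H^i(\eta_fC)\to H^i(\eta_fC')$ is an isomorphism for every $i$; that is, $\eta_fC\to\eta_fC'$ is a quasi-isomorphism.

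I do not expect any genuine obstacle here: all of the substance lies in the explicit cohomology computation of Lemma~\ref{lemma_calculation_of_cohomol}, and the corollary is a purely formal consequence of it once one observes that the comparison map of that lemma is natural in $C$. The only points needing (entirely routine) attention are the well-definedness of the induced map $\eta_fC\to\eta_fC'$ and the elementary fact that an isomorphism on $H^i$ restricts to an isomorphism on $f$-torsion submodules, hence descends to an isomorphism modulo $f$-torsion.
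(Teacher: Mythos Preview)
Your proof is correct and follows exactly the same approach as the paper, which simply says ``Immediate from the previous lemma.'' You have merely spelled out the details the paper leaves implicit: that the map $\eta_fC\to\eta_fC'$ is well-defined, that the isomorphism of Lemma~\ref{lemma_calculation_of_cohomol} is natural, and that an isomorphism on $H^i$ induces one on $H^i/H^i[f]$.
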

\begin{proof}
Immediate from the previous lemma.
\end{proof}

We may now derive $\eta_f$. There is a well-defined endofunctor $L\eta_f$ of the derived category $D(A)$ defined as follows: if $D\in D(A)$ then pick a quasi-isomorphism $C\quis D$ where $C$ is a cochain complex of $f$-torsion-free $A$-modules (e.g., pick a projective resolution, at least if $D$ is bounded above) and set \[L\eta_f D:=\eta_f C.\] This is well-defined by the previous corollary and standard formalism of derived categories.\footnote{
{\bf Warning}. $L\eta_f$ does not preserve distinguished triangles! For example, if $A=\bb Z$ then $L\eta_p(\bb Z/p\bb Z)=0$ but $L\eta_p(\bb Z/p^2\bb Z)=\bb Z/p\bb Z$.}
The general theory of the functor $L\eta_f$ will be spread out through the notes (see especially Remarks \ref{remark_more_on_Leta} and \ref{remark_base_change}); now we proceed to two important examples.

\subsection{Example 1: Crystalline cohomology}\label{subsection_Leta_crys}
The following proposition is the origin of the d\'ecalage functor, in which $A=W(k)$ and $f=p$; it is closely related to the Cartier isomorphism for the de Rham--Witt complex.

\begin{proposition}
Let $k$ be a perfect field of characteristic $p$ and $R$ a smooth $k$-algebra. Then
\begin{enumerate}
\item (Illusie 1979) The absolute Frobenius $\phi:W\Omega_{R/k}^\blob\to W\Omega_{R/k}^\blob$ is injective and has image $\eta_pW\Omega_{R/k}^\blob$, thus inducing a Frobenius semi-linear isomorphism \[\Phi:W\Omega_{R/W(k)}^\blob\isoto \eta_pW\Omega_{R/W(k)}^\blob.\] 
\item (Berthelot--Ogus 1978) There exists a Frobenius semi-linear quasi-isomorphism \[\Phi:R\Gamma_\sub{crys}(R/W(k))\quis L\eta_pR\Gamma_\sub{crys}(R/W(k)).\]
\end{enumerate}
\end{proposition}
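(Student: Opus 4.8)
The plan is to deduce part (ii) from part (i) by passing from the de Rham--Witt complex to crystalline cohomology. Recall the fundamental theorem of crystalline cohomology, which for a smooth $k$-algebra $R$ gives a natural quasi-isomorphism $R\Gamma_\sub{crys}(R/W(k))\quis W\Omega_{R/W(k)}^\blob$ (Bloch--Deligne--Illusie). Under this identification, the Frobenius on crystalline cohomology corresponds to the Frobenius $\phi$ on the de Rham--Witt complex. So the task reduces to producing a Frobenius semi-linear quasi-isomorphism $\Phi\colon W\Omega_{R/W(k)}^\blob\quis L\eta_p W\Omega_{R/W(k)}^\blob$, which is exactly what part (i) provides: the complex $W\Omega_{R/W(k)}^\blob$ is a complex of $p$-torsion-free $W(k)$-modules (each $W\Omega_{R/k}^i$ is $p$-torsion-free, by Illusie), so $L\eta_p$ applied to it is computed directly by the honest subcomplex $\eta_p W\Omega_{R/W(k)}^\blob$ of Definition \ref{definition_decalage}, with no need for a further resolution. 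Part (i) identifies this subcomplex with the image of the injective map $\phi$, so $\phi$ itself, viewed as a map onto its image, is the desired isomorphism $\Phi$.

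The key steps, in order, would be: (1) invoke the Bloch--Deligne--Illusie comparison $R\Gamma_\sub{crys}(R/W(k))\simeq W\Omega_{R/W(k)}^\blob$ in the derived category, compatibly with Frobenius; (2) observe that $W\Omega_{R/W(k)}^\blob$ is termwise $p$-torsion-free, so that $L\eta_p$ of it is represented by $\eta_p W\Omega_{R/W(k)}^\blob$; (3) quote part (i) to get the Frobenius semi-linear isomorphism $\Phi\colon W\Omega_{R/W(k)}^\blob\isoto \eta_p W\Omega_{R/W(k)}^\blob$ of complexes; (4) compose the comparison of (1), the isomorphism of (3), and the $L\eta_p$ of the comparison of (1) to obtain $R\Gamma_\sub{crys}(R/W(k))\quis L\eta_p R\Gamma_\sub{crys}(R/W(k))$. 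One should check that the resulting map is independent of choices and natural in $R$, which follows from the naturality of the Bloch--Deligne--Illusie comparison and of $\phi$ on the de Rham--Witt complex, together with the well-definedness of $L\eta_p$ on the derived category established just before this proposition.

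I would expect the only genuinely substantive input to be part (i) itself, which is Illusie's theorem and is being assumed here; granting that, part (ii) is essentially a formal consequence. The one point deserving a line of care is the compatibility of the Frobenius under the crystalline--de Rham--Witt comparison and the fact that applying $L\eta_p$ to a quasi-isomorphism of (not necessarily termwise torsion-free) complexes is legitimate --- but this is exactly the content of the corollary to Lemma \ref{lemma_calculation_of_cohomol} together with the construction of $L\eta_f$ on $D(A)$ recalled above, so there is no real obstacle. In short, the main ``obstacle'' is bookkeeping: tracking the Frobenius through the chain of identifications and confirming that $\eta_p$ computes $L\eta_p$ on the specific complex $W\Omega_{R/W(k)}^\blob$, which it does precisely because that complex is termwise $p$-torsion-free.
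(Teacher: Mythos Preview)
Your deduction of (ii) from (i) is correct and essentially matches what the paper says: it records ``Obviously (i)$\Rightarrow$(ii)'' in one clause and moves on. The detailed bookkeeping you outline (Bloch--Deligne--Illusie comparison, termwise $p$-torsion-freeness so that $L\eta_p=\eta_p$, Frobenius compatibility) is all fine.

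The gap is that you have only addressed the easy half. The substantive content of the paper's proof is a proof of (i) itself, which you treat as a black box. The paper deduces (i) from three standard de Rham--Witt identities: first, $\phi=p^iF$ on $W\Omega_{R/k}^i$ and $d\phi=\phi d$, which forces $\phi(W\Omega^\bullet)\subseteq\eta_pW\Omega^\bullet$; second, $FV=VF=p$, which gives injectivity of $\phi$ (since $p$ is a non-zero-divisor on $W\Omega^\bullet$); third, Illusie's identity $d^{-1}(pW\Omega_{R/k}^{i+1})=F(W\Omega_{R/k}^i)$ \cite[Eqn.~I.3.21.1.5]{Illusie1979}, which pins down the image of $\phi$ as exactly $\eta_pW\Omega^\bullet$. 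None of this appears in your proposal. If you are content to cite (i) as Illusie's theorem, then your argument for (ii) is complete; but you should be aware that the paper regards (i)$\Rightarrow$(ii) as trivial and spends its effort on (i).
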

\begin{proof}
Obviously (i)$\Rightarrow$(ii), but (ii) was proved earlier and is the historical origin of $L\eta$: see \cite[Thm.~8.20]{BerthelotOgus1978} (with the zero gauge). Berthelot--Ogus applied it to study the relation between the Newton and Hodge polygons associated to a proper, smooth variety over $k$.

(i) is a trivial consequence of the following standard de Rham--Witt identities:
\begin{itemize}\itemsep0pt
\item $\phi$ has image in $\eta_pW\Omega_{R/k}^\blob$ since $\phi=p^iF$ on $W\Omega_{R/k}^i$ and $d\phi=\phi d$.
\item $\phi$ is injective since $FV=VF=p$.
\item the image of $\phi$ is exactly $\eta_pW\Omega_{R/k}^\blob$ since $d^{-1}(pW\Omega_{R/k}^{i+1})=F(W\Omega_{R/k}^i)$ \cite[Eqn.~I.3.21.1.5]{Illusie1979}.\qedhere
\end{itemize}
\end{proof}


\subsection{``Example 2'': An integral form of Faltings' almost purity theorem}\label{subsection_faltings_purity}
We now present an integral form of (the main consequence of) Faltings' almost purity theorem; we do not need this precise result, but we will make use of Lemma \ref{lemma_technical_0} and the ``goodness'' of the group cohomology established in the course of the proof of Theorem \ref{theorem_integral_purity}. Moreover, readers familiar with Faltings' approach to $p$-adic Hodge theory may find this result quite motivating. To recall Faltings' almost purity theorem we consider the following situation:
\begin{itemize}\itemsep0pt
\item $\bb C$ is a complete, non-archimedean, algebraically closed field of mixed characteristic; ring of integers~$\roi$.
\item $R$ is a $p$-adically complete, formally smooth $\roi$-algebra, which we further assume is connected and {\em small}, i.e., formally \'etale over $\roi\pid{\ul T^{\pm1}}:=\roi\pid{T_1^{\pm1},\dots,T_d^{\pm1}}$. As usual in Faltings' theory, we associate to this the following two rings:
\item $R_\infty:=R\hat\otimes_{\roi\pid{\ul T^{\pm1}}}\roi\pid{\ul T^{\pm1/p^\infty}}$ -- this is acted on by $\Gamma:=\bb Z_p(1)^d$ via $R$-algebra automorphisms in the usual way: given $\gamma\in\Gamma=\Hom_{\bb Z_p}((\bb Q_p/\bb Z_p)^d,\mu_{p^\infty})$ and $k_1,\dots,k_d\in\bb Z[\tfrac1p]$, the action is $\gamma\cdot T_1^{k_1}\cdots T_d^{k_d}:=\gamma(k_1,\dots,k_d)T_1^{k_1}\cdots T_d^{k_d}$.
\item $\res R:=$ the $p$-adic completion of the normalisation of $R$ in the maximal (ind)\'etale extension of $R[\tfrac1p]$ -- this is acted on by $\Delta:=\Gal(R[\tfrac1p])$ via $R$-algebra automorphisms, and its restriction to $R_\infty$ gives the $\Gamma$-action there.
\end{itemize}

Faltings' almost purity theorem states $\res R$ is an ``almost \'etale'' $R_\infty$-algebra, and the main consequence of this is that the resulting map on continuous group cohomology \[R\Gamma_\sub{cont}(\Gamma,R_\infty)\To R\Gamma_\sub{cont}(\Delta,\res R)\] is an almost quasi-isomorphism (i.e., all cohomology groups of the cone are killed by the maximal ideal $\frak m\subset\roi$). This is his key to calculating \'etale cohomology in terms of de Rham cohomology; indeed, $R\Gamma_\sub{cont}(\Delta,\res R)$ is a priori hard to calculate and encodes Galois/\'etale cohomology, while $R\Gamma_\sub{cont}(\Gamma,R_\infty)$ is easy to calculate using Koszul complexes (as we will see in the proof of Theorem \ref{theorem_integral_purity}) and differential forms.

The following is our integral form of this result, in which we apply $L\eta$ with respect to any element $f\in\frak m\subset\roi$:

\begin{theorem}\label{theorem_integral_purity}
Under the above set-up, the induced map \[L\eta_{f}R\Gamma_\sub{cont}(\Gamma,R_\infty)\To L\eta_{f}R\Gamma_\sub{cont}(\Delta,\res R)\] is a quasi-isomorphism (not just an almost quasi-isomorphism!) for any non-zero $f\in\frak m$.
\end{theorem}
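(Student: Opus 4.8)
The plan is to exploit the key structural feature that distinguishes $L\eta_f$ from the identity functor: by Lemma \ref{lemma_calculation_of_cohomol}, applying $L\eta_f$ to a complex replaces each cohomology group $H^i$ by $H^i/H^i[f]$, which is $f$-torsion-free. So the strategy is to show that the map $R\Gamma_\sub{cont}(\Gamma,R_\infty)\to R\Gamma_\sub{cont}(\Delta,\res R)$, while only an almost quasi-isomorphism, becomes a genuine quasi-isomorphism after we quotient out the $f$-torsion on cohomology — equivalently, that the cone of the original map, which is killed by $\frak m$ and in particular by $f$, does no harm once $L\eta_f$ is applied. To make this precise I would first record a general lemma, valid for any ring $A$ and non-zerodivisor $f$: if $g\colon C\to D$ is a map of complexes whose cone has all cohomology killed by some power of $f$ — or more robustly, by every element of an ideal containing a power of $f$, to capture the almost mathematics — then one can analyze the long exact sequence relating $H^*(C)$, $H^*(D)$, and $H^*(\mathrm{cone})$, and deduce that on the quotients $H^i(C)/H^i(C)[f]\to H^i(D)/H^i(D)[f]$ the induced map is an isomorphism. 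The point is that an $f$-divisible-free source and an $f$-torsion boundary term cannot interfere: any class in $H^i(D)$ differs from one coming from $H^i(C)$ by a class killed by $f$, and modulo $H^i(D)[f]$ that discrepancy vanishes; injectivity is dual.

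Concretely I would build $f$-torsion-free representatives of both complexes, using the Koszul-complex description of $R\Gamma_\sub{cont}(\Gamma,R_\infty)$ (since $\Gamma\cong\bb Z_p^d$, continuous cohomology is computed by a Koszul complex on $R_\infty$, which is $\roi$-flat hence $f$-torsion-free) and an analogous flat model for $R\Gamma_\sub{cont}(\Delta,\res R)$. Then $L\eta_f$ is computed on the nose by Definition \ref{definition_decalage}, and the content of the theorem is exactly the statement that the map of honest complexes $\eta_f C\to\eta_f D$ induces isomorphisms on cohomology. By Lemma \ref{lemma_calculation_of_cohomol} this cohomology is $H^i(C)/H^i(C)[f]$ resp. $H^i(D)/H^i(D)[f]$, so we are reduced to the cohomological statement of the previous paragraph. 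The almost purity input enters in precisely one place: it tells us the cone of $C\to D$ is killed by $\frak m$, and since $f\in\frak m$ it is in particular killed by $f$, which is all the general lemma needs.

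The main obstacle I anticipate is the general lemma itself, and more precisely the subtlety flagged by the footnote warning that $L\eta_f$ does \emph{not} preserve distinguished triangles: one cannot simply apply $L\eta_f$ to the triangle $C\to D\to \mathrm{cone}$ and read off the answer, because $L\eta_f(\mathrm{cone})$ need not vanish even though $\mathrm{cone}$ is killed by $f$ (compare $L\eta_p(\bb Z/p^2)=\bb Z/p$). So the argument must be made by hand at the level of cohomology groups via the long exact sequence of the original (un-decalaged) triangle, carefully tracking how the connecting maps interact with the $f$-torsion submodules and the $f$-divisible quotients. One must check that every element of $H^i(\mathrm{cone})$, being $f$-torsion, contributes nothing to the comparison of the $f$-torsion-free quotients $H^i(C)/H^i(C)[f]$ and $H^i(D)/H^i(D)[f]$ — surjectivity of $H^{i}(C)\to H^i(D)$ modulo $f$-torsion follows because the cokernel injects into $H^i(\mathrm{cone})[\text{something}]$ and is itself $f$-torsion, while the kernel of $H^i(C)\to H^i(D)$ is a quotient of $H^{i-1}(\mathrm{cone})$ hence $f$-torsion, so it dies in $H^i(C)/H^i(C)[f]$. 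Once this bookkeeping lemma is in place the theorem is immediate; packaging it cleanly (ideally in a form reusable for the later global statements, where the same mechanism — $L\eta$ upgrading almost quasi-isomorphisms to quasi-isomorphisms — will recur) is where I would spend the effort.
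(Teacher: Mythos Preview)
Your general lemma --- that an almost quasi-isomorphism becomes a genuine one after $L\eta_f$ with no further hypothesis --- is false, and this is exactly the subtle point. Take $A=\bb Z$, $f=p$, and the map $\bb Z\xrightarrow{\times p}\bb Z$ of complexes concentrated in degree $0$: the cone is $\bb Z/p\bb Z$, killed by $p$, yet $L\eta_p$ of this map is still multiplication by $p$ on $\bb Z$, not an isomorphism. Your surjectivity argument breaks here: knowing that $\operatorname{coker}(H^i(C)\to H^i(D))$ is $f$-torsion gives, for each $d\in H^i(D)$, some $c$ with $fd=g(c)$; but to produce $c'$ with $d-g(c')\in H^i(D)[f]$ you would need $c\in fH^i(C)+\ker g$, which is not automatic. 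You correctly anticipated that the failure of $L\eta_f$ to preserve triangles is the obstacle, but the long-exact-sequence bookkeeping you propose does not circumvent it without an extra hypothesis on the source.

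The paper's Lemma~\ref{lemma_technical_0} supplies the missing hypothesis: the cohomology groups $H^i(C)$ of the \emph{domain} must be ``good'', meaning that both $H^i(C)$ and $H^i(C)/fH^i(C)$ contain no nonzero element killed by $\frak m$. (In the counterexample $\bb Z/p\bb Z$ is entirely killed by $p\bb Z$, so goodness fails; the condition on $H^i(C)/fH^i(C)$ is precisely what forces $fH^i(C)=fH^i(D)$ and hence surjectivity.) Verifying goodness for $H^i_\sub{cont}(\Gamma,R_\infty)$ is where the actual content lies and is not formal: the paper decomposes $R_\infty=R\oplus R_\infty^\sub{non-int}$ along the framing, computes each $H^i_\sub{cont}$ via Koszul complexes as a (completed) direct sum of copies of $R$ and of $R/(\zeta_{p^r}-1)R$, and then checks by a direct valuation argument that these modules, and their quotients by $f$, contain no nonzero almost-zero elements. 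So the explicit Koszul calculation you hoped to use only to build an $f$-torsion-free model is in fact the heart of the proof.
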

\comment{
\begin{remark}\label{remark_integral_purity_Witt_promise}
Once we have developed some of the theory of perfectoid rings, their Witt rings, etc, we will prove in Theorem \ref{theorem_integral_purity_W_r} the more general statement that \[L\eta_{[\zeta_{p^r}]-1}R\Gamma_\sub{cont}(\Gamma,W_r(R_\infty))\To L\eta_{[\zeta_{p^r}]-1}R\Gamma_\sub{cont}(\Delta,W_r(R_\sim))\] is a quasi-isomorphism for each $r\ge1$. As we will explain in Remark \ref{remark_integral_purity_W_r}, it is possible to recover Faltings' almost purity theorem from this family of quasi-isomorphisms; thus we have an integral precise strengthening of almost purity.
\end{remark}
}
\begin{remark}
\begin{enumerate}
\item The proof of Theorem \ref{theorem_integral_purity} requires knowing nothing new about $R\Gamma_\sub{cont}(\Delta,\res R)$: a key remarkable property of $L\eta$ is that it can transform almost quasi-isomorphisms into actual quasi-isomorphisms, having only imposed hypotheses on the domain, not the codomain, of the morphism; this will be explained in the next lemma.
\item The theorem implies that the kernel and cokernel of $H^i_\sub{cont}(\Gamma,R_\infty)\to H^i_\sub{cont}(\Delta,\res R)$ are killed by $f$; since $f$ is any element of $\frak m$, the kernel and cokernel are killed by $\frak m$. Thus Theorem \ref{theorem_integral_purity} is a family of on-the-nose integral results which recovers Faltings' almost quasi-isomorphism $R\Gamma_\sub{cont}(\Gamma,R_\infty)\to R\Gamma_\sub{cont}(\Delta,\res R)$.
\end{enumerate}
\end{remark}

\begin{lemma}\label{lemma_technical_0}
Let $\frak M\subseteq A$ be an ideal of a ring and $f\in \frak M$ a non-zero-divisor. Say that an $A$-module $M$ is ``good'' if and only if both $M$ and $M/fM$ contain no non-zero elements killed by $\frak M$. Then the following statements hold:
\begin{enumerate}\itemsep0pt
\item If $M\to N$ is a homomorphism of $A$-modules with kernel and cokernel killed by $\frak M$, and $M$ is good, then $M/M[f]\to N/N[f]$ is an isomorphism.
\item If $C\to D$ is a morphism of complexes of $A$-modules whose cone is killed by $\frak M$, and all cohomology groups of $C$ are good, then $L\eta_fC\to L\eta_fD$ is a quasi-isomorphism.
\end{enumerate}
\end{lemma}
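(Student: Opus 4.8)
The statement to prove is Lemma~\ref{lemma_technical_0}, with two parts about ``good'' modules and the décalage functor.

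\textbf{Proof proposal.}

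The plan is to prove (i) directly by a diagram chase with the four-term exact sequence coming from $M \to N$, and then to bootstrap (ii) from (i) by applying Lemma~\ref{lemma_calculation_of_cohomol}, which computes $H^i(L\eta_f C) = H^i(C)/H^i(C)[f]$ and similarly for $D$.

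For part (i): write $K = \ker(M\to N)$ and $Q = \operatorname{coker}(M\to N)$, both killed by $\frak M$, and factor the map as $M \onto M/K \into N \onto Q$. Since $M$ is good, $M$ has no nonzero $\frak M$-torsion, so $K \subseteq M[\frak M] = 0$ and the map $M \to N$ is \emph{injective}; thus we only have to deal with the cokernel $Q$. So we may assume $M \subseteq N$ with $N/M$ killed by $\frak M$. I want to show $M/M[f] \to N/N[f]$ is an isomorphism. Injectivity: if $m \in M$ maps into $N[f]$, then $fm \in M \cap (N$-elements killed by $f) $, i.e. $fm = 0$ in $N$ hence in $M$, so $m \in M[f]$ — giving injectivity. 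Surjectivity: given $n \in N$, we have $fn \in M$ (as $fN \subseteq M$ since $f \in \frak M$); I claim $fn \in M[f] + fM$... more carefully, consider the element $fn \in M$. Then $f\cdot(fn) = f^2 n$, and I want $n$ to be congruent mod $N[f]$ to an element of $M$. The clean way: the snake/kernel-cokernel lemma applied to multiplication by $f$ on the short exact sequence $0 \to M \to N \to N/M \to 0$ yields an exact sequence
\[
0 \to M[f] \to N[f] \to (N/M)[f] \to M/fM \to N/fN \to (N/M)/f(N/M) \to 0.
\]
Now $(N/M)[f]$ is killed by $\frak M$ and is a submodule of $N/fN$'s... no — it is a submodule of $M/fM$ via the connecting map. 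But $M$ is good, so $M/fM$ has no nonzero $\frak M$-torsion, hence the image of $(N/M)[f] \to M/fM$ is zero. Therefore the six-term sequence breaks up: $N[f] \to (N/M)[f]$ is surjective with $(N/M)[f] \hookrightarrow M/fM$ zero, so actually $M[f] \to N[f]$ is surjective, i.e. $M[f] \to N[f]$ is an isomorphism; and $M/fM \to N/fN$ is injective. Combining $M[f] \cong N[f]$ with the injection $M/fM \hookrightarrow N/fN$ and the original inclusion $M \subseteq N$, a short diagram chase on $0 \to M[f] \to M \to M/M[f] \to 0$ versus $0 \to N[f] \to N \to N/N[f] \to 0$ gives that $M/M[f] \to N/N[f]$ is an isomorphism. (The point is that $M/M[f]$ and $N/N[f]$ are both $f$-torsion-free, $M/M[f] \hookrightarrow N/N[f]$, and the two quotients agree after inverting $f$ up to the cokernel $Q[1/f]=0$; one checks the cokernel of $M/M[f]\to N/N[f]$ is both killed by $\frak M$ and $f$-torsion-free inside $N/N[f]$... hmm, this needs the goodness of a quotient.) I expect the cleanest route is genuinely the six-term exact sequence above: from it, goodness of $M$ kills the connecting map, yielding $M[f]\isoto N[f]$ and $M/fM \hookrightarrow N/fN$, and then the isomorphism $M/M[f]\isoto N/N[f]$ follows formally.

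For part (ii): let $E$ be the cone of $C \to D$, so $E$ is killed by $\frak M$, hence every $H^i(E)$ is killed by $\frak M$. The long exact sequence of the triangle $C \to D \to E \to C[1]$ shows that $\ker(H^i(C) \to H^i(D))$ and $\operatorname{coker}(H^i(C)\to H^i(D))$ are killed by $\frak M$ — they are subquotients of $H^{i}(E)$ and $H^{i-1}(E)$ respectively (more precisely the cokernel injects into $H^i(E)$ and the kernel is a quotient of $H^{i-1}(E)$). Now Lemma~\ref{lemma_calculation_of_cohomol} gives $H^i(L\eta_f C) \cong H^i(C)/H^i(C)[f]$ and $H^i(L\eta_f D) \cong H^i(D)/H^i(D)[f]$, compatibly with the map induced by $C\to D$. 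Since $H^i(C)$ is good by hypothesis, part (i) applied to $H^i(C) \to H^i(D)$ shows this map $H^i(C)/H^i(C)[f] \to H^i(D)/H^i(D)[f]$ is an isomorphism for every $i$. Hence $L\eta_f C \to L\eta_f D$ induces isomorphisms on all cohomology, i.e. it is a quasi-isomorphism. One small check: to invoke Lemma~\ref{lemma_calculation_of_cohomol} and the naturality of the isomorphism therein, one should model $C$ and $D$ by $f$-torsion-free complexes and use that the comparison $H^i(C^\bullet)/H^i(C^\bullet)[f] \isoto H^i(\eta_f C^\bullet)$ is natural in the complex.

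\textbf{Main obstacle.} The only real subtlety is part (i): verifying that goodness of $M$ alone (with no hypothesis on $N$) forces $M/M[f] \to N/N[f]$ to be an isomorphism. The temptation is to symmetrize in $M$ and $N$, but that is not available; the resolution is to extract the six-term kernel--cokernel exact sequence for multiplication by $f$ on $0 \to M \to N \to N/M \to 0$ and observe that the connecting homomorphism $(N/M)[f] \to M/fM$ lands in the $\frak M$-torsion of $M/fM$, which vanishes precisely by the second half of the definition of ``good''. Everything else is bookkeeping with long exact sequences and Lemma~\ref{lemma_calculation_of_cohomol}.
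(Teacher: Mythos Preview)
Your reduction of (ii) to (i) via Lemma~\ref{lemma_calculation_of_cohomol} is correct and matches the paper exactly. Your overall strategy for (i) --- the snake lemma for multiplication by $f$ on $0\to M\to N\to N/M\to 0$, then killing the connecting map using goodness of $M/fM$ --- is sound and morally the same as the paper's direct argument.

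However, there is a genuine slip in the execution. From the vanishing of the connecting map $(N/M)[f]\to M/fM$ you correctly deduce that $N[f]\to (N/M)[f]$ is surjective and $M/fM\to N/fN$ is injective. But you then write ``so actually $M[f]\to N[f]$ is surjective, i.e.\ $M[f]\to N[f]$ is an isomorphism'', and this is false: what the exact sequence gives is $N[f]/M[f]\cong (N/M)[f]$, and since $f\in\frak M$ kills $N/M$ we have $(N/M)[f]=N/M$, which is typically nonzero. The correct way to finish is to use precisely this: the surjection $N[f]\twoheadrightarrow N/M$ says that every $n\in N$ differs from an element of $M$ by something in $N[f]$, which is exactly surjectivity of $M/M[f]\to N/N[f]$. (Your parenthetical hedging already betrays some unease; the ``cokernel is $f$-torsion-free'' line of attack does not work as stated, since quotients of $f$-torsion-free modules need not be $f$-torsion-free.)

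For comparison, the paper bypasses the snake lemma entirely: from $\frak M N\subseteq M$ one gets $\frak M fN\subseteq fM\subseteq fN\subseteq M$, so $fN/fM$ is an $\frak M$-torsion submodule of $M/fM$, hence zero by goodness; thus $fN=fM$, and for any $n\in N$ one finds $m\in M$ with $fn=fm$, i.e.\ $n\equiv m\pmod{N[f]}$. This is the same idea (the inclusion $fN/fM\subseteq M/fM$ is exactly the image of your connecting map), just argued by hand in two lines rather than through the six-term sequence.
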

\begin{proof}
Clearly (ii) is a consequence of (i) and Lemma \ref{lemma_calculation_of_cohomol}. So we must prove (i).

Since the kernel of $M$ is killed by $\frak M$, but $M$ contains no non-zero elements killed by $\frak M$, we see that $M\to N$ is injective, and we will henceforth identify $M$ with a submodule of $N$. Then $M[f]=M\cap N[f]$ and so $M/M[f]\to N/N[f]$ is also injective.

Since the quotient $N/M$ is killed by $\frak M$, there is a chain of inclusions $\frak M fN\subseteq fM\subseteq fN\subseteq M$. But $M/fM$ contains no non-zero elements killed by $\frak M$, so $fM=fN$, and this completes the proof: any $n\in N$ satisfies $fn=fm$ for some $m\in M$, whence $n\equiv m$ mod $N[f]$.
\end{proof}

\lb{
\begin{corollary}\label{corollary_technical_0}
Let $\frak M\subseteq A$ be an ideal of a ring, $C\to D$ a morphism of complexes of $A$-modules whose cone is killed by $\frak M$, and make the following assumptions:
\begin{enumerate}\itemsep0pt
\item $A$ is a coherent ring which is $p$-adically complete and satisfies the Artin--Rees condition with respect to $p$\footnote{This means that if $N\subseteq M$ is an inclusion of finitely presented $A$-modules, then for any $r\ge 1$ there exists $m\ge r$ satisfying $N\cap p^mM \subseteq p^rN$. It follows that $p$-adic completion is an exact functor on the category of finitely presented $A$-modules (assuming $A$ is also coherent).};
\item the only finitely generated ideal of $A$ containing $\frak M$ is $A$ itself;
\item $C=\hat\bigoplus_{\lambda}C_\lambda$ is the derived $p$-adic completion of a family of complexes $(C_\lambda)_\lambda$ satisfying the following for each $i\in\bb Z$: the $A$-module $H^i(C_\lambda)$ is of finite presentation, and there is a bound on its $p$-power-torsion which is independent of $\lambda$.
\end{enumerate}
Then, for any non-zero-divisor $f\in\frak M$ dividing a power of $p$, the induced map $L\eta_fC\to L\eta_fD$ is a quasi-isomorphism.
\end{corollary}
\begin{proof}
We first show that conditions (i) and (ii) imply the following about any finitely presented $A$-module $M$: firstly, $M$ is $p$-adically complete, and secondly $M$ contains no non-zero elements killed by $\frak M$. By picking a presentation $A^n\to A^m\to M\to 0$, it is a straightforward consequence of (i) and the exactness of the footnote that $M\isoto\hat M$, i.e., $M$ is $p$-adically complete. Secondly, if $m\in M$ is killed $\frak M$, then $\Ann_A(m)$ is a finitely generated (Proof: it is the kernel of the map $A\to M$, $a\mapsto am$, of finitely presented $A$-modules) ideal of $A$ containing $\frak M$, hence is $A$ itself by (ii), and this implies $m=0$.

Now let $C$ be a complex of $A$-modules as in (iii), and write $C_\sub{disc}:=\bigoplus_\lambda C_\lambda$, so that $C=\hat{C_\sub{disc}}$ (derived $p$-adic completion); we claim that $H^i(C)=\hat\bigoplus_\lambda H^i(C_\lambda)$, where this hat is the usual $p$-adic completion of the sum of cohomology groups. But this is an immediate consequence of the two usual short exact sequences associated to a derived $p$-adic completion,
\[\xymatrix{
&0\ar[d]&&&\\
&\projlim_r H^i(C_\sub{disc})[p^r]\ar[d]&&&\\
0\ar[r] & \Ext^1_{\bb Z_p}(\bb Q_p/\bb Z_p,H^i(C_\sub{disc}))\ar[r]\ar[d] & H^i(C)\ar[r] & \Hom_{\bb Z_p}(\bb Q_p/\bb Z_p, H^{i+1}(C_\sub{disc}))\ar[r] & 0\\
&\hat{H^i(C_\sub{disc})}=\hat\bigoplus_\lambda H^i(C_\lambda)\ar[d]&&&\\
&0&&&
}\]
since we have assumed that $\bigoplus_\lambda H^i(C_\lambda)$ has bounded $p$-power-torsion (whence the right and top terms vanish).

Continue to let $C$ be as in (ii); by applying Lemma \ref{lemma_technical_0}(ii), the corollary will be proved as soon as we show that $\hat\bigoplus_\lambda H^i(C_\lambda)$ and $\big(\hat\bigoplus_\lambda H^i(C_\lambda)\big)/f$ contain no non-zero elements killed by $\frak M$. But $\hat\bigoplus_\lambda H^i(C_\lambda)\subseteq\prod_\lambda H^i(C_\lambda)$ (recall each $H^i(C_\lambda)$ is $p$-adically complete, which includes the hypothesis of separated, by the first paragraph), which contains no non-zero elements killed by $\frak M$ (also by the first paragraph). Next, since $f$ divides a power of $p$, we have $\big(\hat\bigoplus_\lambda H^i(C_\lambda)\big)/f=\bigoplus_\lambda H^i(C_\lambda)/f$; since each $H^i(C_\lambda)/f$ is a finitely presented $A$-module, it follows once more from the first paragraph that it contains no non-zero elements killed by $\frak M$.
\end{proof}

\begin{example}
Let $\roi$ be a $p$-adically complete, non-discrete, rank $1$ valuation ring with maximal ideal $\frak m$. Then the hypotheses of Corollary \ref{corollary_technical_0} are satisfied for the pair $\frak m\subseteq A$ and for any non-zero element $f\in\frak m$. Indeed, it is well-known that any valuation ring is coherent (since any finitely generated ideal is principal), and the non-discrete hypothesis implies $\frak m$ is not finitely generated; finally, a proof of the Artin--Rees condition at $p$ can be found in \cite{???}
\end{example}
}
\comment{
\begin{lemma}\label{lemma_technical_1}
Let $\frak M\subseteq A$ be an ideal of a ring and $f\in \frak M$ a non-zero-divisor. Say that an $A$-module has a ``good decomposition'' if it can be written as a direct sum $M_1\oplus M_2$ where:
\begin{itemize}\itemsep0pt
\item $M_1$ is $f$-torsion-free;
\item $M_2$ is killed by $f$;
\item $M_1/fM_1$ and $M_2$ contain no non-zero elements killed by $\frak M$.
\end{itemize}
Then the following statements hold:
\begin{enumerate}\itemsep0pt
\item If $\al:M\to N$ is a homomorphism of $A$-modules with kernel and cokernel killed by $\frak M$, and $M$ has a good decomposition, then $M/M[f]\to N/N[f]$ is an isomorphism.
\item If $C\to D$ is a morphism of complexes of $A$-modules whose cone is killed by $\frak M$, and all cohomology groups of $C$ have a good decomposition, then $L\eta_fC\to L\eta_fD$ is a quasi-isomorphism.
\end{enumerate}
\end{lemma}
\begin{proof}
Clearly (ii) is a consequence of (i) and Lemma \ref{lemma_calculation_of_cohomol}. So we must prove (i), which is just (slightly tricky) algebra. Consider the commutative diagram in which all maps are the obvious ones:
\[\xymatrix{
M_1\ar@{^(->}[d]\ar@{^(->}[dr]\ar@/^2cm/@{^(->}[ddr]\ar@/_1cm/[dd]_\cong & \\
M\ar[d]\ar@{^(->}[r]_\al & N\ar[d]\\
M/M[f]\ar[r]_\beta & N/N[f]
}\]
The left bendy arrow is an isomorphism since $M[f]=M_2$. The diagonal arrow and $\al$ are injective since their kernels are killed by $\frak M$, but $M$ and $M_1$ contain no non-zero elements killed by $\frak M$. The right bendy arrow is injective since its kernel is killed by $f\frak M$, in particular by $f^2$, but $M_1$ is $f$-torsion-free.

It remains to prove that the right bendy arrow is surjective. We will think of $\al$ as an inclusion of modules and omit it from the notation. Then $\frak MN\subseteq M\subseteq N$, so that $f\frak MN\subseteq fM_1\subseteq fN$ (using that $M_2$ is killed by $f$). The key is to note that also $fN\subseteq M_1$: indeed, if $y\in N$ then $fy=x_1+x_2$, with $x_i\in M_i$, and we have just shown that $\frak Mfy\subseteq M_1$, meaning that $\frak Mx_2=0$ whence $x_2=0$ by the hypotheses on $M_2$. In conclusion $f\frak MN\subseteq fM_1\subseteq fN\subseteq M_1$; since $M_1/fM_1$ contains no non-zero elements killed by $\frak M$, this forces $fN=fM_1$. But this proves the desired surjectivity: if $y\in N$ then there is $x\in M_1$ satisfying $fx=fy$, and so $y\equiv x$ mod $N[f]$.
%
%
\end{proof}
}

\begin{proof}[Proof of Theorem \ref{theorem_integral_purity}]
To prove Theorem \ref{theorem_integral_purity} we use Faltings' almost purity theorem and Lemma \ref{lemma_technical_0} (in the context $A=\roi$, $f\in\frak M=\frak m$): so it is enough to show that $H^i_\sub{cont}(\Gamma,R_\infty)$ is ``good'' for all $i\ge0$. This is a standard type of explicit calculation of $H^i_\sub{cont}(\Gamma,R_\infty)$ in terms of Koszul complexes. For the sake of the reader unfamiliar with this type of calculation, the special case that $R=\roi\pid{T^{\pm1}}$ is presented in a footnote;\footnote{\label{footnote_special_case_torus}
In this footnote we carry out the calculation of the proof of Theorem \ref{theorem_integral_purity} when $R=\roi\pid{T^{\pm1}}$, in which case $R_\infty=\roi\pid{T^{\pm1/p^\infty}}$. To reiterate, we must show that $H^i_\sub{cont}(\Gamma,R_\infty)$ is good for all $i\ge0$.

First note that $R_\infty$ admits a $\Gamma$-equivariant decomposition into $\roi$-submodules \[R_\infty=\hat\bigoplus_{k\in\bb Z[\tfrac1p]}\roi T^k\] (where the hat denotes $p$-adic completion of the sum), with the generator $\gamma\in\Gamma$ acting on the rank-one free $\roi$-module $\roi T^k$ as multiplication by $\zeta^k$. Thus $R\Gamma_\sub{cont}(\bb Z_p,\roi T^k)\simeq[\roi\xto{\zeta^k-1}\roi]$ (since the group cohomology of an infinite cyclic group with generator $\gamma$ is computed by the invariants and coinvariants of $\gamma$, and similarly in the case of continuous group cohomology), and so \[R\Gamma\sub{cont}(\bb Z_p,R_\infty)\simeq\hat\bigoplus_{k\in\bb Z[\tfrac1p]}[\roi\xto{\zeta^k-1}\roi]\] (where the hat now denotes the derived $p$-adic completion of the sum of complexes), which has cohomology groups \[H^0_\sub{cont}(\bb Z_p,R_\infty)\cong\hat\bigoplus_{k\in\bb Z}\roi\oplus 0,\qquad H^1_\sub{cont}(\bb Z_p,R_\infty)\cong\hat\bigoplus_{k\in\bb Z}\roi\oplus\bigoplus_{k\in\bb Z[\tfrac1p]\setminus\bb Z}\roi/(\zeta^k-1)\roi\] (once some care is taken regarding the $p$-adic completions: see footnote \ref{footnote_completing_sums}).

We claim that both cohomology groups are good. Since $\roi$ has no non-zero elements killed by $\frak m$, it remains only to prove that the same is true of $\roi/a\roi$, where $a=f$ or $\zeta^k-1$ for some $k\in\bb Z[\tfrac1p]\setminus\bb Z$. But this is an easy argument with valuations: if $x\in\roi$ is almost a multiple of $a$, then $\nu_p(x)+\ep\ge\nu_p(a)$ for all $\ep>0$, whence $\nu_p(x)\ge\nu_p(a)$ and so $x$ is actually a multiple of $a$.
}
here in the main text we will prove the general case. Both there and here we pick a compatible sequence $\zeta_p,\zeta_{p^2},\dots\in\roi$ of $p$-power roots of unity to get a generator $\gamma\in\bb Z_p(1)$ and an identification $\Gamma\cong\bb Z_p^d$; as a convenient abuse of notation, we write $\zeta^k:=\zeta_{p^j}^a$ when $k=a/p^j\in\bb Z[\tfrac 1p]$.

First note that $\roi\pid{\ul T^{\pm1/p^\infty}}$ admits a $\Gamma$-equivariant decomposition into $\roi\pid{\ul T^{\pm1}}$-modules: \[\roi\pid{\ul T^{\pm1/p^\infty}}=\roi\pid{\ul T^{\pm1}}\oplus \roi\pid{\ul T^{\pm1}}^\sub{non-int},\] where \[\roi\pid{\ul T^{\pm1}}^\sub{non-int}:=\hat{\bigoplus_{\substack{k_1,\dots,k_d\in \bb Z[\tfrac 1p]\cap[0,1)\\\sub{not all zero}}}}\roi\pid{\ul T^{\pm1}}T_1^{k_1}\cdots T_d^{k_d}\] (where the hat denotes $p$-adic completion of the sum), with the generators $\gamma_1,\dots,\gamma_d\in\Gamma$ acting on the rank-one free $\roi$-module $\roi T_1^{k_1}\cdots T_d^{k_d}$ respectively as multiplication by $\zeta^{k_1},\dots,\zeta^{k_d}$.

Base changing to $R$ we obtain a similar $\Gamma$-equivariant decomposition of $R_\infty$ into $R$-modules \[R_\infty=R\oplus R_\infty^\sub{non-int},\qquad R_\infty^\sub{non-int}:=\hat{\bigoplus_{\substack{k_1,\dots,k_d\in \bb Z[\tfrac 1p]\cap[0,1)\\\sub{not all zero}}}}RT_1^{k_1}\cdots T_d^{k_d},\] and so $R\Gamma_\sub{cont}(\bb Z_p^d,R_\infty)\simeq R\Gamma_\sub{cont}(\bb Z_p^d,R)\oplus R\Gamma_\sub{cont}(\bb Z_p^d,R_\infty^\sub{non-int})$, where \[R\Gamma_\sub{cont}(\bb Z_p^d,R_\infty^\sub{non-int})\simeq\hat{\bigoplus_{\substack{k_1,\dots,k_d\in \bb Z[\tfrac 1p]\cap[0,1)\\\sub{not all zero}}}}R\Gamma_\sub{cont}(\bb Z_p^d,RT_1^{k_1}\cdots T_d^{k_d})\] (where the hat now denotes the derived $p$-adic completion of the sum of complexes). Now we must calculate $H^i_\sub{cont}(\bb Z_p,?)$ for $?=R$ or $RT_1^{k_1}\cdots T_d^{k_d}$.

In the first case, the action of $\bb Z_p^d$ on $R$ is trivial and so a standard group cohomology fact says that $H^i_\sub{cont}(\bb Z_p^d,R) \cong\bigwedge_R^iR^d$. In the second case, another standard group cohomology fact says that $R\Gamma_\sub{cont}(\bb Z_p^d,RT_1^{k_1}\cdots T_d^{k_d})$ can be calculated by the Koszul complex $K_r(\zeta^{k_1}-1,\dots, \zeta^{k_d}-1)$; then Lemma \ref{lemma_on_Koszul_2} reveals (crucially using that not all $k_i$ are zero) that \[H^i_\sub{cont}(\bb Z_p^d,RT_1^{k_1}\cdots T_d^{k_d})\cong R/(\zeta_{p^r}-1)R^{\binom{d-1}{i-1}}\] where $r:=-\min_{1\le i\le d}\nu_p(k_i)\ge1$ is the smallest integer such that $\zeta_{p^r}-1|\zeta^{k_i}-1$ for all $i=1,\dots,d$.

Assembling\footnote{\label{footnote_completing_sums}This step requires some care about $p$-adic completions: the following straightforward result is sufficient. Suppose $(C_\lambda)_\lambda$ is a family of complexes satisfying the following for all $i\in\bb Z$: the group $H^i(C_\lambda)$ is $p$-adically complete and separated for all $\lambda$, with a bound on its $p$-power-torsion which is independent of $\lambda$. Then $H^i(\hat\bigoplus_\lambda C_k)=\hat\bigoplus_\lambda H^i(C_\lambda)$, where the left hat is the derived $p$-adic completion of the sum of complexes, and the right hat is the usual $p$-adic completion of the sum of cohomology groups. {\em Proof}. Set $C_\sub{disc}:=\bigoplus_\lambda C_\lambda$ and $C=\hat{C_\sub{disc}}$ (derived $p$-adic completion); then the usual short exact sequences associated to a derived $p$-adic completion are
\[\xymatrix@R=4mm{
&0\ar[d]&&&\\
&\projlim_r^1 H^i(C_\sub{disc})[p^r]\ar[d]&&&\\
0\ar[r] & \Ext^1_{\bb Z_p}(\bb Q_p/\bb Z_p,H^i(C_\sub{disc}))\ar[r]\ar[d] & H^i(C)\ar[r] & \Hom_{\bb Z_p}(\bb Q_p/\bb Z_p, H^{i+1}(C_\sub{disc}))\ar[r] & 0\\
&\hat{H^i(C_\sub{disc})}=\hat\bigoplus_\lambda H^i(C_\lambda)\ar[d]&&&\\
&0&&&
}\]
Our assumption that $\bigoplus_\lambda H^i(C_\lambda)$ has bounded $p$-power-torsion implies that the right and top terms vanish. $\square$}
these calculations yields isomorphisms \[H^i_\sub{cont}(\Gamma, R_\infty)\cong \bigwedge_R^iR^d\oplus \bigoplus_{\substack{k_1,\dots,k_d\in \bb Z[\tfrac 1p]\cap[0,1)\\\sub{not all zero}}} R/(\zeta_{p^{-\min_{1\le i\le d}\nu_p(k_i)}}-1)R^{\binom{d-1}{i-1}},\] which we claim is good for each $i\ge0$. That is, we must show that $R$, $R/fR$, and $R/(\zeta_{p^r}-1)R$, for $r\ge1$, contain no non-zero elements killed by $\frak m$. This is trivial for $R$ itself since it is a torsion-free $\roi$-algebra, so it remains to show, for each non-zero $a\in\frak m$, that $R/aR$ contains no non-zero elements killed by $\frak m$; but $R$ is a topologically free $\roi$-module [BMS, Lem.~8.10] and so $R/aR$ is a free $\roi/a\roi$-module, thereby reducing the problem to the analogous assertion for $\roi/a\roi$, which was proved in the final paragraph of footnote \ref{footnote_special_case_torus}.
\end{proof}

\lb{
\begin{proof}[Proof of Theorem \ref{theorem_integral_purity} when $R=\roi\pid{T^{\pm1}}$]
We first construct the good decomposition in the case $R=\roi\pid{T^{\pm1}}$ (this will not be used in the general case, but indicates the style of argument). Pick a compatible sequence $\zeta_p,\zeta_{p^2},\dots\in\roi$ of $p$-power roots of unity, to get an isomorphism $\Gamma\cong\bb Z_p$. As a convenient abuse of notation, we write $\zeta^k:=\zeta_{p^i}^j$ for $k=j/p^i\in\bb Z[\tfrac 1p]$.

Then $\roi\pid{T^{\pm1}}$ admits a $\Gamma$-equivariant decomposition into $\roi$-submodules \[\roi\pid{T^{\pm1}}=\hat\bigoplus_{k\in\bb Z[\tfrac1p]}\roi T^k\] (where the hat denotes $p$-adic completion), with the generator $\gamma\in\bb Z_p$ acting on the rank one $\roi$-module $\roi T^k$ as multiplication by $\zeta^k$.

Thus $R\Gamma_\sub{cont}(\bb Z_p,\roi T^k)\cong[\roi\xto{\zeta^k-1}\roi]$, and so \[R\Gamma\sub{cont}(\bb Z_p,\roi\pid{T^{\pm1}})\simeq\hat\bigoplus_{k\in\bb Z[\tfrac1p]}[\roi\xto{\zeta^k-1}\roi]\] (where the hat now denotes the derived $p$-adic completion of the sum of complexes), which has cohomology groups \[H^0_\sub{cont}=\hat\bigoplus_{k\in\bb Z}\roi\oplus 0,\qquad H^1_\sub{cont}=\hat\bigoplus_{k\in\bb Z}\roi\oplus\bigoplus_{k\in\bb Z[\tfrac1p]\setminus\bb Z}\roi/(\zeta^k-1)\] (once some care is taken regarding the $p$-adic completions).

We claim that in both degrees, the indicated decomposition $M_1\otimes M_2$ is ``good''. Obviously $\hat\bigoplus_{k\in\bb Z}\roi$ has no $\zeta_p-1$-torsion (since the same is true of $\roi$), and the right-most $\roi$-module is killed by $\zeta_p-1$; since $(\hat\bigoplus_{k\in\bb Z}\roi)/(\zeta_p-1)=\bigoplus_{k\in\bb Z}\roi/(\zeta_p-1)$, the only remaining assertion is to prove that $\roi/(\zeta_{p^r}-1)$ contains no non-zero almost-zero elements when $r\ge1$. But this is an easy argument with valuations: if $x\in\roi$ is almost a multiple of $\zeta_{p^r}-1$, then $\nu_p(x)+\ep\ge\nu_p(\zeta_{p^r}-1)$ for all $\ep>0$, whence $\nu_p(x)+\ep\ge\nu_p(\zeta_{p^r}-1)$ and so $x$ is divisible by $\zeta_{p^r}-1$.
\end{proof}

\begin{proof}[Proof of Theorem \ref{theorem_integral_purity} in general]
To reiterate, we must show that $H^i_\sub{cont}(\Gamma,R_\infty)$ is good for all $i\ge 0$. First note that $\roi\pid{\ul T^{\pm1/p^\infty}}$ admits a $\Gamma$-equivariant decomposition into $\roi\pid{\ul T^{\pm1}}$-modules: \[\roi\pid{\ul T^{\pm1/p^\infty}}=\roi\pid{\ul T^{\pm1}}\oplus \roi\pid{\ul T^{\pm1}}^\sub{non-int},\] where \[\roi\pid{\ul T^{\pm1}}^\sub{non-int}:=\hat{\bigoplus_{\substack{k_1,\dots,k_d\in \bb Z[\tfrac 1p]\cap[0,1]\\\sub{not all zero}}}}\roi\pid{\ul T^{\pm1}}T_1^{k_1}\cdots T_d^{k_d}\] (the hat denotes $p$-adic completion). Base changing to $R$ we obtain a similar $\Gamma$-equivariant decomposition of $R_\infty$ into $R$-modules: \[R_\infty=R\oplus R_\infty^\sub{non-int}\] where \[R_\infty^\sub{non-int}:=\hat{\bigoplus_{\substack{k_1,\dots,k_d\in \bb Z[\tfrac 1p]\cap[0,1]\\\sub{not all zero}}}}RT_1^{k_1}\cdots T_d^{k_d}\] This induces a decomposition of the group cohomology \[H^i_\sub{cont}(\Gamma,R_\infty)=H^i_\sub{cont}(\Gamma,R)\oplus H^i_\sub{cont}(\Gamma,R_\infty^\sub{non-int})=:M_1\oplus M_2,\] which we claim is ``good'':
\begin{enumerate}
\item Firstly, since $\Gamma$ is acting trivially on $R$, we have $M_1=H^i_\sub{cont}(\Gamma,R)=H^i_\sub{cont}(\bb Z_p^d,R)\cong\bigwedge_R^i(R^d)$ (by a standard group cohomology calculation). So $M_1$ is clearly $\zeta_p-1$-torsion-free, and $M_1/(\zeta_p-1)M_1$ contains no non-zero almost-zero elements, by the next lemma.
\item Now pick a compatible sequence $\zeta_p,\zeta_{p^2},\dots\in\roi$ of $p$-power roots of unity, to get an isomorphism $\Gamma\cong\bb Z_p^d$. Then the generators $\gamma_1,\dots,\gamma_d\in\bb Z_p^d$ act on the rank-one free $R$-module $RT_1^{k_d}\cdots T_d^{k_d}$ respectively as multiplication by $\zeta_{p^{r_1}}^{j_1},\dots,\zeta_{p^{r_d}}^{j_d}$, where we have written $k_i=j_i/p^i$. So $R\Gamma_\sub{cont}(\Gamma,ST_1^{k_1}\cdots T_d^{k_d})$ is calculated by the Koszul complex $K_R(\zeta_{p^{r_1}}^{j_1}-1,\dots,\zeta_{p^{r_d}}^{j_d}-1)$ (again by standard group cohomology calculation).

An straightforward calculation of Koszul complexes (which we will discuss later in the course in detail, and which crucially uses that not all $k_i$ are zero) shows that there exists an isomorphism \[H^i(K_R(\zeta_{p^{r_1}}^{j_1}-1,\dots,\zeta_{p^{r_d}}^{j_d}-1))\cong (R/(\zeta_{p^r}-1))^{\binom{d-1}{i-1}},\] where $r:=-\min_{1\le i\le d}\nu_p(k_i)\ge1$ is the smallest integer such that $\zeta_{p^r}-1|\zeta_{p^{r_i}}^{j_i}-1$ for all $i=0,\dots,d$. Note that this cohomology group is killed by $\zeta_p-1$.

Putting these together, we get \[M_2:=H^i_\sub{cont}(\Gamma, R_\infty^\sub{non-int})\cong\hat{\bigoplus_{\substack{k_1,\dots,k_d\in \bb Z[\tfrac 1p]\cap[0,1]\\\sub{not all zero}}}} (R/(\zeta_{p^{-\min_{1\le i\le d}\nu_p(k_i)}}-1))^{\binom{d-1}{i-1}}\] (passing the $p$-adic completion through the calculation uses that we have already shown that each cohomology group of $R\Gamma_\sub{cont}(\Gamma,T_1^{k_1}\cdots T_d^{k_d})$ is derived $p$-adically complete), which is killed by $\zeta_p-1$. Also, $M_2$ is contained in an infinite product of copies of $R/(\zeta_{p^r}-1)$ for varying $r$, so it has no non-zero almost-zero elements, by the next lemma.
\end{enumerate}
This proves that $H^i_\sub{cont}(\Gamma,R_\infty)$ is ``good'' for all $i\ge0$, and so completes the proof.
\end{proof}

The previous proof required the following:

\begin{lemma}
For any $r\ge1$, the $\roi$-module $R/(\zeta_{p^r}-1)$ contains no non-zero almost-zero elements.
\end{lemma}
\begin{proof}
This is really two results:
\begin{enumerate}
\item Any formally smooth $\roi$-algebra $R$ is the $p$-adic completion of a free $\roi$-module.
\item $\roi/(\zeta_{p^r}-1)$ contains no non-zero almost-zero elements.
\end{enumerate}
Note that (i) and (ii) really do imply the lemma, since (i) implies $R$ is contained inside an infinite product of copies of $\roi/(\zeta_{p^r}-1)$. Also, (ii) is easily proved using valuations: if $x\in\roi$ is almost a multiple of $\zeta_{p^r}-1$, then $\nu_p(x)+\ep\ge\nu_p(\zeta_{p^r}-1)$ for all $\ep>0$, whence $\nu_p(x)+\ep\ge\nu_p(\zeta_{p^r}-1)$ and so $x$ is divisible by $\zeta_{p^r}-1$.

It remains to prove (i). Since $R/p$ is a finitely presented (since smooth) $\roi/p$-algebra, the canonical map \[R_k\otimes_k\roi/p^{1/n}\to R/p\otimes_{\roi/p}\roi/p^{1/n}\] is an isomorphism for $n\gg0$ (here we are using the canonical section $k\to \roi/p$ of the residue field). Since $R_k$ is free over $k$, it follows that $R/p^{1/n}$ is free over $\roi/p^{1/n}$ for $n\gg0$. Since $R$ and $\roi$ are $p$-adically complete, this basis may be lifted to give a topological basis for $R$ as an $\roi$-module.
\end{proof}
}

\lb{
\subsection{Some more properties of $L\eta_f$}\label{subsection_more_properties_of_eta}
As already mentioned, the general theory of $L\eta_f$ will be spread out through the course. But now we mention a few basic properties. We work again in the generality of a non-zero-divisor $f$ in any rings $A$.

\subsection{Multiplicativity}
If $g\in A$ is another non-zero-divisor, and $C$ is a complex of $fg$-torsion-free $A$-modules, then \[\eta_g\eta_fC=\eta_{fg}C\subseteq C[\tfrac1{gf}].\] Noting that $\eta_f$ preserves the property the $g$-torsion-freeness, there is no difficulty deriving this to obtain a natural equivalence of functors \[L\eta_g\circ L\eta_f\simeq L\eta_{gf}.\]

\subsubsection{Compatibility with completions}
Recall that the {\em derived $f$-adic completion} of a complex $C$ of $A$-modules is defined to be $\hat C:=\op{Rlim}_rC\dotimes_AA/f^r$, and that $C$ is called {\em derived $f$-adically complete} if and only if the canonical map $C\to \hat C$ is a quasi-isomorphism.

\begin{lemma}
If $C$ is derived $f$-adically complete then so is $L\eta_fC$. In general, the canonical maps $\hat{L\eta_fC}\to L\eta_f\hat C\to\op{Rlim}_rL\eta_f(C\otimes_AA/f^r)$ are quasi-isomorphisms.
\end{lemma}
\begin{proof}
See ????? later.
\end{proof}

\subsubsection{Base change and restriction}
Suppose that $\al:A\to B$ is a homomorphism of rings, such that $\al(f)$ is a non-zero-divisor in $B$. Then, for any $C\in D(A)$, there is a natural base change morphism \[(L\eta_fC)\dotimes_AB\To L\eta_{\al(f)}(C\dotimes_AB)\] in $D(B)$. (Without loss of generality we may assume that $C$ is a complex of projective $A$-modules; then $(\eta_fC)\dotimes_AB\to(\eta_fC)\otimes_AB\to \eta_{\al(f)}(C\otimes_AB)$.) This is an isomorphism if $A\to B$ is flat (easy to check), but otherwise not.

In the other direction, suppose that $C\in D(B)$. Then there is a canonical restriction quasi-isomorphism \[L\eta_f(C|_A)\quis L\eta_{\al(f)}(C)|_A\] in $D(A)$. (Again, without loss of generality we may assume that $C$ is a complex of projective $B$-modules; then $C|_A$ is a complex of $f$-torsion-free $A$-modules, so $L\eta_fC|_A=\eta_f(C|_A)=\eta_{\al(f)}(C)=L\eta_f(C)|_A$.)

\begin{remark}
The previous comments on base change and restriction will be applied in particular when $A=\bb A_\sub{inf}$ and $\tilde\theta:\bb A_\sub{inf}\to\roi$ is (a twist of) Fontaine's map (or a generalisation of it), with $\mu=[\ep]-1\in\bb A_\sub{inf}$ satisfying $\theta(\mu)=\zeta_p-1$.
\end{remark}

\subsubsection{Complexes supported in positive degree}
Let $D^{\ge 0}_{f\sub{-tf}}(A)$ be the full subcategory of $D(A)$ consisting of those complexes $D$ which have $H^i(D)=0$ for $i<0$ and $H^0(D)$ being $f$-torsion-free. Any such $D$ admits a quasi-isomorphism $C\quis D$, where $C$ is a cochain complex of $f$-torsion-free $A$-modules supported in positive degree (e.g., pick a projective resolution $P\quis D$ and then set $C:=\tau^{\ge 0}P$). Then \[L\eta_fD=\eta_fC\subseteq C\quis D,\] whence $L\eta_f$ restricts to an endofunctor of $D^{\ge 0}_{f\sub{-tf}}(A)$, and on this subcategory there is a natural transformation $L\eta_f\to\op{id}$.

\subsubsection{$L\eta$ for sheaves}
Let $\cal T$ be a topos. Call a complex $C$ of sheaves of $A$-modules {\em strongly $K$-flat} if and only if $C^i$ is a sheaf of flat $A$-modules for all $i\in\bb Z$, and $C\dotimes_AD$ is acyclic for any acyclic complex $D$ of sheaves of $A$-modules. For any such $C$ we define a new complex of sheaves $\eta_fC\subseteq C[\tfrac1f]$ by \[(\eta_fC)^i(U):=\{x\in f^iC^i(U):dx\in f^{i+1}C^{i+1}(U)\}\] for each $U\in T$.

Any complex $D$ of sheaves of $A$-modules may be resolved by a strongly $K$-flat complex $C$ (e.g., see the proof of {\em The Stacks Project}, Tag 077J), and we define $L\eta_fD:=\eta_fC$. This is again a well-defined endofunctor of $D(\cal T,A)$, the derived category sheaves of $A$-modules on $\cal T$.

There is a canonical morphism \[L\eta_fR\Gamma(\cal T,C)\To R\Gamma(\cal T,L\eta_fC),\] which is not in general a quasi-isomorphism.
}

\lb{
\begin{lemma}
Suppose that $f:C\to D$ is an almost quasi-isomorphism of complexes of $\roi$-modules with the following properties:
\begin{itemize}\itemsep0pt
\item $H^0(C)$ and $H^0(D)$ are $\zeta_p-1$-torsion-free.
\item Each cohomology group of $C$ is a direct sum of (1) a topologically free $\roi$-module and (2) an $\roi$-module which is killed by $\zeta_p-1$ but which contains no almost zero elements.
\end{itemize}
Then the induced map $L\eta_{\zeta_p-1}f:L\eta_{\zeta_p-1}C\to L\eta_{\zeta_p-1}D$ is a quasi-isomorphism (not just almost so!).
\end{lemma}
\begin{proof}

\end{proof}

\begin{proof}[Proof of the theorem]
Remarkably, the proof will be a formal consequence of almost purity and a calculation of $R\Gamma_\sub{cont}(\Gamma,R_\infty)$.
\end{proof}

\begin{lemma}
Let $\frak M\subseteq V$ be an ideal of a ring, and make the following assumptions:
\begin{enumerate}
\item the only finitely generated ideal of $V$ containing $\frak M$ is $V$ itself;
\item $V$ is coherent.
\end{enumerate}
If $M$ is a finitely presented $V$-module then it contains no non-zero elements killed by $\frak M$.
\end{lemma}
\begin{proof}
If $m\in M$ is killed by $\frak M$, then $I:=\Ann_V(m)$ is an ideal of $V$ containing $\frak M$, so we just need to show that $I$ is finitely generated. But $V/I\into M$; since $M$ is finitely presented and $V$ is coherent, it follows that $V/I$ is finitely presented and so $I$ is finitely generated.
\end{proof}
}

\section{Algebraic preliminaries on perfectoid rings}\label{section_perfectoid}
Fix a prime number $p$, and let $A$ be a commutative ring which is $\pi$-adically complete (and separated) for some element $\pi\in A$ dividing $p$. Denoting by $\phi:A/pA\to A/pA$ the absolute Frobenius, we have:
\begin{itemize}\itemsep0pt
\item the {\em tilt} $A^\flat:=\projlim_\phi A/pA$ of $A$, which is a perfect $\bb F_p$-algebra, on which we also denote the absolute Frobenius by $\phi$. We sometimes write elements of $A^\flat$ as $x=(x_0,x_1,\dots)$, where $x_i\in A/pA$ and $x_i^p=x_{i-1}$ for all $i\ge1$, and unless indicated otherwise the ``projection $A^\flat \to A/pA$'' refers to the map $x\mapsto x_0$.
\item the associated ``infinitesimal period ring'' $W(A^\flat)$ of Fontaine, which is denoted by $\bb A_\sub{inf}(A)$ in [BMS]. Note that, since $A^\flat$ is a perfect ring, $W(A^\flat)$ behaves just like the ring of Witt vectors of a perfect field of characteristic $p$: in particular $p$ is a non-zero divisor of $W(A^\flat)$, each element has a unique expansion of the form $[x]+p[y]+p^2[z]+\cdots$, and $W(A^\flat)/p^r=W_r(A^\flat)$ for any $r\ge1$.
\end{itemize}
The goal of this section is to study these construction in more detail, in particular to introduce ring homomorphisms \[\tilde\theta_r,\theta_r:W(A^\flat)\To W_r(A)\] which play a fundamental role in the paper, and to define perfectoid rings. 

\subsection{The maps $\theta_r$, $\tilde\theta_r$}\label{subsection_theta}
The following lemma is helpful in understanding $A^\flat$ and will be used several times; we omit the proof since it is relatively well-known and based on standard $p$-adic or $\pi$-adic approximations:

\begin{lemma}\label{lemma_monoids}
The canonical maps \[\projlim_{x\mapsto x^p}A\To A^\flat=\projlim_\phi A/p A\To\projlim_\phi A/\pi A\] are isomorphisms of monoids (resp.~rings).
\end{lemma}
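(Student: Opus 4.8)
The plan is to verify that both maps in the chain are isomorphisms by reducing everything to compatible systems of $p$-power (resp.\ $\pi$-power) approximations. I would treat the two arrows separately, and within each, handle the monoid statement first and then observe that when the transition maps are ring homomorphisms the same bijection is automatically a ring isomorphism.

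First I would address the right-hand map $\projlim_\phi A/pA\to\projlim_\phi A/\pi A$, which is the easier of the two. Since $\pi\mid p$, reduction gives a surjection $A/pA\onto A/\pi A$ intertwining the two Frobenii, hence a map on inverse limits; I must show it is bijective. For injectivity: if $(x_i)$ lies in the kernel, then each $x_i\in A/pA$ lifts to an element of $A$ congruent to $0$ mod $\pi$, and using $x_{i}=x_{i+1}^p$ together with $\pi\mid p$ one shows $x_i\equiv 0$ mod $\pi^n$ for all $n$, hence (by $\pi$-adic separatedness, noting $\pi^n\mid p$ eventually is not quite automatic — rather one uses that $x_i = x_{i+n}^{p^n}$ and $\pi\mid p$ implies $\pi^n \mid p^n$, so $x_i\in p^n A/pA$-lift... ) more carefully: $x_i$ is a $p^n$-th power for every $n$ and its "zeroth component" is divisible by $\pi$, so $x_i$ itself is divisible by $\pi^{p^n}$ for all $n$, forcing $x_i=0$ in $A/pA$ once we invoke completeness. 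For surjectivity one lifts a compatible system mod $\pi$ to a compatible system mod $p$ by the standard approximation trick: given $y_i\in A/\pi A$ with $y_{i+1}^p=y_i$, pick arbitrary lifts $\hat y_i\in A/pA$ and replace $\hat y_i$ by $\lim_n \hat y_{i+n}^{p^n}$, which converges $p$-adically in $A/pA$ because raising to the $p$-th power improves $\pi$-adic (hence eventually $p$-adic) congruences.

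Next I would address the left-hand map $\projlim_{x\mapsto x^p}A\to\projlim_\phi A/pA$. A compatible system $(a_i)$ of actual $p$-th power roots in $A$ reduces to a compatible system mod $p$, giving the map; injectivity and surjectivity both rest on the observation that if $a\equiv b$ mod $p$ then $a^{p^n}\equiv b^{p^n}$ mod $p^{n+1}$, so that a mod-$p$ compatible system $(x_i)$ can be lifted canonically by setting $a_i:=\lim_n \tilde x_{i+n}^{p^n}$ for any lifts $\tilde x_j\in A$, the limit existing by $p$-adic completeness and being independent of the choices; one checks $a_i^p=a_{i-1}$ and that $a_i\bmod p=x_i$, giving surjectivity, while injectivity follows because the same limiting formula recovers $(a_i)$ from its reduction. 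Finally, in both cases the constructed bijections are visibly multiplicative, and when the transition maps in the source are ring maps (i.e.\ for $\projlim_\phi A/pA\to\projlim_\phi A/\pi A$, and for $\projlim_{x\mapsto x^p}A$ interpreted additively via the same limit formula, which is the content of why $A^\flat$ is a ring) they are additive as well, because $\lim_n(\tilde x_{i+n}+\tilde y_{i+n})^{p^n}=\lim_n\tilde x_{i+n}^{p^n}+\lim_n\tilde y_{i+n}^{p^n}$ modulo the usual binomial-coefficient estimate.

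The main obstacle I anticipate is bookkeeping the approximation estimates cleanly — specifically, making precise that "$\pi\mid p$ and raising to the $p$-th power" upgrades $\pi$-adic congruences to $p$-adic ones at a controlled rate, so that the limits $\lim_n(-)^{p^n}$ genuinely converge in the $p$-adically complete ring and define the inverse maps. This is entirely standard (it is the same computation underlying the multiplicative-representative/Teichmüller lift and the construction of the tilt), which is exactly why the statement says the proof is omitted; the only care needed is to run the estimate uniformly in the index $i$ so that the whole compatible system, not just each term, is controlled.
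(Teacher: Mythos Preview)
Your proposal is correct and is exactly the standard $p$-adic (resp.\ $\pi$-adic) approximation argument that the paper alludes to when it says the proof is ``relatively well-known and based on standard $p$-adic or $\pi$-adic approximations''; the paper in fact omits the proof entirely. Your identification of the key estimate --- that $a\equiv b$ mod $p$ (resp.\ mod $\pi$) forces $a^{p^n}\equiv b^{p^n}$ mod $p^{n+1}$ (resp.\ mod $\pi^{n+1}$), so the limits $\lim_n \tilde x_{i+n}^{p^n}$ converge and define the inverse --- is precisely the point, and your remark that the right-hand map can be handled either directly or by factoring through the composite is also the usual way to organise it.
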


Before stating the main lemma which permits us to define the maps $\theta_r$, we recall that if $B$ is any ring, then the associated rings of Witt vectors $W_r(B)$ are equipped with three operators:
\[R,F:W_{r+1}(B)\to W_r(B)\qquad V:W_r(B)\to W_{r+1}(B),\] where $R,F$ are ring homomorphisms, and $V$ is merely additive. Therefore we can take the limit over $r$ in two ways (of which the second is probably more familiar): \[\projlim_{r\sub{ wrt } F}W_r(B)\qquad\text{or}\qquad W(B)=\projlim_{r\sub{ wrt } R}W_r(B).\]
\begin{lemma}\label{lemma_witt_alg_1}
Let $A$ be as above, i.e., a ring which is $\pi$-adically complete with respect to some element $\pi\in A$ dividing $p$. Then the following three ring homomorphisms are isomorphisms:
\[\xymatrix{
W(A^\flat)=\projlim_{r\sub{ wrt }R}W_r(A^\flat) & \projlim_{r\sub{ wrt }F}W_r(A^\flat)\ar[l]_-{\phi^\infty}^-{(i)}\ar[d]_-{(ii)}\\
\projlim_{r\sub{ wrt }F}W_r(A)\ar[r]_-{(iii)} & \projlim_{r\sub{ wrt }F}W_r(A/\pi A)
}\]
where
\begin{enumerate}\itemsep0pt
\item $\phi^\infty$ is induced by the homomorphisms $\phi^r:W_r(A^\flat)\to W_r(A^\flat)$ for $r\ge1$;
\item the right vertical arrow is induced by the projection $A^\flat \to A/p A\to A/\pi A$;
\item the bottom horizontal arrow is induced by the projection $A\to A/\pi A$.
\end{enumerate}
There is therefore an induced isomorphism \[W(A^\flat)\Isoto \projlim_{r\sub{ wrt }F}W_r(A)\] making the diagram commute.
\end{lemma}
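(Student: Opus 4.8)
The plan is to show that each of the maps \textup{(i)}, \textup{(ii)}, \textup{(iii)} is an isomorphism; the asserted isomorphism $W(A^\flat)\Isoto\projlim_{r,F}W_r(A)$ is then the composite $\textup{(iii)}^{-1}\circ\textup{(ii)}\circ\textup{(i)}^{-1}$, and it makes the square commute by construction (here, and below, $\projlim_{r,F}$ resp.\ $\projlim_{r,R}$ denotes the inverse limit over the Frobenius resp.\ restriction maps). Throughout I will use freely: that each functor $W_r$ is corepresented by $\bb Z[x_0,\dots,x_{r-1}]$, hence commutes with arbitrary limits of rings; the universal Witt-vector identities $F[a]=[a^p]$, $FV=p$, and $x=\sum_{i\ge0}V^i([x_i])$ for $x=(x_0,x_1,\dots)$, which together give $F(x)=[x_0^p]+p\cdot(x_1,x_2,\dots)$ in any ring; and the standard fact that for an $\bb F_p$-algebra $B$ the Witt-vector Frobenius acts coordinate-wise by $p$-th powers, so that $F\colon W_{r+1}(B)\to W_r(B)$ equals $R\circ W_{r+1}(\phi_B)=W_r(\phi_B)\circ R$. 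For \textup{(i)}: since $A^\flat$ is perfect, $\phi:=\phi_{A^\flat}$ is an automorphism, so each $W_r(\phi^r)$ is an automorphism of $W_r(A^\flat)$; from $F=W_r(\phi)\circ R$ and naturality of $R$ one gets $R\circ W_{r+1}(\phi^{r+1})=W_r(\phi^{r+1})\circ R=W_r(\phi^r)\circ F$, so $(W_r(\phi^r))_{r\ge1}$ is an isomorphism of towers $\{W_r(A^\flat);F\}\Isoto\{W_r(A^\flat);R\}$, and passing to inverse limits yields \textup{(i)}, whose $r$-th component is $\phi^r$ as in the statement.

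For \textup{(iii)}: since $A$ is $\pi$-adically complete and separated, $A=\projlim_nA/\pi^nA$, so $W_r(A)=\projlim_nW_r(A/\pi^nA)$ and hence $\projlim_{r,F}W_r(A)=\projlim_n\big(\projlim_{r,F}W_r(A/\pi^nA)\big)$, under which identification \textup{(iii)} is the projection onto the $n=1$ term. It therefore suffices to show the transition map $\projlim_{r,F}W_r(A/\pi^{n+1}A)\to\projlim_{r,F}W_r(A/\pi^nA)$ is an isomorphism for every $n\ge1$. It fits into a short exact sequence of towers (along $F$) with kernel tower $\{W_r(I_n)\}_r$, where $W_r(I_n):=\ker(W_r(A/\pi^{n+1}A)\to W_r(A/\pi^nA))$ is attached to the ideal $I_n:=\pi^nA/\pi^{n+1}A$; since $n\ge1$ this ideal is square-zero, and since $p\in\pi A$ it is killed by $p$. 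For a square-zero ideal the Witt-addition polynomials are additive in each coordinate, so $W_r(I_n)\cong I_n^{\oplus r}$ is killed by $p$; combined with $I_n^{\,p}=0$ and the identity $F(x)=[x_0^p]+p\cdot(x_1,\dots,x_r)$, this forces $F$ to be the \emph{zero} map on the tower $\{W_r(I_n)\}_r$. A tower with zero transition maps has vanishing $\projlim$ and $\projlim^1$, so the six-term exact sequence attached to $0\to\{W_r(I_n)\}\to\{W_r(A/\pi^{n+1}A)\}\to\{W_r(A/\pi^nA)\}\to0$ gives the claimed isomorphism.

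For \textup{(ii)}: by Lemma~\ref{lemma_monoids}, $A^\flat\cong\projlim_\phi(A/\pi A)$ as rings, compatibly with the projections to $A/\pi A$; write $B:=A/\pi A$. Since $W_r$ commutes with limits, $W_r(A^\flat)=\projlim_mW_r(B)$ with transition maps $W_r(\phi_B)$, and interchanging the two limits (legitimate as $F$ is natural, hence commutes with the $W_r(\phi_B)$) gives $\projlim_{r,F}W_r(A^\flat)=\projlim_mM$, where $M:=\projlim_{r,F}W_r(B)$ and the transition is the endomorphism $\tilde\phi$ of $M$ induced by the $W_r(\phi_B)$; under this identification \textup{(ii)} is the projection onto the $m=0$ term. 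So it suffices to prove $\tilde\phi$ is an automorphism of $M$. I claim the inverse is $\beta\colon(a_r)_r\mapsto(R(a_{r+1}))_r$, which lands in $M$ because $F$ and $R$ commute; the identities $\tilde\phi\beta=\beta\tilde\phi=\mathrm{id}_M$ then follow from $W_r(\phi_B)\circ R=F$ together with $R\circ W_{r+1}(\phi_B)=W_r(\phi_B)\circ R$. Hence $\tilde\phi$ is invertible, $\projlim_mM=M$, and \textup{(ii)} is an isomorphism.

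The main obstacle is the vanishing of $F$ on the tower $\{W_r(I_n)\}_r$ in step \textup{(iii)}: once one accepts the coordinate-wise description of Witt addition on a square-zero ideal and the formula $F(x)=[x_0^p]+p\cdot(x_1,x_2,\dots)$, the remainder is bookkeeping, but correctly tracking how the Witt-vector operators interact with the $\pi$-adic filtration on $A$ is the crux; the analogous conceptual point in \textup{(ii)} — that the $F$-tower already has the Frobenius of $B$ built into it invertibly — is by contrast only a short formal calculation.
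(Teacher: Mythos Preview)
Your proof is correct. Parts (i) and (ii) match the paper's argument essentially verbatim. For (iii) the paper proceeds in a single jump: for each fixed $s\ge1$ it shows the kernel pro-system $\{W_r(\pi A/\pi^s A)\}_r$ along $F$ is pro-isomorphic to zero by exhibiting an explicit power $F^{s+c}$ (with $c$ chosen so that $p^c=0$ in $W_r(A/\pi^s A)$) that annihilates $W_{r+s+c}(\pi A/\pi^s A)$. Your step-by-step refinement through the graded pieces $I_n=\pi^n A/\pi^{n+1}A$ is arguably cleaner: each $I_n$ is square-zero and $p$-torsion, so the identity $F(x)=[x_0^p]+p\cdot(x_1,\dots)$ already gives $F=0$ on $W_{r+1}(I_n)$ in one application, with no need to locate the auxiliary constant $c$. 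The trade-off is that you then invoke the $\projlim$--$\projlim^1$ six-term sequence and an induction on $n$ to assemble the result, whereas the paper's pro-object argument absorbs this bookkeeping into the statement that a level-wise surjective map of towers with pro-zero kernel is a pro-isomorphism.
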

\begin{proof}
(i) is a formal consequence of $A^\flat$ being a perfect ring and commutativity of 
\[\xymatrix{
W_{r+1}(A^\flat)\ar[r]_\phi^\cong\ar[dr]_F & W_{r+1}(A^\flat)\ar[d]^R\\
& W_r(A^\flat)
}\]
(ii) Since $W_r$ commutes with inverse limits of rings we have \[\projlim_{r\sub{ wrt }F}W_r(A^\flat)=\projlim_{r\sub{ wrt }F}\projlim_\phi W_r(A/\pi A)=\projlim_\phi \projlim_{r\sub{ wrt }F}W_r(A/\pi A)\isoto  \projlim_{r\sub{ wrt }F}W_r(A/\pi A),\] where the first equality uses Lemma \ref{lemma_monoids}, and the final projection is an isomorphism since $\phi$ induces an automorphism of the ring $\projlim_{r\sub{ wrt }F}W_r(A/\pi A)$ (thanks again to the formulae $R\phi=\phi R=F$ in characteristic $p$).

(iii) is the most subtle part. Firstly, for any fixed $s\ge1$ we claim that the canonical morphism of pro-rings
\[
\{W_r(A/\pi^sA)\}_{r\sub{ wrt }F}\to \{W_r(A/\pi A)\}_{r\sub{ wrt }F}
\]
is an isomorphism. As it is level-wise surjective, it is enough to show that the kernel $\{W_r(\pi A/\pi^sA)\}_s$ is pro-isomorphic to zero; fix $r\ge 1$. There exists $c\ge 1$ such that $p^c$ is zero in $W_r(A/\pi^sA)$,\footnote{
{\em Proof}. Since $p^r$ vanishes in the $W_r(\bb F_p)$-algebra $W_r(A/pA)$, it also vanishes in $W_r(A/\pi A)$, and therefore its Witt vector expansion has the form $p^r=(a_0\pi,\dots,a_{r-1}\pi)=\sum_{i=0}^{r-1}V^i[a_i\pi]$ for some $a_i\in A$. Using the identity $V^i[a]V^j[b]=p^iV^j[ab^{p^{j-i}}]$ for $a,b\in A$ and $j\ge i$, it is now easy to see that if $c\gg0$ (depending on $r$ and $s$) then $p^c$ vanishes in $W_r(A/\pi^sA)$. $\square$} and we claim that $F^{s+c}:W_{r+s+c}(A/\pi^sA)\to W_r(A/\pi^sA)$ kills the kernel $W_{r+s+c}(\pi A/\pi^sA)$. Each element in the kernel can be written as $\sum_{i=0}^{r+s+c-1}V^i[\pi a_i]$ for some $a_i\in A/p^sA$, and indeed
\[
F^{s+c}\left(\sum_{i=0}^{r+s+c-1}V^i[\pi a_i]\right)=\sum_{i=0}^{c-1}p^i[\pi a_i]^{p^{s+c-i}}+p^cF^s\left(\sum_{i=c}^{r+s+c-1}V^{i-c}[\pi a_i]\right)=0.
\]
This proves the desired pro-isomorphism, from which it follows that \[\projlim_{r\sub{ wrt }F}W_r(A/\pi^s A)\Isoto \projlim_{r\sub{ wrt }F}W_r(A/\pi A).\] Taking the limit over $s\ge 1$, exchanging the order of the limits, and using the isomorphism $W_r(A)\isoto\projlim_sW_s(A/\pi^sA)$ (again since $W_r$ commutes with inverse limits) completes the proof.
\end{proof}

\begin{definition}
Continue to let $A$ be as in the previous lemma, and $r\ge1$. Define $\tilde\theta_r:W(A^\flat)\to W_r(A)$ to be the composition
\[\boxed{\tilde\theta_r:W(A^\flat)\Isoto \projlim_{r\sub{ wrt }F}W_r(A)\To W_r(A),}\] where the first map is the isomorphism of the previous lemma, and the second map is the canonical projection. Also define \[\boxed{\theta_r:=\tilde\theta_r\circ\phi^r:W(A^\flat)\To W_r(A).}\]

We stress that the Frobenius maps $F:W_{r+1}(A)\to W_r(A)$ need not be surjective, and thus $\theta_r,\tilde\theta_r$ need not be surjective; indeed, such surjectivity will be part of the definition of a perfectoid ring (see Lemma \ref{lemma_frobenius_surjectivity}).
\end{definition}

To explicitly describe the maps $\theta_r$ and $\tilde \theta_r$, we follow the usual convention of exploiting the isomorphism of monoids of Lemma \ref{lemma_monoids} to denote an element $x\in A^\flat$ either as $x=(x_0,x_1,\dots)\in \projlim_\phi A/pA$ or $x=(x^{(0)},x^{(1)},\dots)\in \projlim_{x\mapsto x^p}A$:

\begin{lemma}\label{lemma_theta}
For any $x\in A^\flat$ we have $\theta_r([x])=[x^{(0)}]\in W_r(A)$ and $\tilde\theta_r([x]) = [x^{(r)}]$ for $r\ge 1$.
\end{lemma}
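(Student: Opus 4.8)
The plan is to unwind the definitions by chasing the Teichm\"uller element $[x]$ through the chain of isomorphisms in Lemma~\ref{lemma_witt_alg_1}. Recall that the isomorphism $W(A^\flat)\Isoto\projlim_{r\sub{ wrt }F}W_r(A)$ is, by construction, the composite $(iii)^{-1}\circ(ii)\circ(i)^{-1}$ (with $(i)=\phi^\infty$), and that $\tilde\theta_r$ is this composite followed by projection to the $r$-th factor. The point is that each of $(i),(ii),(iii)$ is built level-wise from ring homomorphisms: $\phi^r\colon W_r(A^\flat)\to W_r(A^\flat)$ for $(i)$ (a ring automorphism, since $A^\flat$ is perfect), $W_r$ applied to $A^\flat\to A/\pi A$ for $(ii)$, and $W_r$ applied to $A\to A/\pi A$ for $(iii)$. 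Consequently all three maps send Teichm\"uller representatives to Teichm\"uller representatives, and the whole computation reduces to bookkeeping the element inside the Teichm\"uller bracket.

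For the statement about $\tilde\theta_r$, write $x=(x_0,x_1,\dots)\in\projlim_\phi A/pA$ and $x=(x^{(0)},x^{(1)},\dots)\in\projlim_{y\mapsto y^p}A$ for the two incarnations of $x$ furnished by Lemma~\ref{lemma_monoids}; thus $x^{(i)}\bmod p=x_i$ and $(x^{(i)})^p=x^{(i-1)}$. Since $(i)=\phi^\infty$ is given level-wise by $\phi^r$, its inverse is given level-wise by $\phi^{-r}$, so the $r$-th component of $(i)^{-1}([x])$ is the Teichm\"uller representative $[\phi_{A^\flat}^{-r}(x)]\in W_r(A^\flat)$; and in the model $\projlim_\phi A/pA$ the inverse Frobenius $\phi_{A^\flat}^{-1}$ is the left shift, so $\phi_{A^\flat}^{-r}(x)=(x_r,x_{r+1},\dots)$. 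Applying $(ii)$, whose relevant coordinate map $A^\flat\to A/\pi A$ is ``take the $0$-th coordinate, then reduce mod $\pi$'', shows that $(ii)\bigl((i)^{-1}([x])\bigr)$ is the sequence whose $r$-th component is $[x_r\bmod\pi]\in W_r(A/\pi A)$. On the other hand, $\bigl([x^{(r)}]\bigr)_r$ is a genuine element of $\projlim_{r\sub{ wrt }F}W_r(A)$: indeed $F([x^{(r+1)}])=[(x^{(r+1)})^p]=[x^{(r)}]$ by the standard Witt-vector identity $F([a])=[a^p]$ together with $(x^{(r+1)})^p=x^{(r)}$; and $(iii)$ carries it to the sequence whose $r$-th component is $[x^{(r)}\bmod\pi]=[x_r\bmod\pi]$. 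Since $(iii)$ is injective, $[x]$ corresponds to $\bigl([x^{(r)}]\bigr)_r$ under the isomorphism of Lemma~\ref{lemma_witt_alg_1}, and projecting to the $r$-th factor yields $\tilde\theta_r([x])=[x^{(r)}]\in W_r(A)$.

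For $\theta_r=\tilde\theta_r\circ\phi^r$, I would use that $A^\flat$ is perfect, so the Witt-vector Frobenius $\phi$ on $W(A^\flat)$ equals $W(\phi_{A^\flat})$; hence $\phi^r([x])=[\phi_{A^\flat}^r(x)]=[x^{p^r}]$, and therefore $\theta_r([x])=\tilde\theta_r([x^{p^r}])=[(x^{p^r})^{(r)}]$ by the case just treated. Finally, in $\projlim_{y\mapsto y^p}A$ the $p^r$-th power is componentwise, so $(x^{p^r})^{(r)}=(x^{(r)})^{p^r}$, and iterating $(x^{(i)})^p=x^{(i-1)}$ collapses this to $x^{(0)}$. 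Thus $\theta_r([x])=[x^{(0)}]$.

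There is no substantial difficulty here; the only care required is in keeping the two models of $A^\flat$ (as $\projlim_\phi A/pA$ and as $\projlim_{y\mapsto y^p}A$) and the two transition maps ($R$ versus $F$) straight, and in getting the direction of the Frobenius shifts right. The only non-formal inputs are the identity $F([a])=[a^p]$ in $W_r$ and the fact that the Witt-vector Frobenius of a perfect ring is $W$ of the ring Frobenius.
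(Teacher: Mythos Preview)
Your proof is correct and is precisely the ``straightforward chase through the above isomorphisms'' that the paper's one-line proof alludes to, carried out in full detail. The paper derives $\theta_r$ from $\tilde\theta_r$ in the same way you do (via $\theta_r=\tilde\theta_r\circ\phi^r$ and $\phi^r([x])=[x^{p^r}]$), so there is no difference in approach.
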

\begin{proof}
The formula for $\tilde\theta_r$ follows from a straightforward chase through the above isomorphisms, and the corresponding formula for $\theta_r$ is an immediate consequence.
\end{proof}

In particular, Lemma \ref{lemma_theta} implies that $\theta:=\theta_1:W(A^\flat)\to A$ is the usual map of $p$-adic Hodge theory as defined by Fontaine \cite[\S1.2]{Fontaine1994}, and also shows that the diagram \[\xymatrix{
W(A^\flat)\ar[r]^{\theta_r}\ar[d] & W_r(A)\ar[d]\\
W_r(A^\flat)\ar[r] & W_r(A/pA)
}\]
commutes, where the left arrow is the canonical restriction map and the bottom arrow is induced by the projection $A^\flat\to A/p A$.

The following records the compatibility of the maps $\theta_r$ and $\tilde\theta_r$ with the usual operators on the Witt groups:

\begin{lemma}\label{lemma_theta_r_diagrams}
Continue to let $A$ be as in the previous two lemmas. Then the following diagrams commute:
\[\xymatrix{
W(A^\flat)\ar[d]_{\op{id}}\ar[r]^{\theta_{r+1}}& W_{r+1}(A)\ar[d]^R\\
W(A^\flat)\ar[r]^{\theta_r}& W_r(A)\\
}\quad
\xymatrix{
W(A^\flat)\ar[d]_{\phi}\ar[r]^{\theta_{r+1}}& W_{r+1}(A)\ar[d]^F\\
W(A^\flat)\ar[r]^{\theta_r}& W_r(A)\\
}\quad
\xymatrix{
W(A^\flat)\ar[r]^{\theta_{r+1}}& W_{r+1}(A)\\
W(A^\flat)\ar[r]^{\theta_r}\ar[u]^{\lambda_{r+1}\phi^{-1}}& W_r(A)\ar[u]_V\\
}
\]
where the third diagram requires an element $\lambda_{r+1}\in W(A^\flat)$ satisfying $\theta_{r+1}(\lambda_{r+1})=V(1)$ in $W_{r+1}(A)$. Equivalently, the following diagrams commute:
\[\xymatrix{
W(A^\flat)\ar[d]_{\phi^{-1}}\ar[r]^{\tilde\theta_{r+1}}& W_{r+1}(A)\ar[d]^R\\
W(A^\flat)\ar[r]^{\tilde\theta_r}& W_r(A)\\
}\quad
\xymatrix{
W(A^\flat)\ar[d]_{\op{id}}\ar[r]^{\tilde\theta_{r+1}}& W_{r+1}(A)\ar[d]^F\\
W(A^\flat)\ar[r]^{\tilde\theta_r}& W_r(A)\\
}\quad
\xymatrix{
W(A^\flat)\ar[r]^{\tilde\theta_{r+1}}& W_{r+1}(A)\\
W(A^\flat)\ar[r]^{\tilde\theta_r}\ar[u]^{\times\phi^{r+1}(\lambda_{r+1})}& W_r(A)\ar[u]_V\\
}
\]

\end{lemma}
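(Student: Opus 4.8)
The plan is to prove the three squares in the second row (for $\tilde\theta_r$) first, and then transfer them to the first row (for $\theta_r$) using the defining relation $\theta_r=\tilde\theta_r\circ\phi^r$ together with the fact that $\phi$ is a ring automorphism of $W(A^\flat)$ (it is the Witt--vector Frobenius of the perfect ring $A^\flat$). Indeed, substituting $\theta_s=\tilde\theta_s\circ\phi^s$ into each $\theta_r$-square and cancelling the resulting powers of $\phi$ produces exactly the corresponding $\tilde\theta_r$-square; this is precisely the ``equivalently'' clause of the statement, so it suffices to treat the second row.

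The $F$-compatibility of $\tilde\theta_r$ (the middle square, with identity on the left) is immediate from the construction: by definition $\tilde\theta_r$ is the isomorphism $W(A^\flat)\Isoto\projlim_{r\sub{ wrt }F}W_r(A)$ of Lemma~\ref{lemma_witt_alg_1} followed by projection onto the $r$-th factor, and the transition maps of that inverse system are precisely the Frobenii $F\colon W_{r+1}(A)\to W_r(A)$; hence $F\circ\tilde\theta_{r+1}=\tilde\theta_r$ on the nose. The $R$-compatibility (left square, with $\phi^{-1}$ on the left) is the one genuine computation. Both $R\circ\tilde\theta_{r+1}$ and $\tilde\theta_r\circ\phi^{-1}$ are ring homomorphisms $W(A^\flat)\to W_r(A)$ which are continuous for the $p$-adic topologies, so since $W(A^\flat)$ is $p$-adically complete and separated and is topologically generated by its Teichm\"uller representatives $[x]$, $x\in A^\flat$, it suffices to check the identity on each such $[x]$. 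On one side, Lemma~\ref{lemma_theta} gives $\tilde\theta_{r+1}([x])=[x^{(r+1)}]$, and since the restriction map carries a Teichm\"uller representative to a Teichm\"uller representative we get $R(\tilde\theta_{r+1}[x])=[x^{(r+1)}]$ in $W_r(A)$. On the other side, under the monoid identification $A^\flat\cong\projlim_{y\mapsto y^p}A$ of Lemma~\ref{lemma_monoids} the inverse Frobenius is the shift $\phi^{-1}\bigl((x^{(i)})_{i\ge0}\bigr)=(x^{(i+1)})_{i\ge0}$ (the evident $p$-th root), so Lemma~\ref{lemma_theta} applied to $y=\phi^{-1}(x)$ gives $\tilde\theta_r(\phi^{-1}[x])=[(\phi^{-1}x)^{(r)}]=[x^{(r+1)}]$. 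The two sides agree, proving the square.

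For the $V$-compatibility of $\tilde\theta_r$ (right square, with multiplication by $\phi^{r+1}(\lambda_{r+1})$ on the left), let $\lambda_{r+1}\in W(A^\flat)$ satisfy $\theta_{r+1}(\lambda_{r+1})=V(1)$ and set $\mu_{r+1}:=\phi^{r+1}(\lambda_{r+1})$, so that $\tilde\theta_{r+1}(\mu_{r+1})=V(1)$ because $\theta_{r+1}=\tilde\theta_{r+1}\circ\phi^{r+1}$. Then for every $a\in W(A^\flat)$,
\[
V(\tilde\theta_r(a))=V\bigl(F(\tilde\theta_{r+1}(a))\bigr)=V(1)\cdot\tilde\theta_{r+1}(a)=\tilde\theta_{r+1}(\mu_{r+1})\cdot\tilde\theta_{r+1}(a)=\tilde\theta_{r+1}(\mu_{r+1}\,a),
\]
where the first step is the $F$-square just established, the second is the Witt identity $V(F(z))=V(1)\cdot z$ (the special case $x=1$ of the projection formula $V(x\,F(y))=V(x)\,y$), and the last uses that $\tilde\theta_{r+1}$ is multiplicative; this is exactly the third square of the second row. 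Finally one transfers all three squares to $\theta_r$ through $\theta_r=\tilde\theta_r\circ\phi^r$ as in the first paragraph, which uses only that $\phi$ is a ring homomorphism. I expect the $R$-square to be the only real obstacle: its content is understanding how the nested inverse limits of Lemma~\ref{lemma_witt_alg_1} encode the Frobenius $\phi$, and the explicit Teichm\"uller formula of Lemma~\ref{lemma_theta} is exactly what reduces it to the short calculation above.
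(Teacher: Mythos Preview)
Your proof is correct and follows essentially the same route as the paper: you prove the $\tilde\theta_r$-diagrams first (the paper does too), note that the $F$-square is immediate from the definition of $\tilde\theta_r$ as a projection from the $F$-limit, and derive the $V$-square from the $F$-square together with the identity $V(F(z))=V(1)\cdot z$ --- exactly as in the paper. The only tactical difference is in the $R$-square: the paper argues that under the chain of isomorphisms of Lemma~\ref{lemma_witt_alg_1} the automorphism $\phi^{-1}$ on $W(A^\flat)$ corresponds to the map induced by $R$ on $\projlim_{r\text{ wrt }F}W_r(A)$, whereas you verify the identity directly on Teichm\"uller lifts via the formula $\tilde\theta_r([x])=[x^{(r)}]$ of Lemma~\ref{lemma_theta}; both arguments are short and equivalent in content.
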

\begin{proof}
We check the second set of diagrams, since it is these we will use when constructing Witt complexes (even though it is the first set of diagrams which initially appear more natural). Under the chain of isomorphisms of Lemma \ref{lemma_witt_alg_1} defining $W(A^\flat)\isoto\projlim_{r\sub{ wrt }F}W_r(A)$ it is easy to check that the action of $\phi^{-1}$ on $W(A^\flat)$ corresponds to that of the restriction map $R$ on $\projlim_{r\sub{ wrt }F}W_r(A)$. That is, the diagram
\[\xymatrix{
W(A^\flat)\ar[d]_{\phi^{-1}}\ar[r]^{\tilde\theta_{r+1}}& W_{r+1}(A)\ar[d]^R\\
W(A^\flat)\ar[r]^{\tilde\theta_r}& W_r(A)\\
}\]
commutes. Commutativity of the second diagram follows from the definition of the maps $\tilde\theta_r$. Finally, using commutativity of the second diagram, the commutativity of the third diagram follows from the fact that $VF$ is multiplication by $V(1)$ on $W_{r+1}(A)$.
\end{proof}

\subsection{Perfectoid rings}\label{subsection_perfectoid}
The next goal is to define what it means for $A$ to be perfectoid, which requires discussing surjectivity and injectivity of the Frobenius on $A/pA$. We do this in greater generality than we require, but this greater generality reveals the intimate relation to the map $\theta$ and its generalisations $\theta_r$, $\tilde\theta_r$.

\begin{lemma}\label{lemma_frobenius_surjectivity}
Let $A$ be a ring which is $\pi$-adically complete with respect to some element $\pi\in A$ such that $\pi^p$ divides $p$. Then the following are equivalent:
\begin{enumerate}\itemsep0pt
\item Every element of $A/\pi pA$ is a $p^\sub{th}$-power.
\item Every element of $A/pA$ is a $p^\sub{th}$-power.
\item Every element of $A/\pi^pA$ is a $p^\sub{th}$-power.
\item The Witt vector Frobenius $F:W_{r+1}(A)\to W_r(A)$ is surjective for all $r\ge1$.
\item $\theta_r:W(A^\flat)\to W_r(A)$ is surjective for all $r\ge1$.
\item $\theta:W(A^\flat)\to A$ is surjective.
\end{enumerate}
Moreover, if these equivalent conditions hold then there exist $u, v\in A^\times$ such that $u\pi$ and $vp$ admit systems of $p$-power roots in $A$.
\end{lemma}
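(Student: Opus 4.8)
The plan is to prove the equivalences along the cycle
\[\text{(i)}\Rightarrow\text{(ii)}\Rightarrow\text{(iii)}\Rightarrow\text{(vi)}\Rightarrow\text{(ii)}\Rightarrow\text{(i)},\qquad\text{(ii)}\Rightarrow\text{(iv)}\Rightarrow\text{(v)}\Rightarrow\text{(vi)},\]
and to extract the closing assertion from the argument for (iii)$\Rightarrow$(vi). Fix $w\in A$ with $p=\pi^pw$, so $\pi pA\subseteq pA\subseteq\pi^pA$; then (i)$\Rightarrow$(ii)$\Rightarrow$(iii) are just the surjections $A/\pi pA\twoheadrightarrow A/pA\twoheadrightarrow A/\pi^pA$, and (vi)$\Rightarrow$(ii) follows by reducing $\theta(\sum_ip^i[x_i])=a$ modulo $p$: one gets $a\equiv x_0^{(0)}\pmod p$, and $x_0^{(0)}=(x_0^{(1)})^p$ by the description $A^\flat=\projlim_{x\mapsto x^p}A$ of Lemma~\ref{lemma_monoids}. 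The step (ii)$\Rightarrow$(i) is where the exponent in ``$\pi^p\mid p$'' is genuinely used: given $a\in A$ write $a=b^p+pc$ by (ii), use (ii) again to solve $y^p=wc+pc'$, and note that every cross term $\binom pi b^{p-i}\pi^iy^i$ with $1\le i\le p-1$ lies in $p\pi A$, so that $(b+\pi y)^p\equiv b^p+\pi^py^p=b^p+pc+p\pi^pc'\equiv a\pmod{\pi pA}$.

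For the Witt-theoretic conditions, observe first that $A$ is simultaneously $\pi$-, $\pi^p$- and $p$-adically complete, since $(\pi)^p\subseteq(p)\subseteq(\pi)$. Then (ii)$\Rightarrow$(iv) I would prove by induction on $r$: the case $r=1$ is exactly the surjectivity of $F\colon W_2(A)\to W_1(A)=A$, $(v_0,v_1)\mapsto v_0^p+pv_1$, which is (ii), and the inductive step reduces, via the commuting square relating $F$ to the restriction $R$ together with the exact sequence $0\to A\xrightarrow{V^{r-1}}W_r(A)\xrightarrow{R}W_{r-1}(A)\to0$, to showing that each $V^{r-1}(a)$ lies in the image of $F\colon W_{r+1}(A)\to W_r(A)$ --- which is arranged after writing $a=b_0^p+pb_1$ (by (ii)) by a computation with the identities $FV=p$ and $x\cdot V(y)=V(F(x)y)$; alternatively (v) can be approached directly from (vi) using the explicit formula $\tilde\theta_r([x])=[x^{(r)}]$ of Lemma~\ref{lemma_theta}. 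Granting (iv), the equivalence (iv)$\Leftrightarrow$(v) follows from Lemma~\ref{lemma_witt_alg_1}: surjectivity of all transition maps $F$ makes the projection $\projlim_{r,F}W_r(A)\to W_r(A)$ surjective, this map being identified with $\tilde\theta_r$, while $\theta_r=\tilde\theta_r\circ\phi^r$ with $\phi$ invertible; conversely a preimage of $w\in W_r(A)$ in the inverse limit has its $W_{r+1}$-component mapping onto $w$ under $F$. Since $\theta=\theta_1$, condition (v) at $r=1$ is (vi), and the second cycle closes.

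The heart of the lemma is (iii)$\Rightarrow$(vi), which also produces the two pseudo-uniformizers. As $A/\pi^pA$ semiperfect forces $A/\pi^jA$ semiperfect for $1\le j\le p$, one can build (first modulo $\pi^p$, then refining $\pi$-adically) a compatible system of $p$-power roots of $\pi$, i.e.\ an element $\pi^\flat\in A^\flat$ with $(\pi^\flat)^{(0)}\equiv\pi\pmod{\pi^pA}$; since $p\ge2$ this congruence forces $(\pi^\flat)^{(0)}=\pi\cdot(\text{unit})$, because $1+\pi A\subseteq A^\times$ by completeness. Hence there is a unit $u$ with $u\pi=\theta([\pi^\flat])$ admitting a $p$-power root system --- the first half of ``moreover''. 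To see $\theta$ surjective, take $a\in A$; using that $A/\pi A$ is semiperfect one finds $x_0\in A^\flat$ with $x_0^{(0)}\equiv a\pmod{\pi A}$, rewrites $a-\theta([x_0])=\theta([\pi^\flat])\cdot c_1$, and recurses on $c_1$; the resulting series $\sum_k[(\pi^\flat)^kx_k]$ converges in $W(A^\flat)$ and is sent by $\theta$ to $a$. For the second pseudo-uniformizer, put $\varpi:=u\pi$ and $w':=p/\varpi^p\in A$, and use the now-available condition (ii) to produce $(w')^\flat\in A^\flat$ with $((w')^\flat)^{(0)}=w'\cdot(\text{unit }v)$ --- the correction is a unit since $p\cdot(\text{anything})\in w'A$ --- so that $vp=\varpi^p\cdot((w')^\flat)^{(0)}$ is a product of two elements with $p$-power root systems, hence has one itself. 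I expect the main obstacle to be the bookkeeping inside (iii)$\Rightarrow$(vi) --- lining up the $\pi$-adic precisions and checking convergence and $\theta$-compatibility of the Teichm\"uller series; once the divisibilities among $p$, $\pi p$, $\pi^p$ are sorted out the remaining steps are short.
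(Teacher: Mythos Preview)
Most of your argument is correct, and in places --- your direct (ii)$\Rightarrow$(i), and your (iii)$\Rightarrow$(vi) via the construction of $\pi^\flat$ followed by a Teichm\"uller series --- you take routes the paper does not (the paper proves (iii)$\Rightarrow$(i) by an infinite-sum trick, never argues (iii)$\Rightarrow$(vi) directly, and organizes the ``moreover'' differently but equivalently).

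The genuine gap is (ii)$\Rightarrow$(iv). Your inductive reduction to ``$V^{r-1}(a)\in\operatorname{Im}(F\colon W_{r+1}(A)\to W_r(A))$'' is correct, and writing $a=b_0^p+pb_1$ handles one piece via $V^{r-1}(pb_1)=F(V^r(b_1))$. But $V^{r-1}(b_0^p)\in\operatorname{Im}(F)$ does \emph{not} follow from $FV=p$ and $x\cdot V(y)=V(F(x)y)$: already for $r=2$, the equation $F(x_0,x_1,x_2)=(0,b_0^p)$ forces $x_0^p+px_1=0$, i.e.\ $x_0^p\in pA$, a constraint you cannot meet for a ``generic'' $x_0$ using only the listed identities. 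Your alternative ``(vi)$\Rightarrow$(v) directly from $\tilde\theta_r([x])=[x^{(r)}]$'' fails for the same underlying reason: surjectivity of $\theta$ only gives $A^\flat\twoheadrightarrow A/pA$, not $A^\flat\to A$, so an arbitrary $a\in A$ need not be of the form $x^{(r)}$; and elements of $W_r(A)$ are not sums $\sum p^i[c_i]$ of Teichm\"uller lifts when $A$ is imperfect, so the formula on $[x]$ alone does not reach all of $W_r(A)$.

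The paper itself treats (ii)$\Rightarrow$(iv) as the one hard direction and outsources it: after establishing the ``moreover'' (which you also do), it takes $\pi'$ with $(\pi')^p=vp$, applies (ii)$\Rightarrow$(i) with $\pi'$ in place of $\pi$ to get surjectivity of Frobenius on $A/Ip$ where $I=\{a:a^p\in pA\}$, and then invokes Davis--Kedlaya's ``(xiv)$'\Rightarrow$(ii)''. (Morally, the $\pi'$ is exactly what lets one solve $x_0^p\in pA$ nontrivially in the computation above.) Your sketch does not supply a substitute for this step.
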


\begin{proof}
The implications (i)$\Rightarrow$(ii)$\Rightarrow$(iii) are trivial since $\pi pA\subseteq pA\subseteq\pi^pA$. (v)$\Rightarrow$(vi) is also trivial since $\theta=\theta_1$.

(iii)$\Rightarrow$(i): a simple inductive argument allows us to write any given element $x\in A$ as an infinite sum $x=\sum_{i=0}^\infty x_i^p\pi^{pi}$ for some $x_i\in A$; but then $x\equiv(\sum_{i=0}^\infty x_i\pi^i)^p$ mod $p\pi A$.

(iv)$\Rightarrow$(ii): Clear from the fact that the Frobenius $F:W_2(A)\to W_1(A)=A$ is explicitly given by $(\al_0,\al_1)\mapsto \al_0^p+p\al_1$.

(iv)$\Rightarrow$(v): The hypothesis states that the transition maps in the inverse system $\projlim_{r\sub{ wrt }F}W_r(A)$ are surjective, which implies that each map $\tilde\theta_r$ is surjective, and hence that each map $\theta_r$ is surjective.

(vi)$\Rightarrow$(ii): Clear since any element of $A$ in the image of $\theta$ is a $p^\sub{th}$-power mod $p$.

It remains to show that (ii)$\Rightarrow$(iv), but we will first prove the ``moreover'' assertion using only (i) (which we have shown is equivalent to (ii)). Applying Lemma \ref{lemma_monoids} to both $A$ and $A/\pi pA$ implies that the canonical map $\projlim_{x\mapsto x^p}A\to \projlim_{x\mapsto x^p}A/\pi pA$ is an isomorphism. Applying (i) repeatedly, there therefore exists $\omega\in\projlim_{x\mapsto x^p}A$ such that $\omega^{(0)}\equiv \pi$ mod $\pi pA$ (resp.~$\equiv p$ mod $\pi pA$). Writing $\omega^{(0)}=\pi+\pi px$ (resp.~$\omega^{(0)}=p+\pi px$) for some $x\in A$, the proof of the ``moreover'' assertion is completed by noting that $1+p x\in A^\times$ (resp.~$1+\pi x\in A^\times$).

(ii)$\Rightarrow$(iv): By the ``moreover'' assertion, there exist $\pi'\in A$ and $v\in A^\times$ satisfying $\pi'^p=vp$. Note that $A$ is $\pi'$-adically complete, and so we may apply the implication (ii)$\Rightarrow$(i) for the element $\pi'$ to deduce that every element of $A/\pi'pA$ is a $p^\sub{th}$-power; it follows that every element of $A/Ip$ is a $p^\sub{th}$-power, where $I$ is the ideal $\{a\in A:a^p\in pA\}$. Now apply implication ``(xiv)$^\prime\Rightarrow$(ii)'' of Davis--Kedlaya \cite{DavisKedlaya2014}.
\end{proof}

\begin{lemma}\label{lemma_inj_of_Frob}
Let $A$ be a ring which is $\pi$-adically complete with respect to some element $\pi\in A$ such that $\pi^p$ divides $p$, and assume that the equivalent conditions of the previous lemma are true.
\begin{enumerate}
\item If $\ker\theta$ is a principal ideal of $W(A^\flat)$, then 
\begin{enumerate}\itemsep0pt
\item $\Phi:A/\pi A\to A/\pi^pA$, $a\mapsto a^p$, is an isomorphism;
\item any generator of $\ker\theta$ is a non-zero-divisor;\footnote{\label{footnote_int_dom}In all our cases of interest the ring $A$ will be an integral domain, in which case it may be psychologically comforting to note that $A^\flat$ and $W(A^\flat)$ are also integral domains. {\em Proof}. The ring $W(A^\flat)$ is $p$-adically separated, satisfies $W(A^\flat)/p=A^\flat$, and $p$ is a non-zero-divisor in it (these properties all follow simply from $A^\flat$ being perfect). So, once we show that $A^\flat$ is an integral domain, it will easily follow that $W(A^\flat)$ is also an integral domain. But the fact that $A^\flat$ is an integral domain follows at once from the same property of $A$ using the isomorphism of monids $\projlim_{x\mapsto x^p}A\isoto A^\flat$ which already appeared in Lemma \ref{lemma_witt_alg_1}. $\square$
}
\item an element $\xi\in\ker\theta$ is a generator if and only if it is ``distinguished'', i.e., its Witt vector expansion $\xi=(\xi_0,\xi_1,\dots)$ has the property that $\xi_1$ is a unit of $A^\flat$.
\item any element $\xi\in\ker\theta$ satisfying $\theta_r(\xi)=V(1)\in W_r(A)$ for some $r>1$ is distinguished (and such an element exists for any given $r>1$).
\end{enumerate}
\item Conversely, if $\pi$ is a non-zero-divisor and $\Phi:A/\pi A\to A/\pi^pA$ is an isomorphism (which is automatic if $A$ is integrally closed in $A[\tfrac1\pi]$), then $\ker\theta$ is a principal ideal.
\end{enumerate}
\end{lemma}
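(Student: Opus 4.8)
The plan is to prove the four items of (i) in the order (d), (c), (b), (a), and then (ii). Throughout I would fix $\varpi\in A^\flat$ with $\theta([\varpi])=\varpi^{(0)}=u\pi$ for a unit $u\in A^\times$, as furnished by the ``moreover'' clause of Lemma~\ref{lemma_frobenius_surjectivity}; recall from that lemma that every $\theta_r$ is surjective, so $W(A^\flat)/\ker\theta\Isoto A$ and, modulo $p$, $A^\flat=W(A^\flat)/p\onto A/pA$ via $x\mapsto x^{(0)}\bmod p$, with kernel $\mathfrak a:=\{x\in A^\flat:x^{(0)}\in pA\}$; recall too that $A^\flat$ is perfect and $\mathfrak a$-adically complete.

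\emph{(d).} Surjectivity of $\theta_r$ produces $\xi$ with $\theta_r(\xi)=V(1)$; since $R^{r-2}V(1)=V(1)\in W_2(A)$ it suffices to work with $\theta_2(\xi)=V(1)$, whence also $\theta_1(\xi)=R\theta_2(\xi)=0$. I would then compute the two ghost components of $\theta_2(\xi)\in W_2(A)$ by expanding $\theta_2([\xi_0]+p[\xi_1]+\cdots)$ term by term using Lemma~\ref{lemma_theta}: the $0$-th ghost is $\theta_1(\xi)=0$ and the $1$-st is $\sum_ip^i(\xi_i^{(0)})^p=\theta_1(\phi\xi)$; comparing with the ghost $(0,p)$ of $V(1)$ gives $\theta_1(\phi\xi)=p$. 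Since $\theta_1(\xi)=0$ forces $\xi_0^{(0)}\in pA$, hence $(\xi_0^{(0)})^p\in p^pA$, dividing the relation $(\xi_0^{(0)})^p+p(\xi_1^{(0)})^p+\cdots=p$ by $p$ (harmless when $A$ is $p$-torsion-free, cf.\ the footnote; general case [BMS]) yields $(\xi_1^{(0)})^p\equiv1\bmod pA$, so $\xi_1^{(0)}\in A^\times$, hence $\xi_1\in(A^\flat)^\times$ (a $p^i$-th root of a unit is a unit). Thus $\xi$ is distinguished; this also establishes the existence assertion and shows that \emph{any} $\xi\in\ker\theta$ with $\theta_r(\xi)=V(1)$ is distinguished.

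\emph{(c) and (b).} Given a generator $\zeta$ of $\ker\theta$, write the distinguished $\xi$ above as $\xi=\zeta w$; reducing $\xi=\zeta w$ modulo $p^2$ and comparing Teichm\"uller coefficients gives $\xi_1=\zeta_0w_1+\zeta_1w_0$ in $A^\flat$. Since $\theta(\zeta)=0$ forces $\zeta_0\in\mathfrak a$, and $\xi_1\in(A^\flat)^\times$, this exhibits $\zeta_1w_0$ --- hence $\zeta_1$ --- as a unit modulo $\mathfrak a$, hence a unit of $A^\flat$ by $\mathfrak a$-adic completeness: every generator of $\ker\theta$ is distinguished. Conversely, a distinguished $\xi'\in\ker\theta=(\zeta)$ writes as $\xi'=\zeta w'$, and the same computation $\xi'_1=\zeta_0w'_1+\zeta_1w'_0$, now with $\zeta_1$ known to be a unit, forces $w'_0$ --- hence $w'$ (as $w'=[w'_0]+p(\cdots)$) --- to be a unit of $W(A^\flat)$, so $\xi'$ generates $\ker\theta$; this proves (c). Then (b) follows: a generator is distinguished by (c), and a distinguished element of $W(A^\flat)$ is a non-zero-divisor --- transparent when $A^\flat$, hence $W(A^\flat)$, is a domain (all cases of interest), and part of the general theory of distinguished elements otherwise.

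\emph{(a) and (ii).} For (a), surjectivity of $\Phi$ is Lemma~\ref{lemma_frobenius_surjectivity}(iii). For injectivity I would show the canonical ring map $A^\flat=\projlim_{x\mapsto x^p}A\to A/\pi A$, $x\mapsto x^{(0)}\bmod\pi$, is surjective (Frobenius is surjective on $A/\pi A$ by (iii)) with kernel exactly $\varpi A^\flat$: the inclusion $\supseteq$ is clear, and for $\subseteq$, if $x^{(0)}\in\pi A$ then each $x^{(i)}/\varpi^{(i)}$ is a $p^i$-th root of $x^{(0)}/\varpi^{(0)}\in A$, hence lies in $A$ provided $A$ is integrally closed in $\Frac A$, so $(x^{(i)}/\varpi^{(i)})_i\in A^\flat$ witnesses $x\in\varpi A^\flat$; this gives $A^\flat/\varpi A^\flat\Isoto A/\pi A$, and the identical argument with $\varpi^p$ (using $\pi^p\mid p$ for well-definedness) gives $A^\flat/\varpi^pA^\flat\Isoto A/\pi^pA$. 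As $A^\flat$ is perfect, $\phi$ is bijective with $\phi(\varpi A^\flat)=\varpi^pA^\flat$, so it induces an isomorphism $A^\flat/\varpi A^\flat\Isoto A^\flat/\varpi^pA^\flat$, which under these identifications is exactly $\Phi$. For (ii): the parenthetical is clear ($a^p\in\pi^pA\Rightarrow(a/\pi)^p\in A\Rightarrow a/\pi$ integral over $A\Rightarrow a\in\pi A$); to produce a generator of $\ker\theta$ I would take $\varpi^\flat\in A^\flat$ with $(\varpi^\flat)^{(0)}=vp$ ($v\in A^\times$) and a unit lift $\widetilde{v^{-1}}\in W(A^\flat)^\times$, and check $\xi:=p-\widetilde{v^{-1}}[\varpi^\flat]$ lies in $\ker\theta$ and, from its Teichm\"uller expansion, is distinguished ($\xi_0$ a unit multiple of $\varpi^\flat$, $\xi_1=1+(\text{topologically nilpotent})$); then $W(A^\flat)/\xi$ is $p$-torsion-free and $p$-adically complete with $(W(A^\flat)/\xi)/p=A^\flat/\xi_0\Isoto A/pA$, so the $\theta$-induced surjection $W(A^\flat)/\xi\onto A$ is an isomorphism modulo $p$ between $p$-complete, $p$-torsion-free rings, hence an isomorphism, i.e.\ $\ker\theta=(\xi)$. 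The main obstacle is exactly the injectivity in (a) and the principality in (ii) without the integral-closedness hypothesis (and the $p$-torsion bookkeeping in (d)--(b) for non-domains): all of these amount to transporting the automatic injectivity of Frobenius on the perfect ring $A^\flat$ across $\theta$ to the mixed-characteristic ring $A$, which is where [BMS] invokes the structure theory of perfectoid rings --- in particular the Davis--Kedlaya theorem already used in Lemma~\ref{lemma_frobenius_surjectivity}.
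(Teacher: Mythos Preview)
Your overall strategy---produce a distinguished element of $\ker\theta$ via $\theta_r(\xi)=V(1)$, then bootstrap (c), (b), (a), (ii) from it---matches the argument in [BMS] (to which the paper simply refers), but there are two genuine gaps and one slip worth noting.

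\textbf{The main gap is in (a).} Your injectivity argument for $\Phi$ relies on integral closedness of $A$ (``hence lies in $A$ provided $A$ is integrally closed in $\Frac A$''), and you acknowledge this at the end. But (i)(a) is stated under the sole hypothesis that $\ker\theta$ is principal; integral closedness appears only parenthetically in (ii). The [BMS] argument uses principality directly: once $\ker\theta=(\xi)$ with $\xi=p+[\pi^\flat]^p x$ (your $\varpi$ playing the role of $\pi^\flat$), the isomorphism $\theta:W(A^\flat)/\xi\isoto A$ yields, after modding out by $[\pi^\flat]^p$ and using $\xi\equiv p\bmod[\pi^\flat]^p$, an isomorphism $A^\flat/\pi^{\flat p}A^\flat\isoto A/\pi^p A$. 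Under this identification $\Phi:A/\pi A\to A/\pi^p A$ becomes the $p$-th power map $A^\flat/\pi^\flat\to A^\flat/\pi^{\flat p}$, which is bijective because $A^\flat$ is perfect. This is the step you are missing: the principality of $\ker\theta$ is exactly what lets you transport the question to $A^\flat$ without any integrality hypothesis on $A$.

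\textbf{The secondary gap is in (d).} Your ghost-component manipulation, dividing $(\xi_0^{(0)})^p+p(\xi_1^{(0)})^p+\cdots=p$ by $p$, requires $A$ to be $p$-torsion-free. The [BMS] argument avoids ghosts entirely: the commutative square
\[
\xymatrix{W(A^\flat)\ar[r]^{\theta_r}\ar[d]&W_r(A)\ar[d]\\ W_r(A^\flat)\ar[r]&W_r(A/pA)}
\]
(appearing just after Lemma~\ref{lemma_theta}) shows directly that the image of $\xi=(\xi_0,\xi_1,\dots)$ in $W_r(A/pA)$ is the image of $V(1)=(0,1,0,\dots)$, so $\xi_1^{(0)}\equiv 1\bmod pA$, hence $\xi_1\in(A^\flat)^\times$, with no torsion assumption.

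\textbf{A slip in (c).} The Witt multiplication formula gives $\xi_1=\zeta_0^p w_1+\zeta_1 w_0^p$, not $\zeta_0 w_1+\zeta_1 w_0$. Your conclusion survives: $\zeta_0\in\mathfrak a$ implies $\zeta_0^p\in\mathfrak a$, so $\zeta_1 w_0^p$ (hence $\zeta_1$, hence $w_0$) is a unit modulo $\mathfrak a$; but the formula should be corrected.
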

\begin{proof}
Rather than copying the proof here, we refer the reader to Lem.~3.10 and Rmk.~3.11 of [BMS]. The only assertion which is not proved there is the paranthetical assertion in (ii), for which we just note that if $A$ is integrally closed in $A[\tfrac1\pi]$, then $\Phi$ is automatically injective: indeed, if $a^p$ divides $\pi^p$, then $(a/\pi)^p\in A$ and so $a/\pi\in A$.
\comment{
Since multiplying $\pi$ by a unit does not affect any of the assertions, we may assume by the previous lemma that $\pi$ admits a compatible sequence of $p$-power roots, i.e., that there exists $\pi^\flat\in A^\flat$ satisfying $\pi^{\flat(0)}=\pi$.

We begin by constructing a distinguished element of $\ker\theta$. By the hypothesis that $\pi^p$ divides $p$, and Lemma \ref{lemma_frobenius_surjectivity}(vi), it is possible to write $p=\pi^p\theta(-x)$ for some $x\in W(A^\flat)$, whence $\xi:=p+[\pi^\flat]^px$ belongs to $\ker\theta$ (recall here that $\theta([\pi^\flat])=\pi$). Then there is a commutative diagram
\[\xymatrix{
W(A^\flat)/\xi\ar[r]^{\theta}\ar[d]&A\ar[d]\\
W(A^\flat)/(\xi,[\pi^\flat]^p)\ar[r]& A/\pi^p A
}\]
in which the lower left entry identifies with $W(A^\flat)/\pid{p,[\pi^\flat]^p}=A^\flat/\pi^{\flat p} A$ and the lower horizontal arrow identifies with the map $A^\flat/\pi^{\flat p} A^\flat\to A/\pi^p A$ induced by the canonical projection $A^\flat=\projlim_\phi A/\pi^p A\to A/\pi^p A$.

Now assume that $\ker\theta$ is principal and let $\xi^\prime$ be any generator. We will prove (a)--(d).

(c): We prove first that $\ker\theta$ is generated by the element $\xi$, and that $\xi'$ is distinguished. Let $\xi^\prime = (\xi_0^\prime,\xi_1^\prime,\dots)\in W(A^\flat)$ be its Witt vector expansion, write $\xi=\xi^\prime a$ for some $a\in W(A^\flat)$, and consider the resulting Witt vector expansions:
\[
(\pi^{\flat p}x_0,1+\pi^{\flat p^2}x_1,\dots)=p+[\pi^\flat]^px=\xi=\xi^\prime a=(\xi_0^\prime,\xi_1^\prime,\dots)(a_0,a_1,\dots)=(\xi_0^\prime a_0,\xi_0^{\prime p}a_1+\xi_1^\prime a_0^p,\dots)
\]
It follows that $\xi_1^\prime a_0^p=1+\pi^{\flat p^2}x_1-\xi_0^{\prime p}a_1$. We claim that this is a unit of $A^\flat$. To check this, using that $A^\flat = \varprojlim_\phi A/\pi A$, it is enough to check that the image of $\xi_1^\prime a_0^p$ in $A/\pi A$ is a unit. But this image is simply $1$, as both $\pi^\flat$ and $\xi_0^\prime$ have trivial image in $A/\pi A$. So both $\xi_1^\prime$ and $a_0$ are units of $A^\flat$. In particular, $\xi'$ is distinguished; also $a\in W(A^\flat)^\times$, thereby showing that $\xi=\xi^\prime a$ is also a generator of $\ker\theta$, as required. But, more generally, this argument also shows that  any distinguished element of $\ker\theta$ is equal to a unit times $\xi'$, hence is also a generator.

(a): Since $\theta: W(A^\flat)/\xi\to A$ is an isomorphism by assumption, then so is $A^\flat/\pi^{\flat p}A\to A/\pi^p A$ by the displayed diagram above. The map $\Phi: A/\pi A\to A/\pi^p A$ gets identified with $\Phi: A^\flat/\pi^\flat A^\flat\to A^\flat/\pi^{\flat p} A^\flat$, which is therefore also an isomorphism.

(b): It is sufficient to show that the particular element $\xi$ is a non-zero-divisor. So suppose that $b\in W(A^\flat)$ satisfies $(p+[\pi^\flat]^px)b=0$. Then also $(p^r+[\pi^\flat]^{pr}x)b=0$ for any odd $r\ge1$, since $p+[\pi^\flat]^px$ divides $p^r+[\pi^\flat]^{pr}x$, and so $p^rb\in[\pi^\flat]^{pr}W(A^\flat)$. Using this to examine the Witt vector expansion of $b=(b_0,b_1,\dots)$ shows that $b_i^{p^r}\in\pi^{\flat p^{r+i+1}r} A^\flat$ for each $i\ge0$; hence $b_i\in\pi^{\flat p^{i+1}r} A^\flat$ since $A^\flat$ is perfect. As this holds for all odd $r\ge1$, and as $A^\flat$ is $\pi^\flat$-adically complete and separated, it follows that $b_i=0$ for all $i\ge0$, i.e., $b=0$.

(d): Suppose $\zeta\in W(A^\flat)$ satisfy $\theta_r(\zeta)=V(1)$ in $W_r(A)$ for some $r>1$ (for any fixed $r>1$, such an element $\zeta$ does exist by Lemma \ref{lemma_frobenius_surjectivity}(v)), and write $\zeta=(\zeta_0,\zeta_1,\dots)$. Noting that $V(1)=(0,1,0,\dots,0)$, the first diagram of Lemma \ref{lemma_theta_r_diagrams} shows that $\theta(\zeta)=0$, while the commutative diagram immediately before Lemma \ref{lemma_theta_r_diagrams} shows that $\zeta_1^{(0)}\equiv1$  mod $p A$, whence $\zeta_1$ is a unit of $A^\flat$ and so $\zeta$ is distinguished.

Conversely, to prove part (ii), assume that $\Phi:A/\pi A\to A/\pi^p A$ is an isomorphism and that $\pi$ is a non-zero divisor in $A$. Note first that the first condition implies that for all $n\geq 0$, $A/\pi^{1/p^n} A\to A/\pi^{1/p^{n-1}} A$ is an isomorphism, by taking the quotient modulo $\pi^{1/p^n}$. This implies that the kernel of $A^\flat\to A/\pi^p A$ is generated by $\pi^\flat$: Indeed, given $x=(x^{(0)},x^{(1)},\ldots)\in A^\flat = \varprojlim_{x\mapsto x^p} A$ with $x^{(0)}\in \pi^p A$, one inductively checks that $x^{(n+1)}$ is divisible by $\pi^{1/p^n}$, using that $\phi: A/\pi^{1/p^n}\to A/\pi^{1/p^{n-1}}$ is an isomorphism. This implies that $x$ is divisible by $\pi^\flat$. Thus, we see that $A^\flat/\pi^{\flat p} A^\flat\to A/\pi A$ is an isomorphism. Now let $x\in W(A^\flat)$ satisfy $\theta(x)=0$. Then one can write $x=\xi y_0 + [\pi^\flat]^p x_1$, where $\pi^p \theta(x_1)=\theta([\pi^\flat]^p x_1)=0$. As $\pi$ is a nonzero divisor, this implies $\theta(x_1)=0$, so we can go inductively write $x=\xi(y_0+[\pi^\flat]^p y_1 + \ldots)$, showing that $\ker\theta$ is generated by $\xi$.

Finally, note that if $A$ is integrally closed in $A[\tfrac1\pi]$, then $\Phi$ is automatically injective: indeed, if $a^p$ divides $\pi^p$, then $(a/\pi)^p\in A$ and so $a/\pi\in A$.
}
\end{proof}

We can now define a perfectoid ring:

\begin{definition}
A ring $A$ is {\em perfectoid} if and only if the following three conditions hold:
\begin{itemize}\itemsep0pt
\item $A$ is $\pi$-adically complete for some element $\pi\in A$ such that $\pi^p$ divides $p$;
\item the Frobenius map $\phi: A/pA\to A/pA$ is surjective (equivalently, $\theta:W(A^\flat)\to A$ is surjective);
\item the kernel of $\theta: W(A^\flat)\to A$ is principal.
\end{itemize}
\end{definition}

\begin{remark}
The first condition of the definition could be replaced by the seemingly stronger, but actually equivalent and perhaps more natural, condition that ``$A$ is $p$-adically complete and there exists a unit $u\in A^\times$ such that $pu$ is a $p^\sub{th}$-power.'' Indeed, this follows from the final assertion of Lemma \ref{lemma_frobenius_surjectivity}.
\end{remark}

We return to the maps $\theta_r$, describing their kernels in the case of a perfectoid ring:

\begin{lemma}\label{lemma_ker_theta_r}
Suppose that $A$ is a perfectoid ring, and let $\xi\in W(A)$ be any element generating $\ker\theta$ (this exists by Lemma \ref{lemma_frobenius_surjectivity}). Then $\ker\theta_r$ is generated by the non-zero-divisor
\[
\xi_r:=\xi\phi^{-1}(\xi)\cdots\phi^{-(r-1)}(\xi)
\]
for any $r\ge1$, and so $\ker\tilde\theta_r$ is generated by the non-zero-divisor \[\tilde\xi_r:=\phi^r(\xi_r)=\phi(\xi)\cdots\phi^r(\xi).\]
\end{lemma}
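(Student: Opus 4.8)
\emph{Plan.} I would prove the statement for $\theta_r$ by induction on $r\ge 1$, and then read off the statement for $\tilde\theta_r$ formally. Indeed, since $\theta_r=\tilde\theta_r\circ\phi^r$ by definition, we have $\tilde\theta_r=\theta_r\circ\phi^{-r}$, hence $\ker\tilde\theta_r=\phi^r(\ker\theta_r)=\phi^r(\xi_r)W(A^\flat)=\tilde\xi_r W(A^\flat)$, using that $\phi$ is an automorphism of $W(A^\flat)$; moreover $\tilde\xi_r=\phi^r(\xi_r)=\phi(\xi)\cdots\phi^r(\xi)$ is a non-zero-divisor because $\xi$ is one (Lemma~\ref{lemma_inj_of_Frob}(i)) and $\phi$ is an automorphism. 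So everything reduces to the claim about $\theta_r$. I will use freely that $\theta_r$ is surjective (part of the definition of perfectoid, via Lemma~\ref{lemma_frobenius_surjectivity}), that $\xi$ and hence each $\phi^{-i}(\xi)$ and each $\xi_r=\prod_{i=0}^{r-1}\phi^{-i}(\xi)$ is a non-zero-divisor in $W(A^\flat)$, and the commutative diagrams of Lemma~\ref{lemma_theta_r_diagrams} in the forms $R\circ\theta_{r+1}=\theta_r$ and, iterating the second diagram $r$ times, $F^r\circ\theta_{r+1}=\theta\circ\phi^r$.

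The base case $r=1$ is precisely the hypothesis that $\xi=\xi_1$ generates $\ker\theta=\ker\theta_1$. For the inductive step I would assume $\ker\theta_r=\xi_rW(A^\flat)$, so that $\theta_r$ induces an isomorphism $W(A^\flat)/\xi_rW(A^\flat)\xrightarrow{\sim}W_r(A)$, and set $\xi_{r+1}=\xi_r\cdot\phi^{-r}(\xi)$. Since $R\circ\theta_{r+1}=\theta_r$ and $\xi_r\in\ker\theta_r$, the element $\theta_{r+1}(\xi_r)$ lies in $\ker\!\big(R\colon W_{r+1}(A)\to W_r(A)\big)=V^r(W_1(A))$, so I may write $\theta_{r+1}(\xi_r)=V^r(c)$ for a unique $c\in A$ (using that $V^r$ is injective with image exactly $\ker R$). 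The easy inclusion $\xi_{r+1}W(A^\flat)\subseteq\ker\theta_{r+1}$ then follows from the Witt-vector projection formula $V^r(x)\cdot y=V^r\!\big(x\cdot F^r(y)\big)$ together with $F^r\circ\theta_{r+1}=\theta\circ\phi^r$: indeed $\theta_{r+1}(\xi_{r+1})=V^r(c)\cdot\theta_{r+1}(\phi^{-r}(\xi))=V^r\!\big(c\cdot\theta(\xi)\big)=0$.

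For the reverse inclusion, $R\circ\theta_{r+1}=\theta_r$ already forces $\ker\theta_{r+1}\subseteq\ker\theta_r=\xi_rW(A^\flat)$, so I only need to pin down the submodule $\ker\theta_{r+1}$ inside $\xi_rW(A^\flat)$. Restricting $\theta_{r+1}$, and using the previous paragraph to see that it kills $\xi_{r+1}W(A^\flat)$, one obtains a $W(A^\flat)$-linear map
\[
g\colon \xi_rW(A^\flat)/\xi_{r+1}W(A^\flat)\To \ker\!\big(R\colon W_{r+1}(A)\to W_r(A)\big)=V^r(W_1(A)),
\]
whose kernel is exactly $\ker\theta_{r+1}/\xi_{r+1}W(A^\flat)$. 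Surjectivity of $\theta_{r+1}$ makes $g$ surjective: lifting $V^r(a)$ to some $x\in W(A^\flat)$, the relation $R\circ\theta_{r+1}=\theta_r$ gives $\theta_r(x)=0$, so $x\in\xi_rW(A^\flat)$ and $x$ maps onto $V^r(a)$ under $g$. Now identify the source with $A$ via $\xi_rw\mapsto\theta(\phi^r(w))$ — a well-defined isomorphism because $\xi_r$ is a non-zero-divisor and $\ker(\theta\circ\phi^r)=\phi^{-r}(\xi)W(A^\flat)$ — and the target with $A$ via $V^r$; the same projection-formula computation as above shows that under these identifications $g$ becomes multiplication by $c$ on $A$. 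Since $g$ is surjective, $c$ is a unit in $A$, hence $g$ is also injective, hence $\ker\theta_{r+1}=\xi_{r+1}W(A^\flat)$; and $\xi_{r+1}$ is a non-zero-divisor, being a product of non-zero-divisors. This closes the induction.

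The step I expect to be the real content is the identification of $\ker\theta_{r+1}$ as a \emph{principal} ideal: a priori one only sees that $W(A^\flat)/\ker\theta_{r+1}\cong W_r(A)$ is squeezed between $\xi_{r+1}W(A^\flat)$ and $\xi_rW(A^\flat)$ with graded pieces of the right size, and the temptation is to prove directly that the element $c$ above is a unit (e.g.\ via distinguishedness of $\xi$, Lemma~\ref{lemma_inj_of_Frob}). The point of the argument I propose is that one need not do this: surjectivity of $\theta_{r+1}$ automatically forces the endomorphism ``multiplication by $c$'' of $A$ to be surjective, hence bijective, and injectivity of $g$ is then free. The remaining care is purely bookkeeping with truncated Witt vectors — that $V^r$ is injective with image $\ker R$, that the diagrams of Lemma~\ref{lemma_theta_r_diagrams} iterate to $R^k\circ\theta_{r+1}=\theta_{r+1-k}$ and $F^r\circ\theta_{r+1}=\theta\circ\phi^r$, and the projection formula $V^r(x)y=V^r(xF^r(y))$ — all standard.
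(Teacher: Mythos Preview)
Your argument is correct. Both you and the paper proceed by induction on $r$, but the inductive step is organised differently. The paper first invokes Lemma~\ref{lemma_inj_of_Frob}(i)(d) to replace $\xi$ by a unit multiple (depending on $r$) so that $\theta_{r+1}(\xi)=V(1)$; then the third diagram of Lemma~\ref{lemma_theta_r_diagrams} yields a morphism of short exact sequences
\[
\begin{array}{ccccccccc}
0&\to&W(A^\flat)&\xrightarrow{\ \xi\phi^{-1}\ }&W(A^\flat)&\xrightarrow{\ \theta\ }&A&\to&0\\
&&\downarrow{\scriptstyle\theta_r}&&\downarrow{\scriptstyle\theta_{r+1}}&&\parallel&&\\
0&\to&W_r(A)&\xrightarrow{\ V\ }&W_{r+1}(A)&\xrightarrow{\ R^r\ }&A&\to&0
\end{array}
\]
and a diagram chase gives $\ker\theta_{r+1}=\xi\phi^{-1}(\ker\theta_r)=\xi_{r+1}W(A^\flat)$. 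You instead use the other filtration of $W_{r+1}(A)$ (the $V^r$--$R$ sequence rather than the $V$--$R^r$ one), write $\theta_{r+1}(\xi_r)=V^r(c)$, and obtain the unit-ness of $c$ \emph{a posteriori} from surjectivity of $\theta_{r+1}$ rather than \emph{a priori} from the distinguishedness criterion. Your route avoids appealing to Lemma~\ref{lemma_inj_of_Frob}(i)(d) and is in that sense more self-contained; the paper's route, once the normalisation $\theta_{r+1}(\xi)=V(1)$ is made, packages the whole inductive step into a single snake-lemma argument without having to identify the graded piece $\xi_rW(A^\flat)/\xi_{r+1}W(A^\flat)$ by hand.
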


\begin{proof} 
It is enough to prove the claim about $\xi_r$, since the claim about $\tilde\xi_r$ then follows by applying $\phi^r$. We prove the result by induction on $r\ge1$, the case $r=1$ being covered by the hypotheses; so fix $r\ge1$ for which the result is true. By Lemma \ref{lemma_inj_of_Frob}(i)(d) we may, after multiplying $\xi$ by a unit (depending on the fixed $r\ge1$), assume that $\theta_{r+1}(\xi)=V(1)$. Hence Lemma \ref{lemma_theta_r_diagrams} implies that there is a commutative diagram
\[\xymatrix{
0\ar[r]&W(A^\flat)\ar[r]^{\xi\phi^{-1}}\ar[d]^{\theta_r}&W(A^\flat)\ar[r]^\theta\ar[d]^{\theta_{r+1}} & A\ar@{=}[d]\ar[r] & 0\\
0\ar[r] & W_r(A)\ar[r]_V & W_{r+1}(A)\ar[r]_{R^r} & A\ar[r]&0
}\]
in which both rows are exact. Since $\ker\theta_r$ is generated by $\xi\phi^{-1}(\xi)\cdots\phi^{-(r-1)}(\xi)$, it follows that $\ker\theta_{r+1}$ is generated by $\xi\phi^{-1}(\xi)\cdots\phi^{-r}(\xi)$, as desired.
\end{proof}

\lb{
We use this to show that Witt rings of perfectoid rings behave well under base change (as though the rings were perfect):

\begin{corollary}\label{corollary_tor_independence_1}
Let $A\to B$ be a homomorphism between perfectoid rings. Then the canonical maps
\[W_j(A)\dotimes_{\theta_r,W(A^\flat)}W(B^\flat)\To W_j(B),\qquad W_j(A)\dotimes_{W_r(A)}W_r(B)\to W_j(B)\] are quasi-isomorphisms for all $1\le j\le r$.
\end{corollary}
\begin{proof} Let $\xi\in W(A^\flat)$ be a generator of $\ker\theta_A$, and let $\xi_j$ be as in Lemma~\ref{lemma_ker_theta_r} , which is a non-zero divisor of $W(A^\flat)$. The condition of being distinguished passes through ring homomorphisms, so the image of $\xi$ in $W(B^\flat)$ is still a distinguished, non-zero-divisor which generates of $\ker\theta_B$, by Lemma \ref{lemma_inj_of_Frob}. Thus we may apply Lemma~\ref{lemma_ker_theta_r} to both $A$ and $B$ to see that
\[
W_j(A)\dotimes_{W(A^\flat)}W(B^\flat)=W(A^\flat)/\xi_j\dotimes_{W(A^\flat)}W(B^\flat)=W(B^\flat)/\xi_j=W_j(B).
\]
Then same is true with $r$ in place of $j$, and so
\[
W_j(A)\dotimes_{W_r(A)}W_r(B)=W_j(A)\dotimes_{W_r(A)}W_r(A)\dotimes_{W(A^\flat)}W(B^\flat)
=W_j(A)\dotimes_{W(A^\flat)}W(B^\flat)=W_j(B),
\]
as required.
\end{proof}
}

We finish this introduction to perfectoid rings with some examples:

\begin{example}[Perfect rings of characteristic $p$]\label{example_perfect}
Suppose that $A$ is a ring of characteristic $p$. Then $A$ is perfectoid if and only if it is perfect. Indeed, if $A$ is perfect, then it is $0$-adically complete, the Frobenius is surjective, and the kernel of $\theta: W(A)\to A$ is generated by $p$. Conversely, if $A$ is perfectoid, then Lemma \ref{lemma_inj_of_Frob}(i)(c) implies that the distinguished element $p\in \ker (\theta: W(A^\flat)\to A)$ must be a generator, whence $W(A^\flat)/p\cong A$; but $W(A^\flat)/p=A^\flat$ is perfect.

In particular, in this case $A^\flat=A$ and the maps $\theta_r:W(A^\flat)\to W_r(A)$ are the canonical Witt vector restriction maps.
\end{example}

\begin{example}\label{example_O}
If $\bb C$ is a complete, non-archimedean algebraic closed field of residue characteristic $p>0$, then its ring of integers $\roi$ is a perfectoid ring. Indeed, if $\bb C$ has equal characteristic $p$ then $\roi$ is perfect and we may appeal to the previous lemma. If $\bb C$ has mixed characteristic (our main case of interest), then $\roi$ is $p^{1/p}$-adically complete, integrally closed in $\roi[\tfrac1{p^{1/p}}]=\bb C$, and every element of $\roi/p\roi$ is a $p^\sub{th}$-power since $\bb C$ is algebraically closed, so we may appeal to Lemma \ref{lemma_inj_of_Frob}(ii); in this situation the ring $W(\roi^\flat)$ will always be denoted by $\bb A_\sub{inf}$.
\end{example}

\begin{example}\label{example_torus}
Let $A$ be a perfectoid ring which is $\pi$-adically complete with respect to some non-zero-divisor $\pi\in A$ such that $\pi^p$ divides $p$. Here we offer some constructions of new perfectoid rings from $A$:
\begin{enumerate}\itemsep0pt
\item The rings $A\langle T_1^{1/p^\infty},\dots,T_d^{1/p^\infty}\rangle$ and $A\langle T_1^{\pm1/p^\infty},\dots,T_d^{\pm1/p^\infty}\rangle$, which are by definition the $\pi$-adic completions of $A[T_1^{1/p^\infty},\dots,T_d^{1/p^\infty}]$ and $A[T_1^{\pm1/p^\infty},\dots,T_d^{\pm1/p^\infty}]$ respectively, are also perfectoid.
\item Any $\pi$-adically complete, formally \'etale $A$-algebra is also perfectoid.
\end{enumerate}
{\em Proof}. Since the $\pi$-adic completeness of the given ring is tautological in each case, we only need to check that $\Phi:B/\pi B\to B/\pi^pB$, $b\mapsto b^p$ is an isomorphism in each case. This is clear for $B=A\langle\ul T^{\pm1/p^\infty}\rangle$ and $A\langle\ul T^{1/p^\infty}\rangle$, and it hold for and $A$-algebra $B$ as in (ii) since the square
\[\xymatrix{
B/\pi\ar[r]^\phi & B/\pi \\
A/\pi\ar[u]\ar[r]_\phi & A/\pi\ar[u]
}\]
is a pushout diagram (the base change of the Frobenius along an \'etale morphism in characteristic $p$ is again the Frobenius). \hfill $\square$
\end{example}

\subsection{Main example: perfectoid rings containing enough roots of unity}\label{subsection_roots_of_unity}
Here in \S\ref{subsection_roots_of_unity} we fix a perfectoid ring $A$ which has no $p$-torsion and which contains a compatible system $\zeta_p,\zeta_{p^2},\dots$ of primitive 
 $p$-power roots of unity (to be precise, since $A$ is not necessarily an integral domain, this means that $\zeta_{p^r}$ is a root of the $p^{r\sub{th}}$ cyclotomic polynomial), which we fix. The simplest example is $\roi$ itself, but we also need the theory for perfectoid algebras containing $\roi$ such as $\roi\pid{T_1^{\pm1/p^\infty},\dots,T_d^{\pm1/p^\infty}}$.

In particular we define particular elements $\ep,\xi,\mu,\dots$, which will be used repeatedly in our main constructions, and so we highlight (or rather box) the primary definitions and relations. Firstly, set
\[\boxed{\ep:=(1,\zeta_p,\zeta_{p^2},\dots)\in A^\flat,\qquad \mu:=[\ep]-1\in W(A^\flat),}\]
and
\[
\boxed{\xi:=1+[\ep^{1/p}]+[\ep^{1/p}]^2+\cdots+[\ep^{1/p}]^{p-1}\in W(A^\flat).}
\]

\begin{lemma}
$\xi$ is a generator of $\ker\theta$ satisfying $\theta_r(\xi)=V(1)$ for all $r\ge1$.
\end{lemma}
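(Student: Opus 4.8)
The plan is to compute $\theta_r(\xi)$ directly, recognize the answer as $V(1)$, and then invoke the characterization of the generators of $\ker\theta$ among distinguished elements (Lemma~\ref{lemma_inj_of_Frob}). First, since $\theta_r\colon W(A^\flat)\to W_r(A)$ is a ring homomorphism and Teichm\"uller representatives are multiplicative, $\theta_r(\xi)=\sum_{j=0}^{p-1}\theta_r([\ep^{1/p}])^{j}$. By Lemma~\ref{lemma_theta}, $\theta_r([\ep^{1/p}])=[(\ep^{1/p})^{(0)}]$, and since $\ep=(1,\zeta_p,\zeta_{p^2},\dots)$ under the identification $A^\flat=\projlim_{x\mapsto x^p}A$ we have $(\ep^{1/p})^{(0)}=\zeta_p$, so that $\theta_r(\xi)=\sum_{j=0}^{p-1}[\zeta_p^{\,j}]\in W_r(A)$.

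It remains to identify $\sum_{j=0}^{p-1}[\zeta_p^{\,j}]$ with $V(1)$ in $W_r(A)$; this is the one non-formal point, and I would handle it via ghost components. Since $A$ has no $p$-torsion the ghost map $W_r(A)\into A^r$ is injective, so it is enough to match ghost components. The $n$-th ghost component of $[\zeta_p^{\,j}]$ is $\zeta_p^{\,jp^n}$, which is $\zeta_p^{\,j}$ for $n=0$ and (as $\zeta_p^p=1$) is $1$ for $n\ge1$; summing over $j$ and using the hypothesis that $\zeta_p$ is a root of the $p$-th cyclotomic polynomial, i.e.\ $\sum_{j=0}^{p-1}\zeta_p^{\,j}=0$, the ghost vector of the left-hand side is $(0,p,p,\dots,p)$. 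The ghost components of $V(1)$ are $0$ in degree $0$ and $p\cdot w_{n-1}(1)=p$ in degree $n\ge1$, giving the same vector; hence $\sum_{j=0}^{p-1}[\zeta_p^{\,j}]=V(1)$ for every $r\ge1$ (for $r=1$ both sides are $0\in A$). Thus $\theta_r(\xi)=V(1)$ for all $r\ge1$.

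Finally, taking $r=1$ gives $\theta(\xi)=0$, so $\xi\in\ker\theta$, and since $A$ is perfectoid $\ker\theta$ is principal; therefore Lemma~\ref{lemma_inj_of_Frob}(i) applies. Part~(d) of that lemma shows that the relation $\theta_r(\xi)=V(1)$ for, say, $r=2$ forces $\xi$ to be distinguished, and part~(c) then shows that a distinguished element of $\ker\theta$ generates it. Everything but the Witt identity of the second paragraph is bookkeeping with the already-established functoriality of the maps $\theta_r$ and with the lemmas on distinguished elements, so I expect no real obstacle; one could alternatively obtain that identity from the universal case $W_r(\bb Z[\zeta_p])$ by functoriality, which is what makes me confident it is routine.
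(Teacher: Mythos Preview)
Your proof is correct and follows essentially the same approach as the paper's: both reduce to checking $\theta_r(\xi)=V(1)$ via Lemma~\ref{lemma_inj_of_Frob}(i)(c)--(d), and both verify this identity by using that the ghost map on $W_r(A)$ is injective (since $A$ is $p$-torsion-free) and computing the ghost vector $(0,p,\dots,p)$. The only cosmetic difference is that the paper organises the ghost computation via the identity $\op{gh}\circ\theta_r=(\theta,\theta\phi,\dots,\theta\phi^{r-1})$ and evaluates $\theta\phi^i(\xi)$ directly, whereas you first identify $\theta_r(\xi)=\sum_{j=0}^{p-1}[\zeta_p^{\,j}]$ and then take ghosts of that; the underlying calculation is the same.
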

\begin{proof}
By Lemma \ref{lemma_inj_of_Frob}(i)(d) it is sufficient to show that $\theta_r(\xi)=V(1)$ for all $r\geq 1$. The ghost map $\op{gh}:W_r(A)\to A^r$ is injective since $A$ is $p$-torsion-free, and so it is sufficient to prove that $\op{gh}(\theta_r(\xi))=\op{gh}(V(1))$. But it follows easily from Lemma \ref{lemma_theta} that the composition $\op{gh}\circ\theta_r:W(A^\flat)\to A^r$ is given by $(\theta,\theta\phi,\dots,\theta\phi^{r-1})$, and so in particular that \[\op{gh}(\theta_r(\xi))=(\theta(\xi),\theta\phi(\xi),\dots,\theta\phi^{r-1}(\xi)).\] Since $\theta(\xi)=0$ and $\op{gh}(V(1))=(0,p,p,p,\dots)$, it remains only to check that $\theta\phi^i(\xi)=p$ for all $i\ge 1$, which is straightforward: \[\theta\phi^i(\xi)=\theta(1+[\ep^{p^{i-1}}]+[\ep^{p^{i-1}}]^2+\cdots+[\ep^{p^{i-1}}]^{p-1})=1+1+\cdots+1=p.\qedhere\]
\end{proof}

It now follows from Lemma \ref{lemma_ker_theta_r} that $\ker\theta_r$ is generated by
\[
\boxed{\xi_r:=\xi\phi^{-1}(\xi)\cdots\phi^{-(r-1)}(\xi)=\sum_{i=0}^{p^r-1}[\ep^{1/p^r}]^i,}
\]
and that $\ker\tilde\theta_r$ is generated by \[\boxed{\tilde\xi_r:=\phi^r(\xi_r)=\phi(\xi)\cdots\phi^r(\xi)}.\]

\begin{proposition}\label{proposition_roots_of_unity}
$\mu$ is a non-zero divisor of $W(A^\flat)$ which satisfies \[\boxed{\mu=\xi_r\phi^{-r}(\mu),\qquad \phi^r(\mu)=\tilde\xi_r\mu,\qquad \tilde\theta_r(\mu)=[\zeta_{p^r}]-1\in W_r(A)}\] for all $r\ge 1$.
\end{proposition}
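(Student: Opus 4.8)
The plan is to obtain the two boxed relations $\mu=\xi_r\phi^{-r}(\mu)$, $\phi^r(\mu)=\tilde\xi_r\mu$ and the formula for $\tilde\theta_r(\mu)$ by direct computation with Teichm\"uller representatives, and to treat the non-zero-divisor claim — the only point that uses anything beyond the algebra of Witt vectors — by reducing modulo $p$. Since $A^\flat$ is perfect, $\phi$ is an automorphism of $W(A^\flat)$ with $\phi([x])=[x^p]$, whence $\phi^{-r}([\ep])=[\ep^{1/p^r}]$ and $\phi^{-r}(\mu)=[\ep^{1/p^r}]-1$. Using $[\ep]=[\ep^{1/p^r}]^{p^r}$, the elementary factorisation $X^{p^r}-1=(X-1)\sum_{i=0}^{p^r-1}X^i$, and the boxed expression $\xi_r=\sum_{i=0}^{p^r-1}[\ep^{1/p^r}]^i$ recorded just above the proposition, one reads off
\[
\mu=[\ep]-1=\big([\ep^{1/p^r}]-1\big)\Big(\textstyle\sum_{i=0}^{p^r-1}[\ep^{1/p^r}]^i\Big)=\phi^{-r}(\mu)\,\xi_r .
\]
Applying the automorphism $\phi^r$ and recalling $\tilde\xi_r:=\phi^r(\xi_r)$ gives $\phi^r(\mu)=\tilde\xi_r\mu$. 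Finally, $\tilde\theta_r$ is a ring homomorphism, so $\tilde\theta_r(\mu)=\tilde\theta_r([\ep])-1$, and by Lemma~\ref{lemma_theta} one has $\tilde\theta_r([\ep])=[\ep^{(r)}]=[\zeta_{p^r}]$ since $\ep^{(r)}=\zeta_{p^r}$ by the definition of $\ep$.

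For the non-zero-divisor claim I would first reduce modulo $p$. Since $W(A^\flat)$ is $p$-torsion-free and $p$-adically separated with $W(A^\flat)/p=A^\flat$, a relation $a\mu=0$ with $a\neq 0$ reduces, after pulling out the highest power of $p$ dividing $a$ and cancelling it, to the case $\bar a:=a\bmod p\neq 0$; then $\bar a(\ep-1)=0$ in $A^\flat$, so it suffices that $\ep-1$ be a non-zero-divisor in $A^\flat$. When $A$ is a domain — which covers every ring to which the proposition will be applied — this is immediate, since then $A^\flat$ is a domain (footnote~\ref{footnote_int_dom}) and $\ep-1\neq 0$ because $\ep^{(1)}=\zeta_p\neq 1$. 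For general $A$ I would argue componentwise through the monoid isomorphism $A^\flat\cong\projlim_{x\mapsto x^p}A$ of Lemma~\ref{lemma_monoids}: a relation $\bar a(\ep-1)=0$ forces $\bar a^{(i)}\,(\ep-1)^{(i)}=0$ in $A$ for every $i$, where $(\ep-1)^{(i)}=\lim_n(\zeta_{p^{i+n}}-1)^{p^n}$. Now $A$ contains the ring of integers $\roi_\sub{cyc}$ of the $p$-adically completed cyclotomic field (a perfectoid valuation ring; the structure map $\roi_\sub{cyc}\to A$ is injective because $A$ is $p$-torsion-free), and $(\ep-1)^{(i)}$ lies in $\roi_\sub{cyc}$, where it is a unit multiple of $\zeta_{p^i}-1$ and hence a non-zero-divisor. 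Moreover $A$ is flat over the valuation ring $\roi_\sub{cyc}$, being torsion-free over it: any non-zero element of $\roi_\sub{cyc}$ divides a power of $p$, and $A$ has no $p^N$-torsion. Therefore $(\ep-1)^{(i)}$ remains a non-zero-divisor on $A$, so $\bar a^{(i)}=0$ for all $i$, i.e.\ $\bar a=0$.

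The formal identities are painless; the substantive step — and the only place the perfectoid hypotheses enter rather than pure Witt-vector algebra — is the non-zero-divisor claim, and within it the comparison of $(\ep-1)^{(i)}$ with $\zeta_{p^i}-1$ inside the cyclotomic subring. For the rings actually occurring in the notes this collapses to the observation that $W(A^\flat)$ is an integral domain and $\mu\neq 0$, so I expect the general argument above to be presented only briefly or replaced by that remark.
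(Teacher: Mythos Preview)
Your treatment of the three boxed identities is correct and essentially the same as the paper's: you use the factorisation $X^{p^r}-1=(X-1)\sum_{i<p^r}X^i$ directly, while the paper writes $\mu=\xi\,\phi^{-1}(\mu)$ and inducts, but these are the same computation.

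For the non-zero-divisor claim your argument is correct but takes a genuinely different route from the paper. You reduce modulo~$p$ to the statement that $\ep-1$ is a non-zero-divisor in $A^\flat$, and then unwind the componentwise description $A^\flat\cong\projlim_{x\mapsto x^p}A$ together with flatness of $A$ over the cyclotomic subring $\roi_\sub{cyc}$. (One small slip: your claim that $(\ep-1)^{(i)}$ is a unit multiple of $\zeta_{p^i}-1$ fails at $i=0$, where $\zeta_{p^0}-1=0$ while $(\ep-1)^{(0)}$ has valuation $p/(p-1)$; but you only need $i\ge 1$, since $\bar a^{(i)}=0$ for $i\ge 1$ forces $\bar a^{(0)}=(\bar a^{(1)})^p=0$.) The paper instead exploits the isomorphism $W(A^\flat)\cong\projlim_{r,\,F}W_r(A)$ from Lemma~\ref{lemma_witt_alg_1}: it suffices that each $\tilde\theta_r(\mu)=[\zeta_{p^r}]-1$ be a non-zero-divisor of $W_r(A)$, and since $A$ is $p$-torsion-free the ghost map is injective, reducing this to the observation that $\zeta_{p^j}-1$ divides $p$ in the $p$-torsion-free ring~$A$. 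This is shorter, avoids the limit formula for $(\ep-1)^{(i)}$ and the $\roi_\sub{cyc}$-flatness step, and has the bonus of establishing that $[\zeta_{p^r}]-1$ is a non-zero-divisor in $W_r(A)$ --- a fact used repeatedly later (e.g.\ in Remark~\ref{remark_non_primitive} and throughout \S\ref{subsection_technical}). Your route, by contrast, stays entirely inside $A^\flat$ and never touches the $W_r(A)$ picture.
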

\begin{proof}
The final identity is immediate from Lemma \ref{lemma_theta}. It is clear that $\mu=\xi\phi^{-1}(\mu)$, whence the identity $\mu=\xi_r\phi^{-r}(\mu)$ follows by a trivial induction on $r$, and the central identity then follows by applying $\phi^r$. To prove that $\mu$ is a non-zero-divisor, it suffices to show that $\tilde\theta_r(\mu)=[\zeta_{p^r}]-1$ is a non-zero-divisor of $W_r(A)$ for all $r\ge 1$ (since $W(A^\flat)=\projlim_{r\sub{ wrt }F}W_r(A)$). Since $A$ is $p$-torsion-free the ghost map is injective and so we may check this by proving that \[\op{gh}([\zeta_{p^r}]-1)=(\zeta_{p^r}-1,\zeta_{p^{r-1}}-1,\dots,\zeta_p-1)\] is a non-zero-divisor of $A^r$; i.e., we must show that $\zeta_{p^r}-1$ is a non-zero-divisor in $A$ for all $r\ge 1$. But $\zeta_{p^r} - 1$ divides $p$, and $A$ is assumed to be $p$-torsion-free.
\end{proof}

\begin{remark}\label{remark_non_primitive}
The reader may wish to note that the Teichm\"uller lifts $[\zeta_p], [\zeta_{p^2}],\dots$ are not primitive $p$-power roots unity in $W_r(A)$ in any reasonable sense. Indeed, it follows from its ghost components $\op{gh}([\zeta_p])=(\zeta_p,1,1,\dots,1)$ that $[\zeta_p]$ is not a root of $X^{p-1}+\cdots+X+1$ when $r>1$.

However, the element $[\zeta_{p^r}]-1\in W_r(A)$ will play a distinguished role in our constructions and so we point out that it is a non-zero-divisor whose powers define the $p$-adic topology. Indeed, it follows from the ghost component calculation of the previous proposition that $[\zeta_{p^r}]-1$ is a root of the polynomial \[((X+1)^{p^r}-1)/X=X^{p^r-1}+pX(\cdots)+p^r,\] whence $p$ divides $([\zeta_{p^r}]-1)^{p^r-1}$, and $[\zeta_{p^r}]-1$ divides $p^r$. A particularly important consequence of this is that $L\eta_{[\zeta_{p^r}]-1}$ commutes with derived $p$-adic completion, by [BMS, Lem.~6.20].
\end{remark}

\lb{
\begin{corollary}\label{corollary_roots_of_unity}
For any $0\le j\le r$: 
\begin{enumerate}
\item The element $[\zeta_{p^r}]-1$ is a non-zero divisor of $W_r(A)$ dividing $[\zeta_{p^j}]-1$.
\item The following ideals of $W_r(A)$ are equal: \[\Ann_{W_r(A)}V^j(1),\quad\op{ker}(W_r(A)\xto{F^j} W_{r-j}(A)),\quad\tfrac{[\zeta_{p^j}]-1}{[\zeta_{p^r}]-1}W_r(A)\]
\item The following ideals of $W_r(A)$ are equal: \[\Ann_{W_r(A)}\left(\tfrac{[\zeta_{p^j}]-1}{[\zeta_{p^r}]-1}\right),\quad V^j(1)W_r(A),\quad V^jW_{r-j}(A).\]
\item The map $F^j$ and multiplication by $V^j(1)$ induces isomorphisms of $W_r(A)$-modules
\[
F^j_*W_{r-j}(A)\isofrom W_r(A)/\tfrac{[\zeta_{p^j}]-1}{[\zeta_{p^r}]-1}\isoto \Ann_{W_r(A)}\left(\tfrac{[\zeta_{p^j}]-1}{[\zeta_{p^r}]-1}\right)
\]
\end{enumerate}
\end{corollary}

\begin{proof}
(i): We saw in the previous proof that $[\zeta_{p^r}]-1$ is a non-zero-divisor of $W_r(A)$. To most easily see that it divides $[\zeta_{p^j}]-1$, applying $\theta_r$ to part (i) of the previous proposition.

(ii): Injectivity of $V^j:W_{r-j}(A)\to W_r(A)$ and the identity $xV^j(1)=V^j(F^j(x))$, for $x\in W_r(A)$, show that the stated annihilated and kernel are equal. Next, using the notation of the previous proposition, it is easy to see that the $j^\sub{th}$ power of the Frobenius on $W(A^\flat)$ carries the ideal $\langle \tfrac{\phi^{-j}(\mu)}{\phi^{-r}(\mu)},\xi_r\rangle$ to $\langle \xi_{r-j}\rangle$: indeed, this is an easy consequence of the equality $\mu/\phi^{j-r}(\mu)=\xi_{r-j}$ and the divisibility of $\phi^j(\xi_r)$ by $\xi_{r-j}$. Using the second diagram of Lemma \ref{lemma_theta_r_diagrams} and the identification $W(A^\flat)/\xi_r\cong W_r(A)$, this shows that $F^j$ induces an isomorphism $F^j:W_r(A)/\tfrac{[\zeta_{p^j}]-1}{[\zeta_{p^r}]-1}\isoto W_{r-j}(A)$.

(iii): Surjectivity of $F^j:W_r(A)\to W_{r-j}(A)$ (Lemma \ref{lemma_frobenius_surjectivity}) implies that $V^j(1)$ generates the ideal $V^jW_{r-j}(A)$, since $V^j(F^j(x))=xV^j(1)$ for $x\in W_r(A)$. Aince $[\zeta_{p^r}]-1$ is a non-zero-divisor of $W_r(A)$ by (i), the elements $[\zeta_{p^j}]-1$ and $\tfrac{[\zeta_{p^j}]-1}{[\zeta_{p^r}]-1}$ have the same annihilator. Clearly $V^j(1)$ annihilates $[\zeta_{p^j}]-1$, since $([\zeta_{p^j}]-1)V^j(1)=V^jF^j[\zeta_{p^j}]-V^j(1)=V^j(1)-V^j(1)=0$. Finally, if $x$ annihilates $[\zeta_{p^j}]-1$ then $R^{r-j}(x)=0$ since $R^{r-j}([\zeta_{p^j}]-1)$ is a non-zero-divisor, and so $x\in V^jW_{r-j}(A)$.

(iv): This follows from (ii) and (iii).
\end{proof}
}

\lb{
Next we turn to the definition of a perfectoid Tate ring. Let $R$ be a complete Tate ring, i.e., a complete topological ring $R$ containing an open subring $R_0\subset R$ on which the topology is $\pi$-adic for some $\pi\in R_0$ such that $R=R_0[\tfrac1\pi]$. Recall that a ring of integral elements $R^+\subset R$ is an open and integrally closed subring of powerbounded elements. For example, the subring $R^\circ\subset R$ of all powerbounded elements is a ring of integral elements.

In the terminology of J.-M.~Fontaine \cite{Fontaine2013}, extending Scholze's original definition \cite{Scholze2012}, $R$ is said to be {\em perfectoid} if and only if it is {\em uniform} (i.e., its subring $R^\circ$ of power bounded elements is bounded) and there is a topologically nilpotent unit $\pi\in R$ such that $\pi^p$ divides $p$ in $R^\circ$, and the Frobenius is surjective on $R^\circ/\pi^p R^\circ$.

\begin{lemma}\label{lemma_Tate_perfectoid}
Let $R$ be a complete Tate ring with a ring of integral elements $R^+\subset R$. If $R$ is perfectoid in Fontaine's sense, then $R^+$ is perfectoid. Conversely, if $R^+$ is perfectoid and bounded in $R$ (equivalently, $\pi$-adically complete for a topologically nilpotent unit $\pi\in R$ as above), then $R$ is perfectoid.
\end{lemma}

We remark that perfectoid $K$-algebras in the sense of \cite{scholzethesis} (as well as perfectoid $\bb Q_p$-algebras in the sense of \cite{kedlayaliu}) are complete Tate rings which are perfectoid in Fontaine's sense (and conversely a complete Tate ring which is perfectoid in Fontaine's sense and is a $K$-, resp.~$\bb Q_p$-, algebra is a perfectoid $K$-, resp.~$\bb Q_p$-, algebra in the sense of \cite{scholzethesis}, resp.~\cite{kedlayaliu}).

Note also that if $R$ is a $\bb Q_p$-algebra, and $R^+\subset R$ is a ring of integral elements which is perfectoid, then automatically it is $p$-adically complete, and therefore bounded in $R$.

\begin{proof} Assume that $R$ is perfectoid in Fontaine's sense. First, we check that $R^\circ$ is perfectoid. As $R^\circ$ is bounded, it follows that $R^\circ$ is $\pi$-adically complete. By Lemma~\ref{lemma_inj_of_Frob}, to show that $R^\circ$ is perfectoid, we need to see that the surjective map $\phi: R^\circ/\pi R^\circ\to R^\circ / \pi^p R^\circ$ is an isomorphism. But if $x\in R^\circ$ is such that $x^p = \pi^p y$ for some $y\in R^\circ$, then $z=x/\pi\in R$ has the property that $z^p=y$ is powerbounded, which implies that $z$ itself is powerbounded, i.e. $x\in \pi R^\circ$. Thus, $R^\circ$ is perfectoid.

Now we want to see that then also $R^+$ is perfectoid. Note that $\pi R^\circ$ consists of topologically nilpotent elements, and so $\pi R^\circ\subset R^+$ as the right-hand side is open and integrally closed. We know that any element of $R^\circ/p\pi R^\circ$ is a $p$-th power. Take any element $x\in R^+$, and write $x=y^p+p\pi z$ for some $y,z\in R^\circ$. Then $z^\prime = \pi z\in R^+$, so that $x=y^p + pz^\prime$. It follows that $y^p = x - pz^\prime\in R^+$, and so $y\in R^+$. Thus, the equation $x=y^p + pz^\prime$ shows that $\phi: R^+/p\to R^+/p$ is surjective, and in particular so is $\phi: R^+/\pi R^+\to R^+/\pi^p R^+$. For injectivity, we argue as for $R^\circ$. Using Lemma~\ref{lemma_inj_of_Frob} again, this implies that $R^+$ is perfectoid.

For the converse, note first that since $R^+\subset R$ is by assumption bounded, so is $R^\circ\subset R$, as $\pi R^\circ\subset R^+$; thus, the first part of Fontaine's definition is verified. It remains to see that there is some topologically nilpotent unit $\pi\in R$ such that $\pi^p$ divides $p$ in $R^\circ$, and the Frobenius is surjective on $R^\circ/\pi^p R^\circ$. Let assume for the moment that there is some topologically nilpotent unit $\pi\in R$ such that $\pi^p$ divides $p$ in $R^\circ$. Given $x\in R^\circ$, $\pi x\in R^+$ can be written as $\pi x = y^p + p\pi z$ with $y,z\in R^+$, by Lemma~\ref{lemma_frobenius_surjectivity}. Note that $\pi\in R^+$ can be assumed to has a $p$-th root $\pi^{1/p}\in R^+$ by changing it by a unit; then $y^\prime = y/\pi^{1/p}\in R$ actually lies in $R^\circ$ as $y^{\prime p} = x - pz\in R^\circ$. But then $x= y^{\prime p} + pz$ with $y,z\in R^\circ$, so Frobenius is surjective on $R^\circ/p R^\circ$, and a fortiori on $R^\circ/\pi^p R^\circ$.

It remains to see that if $R^+$ is perfectoid, then there is some topologically nilpotent unit $\pi\in R$ such that $\pi^p$ divides $p$ in $R^\circ$. It is enough to ensure that $\pi^p$ divides $p$ in $R^+$. The problem here is to ensure the condition that $\pi$ is a unit in $R$.

Pick any topologically nilpotent unit $\pi_0\in R$, so $\pi_0\in R^+$. We have the surjection $\theta: W((R^+)^\flat)\to R^+$ whose kernel is generated by a distinguished element $\xi\in W((R^+)^\flat)$. From \cite[Lemma 5.5]{kedlayanonarchwitt}, it follows that there is some $\pi^\flat\in (R^+)^\flat$ and a unit $u\in (R^+)^\times$ such that $\theta([\pi^\flat]) = u \pi_0$. Now $\pi=\theta([\pi^{\flat 1/p^n}])$ for $n$ sufficiently large has the desired property.
\end{proof}

\subsection{Almost mathematics over $W_r(\roi)$, and a Witt vector form of Faltings' almost purity}
As promised in Remark \ref{remark_integral_purity_Witt_promise}, we now prove a further refinement of Faltings' almost purity theorem. We return to the setting of Section \ref{subsection_faltings_purity}:
\begin{itemize}
\item $\bb C$ is a complete, non-archimedean, algebraically closed field of mixed characteristic, with ring of integers $\roi$, maximal ideal $\frak m$, and residue field $k$.
\item Let $R$ be the $p$-adic completion of a smooth $\roi$-algebra, which we assume is {\em small}, i.e., connected and \'etale over $\roi\pid{\ul T^{\pm1}}:=\roi\pid{T_1^{\pm1},\dots,T_d^{\pm1}}$. As before, we associate to this the following two rings:
\item $R_\sub{infty}:=R\hat\otimes_{\roi\pid{\ul T^{\pm1}}}\roi\pid{\ul T^{\pm1/p^\infty}}$ -- acted on by $\Gamma:=\bb Z_p(1)^d$ via $R$-algebra automorphisms as before.
\item $R_\sim:=$ the $p$-adic completion of the normalisation of $R$ in the maximal (ind)\'etale extension of $R[\tfrac1p]$ -- acted on by $\Delta:=\Gal(R[\tfrac1p])$ via $R$-algebra automorphisms.
\end{itemize}

\begin{lemma}
The rings $R_\infty$ and $R_\sim$ are perfectoid.
\end{lemma}
\begin{proof}

\end{proof}

\begin{remark}[Almost mathematics over $W_r(\roi)$]
Let $W_r(\frak m):=\ker(W_r(\roi)\to W_r(k))$ be the ideal of $W_r(\roi)$ of elements whose Witt vector expansion has all terms in $\frak m$. Using the fact that $\frak m=\frak m^2$, it is not hard to check that \[W_r(\frak m)=\pid{[\pi]:\pi\in\frak m}=\bigcup_{n\ge1}([\zeta_{p^n}]-1)W_r(\roi)\] Thus $W_r(\frak m)=W_r(\frak m)^2$, and $W_r(\frak m)$ is an increasing tower of principal ideals generated by non-zero divisors (it easily follows from Corollary \ref{corollary_roots_of_unity}(i) that if $n'\ge n\ge r$ then $[\zeta_{p^{n'}}]-1$ divides $[\zeta_{p^n}]-1$ and both are non-zero-divisors.)

In conclusion, we may do almost mathematics with respect to the ideal $W_r(\frak m)$.
\end{remark}

By a trivial induction on $r$ using the short exact sequences $0\to W_{r-1}\to W_r\to W_1\to 0$, Faltings' almost purity theorem implies that \[R\Gamma_\sub{cont}(\Gamma,W_r(R_\infty))\To R\Gamma_\sub{cont}(\Delta,W_r(R_\sim))\tag{\dag}\] is an almost (wrt the ideal $W_r(\frak m)\subseteq W_r(\roi)$) quasi-isomorphism of complexes of $W_r(\roi)$-modules. Since $\roi$ is perfectoid, Corollary \ref{corollary_roots_of_unity}(i) states that $[\zeta_{p^r}]-1$ is a non-zero-divisor of $W_r(\roi)$, and hence we may apply $L\eta_{[\zeta_{p^r}]-1}$ to the above almost quasi-isomorphism:

\begin{theorem}\label{theorem_integral_purity_W_r}
Under the above set-up, the induced map \[L\eta_{[\zeta_{p^r}]-1}R\Gamma_\sub{cont}(\Gamma,W_r(R_\infty))\To L\eta_{[\zeta_{p^r}]-1}R\Gamma_\sub{cont}(\Delta,W_r(R_\sim))\] is a quasi-isomorphism for each $r\ge1$.
\end{theorem}

\begin{remark}\label{remark_integral_purity_W_r}
We claim that Faltings' almost purity theorem is a consequence of Theorem \ref{theorem_integral_purity_W_r}; in other words, Theorem \ref{theorem_integral_purity_W_r} may be seen as a precise, integral improvement of almost purity.

Indeed, theorem \ref{theorem_integral_purity_W_r} and Lemma \ref{lemma_calculation_of_cohomol} imply that the cone of (\dag) is killed by $[\zeta_{p^r}]-1$. Applying $-\dotimes_{W_r(\roi)}\roi$ it follows (with some justification which we give in a moment) that the cone of \[R\Gamma_\sub{cont}(\Gamma,R_\infty)\To R\Gamma_\sub{cont}(\Delta,R_\sim)\] is killed by $\zeta_{p^r}-1$ for all $r$, hence is almost zero, as claimed. This base-change argument implicitly used that the canonical maps \[W_r(A)\dotimes_{W_r(\roi)}\roi\To A\] are quasi-isomorphisms for $A=R_\infty,R_\sim$. But we saw in Corollary \ref{corollary_tor_independence_1} that this follows from the facts that $\roi$, $R_\infty$, and $R_\sim$ are all perfectoid.\todo{But why is this true?}

\ref{example_torus}
\end{remark}

\todo{Must finish writing this proof!}
\begin{proof}[Proof of Theorem \ref{theorem_integral_purity_W_r}]
By the previous remark and Lemma \ref{lemma_technical_0} (with $A=W_r(\roi)$, $f=[\zeta_{p^r}]-1$, and $\frak M=W_r(\frak m)$), it is enough to show that $H^i_\sub{cont}(\Gamma,W_r(R_\infty))$ has is ``good'' for all $i\ge0$, i.e., that both it and its quotient $H^i_\sub{cont}(\Gamma,W_r(R_\infty))/([\zeta_{p^r}]-1)$ contain no elements killed by $W_r(\frak m)$. This is a generalisation of the calculation we saw in the proof of Theorem \ref{theorem_integral_purity}.

We begin with the following commutative diagram (in which all maps are $\Gamma$-equivariant)
\[\xymatrix@C=2cm{
W_r(\roi)\pid{\ul T^{\pm1/p^\infty}}\ar@/^2mm/[drr]&&\\
W_r(\roi)\pid{\ul T^{\pm1}}\ar@{^(->}[u]\ar[d]\ar@{-->}[r]^\exists & S \ar@{-->}[d]^\exists \ar@{-->}[r]^\exists & W_r(R_\infty)\ar[d] \\
\roi\pid{\ul T^{\pm1}}\ar[r] &R\ar[r] &R_\infty
}\]
in which $S$ and the dotted arrows are not yet defined; the slightly bendy diagonal arrow is given by $T_i^{1/p^j}\mapsto [T_i^{1/p^j}]$. Since the kernel of the left vertical arrow is topologically nilpotent, and since $\roi\pid{\ul T^{\pm1}}\to R$ is formally \'etale, there exists a unique formally \'etale $W_r(\roi)\pid{\ul T^{\pm1}}$-algebra $S$ which makes the left square commute and satisfies $S\otimes_{W_r(\roi)\pid{\ul T^{\pm1}}}\roi\pid{\ul T^{\pm1}}\isoto R$. The right dotted arrow exists by $S$'s universal property, and the induced morphism \[S\hat\otimes_{W_r(\roi)\pid{\ul T^{\pm1}}}W_r(\roi)\pid{\ul T^{\pm1/p^\infty}}\To W_r(R_\infty)\] is an isomorphism since it is an isomorphism after applying $-\otimes_{W_r(\roi)\pid{\ul T^{\pm1}}}\roi\pid{\ul T^{\pm1}}$.

Now note that $W_r(\roi)\pid{\ul T^{\pm1/p^\infty}}$ admits a $\Gamma$-equivariant decomposition into $W_r(\roi)\pid{\ul T^{\pm1}}$-modules: \[W_r(\roi)\pid{\ul T^{\pm1/p^\infty}}=W_r(\roi)\pid{\ul T^{\pm1}}\oplus W_r(\roi)\pid{\ul T^{\pm1}}^\sub{non-int},\] where \[W_r(\roi)\pid{\ul T^{\pm1}}^\sub{non-int}:=\hat{\bigoplus_{\substack{k_1,\dots,k_d\in \bb Z[\tfrac 1p]\cap[0,1)\\\sub{not all zero}}}}W_r(\roi)\pid{\ul T^{\pm1}}T_1^{k_1}\cdots T_d^{k_d}\] (the hat denotes $p$-adic completion). Base changing to $S$ we obtain a similar $\Gamma$-equivariant decomposition of $W_r(R_\infty)$ into $S$-modules: \[W_r(R_\infty)=S\oplus W_r(R_\infty)^\sub{non-int}\] where \[W_r(R_\infty)^\sub{non-int}:=\hat\bigoplus_{\substack{k_1,\dots,k_d\in \bb Z[\tfrac 1p]\cap[0,1]\\\sub{not all zero}}}S[T_1^{k_1}]\cdots [T_d^{k_d}].\] It follows that \[R\Gamma_\sub{cont}(\Gamma,W_r(R_\infty))=R\Gamma_\sub{cont}(\Gamma,W_r(R))\oplus \hat\bigoplus_{\substack{k_1,\dots,k_d\in \bb Z[\tfrac 1p]\cap[0,1]\\\sub{not all zero}}}R\Gamma_\sub{cont}(\Gamma,S[T_1^{k_1}]\cdots [T_d^{k_d}]).\] We now commute the cohomology groups of all the $R\Gamma$s appearing in the previous line.

\begin{enumerate}
\item Firstly, since $\Gamma$ is acting trivially on $S$, we have $H^i_\sub{cont}(\Gamma,S)=H^i_\sub{cont}(\bb Z_p^d,S)\cong\bigwedge_S^i(S^d)$ (by a standard group cohomology calculation).
\item We now pick a compatible sequence of $p$-power roots of unity, to get an isomorphism $\Gamma\cong\bb Z_p^d$. Given $k=j/p^r\in\bb Z[\tfrac1p]$, we employ the useful abuse of notation $\zeta^k:=\zeta_{p^r}^j$. Then the generators $\gamma_1,\dots,\gamma_d\in\bb Z_p^d$ act on the rank-one free $S$-module $S[T_1^{k_d}]\cdots [T_d^{k_d}]$ respectively as multiplication by $[\zeta^{k_1}],\dots,[\zeta^{k_d}]$. So $R\Gamma_\sub{cont}(\Gamma,S[T_1^{k_1}]\cdots [T_d^{k_d}])$ is calculated by the Koszul complex $K_S([\zeta^{k_1}]-1,\dots,[\zeta^{k_d}]-1)$ (again by standard group cohomology calculation).

An straightforward calculation of Koszul complexes\footnote{If $a,a_1,\dots,a_d$ are elements of a ring $A$, such that each $a_i$ is divisible by $a$, and such that there exists $i$ with $a_i$ associated to $a$, then...} (which we will discuss later in the course in detail) shows that there exist isomorphisms \[H^i(K_S([\zeta^{k_1}]-1,\dots,[\zeta^{k_d}]-1))\cong S[[\zeta_{p^v}]-1]^{\binom{d-1}{i}}\oplus S/([\zeta_{p^v}]-1)^{\binom{d-1}{i-1}},\] where $v:=-\min_{1\le i\le d}\nu_p(k_i)\ge1$ is the smallest integer such that $\zeta_{p^v}-1|\zeta^{k_i}-1$ for all $i=1,\dots,d$.

\ul{Case}: $v\ge r$. Then $[\zeta_{p^v}]-1$ divides $[\zeta_{p^r}]-1$ and hence is a non-zero-divisor, by Corollary \ref{corollary_roots_of_unity}. So $S[[\zeta_{p^v}]-1]=0$, and $(S/([\zeta_{p^v}]-1))/[\zeta_{p^r}]-1)=S/([\zeta_{p^v}]-1)$.

\ul{Case}: $v\le r$. Then $[\zeta_{p^r}]-1$ divides $[\zeta_{p^v}]-1$, but the former is still a non-zero-divisor, by Corollary \ref{corollary_roots_of_unity}. So \[S[[\zeta_{p^v}]-1]=S[\tfrac{[\zeta_{p^v}]-1}{[\zeta_{p^r}]-1}]\] and chasing the isomorphisms of Corollary \ref{corollary_roots_of_unity} shows that this modulo $[\zeta_{p^r}]-1$ is isomorphic to $S\otimes_{W_r(\roi)}F^j_*(W_{r-j}(\roi)/([\zeta_{p^{r-j}}]-1))$
\end{enumerate}
This proves that $H^i_\sub{cont}(\Gamma,R_\infty)$ is ``good'' for all $i\ge0$, and so completes the proof.
\end{proof}

}

\section{The pro-\'etale site and its sheaves}\label{section_pro_etale}
In this section we review aspects of pro-\'etale cohomology following \cite[\S3--4]{Scholze2013}, working under the following set-up:
\begin{itemize}\itemsep0pt
\item $\bb C$ is a complete, non-archimedean, algebraically closed field of mixed characteristic; ring of integers~$\roi$ with maximal ideal $\frak m$; residue field $k$.
\item $X$ is a quasi-separated rigid analytic variety over $\bb C$.
\end{itemize}
In particular, we will introduce various pro-\'etale sheaves on $X$ which will play an essential role in our constructions, and explain how to calculate their cohomology via affinoid perfectoids and almost purity theorems.

\subsection{The pro-\'etale site $X_\sub{pro\'et}$}\label{subsection_pro_etale}
We will take for granted that the reader is either familiar with, or can reasonably imagine, \'etale morphisms and coverings of rigid analytic varieties, and we let $X_\sub{\'et}$ denote the associated \'etale site of $X$. To define coverings in $X_\sub{\'et}$ (and soon in $X_\sub{pro\'et}$) it is useful to view $X$ as an adic space,\footnote{There is an equivalence of categories between quasi-separated rigid analytic varieties over $\bb C$ and those adic spaces over $\Spa(\bb C,\roi)$ whose structure map is quasi-separated and locally of finite-type. A collection of \'etale maps $\{f_\lambda:U_\lambda\to U\}$ in $X_\sub{\'et}$ is a cover if and only if it is jointly ``strongly surjective'', which is equivalent to being jointly surjective at the level of adic points. See \cite[\S2.1]{Huber1996}.} and we therefore denote by $|X|$ the underlying topological space of its associated adic space $X^\sub{ad}$: for example, if $T$ is an affinoid $\bb C$-algebra, then $|\Sp T|$ denotes the topological space of (equivalences classes of) all continuous valuations on $T$, not merely those factoring through a maximal ideal (which correspond to the closed points of the adic space).

We now define (a countable version of) Scholze's pro-\'etale site $X_\sub{pro\'et}$ in several steps:
\begin{itemize}
\item An object of $X_\sub{pro\'et}$ is simply a formal inverse system $\cal U=\projlimf_iU_i$ in $X_\sub{\'et}$ of the form
\[\xymatrix@R=5mm{
\vdots\ar[d]\\
U_3\ar[d]^{\sub{fin.~\'et.~surj.}}\\
U_2\ar[d]^{\sub{fin.~\'et.~surj.}}\\
U_1\ar[d]^{\sub{\'et.}}\\
X
}\] In other words, $\cal U$ is the data of a tower of finite \'etale covers of $U_1$, which is \'etale over $X$. The underlying topological space of $\cal U$ is by definition $|\cal U|:=\projlim_i|U_i|$.
\item Up to isomorphism,\footnote{This means that we are permitted to replace the towers $\projlimf_iU_i$ and $\projlimf_iV_i$ by ``obviously isomorphic'' towers, e.g., by inserting or removing some stages of the tower. To be precise, first let $\op{pro-}X_\sub{\'et}$ denote the usual category of countable inverse systems in $X_\sub{\'et}$: its objects are inverse systems $\projlimf_iU_i$ in $X_\sub{\'et}$, and its morphisms are defined by $\Hom(\projlimf_iU_i,\projlimf_jV_j):=\projlim_j\indlim_i\Hom_{X_\sub{\'et}}(U_i,V_j)$. Then call an object $\cal U$ of $\op{pro-}X_\sub{\'et}$ {\em pro-\'etale} if and only if it is isomorphic in $\op{pro-}X_\sub{\'et}$ to an inverse system $\projlimf_iU_i$ whose transition maps are finite \'etale surjective; and call a morphism $f:\cal U\to\cal V$ {\em pro-\'etale} if and only if there exist isomorphisms $\cal U\cong \projlimf_iU_i$ and $\cal V\cong \projlimf_iV_i$ in $\op{pro-}X_\sub{\'et}$ such that $\projlimf_iU_i$ and $\projlimf_iV_i$ have finite \'etale surjective transition maps and such that the resulting morphism $\projlimf_iU_i\to \projlimf_iV_i$ has the shape described in the main text. Then the category $X_\sub{pro\'et}$ is more correctly defined as the full subcategory of $\op{pro-}X_\sub{\'et}$ consisting of pro-\'etale objects, and covers are defined as in the main text using the more correct definition of a pro-\'etale morphism.} a morphism $f:\cal U\to\cal V$ in $X_\sub{pro\'et}$ is simply a compatible family of morphisms between the towers
\[\xymatrix@R=5mm{
\vdots\ar[d]&&\vdots\ar[d]\\
U_3\ar[d]\ar[rr]^{f_3}&&V_3\ar[d]\\
U_2\ar[d]\ar[rr]^{f_2}&&V_2\ar[d]\\
U_1\ar[dr]\ar[rr]^{f_1}&&V_1\ar[dl]\\
&X&
}\]
\item A morphism $f$ as immediately above is called {\em pro-\'etale} if and only if it satisfies the following additional condition: the induced finite \'etale map \[U_{i+1}\To U_i\times_{V_i}V_{i+1}\] is surjective for each $i\ge 1$. It can be shown that this implies that the induced continuous map of topological spaces $|f|:|\cal U|\to|\cal V|$ is an open mapping \cite[Lem.~3.10(iv)]{Scholze2013}.

Then a collection of morphisms $\{f_\lambda:\cal U_\lambda\to\cal U\}$ in $X_\sub{pro\'et}$ is defined to be a cover if and only if each morphism $f_\lambda$ is pro-\'etale and the collection $\{|f_\lambda|:|\cal U_\lambda|\to|\cal U|\}$ is a pointwise covering of the topological space $|\cal U|$. For the proof that this indeed defines a Grothendieck topology we refer the reader to \cite[Lem.~3.10]{Scholze2013}.
\end{itemize}
This completes the definition of the pro-\'etale site $X_\sub{pro\'et}$.\footnote{The topos of abelian sheaves on $X_\sub{pro\'et}$ is ``algebraic'' in the sense of \cite[Def.~VI.2.3]{SGA_IV_II}; see \cite[Prop.~3.12]{Scholze2013} for this and further properties of the site. In particular, it then follows from \cite[Corol.~VI.5.3]{SGA_IV_II} that if $\cal U\in X_\sub{pro\'et}$ is such that $|\cal U|$ is quasi-compact and quasi-separated, then $H^*_\sub{pro\'et}(\cal U,-)$ commutes with filtered inductive limits of sheaves.}

There is an obvious projection functor of sites \[\nu:X_\sub{pro\'et}\To X_\sub{\'et}\] obtained by pulling back any $U\in X_\sub{\'et}$ to the constant tower $\cdots \to U\to U\to U\to X$ in $X_\sub{pro\'et}$; this satisfies the unsurprising\footnote{Nonetheless, a condition is required: we must assume that the topological space $|\cal U|$ is quasi-compact and quasi-separated; this is satisfied in particular when $\cal U$ is a tower of rigid affinoids.} property that if $\cal F$ is a sheaf on $X_\sub{\'et}$, and $\cal U=\projlimf_i U_i\in X_\sub{pro\'et}$, then \[\nu^*\cal F(\cal U)=\indlim_i\cal F(U_i)\] and more generally \[H^i_\sub{pro\'et}(\cal U,\nu^*\cal F)=\indlim H^i_\sub{\'et}(U_i,\cal F)\] for all $i\ge0$ \cite[Lem.~3.16]{Scholze2013}. For this reason the most interesting sheaves on $X_\sub{pro\'et}$ are not obtained via pullback from $X_\sub{\'et}$, although our first examples of sheaves on $X_\sub{pro\'et}$ are of this form.

The {\em integral} and {\em rational structure sheaves} $\roi_{X_\sub{\'et}}^+$ and $\roi_{X_\sub{\'et}}$ on $X_\sub{\'et}$ are defined by \[\roi_{X_\sub{\'et}}^+(\Sp T):=T^\circ\subset T=:\roi_{X_\sub{\'et}}(\Sp T)\] where $\Sp T\in X_\sub{\'et}$ is any rigid affinoid, and $T^\circ$ denotes the subring of power bounded elements inside $T$. The integral structure sheaf was not substantially studied in the classical theory.\footnote{Unlike the rational structure sheaf, the integral structure sheaf can have non-zero higher cohomology on rigid affinoids.} Pulling back then defines the {\em integral} and {\em rational structure sheaves} $\roi_X^+$ and $\roi_X$ on $X_\sub{pro\'et}$ \[\roi_X^+:=\nu^*\roi_{X_\sub{\'et}}^+\subset\roi_X:=\nu^*\roi_{X_\sub{\'et}},\] which are our first examples of sheaves on $X_\sub{pro\'et}$.


We now describe the finer, local nature of the pro-\'etale site by introducing affinoid perfectoids and stating the fundamental role which they play in the theory.

\begin{definition}
An object $\cal U=\projlimf_iU_i$ in $X_\sub{pro\'et}$ is called {\em affinoid perfectoid} if and only if it satisfies the following two conditions:
\begin{itemize}\itemsep0pt
\item $U_i$ is a rigid affinoid, i.e., $U_i=\Sp T_i$ for some affinoid $\bb C$-algebra $T_i$, for each $i\ge1$;
\item and the $p$-adic completion of the ring $\roi_X^+(\cal U)=\indlim_iT_i^\circ$ is a perfectoid ring.\footnote{We emphasise that, in our current set-up, this perfectoid ring will always be the type considered in \S\ref{subsection_roots_of_unity}: indeed, it is $p$-torsion-free since each $T_i$ is $p$-torsion-free, and it contains a compatible sequence of primitive $p$-power roots of unity since it contains $\roi$.\label{footnote_aff_perf_are_as_usual}}
\end{itemize}
\end{definition}
The following key result makes precise the idea that $X$ looks locally perfectoid in the pro-\'etale topology, and that affinoid perfectoids are small enough for their cohomology to almost vanish, thereby allowing them to be used for almost calculations \`a la \v{C}ech, as we will see further in \S\ref{subsection_CL}.

\begin{proposition}[Scholze]\label{proposition_local_pro_etale}
\begin{enumerate}
\item The affinoid perfectoid objects of $X_\sub{pro\'et}$ form a basis for the site.
\item If $\cal U\in X_\sub{pro\'et}$ is affinoid perfectoid, then $H^*_\sub{pro\'et}(\cal U,\roi_X^+/p)$ is almost zero (i.e., killed by $\frak m$) for~$*>0$.
\end{enumerate}
\end{proposition}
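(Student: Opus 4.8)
Both assertions are due to Scholze \cite[\S3--4]{Scholze2013}; here I indicate the plan.

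\emph{Part (i).} It is enough to produce, for an arbitrary $\cal U=\projlimf_iU_i$ in $X_\sub{pro\'et}$, a pro-\'etale cover by affinoid perfectoid objects. First I would use that $U_1\to X$ is \'etale, together with the standard fact that every rigid analytic variety is locally \emph{small} --- that is, it admits an \'etale covering by affinoids which are \'etale over a torus $\Sp\bb C\pid{T_1^{\pm1},\dots,T_d^{\pm1}}$ --- to reduce to the case that $U_1$ is small. Then I would form the pro-(finite \'etale) tower over $U_1$ obtained by adjoining all $p$-power roots $T_j^{1/p^n}$ of the torus coordinates, and pull $\cal U$ back along it. The resulting pro-\'etale cover $\cal V\to\cal U$ is a tower of rigid affinoids whose ring $\roi_X^+(\cal V)$ has $p$-adic completion formally \'etale over the completed perfectoid torus $\roi\pid{T_1^{\pm1/p^\infty},\dots,T_d^{\pm1/p^\infty}}$, hence perfectoid by Example \ref{example_torus}; thus $\cal V$ is affinoid perfectoid. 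As such $\cal V$ are cofinal among pro-\'etale covers of $\cal U$, the affinoid perfectoid objects form a basis.

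\emph{Part (ii).} Fix $\cal U=\projlimf_iU_i$ affinoid perfectoid and write $R^+$ for the $p$-adic completion of $\roi_X^+(\cal U)$, a perfectoid ring, with $R:=R^+[\tfrac1p]$ a perfectoid Tate $\bb C$-algebra; since reduction mod $p$ is insensitive to $p$-adic completion, $\roi_X^+(\cal U)/p=R^+/p$. Using part (i) I would compute $R\Gamma_\sub{pro\'et}(\cal U,\roi_X^+/p)$ by \v{C}ech complexes with respect to covers of $\cal U$ by affinoid perfectoids; after refinement, such covers are built from two kinds of pieces. The first kind consists of covers pulled back from an \'etale --- and, refining further, rational or finite \'etale --- cover of some finite stage $U_{i_0}$ in $X_\sub{\'et}$; passing to the limit along the tower, where the structure ring becomes the perfectoid ring $R^+$, the contribution of such a cover is controlled by the almost vanishing in positive degrees of the \'etale cohomology of $\roi^+/p$ on the affinoid perfectoid space $\Sp(R,R^+)$ (almost Tate acyclicity for rational covers, Faltings' almost purity \cite{Scholze2012} for finite \'etale covers). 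The second kind consists of pro-(finite \'etale) covers $\cal W=\projlimf_jW_j\to\cal U$ with $W_j\to U_1$ finite \'etale; such $\cal W$ are again affinoid perfectoid, and by almost purity the $p$-completed structure ring $S^+$ is an almost faithfully flat $R^+$-algebra, so that the \v{C}ech--Amitsur complex $R^+/p\to S^+/p\to(S^+\dotimes_{R^+}S^+)/p\to\cdots$ is almost exact by almost faithfully flat descent. Assembling these computations over the localised site $(X_\sub{pro\'et})_{/\cal U}$ then gives that $H^*_\sub{pro\'et}(\cal U,\roi_X^+/p)$ is almost zero for $*>0$ (and almost $R^+/p$ for $*=0$).

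The main obstacle is the geometric input of the first kind: the almost acyclicity of $\roi^+/p$ on an affinoid perfectoid space. I would handle it via the tilting equivalence of \cite{Scholze2012}, identifying $\roi^+/p$ up to a finite filtration with the structure sheaf modulo a pseudo-uniformiser on the tilted perfectoid space $\Sp(R^\flat,R^{\flat+})$, where the claim reduces to classical Tate acyclicity in characteristic $p$ together, once again, with almost purity. The remaining bookkeeping --- organising an arbitrary pro-\'etale cover of $\cal U$ into the two kinds of pieces above, and checking that the two computations are compatible --- is carried out in \cite[\S3]{Scholze2013}.
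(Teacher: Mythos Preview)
Your sketch is correct and in fact more detailed than the paper's own treatment: the paper does not prove this proposition at all, but simply records that both parts are consequences of the tilting formalism and almost purity theorems of \cite{Scholze2012}, referring to Corol.~4.7 and Lem.~4.10 of \cite{Scholze2013} for the details. The ingredients you name --- covering by small affinoids, adjoining $p$-power roots of torus coordinates to produce affinoid perfectoids, and then almost Tate acyclicity together with almost purity via tilting for the vanishing statement --- are exactly what lies behind those cited references.

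One small imprecision worth flagging in part (i): after pulling back $\cal U$ along the perfectoid tower over $U_1$, the resulting ring is not literally ``formally \'etale over the completed perfectoid torus'', since one must still absorb the infinite tower of finite \'etale maps $U_{i+1}\to U_i$. The missing step is that finite \'etale covers of affinoid perfectoids are again affinoid perfectoid (this is the content of almost purity in \cite{Scholze2012}), and a filtered colimit of such remains perfectoid; with that observation your argument goes through. This is the same mechanism Scholze uses in the proof of \cite[Corol.~4.7]{Scholze2013}.
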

\begin{proof}
These are consequences of the tilting formalism and almost purity theorems developed in \cite{Scholze2012}. See Corol.~4.7 and Lem.~4.10 of \cite{Scholze2013}.
\end{proof}

To complement the previous local result we recall also the key global result about pro-\'etale cohomology, which we will need:

\begin{theorem}[Scholze]\label{theorem_global_cohomol}
If the rigid analytic variety $X$ is moreover proper and smooth over $\bb C$, then the canonical map of $\roi/p\roi$-modules \[H^i_\sub{\'et}(X,\bb Z/p\bb Z)\otimes_{\bb Z/p\bb Z}\roi/p\roi\To H^i_\sub{pro\'et}(X,\roi_X^+/p)\] is an almost isomorphism (i.e., the kernel and cokernel are killed by $\frak m$) for all $i\ge0$.
\end{theorem}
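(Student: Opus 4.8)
The plan is to follow Scholze's proof of the primitive comparison theorem; see \cite[\S4--5]{Scholze2013}. First I would construct the map: taking $\cal U=X$ to be the constant tower in the displayed formula $H^i_\sub{pro\'et}(\cal U,\nu^*\cal F)=\indlim_iH^i_\sub{\'et}(U_i,\cal F)$ above (legitimate since $|X|$ is qcqs, $X$ being proper and quasi-separated) and $\cal F=\bb Z/p\bb Z$ identifies $H^i_\sub{pro\'et}(X,\bb Z/p\bb Z)=H^i_\sub{\'et}(X,\bb Z/p\bb Z)$, where on the left $\bb Z/p\bb Z$ denotes the constant sheaf on $X_\sub{pro\'et}$; composing with the unit $\bb Z/p\bb Z\To\roi_X^+/p$ of sheaves on $X_\sub{pro\'et}$ and extending $\roi/p$-linearly produces the map in the statement. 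Everything below takes place in the category of almost $\roi/p$-modules (with respect to $\frak m\subset\roi$).

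The heart of the proof will be a \emph{finiteness} statement: that $R\Gamma_\sub{pro\'et}(X,\roi_X^+/p)$ is a \emph{perfect} complex of almost $\roi/p$-modules, i.e.\ bounded with almost finitely generated cohomology, concentrated almost in degrees $[0,2\dim X]$. To establish it I would pick a proper flat formal model $\frak X$ of $X$ over $\roi$ which, after admissible blow-ups, is covered by affine opens $\Spf R_j$ with each $R_j$ small (formally \'etale over a torus $\roi\pid{\ul T^{\pm1}}$, as in \S\ref{subsection_faltings_purity}); the associated $p$-power-root towers $R_{j,\infty}$ give affinoid perfectoid objects of $X_\sub{pro\'et}$. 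Then Proposition \ref{proposition_local_pro_etale}(ii) lets one compute $R\nu_*(\roi_X^+/p)$, for $\nu$ the projection $X_\sub{pro\'et}\to\frak X_\sub{Zar}$, locally on $\frak X$ by the continuous cochains of $\Gamma=\bb Z_p(1)^d$ acting on $R_{j,\infty}/p$, and the Koszul-complex calculation carried out in the proof of Theorem \ref{theorem_integral_purity}, run verbatim modulo $p$, shows that the cohomology sheaves of $R\nu_*(\roi_X^+/p)$ are almost coherent $\roi_{\frak X}/p$-modules. Since $\frak X$ is proper over $\roi$, the hypercohomology $R\Gamma_\sub{pro\'et}(X,\roi_X^+/p)=R\Gamma(\frak X,R\nu_*(\roi_X^+/p))$ of this bounded complex of almost coherent sheaves is then perfect over $\roi/p$, by finiteness of coherent cohomology on the proper $\roi/p$-scheme $\frak X\times_\roi\roi/p$. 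I expect this finiteness to be the main obstacle: it is here that a proper formal model and toric charts have to be produced, and that the almost structures must be tracked carefully through the local Koszul calculation and the coherence/properness argument.

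With finiteness in hand I would conclude via the Artin--Schreier sequence of sheaves on $X_\sub{pro\'et}$,
\[0\To\bb Z/p\bb Z\To\roi_X^+/p\xTo{\phi-1}\roi_X^+/p\To0,\]
which is almost exact because on an affinoid perfectoid $\cal V$ the Frobenius of $\roi_X^+(\cal V)/p$ is surjective and $\phi-1$ is then almost surjective on it, $\bb C^\flat$ being algebraically closed. Writing $M^i:=H^i_\sub{pro\'et}(X,\roi_X^+/p)$ — a finitely generated almost $\roi/p$-module with a Frobenius-semilinear $\phi$, by the previous paragraph — one notes that $\phi-1$ is almost surjective on every finitely generated almost $\roi/p$-module (again a Newton-polygon argument exploiting that $\bb C$ is algebraically closed), so the long exact cohomology sequence of the displayed sequence gives $H^i_\sub{\'et}(X,\bb Z/p\bb Z)\cong(M^i)^{\phi=1}$ (in particular $H^i_\sub{\'et}(X,\bb Z/p\bb Z)$ is finite over $\bb F_p$); then a bootstrapping argument, comparing both sides as perfect complexes over $\roi/p$ through their Frobenii with the top cohomological degree pinned down by Poincar\'e duality, should identify the map of the theorem with the natural almost isomorphism $(M^i)^{\phi=1}\otimes_{\bb F_p}\roi/p\isoto M^i$. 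The subtler point in this last step — the secondary obstacle — is making everything precise at the almost level: the exactness of the Artin--Schreier sequence and the bootstrapping that forces the cone of the comparison map to vanish almost.
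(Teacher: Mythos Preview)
The paper does not give a proof of this theorem: its entire ``proof'' is the single sentence ``See \cite[\S5]{Scholze2013}.'' So there is nothing in the paper to compare against; you have written out a sketch of the argument that the paper merely cites, and your sketch is broadly faithful to Scholze's original approach (almost finiteness via local group-cohomology calculations on a formal model, then the Artin--Schreier sequence).

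One caution on your finiteness step: you assume a proper formal model $\frak X$ covered by \emph{small} affines, i.e.\ formally \'etale over tori. But $X$ is only assumed to be a proper smooth rigid analytic variety over $\bb C$, and its formal models need not be smooth over $\roi$; smallness is a smoothness condition and will typically fail after admissible blow-ups. Scholze's actual argument in \cite[\S5]{Scholze2013} works with an arbitrary admissible formal model and affinoid covers of the generic fibre that are \'etale over polydiscs (or tori), computing $R\nu_*(\roi_X^+/p)$ via these analytic charts rather than via charts on $\frak X$ itself, and then invokes an almost-coherence/properness finiteness result. Your outline is morally right, but as written the reduction to the Koszul calculation of Theorem~\ref{theorem_integral_purity} presupposes a smooth integral model that may not exist.
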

\begin{proof}
See \cite[\S5]{Scholze2013}.
\end{proof}

\subsection{More sheaves on $X_\sub{pro\'et}$}\label{subsection_pro_etale_sheaves}
As indicated by Proposition \ref{proposition_local_pro_etale}(ii) and Theorem \ref{theorem_global_cohomol}, the pro-\'etale sheaf $\roi_X^+/p$ on $X$ enjoys some special properties, and this richness passes to the {\em completed integral structure sheaf} \[\hat\roi_X^+:=\projlim_s\roi_X^+/p^s,\] which is probably the most important sheaf on $X_\sub{pro\'et}$. We stress that it is not known whether $\roi_X^+(\cal U)$ coincides with the $p$-adic completion of $\roi_X^+(\cal U)$ for arbitrary objects $\cal U\in X_\sub{pro\'et}$.

Further sheaves of interest on $X_\sub{pro\'et}$ are collected in the following definition:

\begin{definition}
The {\em tilted integral structure sheaf} \footnote{Usually denoted by $\hat\roi_{X^\flat}^+$ to evoke the idea of it being the completed integral structure sheaf on the tilt $X^\flat$ of $X$.} is \[\roi_X^{+\flat}:=\projlim_\phi\roi_X^+/p,\] where the limit is taken over iterations of the Frobenius map $\phi$ on the sheaf of $\bb F_p$-algebras $\roi_X^+/p$. We will also need Witt vector forms\footnote{\label{footnote_Witt}If $\cal R$ is a sheaf of rings on a site $\cal T$, then $W_r(\cal R)$ and $W(\cal R)$ are the sheaves of rings obtained by applying the Witt vector construction section-wise, i.e., $W_r(\cal R)(U):=W_r(\cal R(U))$ and $W(\cal R)(U):=W(\cal R(U))$ for all $U\in\cal T$.} of the completed and tilted integral structure sheaves \[W_r(\hat\roi_X^+)\qquad\text{and}\qquad W_r(\roi_X^{+\flat}),\] and the {\em infinitesimal period sheaf} \[\bb A_{\sub{inf},X}:=W(\roi_X^{+\flat}).\]
\end{definition}

By repeating Lemma \ref{lemma_witt_alg_1} in terms of presheaves on $X_\sub{pro\'et}$ and then sheafifying, we obtain a canonical isomorphism of pro-\'etale sheaves \[\bb A_{\sub{inf},X}\Isoto \projlim_{r\sub{ wrt }F}W_r(\hat\roi_X^+).\] As in the affine case in \S\ref{subsection_theta} we then denote the resulting projection maps and their Frobenius twists by \[\tilde\theta_r:\bb A_{\sub{inf},X}\To W_r(\hat\roi_X^+)\quad\text{and}\quad \theta_r=\tilde\theta_r\circ\phi^r:\bb A_{\sub{inf},X}\To W_r(\hat\roi_X^+).\] To reduce further analysis of all these sheaves to the affine case of Section \ref{section_perfectoid}, we combine the fact that $X$ is locally perfectoid in the pro-\'etale topology (Proposition \ref{proposition_local_pro_etale}(i)) with the fact that the sections of these sheaves on affinoid perfectoids are ``as expected'':

\begin{lemma}[Scholze]\label{lemma_local_structure_of_sheaves}
Let $\cal U=\projlimf_i U_i$ be an affinoid perfectoid in $X_\sub{pro\'et}$, with associated perfectoid ring $A:=\roi_X^+(\cal U)_p^\comp$. Then \[\hat\roi_X^+(\cal U)=A,\quad W_r(\hat\roi_X^+)(\cal U)=W_r(A),\quad \roi_X^{+\flat}(\cal U)=A^\flat,\quad W_r(\roi_X^{+\flat})(\cal U)=W_r(A^\flat),\quad \bb A_{\sub{inf},X}(\cal U)=W(A^\flat).\] 
On the other hand, for $*>0$ the pro-\'etale cohomology groups \[H^*_\sub{pro\'et}(\cal U,\hat\roi_X^+),\quad H^*_\sub{pro\'et}(\cal U,W_r(\hat\roi_X^+)),\quad H^*_\sub{pro\'et}(\cal U,\roi_X^{+\flat}),\quad H^*_\sub{pro\'et}(\cal U,W_r(\roi_X^{+\flat})),\quad H^*_\sub{pro\'et}(\cal U,\bb A_{\sub{inf},X})\]
are almost zero, i.e., killed respectively by $\frak m$, $W_r(\frak m)$, $\frak m^\flat$, $W_r(\frak m^\flat)$, $W(\frak m^\flat)$.\footnote{\label{footnote_almost_W_r}
Now seems to be an appropriate moment for mentioning some formalism of almost mathematics over Witt rings. By a ``setting for almost mathematics'' we mean a pair $(V,I)$, where $V$ is a ring and $I=I^2\subseteq V$ is an ideal which is an increasing union of principal ideals $\bigcup_\lambda t_\lambda V$ generated by non-zero-divisors $t_\lambda$. Elementary manipulations of Witt vectors [Lem.~10.1 \& Corol.~10.2, BMS] then show that each Teichm\"uller lift $[t_\lambda]\in W_r(V)$ is a non-zero-divisor and that $W_r(I):=\ker(W_r(V)\to W_r(V/I))$ equals the increasing union $\bigcup_\lambda[t_\lambda]W_r(V)$, which moreover coincides with its square; in conclusion, the pair $(W_r(V), W_r(I))$ is also a setting for almost mathematics. We apply this above, and elsewhere, in the cases $(V,I)=(\roi,\frak m)$ and $(\roi^\flat,\frak m^\flat)$.

Upon taking the limit as $r\to\infty$, the inclusion $W(I):=\ker(W(V)\to W(V/I))\supset [I]:=\bigcup_\lambda[t_\lambda]W(V)$ is strict; the pair $(W(V),[I])$ is a setting for almost mathematics, but $(W(V),W(I))$ typically is not. So, strictly, speaking, the almost language should be avoided for the ideal $W(\frak m^\flat)$ above. However, if $V$ is a perfect ring of characteristic $p$ (e.g., $V=\roi^\flat$), then $[I]$ and $W(I)$ coincide after $p$-adic completion (and derived $p$-adic completion) by the argument of the proof of Lemma \ref{lemma_Tors_over_A_inf}; so a map between $p$-adically complete $W(V)$-modules (resp.~derived $p$-adically complete complexes of $W(V)$-modules) has kernel and cokernel (resp.~all cohomology groups of the cone) killed by $W(I)$ if and only if they are killed by $[I]$.
}
\end{lemma}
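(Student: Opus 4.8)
The plan is to deduce all five assertions from the single geometric input of Proposition~\ref{proposition_local_pro_etale}: affinoid perfectoids form a basis of $X_\sub{pro\'et}$ on which $\roi_X^+/p$ has almost-vanishing higher cohomology. I would take as base cases the two ``uncompleted-Witt'' sheaves $\hat\roi_X^+$ and $\roi_X^{+\flat}$: on an affinoid perfectoid $\cal U$ one has $\hat\roi_X^+(\cal U)=A$ with $H^{>0}_\sub{pro\'et}(\cal U,\hat\roi_X^+)$ killed by $\frak m$, and --- working in characteristic $p$, i.e.\ on the tilt --- $\roi_X^{+\flat}(\cal U)=A^\flat$ with $H^{>0}_\sub{pro\'et}(\cal U,\roi_X^{+\flat})$ killed by $\frak m^\flat$; these are Scholze's \cite[Lem.~4.10]{Scholze2013}. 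The cohomological vanishing for $\hat\roi_X^+$ comes from Proposition~\ref{proposition_local_pro_etale} by dévissage along $0\to\roi_X^+/p\xrightarrow{p^{s-1}}\roi_X^+/p^s\to\roi_X^+/p^{s-1}\to0$ (exact since $\roi_X^+$ is $p$-torsion-free) followed by passage to the limit over $s$: the transition maps are sheaf epimorphisms, so $\hat\roi_X^+=R\varprojlim_s\roi_X^+/p^s$ and $R\Gamma(\cal U,\hat\roi_X^+)=R\varprojlim_sR\Gamma(\cal U,\roi_X^+/p^s)$. The one genuinely delicate point here is that Proposition~\ref{proposition_local_pro_etale} only yields the \emph{almost} versions of $\hat\roi_X^+(\cal U)=A$ and $\roi_X^{+\flat}(\cal U)=A^\flat$ on $H^0$; upgrading these to honest equalities is exactly the content of loc.\ cit., which I would cite rather than reprove.

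Granting the base cases, I would obtain the Witt-vector statements by induction on $r$. For any sheaf of rings $\cal F$ on $X_\sub{pro\'et}$ there is a short exact sequence of abelian sheaves
\[0\To W_{r-1}(\cal F)\xto{V}W_r(\cal F)\xto{R^{r-1}}\cal F\To 0,\]
exact because it is so on stalks (where $W_r$ commutes with the defining filtered colimit). Applying $R\Gamma(\cal U,-)$ with $\cal F=\hat\roi_X^+$ and comparing the long exact cohomology sequence with the Witt-vector short exact sequence $0\to W_{r-1}(A)\to W_r(A)\to A\to 0$ via the natural map $W_r(A)=W_r(\hat\roi_X^+(\cal U))\to W_r(\hat\roi_X^+)(\cal U)$: the outer vertical comparison maps are isomorphisms by the inductive hypothesis and the base case, and since the Witt-vector projection $W_r(A)\to A$ is surjective and factors through $W_r(\hat\roi_X^+)(\cal U)$, the map $W_r(\hat\roi_X^+)(\cal U)\to\hat\roi_X^+(\cal U)$ is surjective, so the connecting homomorphism into $H^1_\sub{pro\'et}(\cal U,W_{r-1}(\hat\roi_X^+))$ vanishes and the short five lemma gives $W_r(\hat\roi_X^+)(\cal U)=W_r(A)$. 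For $i>0$ the long exact sequence squeezes $H^i_\sub{pro\'et}(\cal U,W_r(\hat\roi_X^+))$ between $H^i_\sub{pro\'et}(\cal U,W_{r-1}(\hat\roi_X^+))$ --- killed by $W_{r-1}(\frak m)$ by induction, hence by $W_r(\frak m)$ since the module structure here is through $F\colon W_r(\roi)\to W_{r-1}(\roi)$, which sends $W_r(\frak m)$ into $W_{r-1}(\frak m)$ --- and $H^i_\sub{pro\'et}(\cal U,\hat\roi_X^+)$ --- killed by $\frak m$, hence by $W_r(\frak m)$ via the ring map $R^{r-1}\colon W_r(\roi)\to\roi$; as $W_r(\frak m)=W_r(\frak m)^2$ (footnote~\ref{footnote_almost_W_r}) the middle term is killed by $W_r(\frak m)$. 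The identical argument with $\cal F=\roi_X^{+\flat}$ and the ideal $\frak m^\flat$ handles $W_r(\roi_X^{+\flat})$.

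For $\bb A_{\sub{inf},X}$ I would use the sheaf isomorphism $\bb A_{\sub{inf},X}\isoto\varprojlim_{r\text{ wrt }F}W_r(\hat\roi_X^+)$ established just before the lemma. Since taking sections commutes with inverse limits of sheaves, $\bb A_{\sub{inf},X}(\cal U)=\varprojlim_{r\text{ wrt }F}W_r(\hat\roi_X^+)(\cal U)=\varprojlim_{r\text{ wrt }F}W_r(A)=W(A^\flat)$, the last step being Lemma~\ref{lemma_witt_alg_1}. For higher cohomology, the transition maps $F\colon W_{r+1}(\hat\roi_X^+)\to W_r(\hat\roi_X^+)$ are sheaf epimorphisms --- on stalks this is surjectivity of the Witt-vector Frobenius on filtered colimits of perfectoid rings, Lemma~\ref{lemma_frobenius_surjectivity} --- so $\bb A_{\sub{inf},X}=R\varprojlim_{r\text{ wrt }F}W_r(\hat\roi_X^+)$ and $R\Gamma(\cal U,\bb A_{\sub{inf},X})=R\varprojlim_{r\text{ wrt }F}R\Gamma(\cal U,W_r(\hat\roi_X^+))$; in positive degrees its cohomology is assembled from $\varprojlim$ and $\varprojlim^1$ of the groups $H^{>0}_\sub{pro\'et}(\cal U,W_r(\hat\roi_X^+))$, each killed by $W_r(\frak m)$, and is therefore killed by $\varprojlim_{r\text{ wrt }F}W_r(\frak m)=W(\frak m^\flat)$.

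The main obstacle is not in the inductions, which are bookkeeping, but in the two \emph{honest} $H^0$-identities $\hat\roi_X^+(\cal U)=A$ and $\roi_X^{+\flat}(\cal U)=A^\flat$: Proposition~\ref{proposition_local_pro_etale} supplies only almost-isomorphisms there, and the upgrade to equality is Scholze's \cite[Lem.~4.10]{Scholze2013}, which I would cite. A secondary point, requiring care in the write-up rather than a new idea, is the limit $r\to\infty$ for $\bb A_{\sub{inf},X}$: as footnote~\ref{footnote_almost_W_r} warns, $(W(\roi^\flat),W(\frak m^\flat))$ is not literally a setting for almost mathematics, so ``$H^{>0}_\sub{pro\'et}(\cal U,\bb A_{\sub{inf},X})$ is killed by $W(\frak m^\flat)$'' must be read after derived $p$-adic completion, where $W(\frak m^\flat)$ and $[\frak m^\flat]$ agree because $\roi^\flat$ is perfect.
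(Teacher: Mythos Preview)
Your approach is exactly what the paper intends when it writes ``the almost vanishings follow by taking suitable limits in Proposition~\ref{proposition_local_pro_etale}(ii)'': the paper simply cites \cite[Lems.~4.10, 5.11, Thm.~6.5]{Scholze2013} for the section identities and leaves the d\'evissage/induction/limit arguments implicit, whereas you have written them out. The inductive structure via $0\to W_{r-1}\xrightarrow{V}W_r\to W_1\to 0$ and the attention to which ring map ($F$ or $R^{r-1}$) carries the $W_r(\roi)$-module structure on each piece are correct and match what Scholze does.

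There is, however, one step you should not regard as routine: the inference ``the transition maps are sheaf epimorphisms, so $\hat\roi_X^+=R\varprojlim_s\roi_X^+/p^s$'' (and the parallel claim for $\bb A_{\sub{inf},X}=R\varprojlim_{r\text{ wrt }F}W_r(\hat\roi_X^+)$). The pro-\'etale topos of \cite{Scholze2013} is \emph{not} replete, so surjective transition maps do not automatically kill higher $R\varprojlim$. The paper itself flags this explicitly in the footnote to the Cartan--Leray discussion in \S\ref{subsection_CL}: ``note that the topos of pro-\'etale sheaves does not satisfy the necessary Grothendieck axioms to automatically imply that higher derived inverse limits vanish in the case of surjective transition maps!''. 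The correct argument (given there) is to observe that the comparison map $\hat\roi_X^+\to R\varprojlim_s\roi_X^+/p^s$ is an \emph{almost} quasi-isomorphism on every affinoid perfectoid by Proposition~\ref{proposition_local_pro_etale}(ii), hence its cone is derived $p$-adically complete, and then derived-completeness of both sides forces it to be an honest quasi-isomorphism. Since you already cite \cite[Lem.~4.10]{Scholze2013} for the base case anyway, this gap in justification is harmless for the lemma as stated, but your write-up should not present the $R\varprojlim$ identification as a formal consequence of surjectivity.
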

\begin{proof}
See Lems.~4.10, 5.11 and Thm.~6.5 of \cite{Scholze2013} for the description of the sections. The almost vanishings follow by taking suitable limits in Proposition \ref{proposition_local_pro_etale}(ii).
\end{proof}

\begin{corollary}\label{corollary_non_zero_divisors_on_sheaf}
The maps of pro-\'etale sheaves  $\theta_r,\tilde\theta_r:\bb A_{\sub{inf},X}\to W_r(\hat\roi_X^+)$ are surjective, with kernels generated respectively by the elements $\xi_r,\tilde\xi_r\in\bb A_\sub{inf}=W(\roi^\flat)$ defined in \S\ref{subsection_roots_of_unity}; moreover, these elements (as well as $\mu\in\bb A_\sub{inf}$, also defined in \S\ref{subsection_roots_of_unity}) are non-zero-divisors of the sheaf of rings $\bb A_{\sub{inf},X}$.
\end{corollary}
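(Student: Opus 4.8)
The plan is to reduce all three assertions to the affine statements of \S\ref{subsection_roots_of_unity} by testing on the basis of affinoid perfectoid objects provided by Proposition~\ref{proposition_local_pro_etale}(i) and using the explicit description of sections in Lemma~\ref{lemma_local_structure_of_sheaves}. The underlying sheaf-theoretic principle is elementary: a morphism of sheaves on $X_\sub{pro\'et}$ which is injective (resp.\ surjective, resp.\ bijective) on sections over every member of a basis for the topology is a monomorphism (resp.\ epimorphism, resp.\ isomorphism) --- for injectivity this is the separatedness axiom, for surjectivity the definition of an epimorphism of sheaves, and the bijective case is the conjunction. If $\cal U=\projlimf_i U_i$ is affinoid perfectoid with associated perfectoid ring $A=\roi_X^+(\cal U)_p^\comp$, then $A$ contains $\roi$ together with the chosen compatible system of $p$-power roots of unity (footnote~\ref{footnote_aff_perf_are_as_usual}); hence, by the explicit Teichm\"uller formulas, the images of $\mu,\xi_r,\tilde\xi_r\in\bb A_\sub{inf}=W(\roi^\flat)$ in $\bb A_{\sub{inf},X}(\cal U)=W(A^\flat)$ are the elements of the same names attached to $A$ in \S\ref{subsection_roots_of_unity}, and by the construction of the sheaf maps together with Lemma~\ref{lemma_local_structure_of_sheaves} the maps $\theta_r,\tilde\theta_r$ on $\bb A_{\sub{inf},X}(\cal U)$ are exactly the maps $\theta_r,\tilde\theta_r:W(A^\flat)\to W_r(A)$ of \S\ref{subsection_theta}.

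\emph{Non-zero-divisors and surjectivity.} Each of $\mu,\xi_r,\tilde\xi_r$ is a global section of $\bb A_{\sub{inf},X}$, and to see that multiplication by it is injective as a sheaf morphism it suffices to check this over affinoid perfectoids $\cal U$, where it becomes multiplication on $W(A^\flat)$ by the corresponding element of \S\ref{subsection_roots_of_unity}, a non-zero-divisor by Proposition~\ref{proposition_roots_of_unity} and Lemma~\ref{lemma_ker_theta_r} applied to the perfectoid ring $A$. Similarly, over such $\cal U$ the maps $\theta_r,\tilde\theta_r:W(A^\flat)\to W_r(A)$ are surjective because $A$ is perfectoid (Lemma~\ref{lemma_frobenius_surjectivity}(v) and its proof, which also handles $\tilde\theta_r$), so $\theta_r$ and $\tilde\theta_r$ are epimorphisms of sheaves.

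\emph{Kernels.} Since the sections functor commutes with kernels, $\ker(\theta_r)(\cal V)=\ker(\theta_r:\bb A_{\sub{inf},X}(\cal V)\to W_r(\hat\roi_X^+)(\cal V))$ for every $\cal V\in X_\sub{pro\'et}$. The global section $\theta_r(\xi_r)$ restricts to $0$ over each affinoid perfectoid (as $\theta_r(\xi_r)=0$ for $A$ by Lemma~\ref{lemma_ker_theta_r}), hence vanishes, so $\xi_r\bb A_{\sub{inf},X}\subseteq\ker(\theta_r)$. Conversely, given $s\in\bb A_{\sub{inf},X}(\cal V)$ with $\theta_r(s)=0$, cover $\cal V$ by affinoid perfectoids $\cal V_j$ with rings $A_j$; by Lemma~\ref{lemma_ker_theta_r} we have $s|_{\cal V_j}\in\ker(\theta_r:W(A_j^\flat)\to W_r(A_j))=\xi_r W(A_j^\flat)$, so $s|_{\cal V_j}=\xi_r t_j$ for a unique $t_j\in\bb A_{\sub{inf},X}(\cal V_j)$, uniqueness because $\xi_r$ is a non-zero-divisor. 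On a cover of each $\cal V_j\times_{\cal V}\cal V_{j'}$ by affinoid perfectoids the restrictions of $t_j$ and $t_{j'}$ agree, again because $\xi_r$ is a non-zero-divisor there, so the $t_j$ glue to $t\in\bb A_{\sub{inf},X}(\cal V)$ with $\xi_r t=s$. As $\xi_r$ is a non-zero-divisor, the image sheaf $\xi_r\bb A_{\sub{inf},X}$ has sections $\xi_r\bb A_{\sub{inf},X}(\cal V)$ over every $\cal V$, and we conclude $\ker(\theta_r)=\xi_r\bb A_{\sub{inf},X}$.

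Finally, $\roi_X^{+\flat}$ is a perfect sheaf of $\bb F_p$-algebras, so $\phi$ is an automorphism of $\bb A_{\sub{inf},X}=W(\roi_X^{+\flat})$; since $\tilde\theta_r=\theta_r\circ\phi^{-r}$ and $\tilde\xi_r=\phi^r(\xi_r)$, applying $\phi^r$ to the identity just proved yields simultaneously that $\tilde\xi_r$ is a non-zero-divisor and that $\ker(\tilde\theta_r)=\tilde\xi_r\bb A_{\sub{inf},X}$, completing the proof. The only genuinely delicate point is the bookkeeping between sheaf-level and section-level statements --- image and quotient sheaves are \emph{not} computed sectionwise --- which is exactly why the non-zero-divisor property of $\xi_r$ is established first and then exploited to run the gluing argument over the basis of affinoid perfectoids.
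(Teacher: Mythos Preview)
Your proof is correct and follows exactly the same approach as the paper's: reduce to sections on affinoid perfectoids via Proposition~\ref{proposition_local_pro_etale}(i) and Lemma~\ref{lemma_local_structure_of_sheaves}, then invoke the affine results of \S\ref{subsection_perfectoid}--\ref{subsection_roots_of_unity}. The paper's proof is a one-sentence version of your argument; your careful gluing argument for the kernel (using that $\xi_r$ is a non-zero-divisor to make the image presheaf a sheaf) just makes explicit what the paper leaves implicit in the phrase ``all assertions are local.''
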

\begin{proof}
All assertions are local, so by Proposition \ref{proposition_local_pro_etale}(i) it is sufficient to prove the analogous affine assertions after taking sections in any affinoid perfectoid $\cal U\in X_\sub{pro\'et}$; but using the descriptions of the sections given by the previous lemma, these affine assertions were covered in \S\ref{subsection_perfectoid}--\ref{subsection_roots_of_unity}.
\end{proof}

\subsection{Calculating pro-\'etale cohomology}\label{subsection_CL}
This section is devoted to an explanation of how Proposition \ref{proposition_local_pro_etale}(ii) is used in practice to (almost) calculate the pro-\'etale cohomology of our sheaves of interest; this is of course the pro-\'etale analogue of Faltings' purity theorem and techniques which we saw in \S\ref{subsection_faltings_purity}. We assume in this section that our rigid analytic variety $X$ is the generic fibre $\frak X_\bb C$ of a smooth formal scheme $\frak X$ over $\roi$; this will be the set-up of our main results later.

Relatively elementary arguments show that $\frak X$ admits a basis of affine opens $\{\Spf R\}$ where each $R$ is a $p$-adically complete, formally smooth $\roi$-algebra which is moreover small, i.e., formally \'etale over $\roi\pid{T_1^{\pm1},\dots,T_d^{\pm1}}$. Fix such an open $\Spf R\subseteq\frak X$ (as well as a formally \'etale map $\roi\pid{\ul T^{\pm1}}\to R$, sometimes called a ``framing''); the associated generic fibre is the rigid affinoid space $U:=\Sp R[\tfrac1p]\subseteq X$, which is equipped with an \'etale morphism to $\Sp \bb C\pid{\ul T^{\pm1}}$. We will explain how to almost calculate the pro-\'etale cohomology groups $H^*_\sub{pro\'et}(\Sp R[\tfrac1p],?)$ where $?$ is any of the sheaves from Lemma \ref{lemma_local_structure_of_sheaves}.

For each $i\ge 1$, let \[R_i:=R\otimes_{\roi\pid{\ul T^{\pm1}}}\roi\pid{\ul T^{\pm1/p^i}}\] be the finite \'etale $R$-algebra obtained by adjoining $p^i$-roots of $T_1,\dots,T_d$. Then $\Sp R_{i+1}[\tfrac1p]\to\Sp R_i[\tfrac1p]$ is a finite \'etale cover of rigid affinoids for each $i\ge0$, whence it easily follows that \[\cal U:=\projlimf_i\Sp R_i[\tfrac1p] \To U\] is a cover in $X_\sub{pro\'et}$.

In fact, $\Sp R_i[\tfrac1p]\to U$ is a finite Galois cover with Galois group $\mu_{p^i}^d$, where $\ul\zeta=(\zeta_1,\dots,\zeta_d)\in\mu_{p^i}^d$ acts on $R_i$ in the usual way via $\ul\zeta\cdot T_1^{j_1/p^i}\cdots T_d^{j_d/p^i}:=\zeta_1^{j_1}\cdots\zeta_d^{j_d}T_1^{j_1/p^i}\cdots T_d^{j_d/p^i}$, and so for each $s\ge1$ there is an associated Cartan--Leray\footnote{Often called Hochschild--Serre in this setting. Here $H^*_\sub{grp}$ and $R\Gamma_\sub{grp}$ refer to group cohomology for a topological group acting on discrete modules.} spectral sequence \[H^a_\sub{grp}(\mu_{p^i}^d, H^b_\sub{pro\'et}(U_i,\roi_X^+/p^s))\implies H^{a+b}_\sub{pro\'et}(U,\roi_X^+/p^s),\] or writing in a more derived fashion \[R\Gamma_\sub{grp}(\mu_{p^i}^d,R\Gamma_\sub{pro\'et}(U_i,\roi_X^+/p^s))\quis R\Gamma_\sub{pro\'et}(U,\roi_X^+/p^s).\] Taking the colimit over $i$ yields an analogous quasi-isomorphism (and spectral sequence) for the ``$\bb Z_p(1)^d$-Galois cover'' $\cal U\to U$: \[R\Gamma_\sub{grp}(\bb Z_p(1)^d,R\Gamma_\sub{pro\'et}(\cal U,\roi_X^+/p^s))\quis R\Gamma_\sub{pro\'et}(U,\roi_X^+/p^s).\] 

However, $\cal U$ is affinoid perfectoid: indeed, since the power bounded elements in the affinoid $\bb C$-algebra $R_i[\tfrac1p]$ are exactly $R_i$, we must show that $(\indlim_iR_i)_p^\comp=R\hat\otimes_{\roi\pid{\ul T^{\pm1}}}\roi\pid{\ul T^{\pm1/p^\infty}}=:R_\infty$ is a perfectoid ring; but $R_\infty$ is a $p$-adically complete, formally \'etale $\roi\pid{\ul T^{\pm1/p^\infty}}$-algebra, whence it is perfectoid by Example~\ref{example_torus}. Therefore the pro-\'etale cohomology $H^*_\sub{pro\'et}(\cal U,\roi_X^+/p^s)$ almost vanishes for $*>0$ (by Proposition \ref{proposition_local_pro_etale}(ii)) and almost equals $R_\infty/p^sR_\infty$ for $*=0$ (by Lemma \ref{lemma_local_structure_of_sheaves}, using that $\roi_X^+/p^s=\hat\roi_X^+/p^s$); so the edge map associated to the previous line is an almost quasi-isomorphism \[R\Gamma_\sub{grp}(\bb Z_p(1)^d,R_\infty/p^sR_\infty)\xTo{\sub{al.~qu.-iso.}} R\Gamma_\sub{pro\'et}(U,\roi_X^+/p^s)\] (i.e., all cohomology groups of the cone are killed by $\frak m$), where we mention explicitly that $\bb Z_p(1)^d$ is acting on $R_\infty$ as in \S\ref{subsection_faltings_purity}. Finally, taking the derived inverse limit\footnote{This process of taking the inverse limit deserves further explanation. By definition, when $G$ is a topological group and $M$ is a complete topological $G$-module whose topology is defined by a system $\{N\}$ of open sub-$G$-submodules, we define its continuous group cohomology as $R\Gamma_\sub{cont}(G,M):=R\!\op{lim}_NR\Gamma_\sub{grp}(G,M/N)$ and $H^*_\sub{cont}(G,M):=H^*(R\Gamma_\sub{cont}(G,M))$; of course, we may always restrict the limit to any preferred system of open neighbourhoods of $0$ by sub-$G$-modules. In particular, $R\Gamma_\sub{cont}(\bb Z_p(1)^d,R_\infty)=R\!\op{lim}_sR\Gamma_\sub{grp}(\bb Z_p(1)^d,R_\infty/p^sR_\infty)$.

To take the inverse limit of the right, we show that the canonical map $R\Gamma_\sub{pro\'et}(U,\hat\roi_X^+)\to R\!\op{lim}_sR\Gamma_\sub{pro\'et}(U,\roi_X^+/p^s)$ is a quasi-isomorphism. Since the codomain may be rewritten as $R\Gamma_\sub{pro\'et}(U,R\!\op{lim}_s\roi_X^+/p^s)$ by general formalism of derived functors, it is enough to show that the canonical map $\hat\roi_X^+\to R\!\op{lim}_s\roi_X^+/p^s$ is a quasi-isomorphism (note that the topos of pro-\'etale sheaves does not satisfy the necessary Grothendieck axioms to automatically imply that higher derived inverse limits vanish in the case of surjective transition maps!), for which it is enough to show that $R\Gamma_\sub{pro\'et}(\cal V,\hat\roi_X^+)\to R\!\op{lim}_sR\Gamma_\sub{pro\'et}(\cal V,\roi_X^+/p^s)$ is a quasi-isomorphism for every affinoid perfectoid $\cal V\in X_\sub{pro\'et}$; this is what we shall now do. Firstly, it is easily seen to be an almost quasi-isomorphism by Lemma \ref{lemma_local_structure_of_sheaves}, and so in particular the cone is derived $p$-adically complete; since the codomain is derived $p$-adically complete, therefore the domain is also; but the codomain is precisely the derived $p$-adic completion of the domain, and hence the map is a quasi-isomorphism.
}
over $s$ yields an almost quasi-isomorphism \[\boxed{R\Gamma_\sub{cont}(\bb Z_p(1)^d,R_\infty)\xTo{\sub{al.~qu.-iso.}} R\Gamma_\sub{pro\'et}(U,\hat\roi_X^+).}\] 

Arguing by induction and taking inverse limits, these almost descriptions may be extended to the other sheaves in Lemma \ref{lemma_local_structure_of_sheaves}, giving in particular almost (wrt.~$W_r(\frak m)$ and $W(\frak m^\flat)$ respectively) quasi-isomorphisms
\[\boxed{R\Gamma_\sub{cont}(\bb Z_p(1)^d,W_r(R_\infty))\xTo{\sub{al.~qu.-iso.}} R\Gamma_\sub{pro\'et}(U,W_r(\hat\roi_X^+))}\] and
\[\boxed{R\Gamma_\sub{cont}(\bb Z_p(1)^d,W(R_\infty^\flat))\xTo{\sub{al.~qu.-iso.}} R\Gamma_\sub{pro\'et}(U,\bb A_{\sub{inf},X}).}\] These ``Cartan--Leray almost quasi-isomorphisms'' are crucial to all our calculations of pro-\'etale cohomology.

\section{The main construction and theorems}\label{section_main}
In this section we present the main construction and define the new cohomology theory introduced in [BMS], before proving that its main properties, as stated in Theorem \ref{thm:IntroGlobalCompThm}, can be reduced to a certain $p$-adic Cartier isomorphism. We work in the set-up of  \S\ref{subsection_intro} throughout:
\begin{itemize}\itemsep0pt
\item $\bb C$ is a complete, non-archimedean, algebraically closed field of mixed characteristic; ring of integers~$\roi$ with maximal ideal $\frak m$; residue field $k$.
\item We pick a compatible sequence $\zeta_p,\zeta_{p^2},\dots\in\roi$ of $p$-power roots of unity, and define $\mu,\xi,\xi_r,\tilde\xi,\tilde\xi_r\in\bb A_\sub{inf}=W(\roi^\flat)$ as in \S\ref{subsection_roots_of_unity}.
\item $\frak X$ is a smooth formal scheme over $\roi$, which we do not yet assume is proper; its generic fibre, as a rigid analytic variety over $\bb C$, is denoted by $X=\frak X_C$.
\item $\nu:X_\sub{pro\'et}\to \frak X_\sub{Zar}$ is the projection map of sites obtained by pulling back any Zariski open in $\frak X_\sub{Zar}$ to the constant tower in $X_\sub{pro\'et}$ consisting of its generic fibre. That is, $\nu$ is the composition of maps of sites $X_\sub{pro\'et}\to X_\sub{\'et}\to \frak X_\sub{\'et}\to \frak X_\sub{Zar}$, where the first projection map is what was previously denoted by $\nu$ in \S\ref{subsection_pro_etale}.\footnote{We hope that this rechristening of $\nu$ does not lead to confusion, but we are following the (incompatible) notations of \cite{Scholze2013} and [BMS].}
\end{itemize}
The following is the fundamental new object at the heart of our cohomology theory:

\begin{definition}
Applying $\nu:X_\sub{pro\'et}\to\frak X_\sub{Zar}$ to the period sheaf $\bb A_{\sub{inf},X}$ gives a ``nearby cycles period sheaf'' $R\nu_*\bb A_{\sub{inf},X}$, which is a complex of sheaves of $\bb A_\sub{inf}$-modules on $\frak X_\sub{Zar}$; to this we now apply $L\eta_\mu$ to obtain a complex of sheaves of $\bb A_\sub{inf}$-modules on $\frak X_\sub{Zar}$:
\[\bb A\Omega_{\frak X/\roi}:=L\eta_\mu(R\nu_*\bb A_{\sub{inf},X}).\] We will soon equip $\bb A\Omega_{\frak X/\roi}$ with a Frobenius-semi-linear endomorphism $\phi$.
\end{definition}

\begin{remark}\label{remark_Leta_on_sheaves}
The previous definition used the d\'ecalage functor for a complex of sheaves, whereas we only defined it in Definition \ref{definition_decalage} for complexes of modules; here we explain the necessary minor modifications.

Let $\cal T$ be a topos, $A$ a ring, and $f\in A$ a non-zero-divisor. Call a complex $C$ of sheaves of $A$-modules {\em strongly $K$-flat} if and only if 
\begin{itemize}\itemsep0pt
\item $C^i$ is a sheaf of flat $A$-modules for all $i\in\bb Z$,
\item and the direct sum totalisation of the bicomplex $C\otimes_AD$ is acyclic for every acyclic complex $D$ of sheaves of $A$-modules.\footnote{This is not automatic from the first condition since $C$ may be unbounded, and is a standard condition to impose when requiring flatness conditions on unbounded complexes of sheaves.}
\end{itemize}
For any such $C$ we define a new complex of sheaves $\eta_fC$ by \[\cal T\ni U\mapsto (\eta_fC)^i(U):=\{x\in f^iC^i(U):dx\in f^{i+1}C^{i+1}(U)\}.\] Any complex $D$ of sheaves of $A$-modules may be resolved by a strongly $K$-flat complex $C\quis D$ (e.g., see the proof of {\em The Stacks Project}, Tag 077J), and we define $L\eta_fD:=\eta_fC$. This is a well-defined endofunctor of the derived category of sheaves of $A$-modules on $\cal T$.

{\bf Warning:} The d\'ecalage functor does not commute with global sections: there is a natural ``global-to-local'' morphism \[L\eta_fR\Gamma(\cal T,C)\To R\Gamma(\cal T,L\eta_fC),\] but this is not in general a quasi-isomorphism.
\end{remark}

\begin{remark}\label{remark_vague_outline}
Before saying anything precise, we offer some vague descriptions of how $\bb A\Omega_{\frak X/\roi}$ looks and how it can be studied. Ignoring the d\'ecalage functor for the moment, $R\nu_*\bb A_{\sub{inf},X}$ is obtained by sheafifying $\frak X\supseteq \Spf R\mapsto R\Gamma_\sub{pro\'et}(\Sp R[\tfrac1p],\bb A_{\sub{inf},X})$, as $\Spf R$ runs over affine opens of $\frak X$. We may suppose here that $R$ is small and so put ourselves in the situation of \S\ref{subsection_CL}: $R$ is a small, formally smooth $\roi$-algebra corresponding to an affine open $\Spf R\subseteq\frak X$, with associated pro-\'etale cover $\cal U=\projlimf_i\Sp R_i[\tfrac1p]\to \Sp R[\tfrac1p]$, where $\cal U$ is affinoid perfectoid with associated perfectoid ring $R_\infty$. As we saw in \S\ref{subsection_CL} there is an associated Cartan--Leray almost (wrt.~$W(\frak m^\flat)$) quasi-isomorphism \[R\Gamma_\sub{cont}(\bb Z_p(1)^d,W(R_\infty^\flat))\To R\Gamma_\sub{pro\'et}(\Sp R[\tfrac1p],\bb A_{\sub{inf},X}).\] Recalling from \S\ref{subsection_faltings_purity} that the d\'ecalage functor sometimes transforms almost quasi-isomorphisms into actual quasi-isomorphisms, $\bb A\Omega_{\frak X/\roi}$ can therefore be analysed locally through the complexes \[L\eta_\mu R\Gamma_\sub{cont}(\bb Z_p(1)^d,W(R_\infty^\flat)),\] as $\Spf R$ various over small affine opens of $\frak X$.\footnote{The astute reader may notice that in this argument we have just implicitly identified $L\eta_\mu R\Gamma_\sub{pro\'et}(\Sp R[\tfrac1p],\bb A_{\sub{inf},X})$ and $R\Gamma_\sub{Zar}(\Spf R,\bb A\Omega_{\frak X/\roi})$, contrary to the warning of Remark \ref{remark_Leta_on_sheaves}; this is precisely the type of technical obstacle which will need to be overcome in \S\ref{subsection_technical}.
} These complexes will turn out to be relatively explicit and related to de Rham--Witt complexes, Koszul complexes, and $q$-de Rham complexes.
\end{remark}

\begin{remark}[de Rham--Witt complexes]\label{remark_de_Rham_Witt}
Before continuing any further with Section \ref{section_main} the reader should probably first read \S\ref{subsection_definition_Witt}, where the relative de Rham--Witt complex $W_r\Omega_{\frak X/\roi}^\blob$ on $\frak X$ is defined; it provides an explicit complex computing both de Rham and crystalline cohomology.

In the subsequent \S\ref{subsection_constructing_Witt}, which the reader can ignore for the moment, we will explain methods of constructing ``Witt complexes'' over perfectoid rings. In particular, given a commutative algebra object $D\in D(\bb A_\sub{inf})$ equipped with a Frobenius-semi-linear automorphism $\phi_D$ and satisfying certain hypotheses, we will equip the cohomology groups \[\cal W_r^\blob(D):=H^\blob((L\eta_\mu D)/\tilde\xi_r),\quad\text{where}\quad (L\eta_\mu D)/\tilde\xi_r=(L\eta_\mu D)\dotimes_{\bb A_\sub{inf},\tilde\theta_r}W_r(\roi),\] with the structure of a Witt complex for $\roi\to R$ (where $R$ is an $\roi$-algebra depending on $D$); the differential $d:\cal W_r^\blob(D)\to\cal W_r^{\blob+1}(D)$ will be given by the Bockstein $\op{Bock}_{\tilde\xi_r}$. 
\end{remark}

To explain the main theorems we recall from \S\ref{subsection_theta} that there are two ways of specialising from $\bb A_\sub{inf}$ to $W_r(\roi)$
\[\xymatrix{
&\bb A_\sub{inf}\ar@{->>}[dl]_{\theta_r}\ar@{->>}[dr]^{\tilde\theta_r=\theta_r\circ\phi^{-r}}&\\
W_r(\roi)=\bb A_\sub{inf}/\xi_r\bb A_\sub{inf}&&W_r(\roi)=\bb A_\sub{inf}/\tilde\xi_r\bb A_\sub{inf},
}\]
so we use these to form corresponding specialisations of the complex of sheaves of $\bb A_\sub{inf}$-modules $\bb A\Omega_{\frak X/\roi}$:
\[\xymatrix{
&\bb A\Omega_{\frak X/\roi}\ar@{->>}[dl]_{\theta_r}\ar@{->>}[dr]^{\tilde\theta_r=\theta_r\circ\phi^{-r}}&\\
\bb A\Omega_{\frak X/\roi}/\xi_r=\bb A\Omega_{\frak X/\roi}\dotimes_{\bb A_\sub{inf},\theta_r}W_r(\roi)&&\bb A\Omega_{\frak X/\roi}\dotimes_{\bb A_\sub{inf},\tilde\theta_r}W_r(\roi)=\bb A\Omega_{\frak X/\roi}/\tilde\xi_r=:\tilde{W_r\Omega}_{\frak X/\roi}
}\]
The next theorem is the main new calculation at the heart of our results (and is the reason for the chosen notation $\tilde{W_r\Omega}_{\frak X/\roi}$ on the right of the previous line), from which we will deduce all further results, in which $W_r\Omega_{\frak X/\roi}^\blob$ is the relative de Rham--Witt complex of $\frak X$ over $\roi$:

\begin{theorem}[$p$-adic Cartier isomorphism]\label{theorem_p_adic_Cartier}
There are natural\footnote{As written, this isomorphism is natural but not canonical: it depends on the chosen sequence of $p$-power roots of unity. To make it independent of any choices, the right side should be replaced by $\cal H^i(\tilde{W_r\Omega}_{\frak X/\roi})\otimes_{W_r(\roi)}(\ker\tilde\theta_r/(\ker\tilde\theta_r)^2)^{\otimes i}$. Here  $\ker\tilde\theta_r/(\ker\tilde\theta_r)^2=\tilde\xi_r\bb A_\sub{inf}/\tilde\xi_r^2\bb A_\sub{inf}$ is a certain canonical rank-one free $W_r(\roi)$-module, and so we are replacing the right side by a type of Tate twist $\cal H^i(\tilde{W_r\Omega}_{\frak X/\roi})\{i\}$. This dependence arrises as follows: changing the chosen sequence of $p$-power roots of unity changes $\mu$ up to a unit in $\bb A_\sub{inf}$: this does not affect $L\eta_\mu$ (which depends only on the ideal generated by $\mu$), but does affect the forthcoming isomorphism in Remark \ref{remark_more_on_Leta}(a) (see footnote \ref{footnote_Bockstein}).
} isomorphisms of Zariski sheaves of $W_r(\roi)=\bb A_\sub{inf}/\tilde\xi_r\bb A_\sub{inf}$-modules \[C^{-r}_\frak X:W_r\Omega_{\frak X/\roi}^i\Isoto\cal H^i(\tilde{W_r\Omega}_{\frak X/\roi})\] for all $i\ge0$, $r\ge1$, which satisfy the following compatibilities:
\begin{enumerate}\itemsep0pt
\item the restriction map $R:W_{r+1}\Omega_{\frak X/\roi}^i\to W_r\Omega_{\frak X/\roi}^i$ is compatible with the map $\tilde{W_{r+1}\Omega}_{\frak X/\roi}\to \tilde{W_r\Omega}_{\frak X/\roi}$ induced by the inverse Frobenius on $\bb A_{\sub{inf},X}$.
\item the de Rham--Witt differential $d:W_r\Omega_{\frak X/\roi}^i\to W_r\Omega_{\frak X/\roi}^{i+1}$ is compatible with the Bockstein homomorphism $\op{Bock}_{\tilde\xi_r}:\cal H^i(\tilde{W_r\Omega}_{\frak X/\roi})\to\cal H^{i+1}(\tilde{W_r\Omega}_{\frak X/\roi})$.
\end{enumerate}
\end{theorem}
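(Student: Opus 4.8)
The plan is to reduce to the local calculation that occupies \S\ref{subsection_local_Cartier}, and then to invoke the universal property of Langer--Zink's relative de Rham--Witt complex. Since the statement is local on $\frak X_\sub{Zar}$ and $W_r\Omega^\blob_{\frak X/\roi}$ is a Zariski sheaf, it is enough to produce, for each small affine open $\Spf R\subseteq\frak X$ (so $R$ formally étale over $\roi\pid{\ul T^{\pm1}}$), natural isomorphisms $W_r\Omega^i_{R/\roi}\isoto H^i\big(R\Gamma_\sub{Zar}(\Spf R,\tilde{W_r\Omega}_{\frak X/\roi})\big)$ compatible with restriction and with $d$, and then to sheafify over this basis of opens. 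The first step --- the content of \S\ref{subsection_technical} --- is to identify that target. Combining the Cartan--Leray almost quasi-isomorphism $R\Gamma_\sub{cont}(\bb Z_p(1)^d,W(R_\infty^\flat))\to R\Gamma_\sub{pro\'et}(\Sp R[\tfrac1p],\bb A_{\sub{inf},X})$ of \S\ref{subsection_CL} with (a) the fact that $L\eta_\mu$ turns it into an \emph{honest} quasi-isomorphism --- via the same ``goodness'' mechanism as in Lemma~\ref{lemma_technical_0} and the proof of Theorem~\ref{theorem_integral_purity} --- and (b) the verification that $R\Gamma_\sub{Zar}(\Spf R,\bb A\Omega_{\frak X/\roi})$ agrees with $L\eta_\mu R\Gamma_\sub{pro\'et}(\Sp R[\tfrac1p],\bb A_{\sub{inf},X})$ (the point flagged in Remark~\ref{remark_vague_outline}), one obtains a $\phi$-equivariant quasi-isomorphism $R\Gamma_\sub{Zar}(\Spf R,\bb A\Omega_{\frak X/\roi})\quis L\eta_\mu D$, where $D:=R\Gamma_\sub{cont}(\bb Z_p(1)^d,W(R_\infty^\flat))$. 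As $\Spf R$ is affine and base change along $\tilde\theta_r$ is $-\dotimes_{\bb A_\sub{inf}}\bb A_\sub{inf}/\tilde\xi_r$, which commutes with $R\Gamma_\sub{Zar}$, the target of $C^{-r}_\frak X$ becomes $H^i\big((L\eta_\mu D)/\tilde\xi_r\big)=\cal W_r^i(D)$ in the notation of Remark~\ref{remark_de_Rham_Witt}.

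Next I would run the construction of \S\ref{subsection_constructing_Witt} on $D$. The commutative algebra object $D\in D(\bb A_\sub{inf})$ inherits a Frobenius-semi-linear automorphism from $\phi$ on $\bb A_{\sub{inf},X}$; the compatibilities of $\tilde\theta_r$ with $R,F,V$ recorded in Lemma~\ref{lemma_theta_r_diagrams}, together with the relations $\mu=\xi_r\phi^{-r}(\mu)$ and $\phi^r(\mu)=\tilde\xi_r\mu$ of Proposition~\ref{proposition_roots_of_unity}, endow the family $(\cal W_r^\blob(D))_{r\ge1}$ with the structure of a Witt complex for $\roi\to R$: the differential is the Bockstein $\op{Bock}_{\tilde\xi_r}$ attached to multiplication by $\tilde\xi_r$, the restriction is induced by $\phi^{-1}$ on $\bb A_{\sub{inf},X}$, and $F,V$ are built from $\phi^{\pm1}$ and the maps of Lemma~\ref{lemma_theta_r_diagrams}. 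Because $(W_r\Omega^\blob_{R/\roi})_{r\ge1}$ is the \emph{initial} object in the category of Witt complexes for $\roi\to R$ (recalled in \S\ref{subsection_definition_Witt}), there is a canonical morphism of Witt complexes $C^{-r}_\frak X\colon W_r\Omega^\blob_{R/\roi}\to\cal W_r^\blob(D)$; by construction it sends $d$ to $\op{Bock}_{\tilde\xi_r}$ and commutes with the restriction maps, so compatibilities (i) and (ii) hold automatically and only bijectivity remains.

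Bijectivity I would prove in two steps, following \S\ref{subsection_local_Cartier}. For $R=\roi\pid{\ul T^{\pm1}}$ we have $R_\infty=\roi\pid{\ul T^{\pm1/p^\infty}}$ with the $\bb Z_p(1)^d$-equivariant decomposition used in the proof of Theorem~\ref{theorem_integral_purity}; thus $D$ is computed by the (derived $p$-completed direct sum of) Koszul complex(es) on the elements $[\ep^{k_1}]-1,\dots,[\ep^{k_d}]-1$, for $k_i\in\bb Z[\tfrac1p]\cap[0,1)$. Applying $L\eta_\mu$, reducing modulo $\tilde\xi_r$, and passing to cohomology is then a finite --- but genuinely delicate --- manipulation of Koszul complexes, using $\tilde\theta_r(\mu)=[\zeta_{p^r}]-1$ and the divisibilities of Proposition~\ref{proposition_roots_of_unity}; the outcome must be identified, \emph{as a Witt complex} for $\roi\to\roi\pid{\ul T^{\pm1}}$, with Langer--Zink's explicit presentation of $W_r\Omega^\blob_{\roi\pid{\ul T^{\pm1}}/\roi}$ (the relative analogue of Illusie's description of the de Rham--Witt complex of a Laurent polynomial algebra). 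Then for a general small $R$ one notes that both sides commute with the formally étale base change $\roi\pid{\ul T^{\pm1}}\to R$: on the de Rham--Witt side this is Langer--Zink's étale base-change theorem, and on the $\cal W$-side it follows because $R_\infty$ is the formally étale base change of $\roi\pid{\ul T^{\pm1/p^\infty}}$ and étale base change commutes with group cohomology, with $L\eta_\mu$, with the formation of the relevant Witt rings, and with reduction modulo $\tilde\xi_r$ (using that Witt vectors of perfectoid rings behave under base change as if the rings were perfect). Hence the isomorphism propagates from the torus to $R$, and sheafifying over small affine opens of $\frak X$ yields the theorem.

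\textbf{Main obstacle.} The crux is the torus computation \emph{together with} its compatibility with globalisation. One must not merely identify the underlying $W_r(\roi)$-modules $\cal H^i((L\eta_\mu D)/\tilde\xi_r)$ with $W_r\Omega^i$, but identify the full Witt-complex structure ($F$, $V$, $R$, $d$, and the multiplication), and one must understand the interaction of $L\eta_\mu$ with $R\nu_*$ and with the non-flat base change along $\tilde\theta_r$ precisely enough that this local Koszul calculation actually computes the global cohomology \emph{sheaves} $\cal H^i(\tilde{W_r\Omega}_{\frak X/\roi})$. This is exactly the package of technical lemmas on the décalage functor --- behaviour under base change, under taking cohomology, and its transformation of certain almost quasi-isomorphisms into quasi-isomorphisms --- developed in Section~\ref{section_Cartier}.
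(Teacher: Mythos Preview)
Your overall strategy is the paper's: equip the cohomology groups $H^\blob((L\eta_\mu D)/\tilde\xi_r)$ with the structure of a Witt complex for $\roi\to R$, invoke the universal property of $W_r\Omega^\blob_{R/\roi}$ to produce the map, verify bijectivity in the torus case by the explicit Koszul computation of \S\ref{subsection_local_Cartier}, and propagate to general small $R$ by \'etale base change. Compatibilities (i) and (ii) are indeed automatic from the universal-map construction, exactly as you say.

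There is, however, a naturality gap in your choice of $D$. You run the construction of \S\ref{subsection_constructing_Witt} on $D=R\Gamma_\sub{cont}(\bb Z_p(1)^d,W(R_\infty^\flat))$, which depends on the chosen framing $\roi\pid{\ul T^{\pm1}}\to R$. The resulting universal map $W_r\Omega^\blob_{R/\roi}\to\cal W_r^\blob(D)$ is then a priori framing-dependent, and you have not explained why it agrees for different framings of the same $R$, nor why it is functorial as $\Spf R$ varies --- both of which you need in order to sheafify. (You also do not say where the structure maps $\lambda_r:W_r(R)\to\cal W_r^0(D)$ come from; for your $D$ this is not obvious.) The paper avoids this by running the Witt-complex construction on the \emph{framing-independent} object $D^\sub{pro\'et}_{R/\roi}:=R\Gamma_\sub{pro\'et}(\Sp R[\tfrac1p],\bb A_{\sub{inf},X})$ instead: the structure maps $\lambda_{r,\sub{pro\'et}}$ arise naturally from $W_r(\roi_\frak X)\to R\nu_*W_r(\hat\roi_X^+)$, so the universal map is automatically natural in $R$. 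The framing-dependent objects --- your $D$ (denoted $\bb A\Omega^\square_{R/\roi}$ in the paper) and the even more explicit $D^\sub{grp}$ --- are used only to \emph{verify} hypotheses ($\cal W1$)--($\cal W3$) for $D^\sub{pro\'et}_{R/\roi}$ and to \emph{check} that the already-constructed natural map is an isomorphism, never to construct the map itself.
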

\begin{proof}[Idea of forthcoming proof]
Using the construction of \S\ref{subsection_constructing_Witt} (summarised in the previous remark), we will equip the sections $\cal H^\blob(\tilde{W_r\Omega}_{\frak X/\roi})(\Spf R)$ with the structure of a Witt complex for $\roi\to R$, naturally as $\Spf R$ varies over all small affine opens of $\frak X$, in \S\ref{subsection_reduction_to_tor}. This will give rise to universal (hence natural) morphisms of Witt complexes $W_r\Omega_{R/\roi}^\blob\to \cal H^\blob(\tilde{W_r\Omega}_{\frak X/\roi})(\Spf R)$ which satisfy (i) and (ii) and which will be explicitly checked to be isomorphisms (after $p$-adically completing $W_r\Omega_{R/\roi}^\blob$) by reducing,  via the type of  argument sketched in Remark \ref{remark_vague_outline}, to  group cohomology calculations given in \S\ref{subsection_local_Cartier}. 
\end{proof}

\begin{theorem}[Relative de Rham--Witt comparison]\label{theorem_drw_comparison_main}
There are natural quasi-isomorphisms in the derived category of Zariski sheaves of $W_r(\roi)=\bb A_\sub{inf}/\xi_r\bb A_\sub{inf}$-modules \[W_r\Omega_{\frak X/\roi}^\blob\simeq \bb A\Omega_{\frak X/\roi}/\xi_r,\] for all $r\ge 1$, such that the restriction map $R:W_{r+1}\Omega_{\frak X/\roi}^\blob\to W_r\Omega_{\frak X/\roi}^\blob$ is compatible with the canonical quotient map $\bb A_\sub{inf}/\xi_{r+1}\bb A_\sub{inf}\to \bb A_\sub{inf}/\xi_r\bb A_\sub{inf}$.
\end{theorem}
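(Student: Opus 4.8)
The plan is to deduce this from the $p$-adic Cartier isomorphism (Theorem~\ref{theorem_p_adic_Cartier}) using two facts about the d\'ecalage functor which will be recorded separately in the $L\eta$-toolbox: multiplicativity, $L\eta_g\circ L\eta_f\simeq L\eta_{gf}$ for non-zero-divisors $f,g$; and the identification of a reduced d\'ecalage with a Bockstein complex, i.e.\ for a complex $C$ over a ring $A$ and a non-zero-divisor $f\in A$ a natural quasi-isomorphism
\[(L\eta_fC)\dotimes_AA/f\;\simeq\;\big(\cal H^\blob(C\dotimes_AA/f),\ \op{Bock}_f\big),\]
the right-hand side being the complex with degree-$i$ term $\cal H^i(C\dotimes_AA/f)$ and differential the $f$-Bockstein; both facts also hold for complexes of sheaves on $\frak X_\sub{Zar}$.

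First I would produce a Frobenius quasi-isomorphism. Since $\roi_X^{+\flat}$ is a sheaf of perfect $\bb F_p$-algebras, the Witt-vector Frobenius on $\bb A_{\sub{inf},X}=W(\roi_X^{+\flat})$ is an isomorphism, giving $\phi\colon\bb A_{\sub{inf},X}\isoto\phi_*\bb A_{\sub{inf},X}$ in the category of sheaves of $\bb A_\sub{inf}$-modules (the subscript $*$ denoting restriction of scalars along $\phi\colon\bb A_\sub{inf}\to\bb A_\sub{inf}$), hence $\phi\colon R\nu_*\bb A_{\sub{inf},X}\isoto\phi_*R\nu_*\bb A_{\sub{inf},X}$ in the derived category. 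Applying $L\eta_\mu$ and using the elementary identity $L\eta_\mu\circ\phi^r_*\simeq\phi^r_*\circ L\eta_{\phi^r(\mu)}$ together with $\phi^r(\mu)=\tilde\xi_r\mu$ (Proposition~\ref{proposition_roots_of_unity}) and multiplicativity, one obtains a natural $\bb A_\sub{inf}$-linear quasi-isomorphism
\[\phi^r\colon\bb A\Omega_{\frak X/\roi}\;\isoto\;\phi^r_*\big(L\eta_{\tilde\xi_r}\bb A\Omega_{\frak X/\roi}\big)\qquad(r\ge1).\]
As $\theta(\tilde\xi_r)=p^r$, this is the exact analogue of Illusie's $\Phi\colon W\Omega^\blob\isoto\eta_pW\Omega^\blob$ recalled in \S\ref{subsection_Leta_crys}.

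Next I would reduce modulo $\xi_r$, applying $-\dotimes_{\bb A_\sub{inf}}\bb A_\sub{inf}/\xi_r$. The left-hand side becomes $\bb A\Omega_{\frak X/\roi}/\xi_r$. On the right-hand side, since $\phi$ is an automorphism of $\bb A_\sub{inf}$ sending $\xi_r$ to $\phi^r(\xi_r)=\tilde\xi_r$ (Lemma~\ref{lemma_ker_theta_r}), restriction of scalars along $\phi^r$ turns reduction modulo $\xi_r$ into reduction modulo $\tilde\xi_r$, so the right-hand side is $\phi^r_*\big((L\eta_{\tilde\xi_r}\bb A\Omega_{\frak X/\roi})\dotimes_{\bb A_\sub{inf}}\bb A_\sub{inf}/\tilde\xi_r\big)$; by the Bockstein identification (with $f=\tilde\xi_r$) its inner object is $\big(\cal H^\blob(\bb A\Omega_{\frak X/\roi}/\tilde\xi_r),\op{Bock}_{\tilde\xi_r}\big)=\big(\cal H^\blob(\tilde{W_r\Omega}_{\frak X/\roi}),\op{Bock}_{\tilde\xi_r}\big)$. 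Now Theorem~\ref{theorem_p_adic_Cartier} identifies $\cal H^i(\tilde{W_r\Omega}_{\frak X/\roi})$ with $W_r\Omega^i_{\frak X/\roi}$ and, by its part~(ii), matches $\op{Bock}_{\tilde\xi_r}$ with the de Rham--Witt differential; thus this Bockstein complex is $W_r\Omega^\blob_{\frak X/\roi}$, and because $\theta_r=\tilde\theta_r\circ\phi^r$ the operation $\phi^r_*$ corresponds to the identity of $W_r(\roi)$ under the identifications $\theta_r\colon\bb A_\sub{inf}/\xi_r\isoto W_r(\roi)$ and $\tilde\theta_r\colon\bb A_\sub{inf}/\tilde\xi_r\isoto W_r(\roi)$. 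This yields the asserted quasi-isomorphism $W_r\Omega^\blob_{\frak X/\roi}\simeq\bb A\Omega_{\frak X/\roi}/\xi_r$ of complexes of $W_r(\roi)$-modules.

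Compatibility with the restriction maps would follow by assembling: naturality in $r$ of the Frobenius quasi-isomorphism (from $\phi^{r+1}=\phi\circ\phi^r$ and $L\eta_{\tilde\xi_{r+1}}=L\eta_{\phi^{r+1}(\xi)}\circ L\eta_{\tilde\xi_r}$); the compatibility of the Bockstein identification when $fg$ is replaced by $f$; and Theorem~\ref{theorem_p_adic_Cartier}(i). The hard part is the two $L\eta$-inputs underlying the second and third paragraphs: pinning down the interaction of the d\'ecalage functor with the Frobenius-semilinear (hence non-$\bb A_\sub{inf}$-linear) operator $\phi$, and proving $(L\eta_fC)/f\simeq(\cal H^\blob(C/f),\op{Bock}_f)$ in the unbounded derived category of Zariski sheaves; together these are exactly what promotes the cohomology-sheaf-level $p$-adic Cartier isomorphism to a chain-level comparison. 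The rest is bookkeeping with $\mu$, $\xi_r$, $\tilde\xi_r$ via Proposition~\ref{proposition_roots_of_unity} and Lemma~\ref{lemma_ker_theta_r}.
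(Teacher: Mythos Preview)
Your proposal is correct and is essentially the paper's own argument: the paper writes $C=R\nu_*\bb A_{\sub{inf},X}$ and chains together the $p$-adic Cartier isomorphism, the Bockstein--$L\eta$ identification $(L\eta_{\tilde\xi_r}L\eta_\mu C)/\tilde\xi_r\simeq[\cal H^\blob((L\eta_\mu C)/\tilde\xi_r),\op{Bock}_{\tilde\xi_r}]$, multiplicativity $L\eta_{\tilde\xi_r}L\eta_\mu=L\eta_{\tilde\xi_r\mu}$, and finally the Frobenius step $(L\eta_{\tilde\xi_r\mu}C)/\tilde\xi_r\stackrel{\phi^{-r}}{\simeq}(L\eta_\mu C)/\xi_r$ using $\phi^{-r}(\tilde\xi_r\mu)=\mu$. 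The only cosmetic difference is direction: you run $\phi^r$ forward and track the restriction-of-scalars $\phi^r_*$ explicitly, whereas the paper applies $\phi^{-r}$ and leaves the semi-linearity bookkeeping implicit.
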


In a moment we will equip $\bb A\Omega_{\frak X/\roi}$ with a Frobenius and check that Theorem \ref{theorem_p_adic_Cartier} implies Theorem~\ref{theorem_drw_comparison_main}, from which we will then deduce Theorem \ref{thm:IntroGlobalCompThm}; first we require some additional properties of the d\'ecalage functor:

\begin{remark}[Elementary properties of the d\'ecalage functor, I]\label{remark_more_on_Leta}
Let $A$ be a ring and $f\in A$ a non-zero-divisor.
\begin{enumerate}[(a)]
\item (Bockstein construction) One of the most important properties of the d\'ecalage functor is its relation to the Bockstein boundary map. Let $C$ be a complex of $f$-torsion-free $A$-modules. From the definition of $\eta_fC$ it is easy to see that if $f^ix\in(\eta_fC)^i$ is a arbitrary element, then $x\text{ mod }fC^i$ is a cocycle of the complex $C/fC$ (since $d(f^ix)$ is divisible by $f^{i+1}$), and so defines a class $\res x\in H^i(C/fC)$; this defines a map of $A$-modules \[(\eta_fC)^i\To H^i(C/fC),\qquad f^ix\mapsto \res x.\] Next, the Bockstein $\op{Bock}_f:H^\blob(C/fC)\to H^{\blob+1}(C/fC)$ gives the cohomology groups $H^\blob(C/fC)$ the structure of a complex of $A/fA$-modules, and we leave it to the reader as an important exercise to check that the map \[\eta_fC\To [H^\blob(C/fC),\op{Bock}_f],\] given in degree $i$ by the previous line, is actually one of complexes, i.e., that the differential on $\eta_fC$ is compatible with $\op{Bock}_f$. Even more, the reader should check that the induced map \[(\eta_fC)\otimes_AA/fA\To  [H^\blob(C/fC),\op{Bock}_f]\] is a quasi-isomorphism.

More generally, if $D$ is an arbitrary complex of $A$-modules, then this can be rewritten as a natural\footnote{\label{footnote_Bockstein}Continuing the theme of the previous footnote, the left side depends only on the ideal $fA$ while the right side currently depends on the chosen generator $f$; to make the construction and morphism independent of this choice, each cohomology group on the right should be replaced by the twist $H^*(C\dotimes_AA/fA)\otimes_{A/fA}(f^*A/f^{*+1}A)$.} quasi-isomorphism \[(L\eta_fD)\dotimes_AA/fA\quis [H^*(D\dotimes_AA/fA),\op{Bock}_f]\] of complexes of $A/fA$-modules.\footnote{Curiously, this shows that the complex $(L\eta_f D)\dotimes_AA/fA$, which a priori lives only in the derived category of $A/fA$-modules, has a natural representative by an actual complex.}
\item (Multiplicativity) If $g\in A$ is another non-zero-divisor, and $C$ is a complex of $fg$-torsion-free $A$-modules, then \[\eta_g\eta_fC=\eta_{fg}C\subseteq C[\tfrac1{gf}].\] Noting that $\eta_f$ preserves the property the $g$-torsion-freeness, there is no difficulty deriving to obtain a natural equivalence of endofunctors of $D(A)$ \[L\eta_g\circ L\eta_f\simeq L\eta_{gf}.\]
\item (Coconnective complexes) Let $D^{\ge 0}_{f\sub{-tf}}(A)$ be the full subcategory of $D(A)$ consisting of those complexes $D$ which have $H^i(D)=0$ for $i<0$ and $H^0(D)$ is $f$-torsion-free. Any such $D$ admits a quasi-isomorphic replacement $C\quis D$, where $C$ is a cochain complex of $f$-torsion-free $A$-modules supported in positive degree (e.g., if $D$ is bounded then pick a projective resolution $P\quis D$ and set $C:=\tau^{\ge 0}P$). Then \[L\eta_fD=\eta_fC\subseteq C\quis D,\] whence $L\eta_f$ restricts to an endofunctor of $D^{\ge 0}_{f\sub{-tf}}(A)$, and on this subcategory there is a natural transformation $j:L\eta_f\to\op{id}$. In fact, all our applications of the d\'ecalage functor take place in this subcategory.

\item (Functorial bound on torsion) We maintain the hypotheses of (c). Then the morphism $j:L\eta_f D\to D$ induces an isomorphism on $H^0$: indeed, \[H^0(L\eta_f D)=\ker((\eta_fC)^0\xto{d}(\eta_fC)^1)=\ker(C^0\xto{d}C^1)=H^0(D).\] More generally, for any $i\ge0$, the map $j:H^i(L\eta_fD)\to H^i(D)$ has kernel $H^i(L\eta_fD)[f^i]$ and image $f^iH^i(D)$: indeed, the composition \[H^i(D)/H^i(D)[f]\isoto H^i(L\eta_fD)\xto{j} H^i(D),\] where the first isomorphism is Lemma \ref{lemma_calculation_of_cohomol}, is easily seen to be multiplication by $f^i$, whence the assertion follows.

It may be useful to note that this $f$-power-torsion difference between $D$ and its d\'ecalage $L\eta_fD$ can be functorially captured in the derived category, at least after truncation. More precisely, multiplication by $f^i$ defines a map $\tau^{\le i}C\to\tau^{\le i}\eta_fC$, which induces a natural transformation of functors $``{f^i}":\tau^{\le i}\to\tau^{\le i}L\eta_f$ on $D^{\ge 0}_{f\sub{-tf}}(A)$ such that the compositions \[\tau^{\le i}\xto{``f^i"}\tau^{\le i}L\eta_f\xto{j}\tau^{\le i},\quad \tau^{\le i}L\eta_f\xto{j}\tau^{\le i}\xto{``f^i"}\tau^{\le i}L\eta_f\] are both multiplication by $f^i$.
\item (a)--(d) have obvious modifications for complexes of sheaves of $A$-modules on a site.
\end{enumerate}
\end{remark}

As promised, we will now equip $\bb A\Omega_{\frak X/\roi}$ with a Frobenius:

\begin{lemma}\label{lemma_frobenius_on_AOmega}
The complex of sheaves of $\bb A_\sub{inf}$-modules $\bb A\Omega_{\frak X/\roi}$ is equipped with a Frobenius-semi-linear endomorphism $\phi$ which becomes an isomorphism after inverting $\xi$, i.e., \[\phi:\bb A\Omega_{\frak X/\roi}\dotimes_{\bb A_\sub{inf}}\bb A_\sub{inf}[\tfrac1\xi]\quis \bb A\Omega_{\frak X/\roi}\dotimes_{\bb A_\sub{inf}}\bb A_\sub{inf}[\tfrac1{\tilde\xi}]\] (recall that $\tilde\xi=\phi(\xi)$).
\end{lemma}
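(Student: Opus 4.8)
The plan is to carry the Witt vector Frobenius of the period sheaf $\bb A_{\sub{inf},X}$ through the d\'ecalage functor, using the relation $\phi(\mu)=\tilde\xi\mu$ established in Proposition~\ref{proposition_roots_of_unity}. First I would note that the tilted integral structure sheaf $\roi_X^{+\flat}=\projlim_\phi\roi_X^+/p$ is a sheaf of perfect $\bb F_p$-algebras, so its absolute Frobenius is an automorphism; hence the Witt vector Frobenius is a $\phi$-semi-linear automorphism of the sheaf of rings $\bb A_{\sub{inf},X}=W(\roi_X^{+\flat})$, and applying $R\nu_*$ produces a $\phi$-semi-linear automorphism $\phi$ of the complex $R\nu_*\bb A_{\sub{inf},X}$ of sheaves of $\bb A_\sub{inf}$-modules on $\frak X_\sub{Zar}$.

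Next I would pass this automorphism through $L\eta_\mu$. Reading the $\phi$-semi-linearity as a genuine isomorphism $R\nu_*\bb A_{\sub{inf},X}\Isoto(R\nu_*\bb A_{\sub{inf},X})|_\phi$ onto the restriction of scalars along the ring automorphism $\phi\colon\bb A_\sub{inf}\to\bb A_\sub{inf}$, the compatibility of $L\eta$ with restriction of scalars (Remark~\ref{remark_base_change}; concretely, on a strongly $K$-flat representative $\phi$ carries the $\mu$-d\'ecalage onto the $\phi(\mu)$-d\'ecalage) yields a $\phi$-semi-linear isomorphism $\bb A\Omega_{\frak X/\roi}=L\eta_\mu R\nu_*\bb A_{\sub{inf},X}\Isoto L\eta_{\phi(\mu)}R\nu_*\bb A_{\sub{inf},X}$. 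Since $\phi(\mu)=\tilde\xi\mu$, the multiplicativity of the d\'ecalage functor (Remark~\ref{remark_more_on_Leta}(b),(e)) rewrites the target as $L\eta_{\tilde\xi}L\eta_\mu R\nu_*\bb A_{\sub{inf},X}=L\eta_{\tilde\xi}\bb A\Omega_{\frak X/\roi}$, so we obtain a $\phi$-semi-linear isomorphism $\phi_0\colon\bb A\Omega_{\frak X/\roi}\Isoto L\eta_{\tilde\xi}\bb A\Omega_{\frak X/\roi}$.

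To produce the endomorphism itself I would then compose $\phi_0$ with the canonical map $j\colon L\eta_{\tilde\xi}\bb A\Omega_{\frak X/\roi}\to\bb A\Omega_{\frak X/\roi}$ of Remark~\ref{remark_more_on_Leta}(c),(e). This is legitimate because $R\nu_*\bb A_{\sub{inf},X}$, hence $\bb A\Omega_{\frak X/\roi}$, is concentrated in non-negative degrees with $H^0(\bb A\Omega_{\frak X/\roi})=\nu_*\bb A_{\sub{inf},X}$ (using Remark~\ref{remark_more_on_Leta}(d) and that $\mu$ is a non-zero-divisor on $\bb A_{\sub{inf},X}$ by Corollary~\ref{corollary_non_zero_divisors_on_sheaf}), and this $H^0$ is $\tilde\xi$-torsion-free since $\tilde\xi=\tilde\xi_1$ is likewise a non-zero-divisor on $\bb A_{\sub{inf},X}$ and $\nu_*$ is left exact; thus $\bb A\Omega_{\frak X/\roi}\in D^{\ge0}_{\tilde\xi\sub{-tf}}$. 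Setting $\phi:=j\circ\phi_0$ gives the desired $\phi$-semi-linear endomorphism. For the final assertion, $\phi$ being $\phi$-semi-linear extends to a map $\bb A\Omega_{\frak X/\roi}\dotimes_{\bb A_\sub{inf}}\bb A_\sub{inf}[\tfrac1\xi]\to\bb A\Omega_{\frak X/\roi}\dotimes_{\bb A_\sub{inf}}\bb A_\sub{inf}[\tfrac1{\tilde\xi}]$ because $\tilde\xi=\phi(\xi)$; on this localisation $\phi_0$ stays an isomorphism, while $j$ becomes a quasi-isomorphism after inverting $\tilde\xi$, since by Remark~\ref{remark_more_on_Leta}(d) the kernel and cokernel of $j$ on every cohomology sheaf are $\tilde\xi$-power-torsion and hence vanish. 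Therefore $\phi$ is the asserted quasi-isomorphism after inverting $\xi$.

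The step I expect to be the main obstacle is the passage through $L\eta_\mu$ in the second paragraph: because $L\eta$ is not a triangulated functor, one must check carefully, at the level of strongly $K$-flat representatives, that the Witt-Frobenius automorphism of $R\nu_*\bb A_{\sub{inf},X}$ — which moves the ideal $(\mu)$ to $(\phi(\mu))=(\tilde\xi\mu)$ — really does induce a well-defined isomorphism $L\eta_\mu\Isoto L\eta_{\tilde\xi\mu}$ in the derived category. Everything else is formal bookkeeping with the properties of $L\eta$ recorded in Remark~\ref{remark_more_on_Leta}.
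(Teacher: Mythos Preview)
Your proof is correct and follows essentially the same route as the paper: push the Witt vector Frobenius automorphism of $\bb A_{\sub{inf},X}$ through $R\nu_*$ and then through $L\eta_\mu$ to obtain $L\eta_\mu C\quis L\eta_{\phi(\mu)}C$, rewrite the target via multiplicativity as $L\eta_{\tilde\xi}L\eta_\mu C$, and compose with the canonical map $j$ of Remark~\ref{remark_more_on_Leta}(c), which becomes an isomorphism after inverting $\tilde\xi$. Your verification that $\cal H^0(\bb A\Omega_{\frak X/\roi})$ is $\tilde\xi$-torsion-free is even slightly more direct than the paper's (you invoke that $\tilde\xi=\tilde\xi_1$ is a non-zero-divisor on $\bb A_{\sub{inf},X}$ from Corollary~\ref{corollary_non_zero_divisors_on_sheaf}, whereas the paper routes through the $\mu$-torsion-freeness and the factorisation $\phi(\mu)=\tilde\xi\mu$); and the step you flag as the ``main obstacle'' is in fact the purely functorial observation that a $\phi$-semi-linear automorphism of $C$ carries $\eta_\mu C$ onto $\eta_{\phi(\mu)}C$ on any strongly $K$-flat model, exactly as you describe.
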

\begin{proof}
The Frobenius automorphism $\phi$ on the period sheaf $\bb A_{\sub{inf},X}$ induces a Frobenius automorphism $\phi$ on its derived image $C:=R\nu_*\bb A_{\sub{inf},X}$, which by functoriality then induces a quasi-isomorphism of complexes of Zariski sheaves \[\phi:L\eta_\mu C \quis L\eta_{\phi(\mu)}C.\] We follow this map by \[L\eta_{\phi(\mu)}C=L\eta_{\tilde \xi}L\eta_\mu C\To L\eta_\mu C\] to ultimately define the desired Frobenius $\phi:L\eta_\mu C\to L\eta_\mu C$, where it remains to explain the previous line. The equality is a consequence of Remark \ref{remark_more_on_Leta}(b) of the previous remark since $\phi(\mu)=\tilde\xi\mu$; the arrow is a consequence of Remark \ref{remark_more_on_Leta}(c) since $\cal H^0(L\eta_\mu C)$ has no $\tilde\xi$-torsion.\footnote{{\em Proof.} $\cal H^0(C)=\nu_*\bb A_{\sub{inf},X}$ has no $\mu$-torsion since $\bb A_{\sub{inf},X}$ has no $\mu$-torsion by Corollary \ref{corollary_non_zero_divisors_on_sheaf}; thus $\cal H^0(L\eta_\mu C)\isoto\cal H^0(C)$ by Remark \ref{remark_more_on_Leta}(d). But since $\cal H^0(C)$ has no $\mu$-torsion, it also has no $\phi(\mu)=\tilde\xi\mu$-torsion, thus has no $\tilde\xi$-torsion. $\square$} Since the arrow becomes a quasi-isomorphism after inverting $\tilde\xi$, we see that the final Frobenius $\phi:L\eta_\mu C\to L\eta_\mu C$ becomes a quasi-isomorphism after inverting $\xi$.
\end{proof}

\begin{proof}[Proof that Theorem \ref{theorem_p_adic_Cartier} implies Theorem \ref{theorem_drw_comparison_main}]
As in the proof of the previous lemma we write $C:=R\nu_*\bb A_{\sub{inf,X}}$, which we equipped with a Frobenius-semi-linear automorphism $\phi$. Thus we have
\begin{align*}
W_r\Omega_{\frak X/\roi}^\blob&\stackrel{C^{-r}_\frak X}{\cong}[\cal H^\blob(\tilde{W_r\Omega}_{\frak X/\roi}),\op{Bock}_{\tilde\xi_r}]\tag{by Theorem \ref{theorem_p_adic_Cartier}}\\
&=[\cal H^\blob((L\eta_\mu C)/\tilde\xi_r),\op{Bock}_{\tilde\xi_r}]\tag{rewriting for clarify}\\
&\simeq (L\eta_{\tilde \xi_r}L\eta_\mu C)/\tilde\xi_r\tag{by the Bockstein--$L\eta$ relation, i.e., Rmk.~\ref{remark_more_on_Leta}(a)}\\
&=(L\eta_{\tilde\xi_r\mu}C)/\tilde\xi_r\tag{by  Rmk.~\ref{remark_more_on_Leta}(b)}\\
&\stackrel{\phi^{-r}}\quis (L\eta_\mu C)/\xi_r\tag{functoriality and $\phi^{-r}(\tilde\xi_r\mu)=\mu$},
\end{align*}
which proves Theorem \ref{theorem_drw_comparison_main}.
\end{proof}

\lb{
\begin{remark}
Since the relative de Rham--Witt complex computes crystalline cohomology, Theorem \ref{theorem_drw_comparison_main} may be rewritten as \[\bb A\Omega_{\frak X/\roi}\dotimes_{\bb A_\sub{inf}}\bb A_\sub{inf}/\xi_r\cong Ru_*\roi_{\frak X/}\]

Here we use the usual divided powers structure on the ideal $VW_{r-1}(\roi)$, defined by $\gamma_n(V(a)):=\tfrac{p^{n-1}}{n!}V(x^n)$.

We can actually prove the stronger result that \[\bb A\Omega_{\frak X/\roi}\hat\otimes_{\bb A_\sub{inf}}\bb A_\sub{crys}\cong Ru_*\roi_{\frak X/\bb A_\sub{crys}}\]
and we can add the following comparison theorem: $R\Gamma_{\bb A}(\frak X)\dotimes_{\bb A_\sub{inf}}\bb A_\sub{crys}\simeq R\Gamma_\sub{crys}(\frak X\otimes_\roi\roi/p/\bb A_\sub{crys})$.
\end{remark}
}

Now we deduce the beginning of Theorem \ref{thm:IntroGlobalCompThm} from Theorem \ref{theorem_drw_comparison_main}:

\begin{theorem}\label{theorem_main_with_proofs}
If $\frak X$ is moreover proper over $\roi$, then $R\Gamma_\bb A(\frak X):=R\Gamma_\sub{Zar}(\frak X,\bb A\Omega_{\frak X/\roi})$ is a perfect complex of $\bb A_\sub{inf}$-modules with the following specialisations, in which (i) and (ii) are compatible with the Frobenius actions:
\begin{enumerate}
\item \'Etale specialization: $R\Gamma_{\bb A}(\frak X) \dotimes_{\bb A_\sub{inf}} W(\bb C^\flat) \simeq R\Gamma_\sub{\' et}(X,\mathbb{Z}_p) \dotimes_{\mathbb{Z}_p} W(\bb C^\flat)$.
\item Crystalline specialization: $R\Gamma_{\bb A}(\frak X) \dotimes_{\bb A_\sub{inf}} W(k) \simeq R\Gamma_{\mathrm{crys}}(\mathfrak{X}_k/W(k))$.
\item de Rham specialization: $R\Gamma_{\bb A}(\frak X) \dotimes_{\bb A_\sub{inf},\theta} \mathcal{O} \simeq R\Gamma_{\mathrm{dR}}(\mathfrak{X}/\mathcal{O})$.
\end{enumerate}
\end{theorem}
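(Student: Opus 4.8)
The plan is to read off the de Rham and crystalline specialisations directly from the relative de Rham--Witt comparison (Theorem \ref{theorem_drw_comparison_main}), to obtain perfectness by reduction modulo $p$, and to handle the \'etale statement separately using the fact that $L\eta_\mu$ becomes trivial over $W(\bb C^\flat)$ together with Scholze's primitive comparison (Theorem \ref{theorem_global_cohomol}). First I would apply $R\Gamma_\sub{Zar}(\frak X,-)$ to Theorem \ref{theorem_drw_comparison_main}: since $\xi_r$ is a non-zero-divisor of $\bb A_\sub{inf}$, the functor $-\dotimes_{\bb A_\sub{inf},\theta_r}W_r(\roi)$ is the shifted cone on multiplication by $\xi_r$, a finite homotopy colimit which commutes with the (bounded, as $\frak X$ is proper) functor $R\Gamma_\sub{Zar}$; hence $R\Gamma_\bb A(\frak X)\dotimes_{\bb A_\sub{inf},\theta_r}W_r(\roi)\simeq R\Gamma_\sub{Zar}(\frak X,W_r\Omega^\blob_{\frak X/\roi})$, compatibly with the restriction maps in $r$. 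Taking $r=1$ and $W_1\Omega^\blob_{\frak X/\roi}=\Omega^\blob_{\frak X/\roi}$ gives statement (iii); and since the $\Omega^i_{\frak X/\roi}$ are finite locally free, $R\Gamma_\bb A(\frak X)\dotimes_{\bb A_\sub{inf}}\roi$ is a perfect complex of $\roi$-modules.

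For the crystalline specialisation I would base change further along $W_r(\roi)\to W_r(k)$, invoking Langer--Zink's base-change theorem for the relative de Rham--Witt complex of the smooth $\frak X$ to get $W_r\Omega^\blob_{\frak X/\roi}\dotimes_{W_r(\roi)}W_r(k)\simeq W_r\Omega^\blob_{\frak X_k/k}$ with no higher Tor, the target computing $R\Gamma_\sub{crys}(\frak X_k/W_r(k))$ by Bloch--Deligne--Illusie. One checks that the composite $\bb A_\sub{inf}\xto{\theta_r}W_r(\roi)\to W_r(k)$ is the crystalline specialisation $\bb A_\sub{inf}\to W(k)\to W_r(k)$ (both are continuous ring maps agreeing on Teichm\"uller lifts). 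This yields $R\Gamma_\bb A(\frak X)\dotimes_{\bb A_\sub{inf}}W_r(k)\simeq R\Gamma_\sub{crys}(\frak X_k/W_r(k))$, compatibly in $r$ and (via Lemma \ref{lemma_frobenius_on_AOmega} on the left and the Witt-vector Frobenius on the right) with Frobenius; passing to $R\!\lim_r$ and using that $R\Gamma_\bb A(\frak X)$ is derived $p$-complete — because $R\nu_*\bb A_{\sub{inf},X}$ is derived $p$-complete and $L\eta_\mu$ commutes with derived $p$-completion (Remark \ref{remark_non_primitive}) — gives (ii). In particular each $R\Gamma_\bb A(\frak X)\dotimes_{\bb A_\sub{inf}}W_r(k)$ is perfect over the noetherian ring $W_r(k)$, so $R\Gamma_\bb A(\frak X)\dotimes_{\bb A_\sub{inf}}W(k)$ is perfect over $W(k)$.

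Perfectness over $\bb A_\sub{inf}$ itself I would deduce by derived Nakayama along $p$: $\bb A_\sub{inf}$ is $p$-adically complete and $R\Gamma_\bb A(\frak X)$ is derived $p$-complete, so it suffices to show $R\Gamma_\bb A(\frak X)\dotimes_{\bb A_\sub{inf}}\bb A_\sub{inf}/p$ is perfect over $\roi^\flat=\bb A_\sub{inf}/p$. Its Tor-amplitude is finite, since after the further base change $\roi^\flat\to k$ it becomes $R\Gamma_\sub{crys}(\frak X_k/W(k))\dotimes_{W(k)}k=R\Gamma_\sub{dR}(\frak X_k/k)$, a bounded complex of finite-dimensional $k$-vector spaces; and its cohomology is finitely presented over the coherent ring $\roi^\flat$ (using the explicit Langer--Zink presentation of $W_r\Omega^\blob_{\frak X/\roi}$ mod $p$), so it is pseudo-coherent, hence perfect. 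For the \'etale specialisation (iv), the key observation is that $\theta(\mu)=\zeta_p-1$ is a unit in $\bb C$ and $\mu\bmod p=\ep-1$ is a unit in the field $\bb C^\flat$, so $\mu\in W(\bb C^\flat)^\times$; as $\bb A_\sub{inf}\to W(\bb C^\flat)$ is flat, $L\eta_\mu$ disappears after this base change and $\bb A\Omega_{\frak X/\roi}\dotimes_{\bb A_\sub{inf}}W(\bb C^\flat)\simeq(R\nu_*\bb A_{\sub{inf},X})\dotimes_{\bb A_\sub{inf}}W(\bb C^\flat)$. Using perfectness to pull $R\Gamma_\sub{Zar}$ through the base change, $R\Gamma_\bb A(\frak X)\dotimes_{\bb A_\sub{inf}}W(\bb C^\flat)\simeq R\Gamma_\sub{pro\'et}(X,\bb A_{\sub{inf},X})\dotimes_{\bb A_\sub{inf}}W(\bb C^\flat)$; then reducing mod $p$, where $\bb A_{\sub{inf},X}/p=\roi_X^{+\flat}=\projlim_\phi\roi_X^+/p$, and feeding in Theorem \ref{theorem_global_cohomol} together with the tilting identification, one gets $R\Gamma_\sub{pro\'et}(X,\bb A_{\sub{inf},X})/p\dotimes_{\roi^\flat}\bb C^\flat\simeq R\Gamma_\sub{\'et}(X,\bb F_p)\otimes_{\bb F_p}\bb C^\flat$ (the $\frak m^\flat$-almost ambiguity vanishing after inverting $\pi^\flat$); d\'evissage in powers of $p$ and derived $p$-completion then upgrade this to $R\Gamma_\bb A(\frak X)\dotimes_{\bb A_\sub{inf}}W(\bb C^\flat)\simeq R\Gamma_\sub{\'et}(X,\bb Z_p)\dotimes_{\bb Z_p}W(\bb C^\flat)$, compatibly with Frobenius.

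\textbf{Main obstacle.} I expect the \'etale step to be the real content: converting Scholze's mod-$p$ \emph{almost} isomorphism into an honest isomorphism with $\bb Z_p$-coefficients after base change to $W(\bb C^\flat)$ requires careful bookkeeping of the $\frak m^\flat$-almost mathematics, the derived $p$-completions, and the Bockstein d\'evissage, and it is entangled with needing perfectness in order to commute $R\Gamma_\sub{Zar}$ with the (non-finite) base change $\bb A_\sub{inf}\to W(\bb C^\flat)$. The secondary difficulty is establishing perfectness over the non-noetherian ring $\bb A_\sub{inf}$, where one cannot simply quote finiteness of coherent cohomology and must instead combine derived completeness with the pseudo-coherence input from the Langer--Zink model.
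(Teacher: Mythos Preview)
Your overall strategy --- deduce (iii) and then (ii) from Theorem \ref{theorem_drw_comparison_main} together with the Langer--Zink base change $W_r\Omega^\blob_{\frak X/\roi}\dotimes_{W_r(\roi)}W_r(k)\simeq W_r\Omega^\blob_{\frak X_k/k}$, and handle (i) via Scholze's primitive comparison --- matches the paper. There are, however, two places where your route diverges and where your argument is either incomplete or needlessly laborious.

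\emph{Perfectness and completeness.} You deduce perfectness of $R\Gamma_\bb A(\frak X)$ by derived Nakayama along $p$, which requires knowing that $R\Gamma_\bb A(\frak X)$ is derived $p$-complete. Your justification --- that $L\eta_\mu$ preserves derived $p$-completeness of the sheaf $R\nu_*\bb A_{\sub{inf},X}$, citing Remark \ref{remark_non_primitive} --- does not work: that remark concerns $L\eta_{[\zeta_{p^r}]-1}$ on complexes of \emph{modules}, not $L\eta_\mu$ on complexes of Zariski sheaves, and the paper explicitly warns (footnote \ref{footnote_complete}) that the module-level argument ``$L\eta_f$ preserves completeness'' fails for sheaves on a non-replete site. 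The paper instead uses derived $\xi$-adic completeness --- established non-formally in Corollary \ref{corollary_completeness} via the local technical lemmas of \S\ref{subsection_technical} --- and then reduces perfectness in one step to the perfectness of $R\Gamma_\bb A(\frak X)/\xi\simeq R\Gamma_\sub{dR}(\frak X/\roi)$ over $\roi$. This is much cleaner than your route through $\roi^\flat$: your claim that the cohomology of $R\Gamma_\bb A(\frak X)/p$ is finitely presented over $\roi^\flat$ ``using the explicit Langer--Zink presentation of $W_r\Omega^\blob_{\frak X/\roi}$ mod $p$'' is unjustified, since $\bb A\Omega_{\frak X/\roi}/p$ has no direct de Rham--Witt description (it is $\bb A\Omega_{\frak X/\roi}/\xi_r$, not $/p$, that Theorem \ref{theorem_drw_comparison_main} identifies). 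Once perfectness is known, the derived $p$-completeness needed for the crystalline $R\!\lim_r$ step is automatic (a perfect complex over the $p$-complete ring $W(k)$ is derived $p$-complete), which is exactly how the paper orders the logic.

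\emph{\'Etale specialisation.} The paper does not assert that $\bb A_\sub{inf}\to W(\bb C^\flat)$ is flat (this is not obvious: $W(\bb C^\flat)$ is the $p$-adic completion of the localisation $\bb A_\sub{inf}[1/[\pi^\flat]]$, and $p$-completion need not preserve flatness over a non-noetherian base). Instead it proves the stronger statement after the genuinely flat localisation $\bb A_\sub{inf}\to\bb A_\sub{inf}[\tfrac1\mu]$. Concretely, Remark \ref{remark_more_on_Leta}(d) gives natural maps $``\mu^i"$ on truncations showing that the kernel and cokernel of $H^i_\bb A(\frak X)\to H^i_\sub{pro\'et}(X,\bb A_{\sub{inf},X})$ are killed by $\mu^i$; combined with the fact --- deduced from Theorem \ref{theorem_global_cohomol} by a limit argument already carried out in \cite[Prf.~of Thm.~8.4]{Scholze2013} --- that the cone of $R\Gamma_\sub{\'et}(X,\bb Z_p)\dotimes_{\bb Z_p}\bb A_\sub{inf}\to R\Gamma_\sub{pro\'et}(X,\bb A_{\sub{inf},X})$ is killed by $W(\frak m^\flat)\ni\mu$, inverting $\mu$ finishes. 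This sidesteps both your sheaf-level base change of $L\eta_\mu$ and your mod-$p$ d\'evissage.
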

\begin{proof}
We prove the specialisations in reverse order. Firstly, since $R\Gamma_\bb A(\frak X)$ is derived $\xi$-adically complete,\footnote{\label{footnote_complete}{\em ``Proof''}. If $f,g$ are non-zero-divisors of a ring $A$, and $D$ is complex of $A$-modules which is derived $g$-adically complete, then we claim that $L\eta_fD$ is still derived $g$-adically complete: indeed, this follows from the fact that a complex is derived $g$-adically complete if and only if all of its cohomology groups are derived $g$-adically complete, that $H^i(L\eta_f D)\cong H^i(D)/H^i(D)[f]$ for all $i\in\bb Z$ by Lemma \ref{lemma_calculation_of_cohomol}, and that kernels and cokernels of maps between derived $g$-adically complete modules are again derived $g$-adically complete. For a reference on such matters, see {\em The Stacks Project}, Tag 091N.

It is tempting to claim that the previous paragraph remains valid for the complex of sheaves $R\nu_*\bb A_{\sub{inf},X}$ (which is indeed derived $\xi$-adically complete, since $R\nu_*$ preserves the derived $\xi$-adic completeness of the pro-\'etale sheaf $\bb A_{\sub{inf},X}$), which would complete the proof since $R\Gamma_\sub{Zar}(\frak X,-)$ also preserves derived $\xi$-adic completeness, but unfortunately the previous paragraph does not remain valid for complexes of sheaves on a ``non-replete'' site (e.g., the Zariski site). In fact, it seems that the derived $\xi$-adic completeness of $R\Gamma_\bb A(\frak X)$ is not purely formal, and requires the technical lemmas established in \S\ref{subsection_technical}; therefore we have postponed a proof of the completeness to Corollary \ref{corollary_completeness}. $\square$
} general formalism implies that $R\Gamma_\bb A(\frak X)$ is a perfect complex of $\bb A_\sub{inf}$-modules if and only if $R\Gamma_\bb A(\frak X)\dotimes_{\bb A_\sub{inf}}\bb A_\sub{inf}/\xi\bb A_\sub{inf}$ is a perfect complex of $\bb A_\sub{inf}/\xi\bb A_\sub{inf}=\roi$-modules. But Theorem \ref{theorem_drw_comparison_main} in the case $r=1$ implies that \[R\Gamma_\bb A(\frak X)\dotimes_{\bb A_\sub{inf}}\bb A_\sub{inf}/\xi\bb A_\sub{inf}\simeq R\Gamma_\sub{Zar}(\frak X,\Omega_{\frak X/\roi}^\blob)=R\Gamma_\sub{dR}(\frak X/\roi),\] which is indeed a perfect complex.\footnote{{\em Proof}. By derived $p$-adically completeness, it is enough to check that $R\Gamma_{\sub{dR}}(\frak X/\roi)\dotimes_{\roi}\roi/p\roi=R\Gamma_\sub{dR}(\frak X\otimes_{\roi}\roi/p\roi/(\roi/p\roi))$ is a perfect complex of $\roi/p\roi$-modules; this follows from the facts that $\Omega_{\frak X\otimes_{\roi}\roi/p\roi/(\roi/p\roi)}^\blob$ is a perfect complex of $\roi_{\frak X\otimes_{\roi}\roi/p\roi}$-modules by smoothness, and that the structure map $\frak X\otimes_{\roi}\roi/p\roi\to\Spec\roi/p\roi$ is proper, flat, and of finite presentation. $\square$}

It follows that $R\Gamma_\bb A(\frak X)\dotimes_{\bb A_\sub{inf}}W(k)$ is a perfect complex of $W(k)$-modules; since $W(k)$ is $p$-adically complete, any perfect complex over it is derived $p$-adically complete and so
\begin{align*}
R\Gamma_\bb A(\frak X)\dotimes_{\bb A_\sub{inf}}W(k)&\quis \op{Rlim}_r(R\Gamma_\bb A(\frak X)\dotimes_{\bb A_\sub{inf}}W_r(k))\\
&=\op{Rlim}_r(R\Gamma_\bb A(\frak X)\dotimes_{\bb A_\sub{inf}}W_r(\roi)\dotimes_{W_r(\roi)}W_r(k))\\
&\quis \op{Rlim}_r(R\Gamma_\sub{Zar}(\frak X,W_r\Omega_{\frak X/\roi}^\blob\dotimes_{W_r(\roi)}W_r(k)))
\end{align*}
where the final line uses Theorem \ref{theorem_drw_comparison_main}. But the canonical base change map $W_r\Omega_{\frak X/\roi}^\blob\dotimes_{W_r(\roi)}W_r(k)\quis W_r\Omega_{\frak X_k/k}^\blob$ is a quasi-isomorphism for each $r\ge1$ by Remark \ref{remark_de_rham_witt}(vii), and so we deduce that \[R\Gamma_\bb A(\frak X)\dotimes_{\bb A_\sub{inf}}W(k)\quis\op{Rlim}_rR\Gamma_\sub{Zar}(\frak X_k,W_r\Omega_{\frak X_k/k}^\blob)=R\Gamma_\sub{crys}(\frak X_k/W(k)).\]

It remains to prove the \'etale specialisation; we prove the stronger (since $\mu$ becomes invertible in $W(\bb C^\flat)$) result that $R\Gamma_\bb A(\frak X)\dotimes_{\bb A_\sub{inf}}\bb A_\sub{inf}[\tfrac1\mu]\simeq R\Gamma_\sub{\' et}(X,\mathbb{Z}_p) \dotimes_{\mathbb{Z}_p}\bb A_\sub{inf}[\tfrac1\mu]$. Since $L\eta_\mu$ only effects complexes up to $\mu^i$-torsion in degree $i$ (to be precise, use the morphisms $``\mu^i"$ on the truncations of $\bb A\Omega_{\frak X/\roi}\to R\nu_*\bb A_{\sub{inf},X}$, as in Remark \ref{remark_more_on_Leta}), the kernel and cokernel of $H^i_\bb A(\frak X)\to H^i_\sub{Zar}(\frak X,R\nu_*\bb A_{\sub{inf},X})=H^i_\sub{pro\'et}(X,\bb A_{\sub{inf},X})$ are killed by $\mu^i$. The key to the \'etale specialisation is now the fact that the canonical map \[R\Gamma_\sub{\'et}(X,\bb Z_p)\dotimes_{\bb Z_p}\bb A_\sub{inf}\To R\Gamma_\sub{pro\'et}(X,\bb A_{\sub{inf},X})\] has cone killed by $W(\frak m^\flat)\ni\mu$ (this is deduced from Theorem \ref{theorem_global_cohomol} by taking a suitable limit; see \cite[Prf.~of Thm.~8.4]{Scholze2013}); inverting $\mu$ completes the proof.
\end{proof}

We next discuss the rest of Theorem \ref{thm:IntroGlobalCompThm} (using the same enumeration):

\begin{theorem}\label{theorem_main_with_proofs2}
Continuing to assume that $\frak X$ is a proper, smooth formal scheme over $\roi$, then the individual $\bb A_\sub{inf}$-modules $H^i_\bb A(\frak X):=H^i_\sub{Zar}(\frak X,\bb A\Omega_{\frak X/\roi})$ vanish for $i>2\dim\frak X$ and enjoy the following properties:
\begin{enumerate}\setcounter{enumi}{3}
\item $H^i_\bb A(\frak X)$ is a finitely presented $\bb A_\sub{inf}$-module;
\item $H^i_\bb A(\frak X)[\tfrac1p]$ is finite free over $\bb A_\sub{inf}[\tfrac1p]$;
\item $H^i_\bb A(\frak X)$ is equipped with a Frobenius semi-linear endomorphism $\phi$ which becomes an isomorphism after inverting $\xi$ (or any other preferred generator of $\ker\theta$), i.e., $\phi:H^i_\bb A(\frak X)[\tfrac1\xi]\isoto H^i_\bb A(\frak X)[\tfrac1{\tilde\xi}]$.
\item \'Etale: $H^i_\bb A(\frak X)[\tfrac1\mu]\cong H^i_\sub{\'et}(X,\bb Z_p)\otimes_{\bb Z_p}\bb A_\sub{inf}[\tfrac1\mu]$.
\item Crystalline: there is a short exact sequence \[0\To H^i_\bb A(\frak X)\otimes_{\bb A_\sub{inf}}W(k)\to H^i_\sub{crys}(\frak X_k/W(k))\To\Tor_1^{\bb A_\sub{inf}}(H^{i+1}_\bb A(\frak X), W(k))\To 0\]
\item de Rham: there is a short exact sequence \[0\To H^i_\bb A(\frak X)\otimes_{\bb A_\sub{inf},\theta}\roi\to H^i_\sub{dR}(\frak X/\roi)\To H^{i+1}_\bb A(\frak X)[\xi]\To 0\]
\item If $H^i_\sub{crys}(\frak X_k/W(k))$ or $H^i_\sub{crys}(\frak X/\roi)$ is torsion-free, then $H^i_\bb A(\frak X)$ is a finite free $\bb A_\sub{inf}$-module.
\end{enumerate}
\end{theorem}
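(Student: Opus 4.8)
The plan is to deduce every assertion from what is now in hand — $R\Gamma:=R\Gamma_\bb A(\frak X)$ is a perfect complex of $\bb A_\sub{inf}$-modules, derived $\xi$- and $p$-adically complete (Corollary~\ref{corollary_completeness} and the $p$-adic completeness of $\bb A_\sub{inf}$), carrying a Frobenius which is a quasi-isomorphism after inverting $\xi$ (Lemma~\ref{lemma_frobenius_on_AOmega}), and equipped with the three specialisations of Theorem~\ref{theorem_main_with_proofs} — supplemented by the commutative algebra of finitely presented modules over $\bb A_\sub{inf}$ and $\bb A_\sub{inf}[\tfrac1p]$ developed in [BMS, \S4]. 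Three of the statements are essentially formal. For (vi) one applies $H^i_\sub{Zar}(\frak X,-)$ to the map of Lemma~\ref{lemma_frobenius_on_AOmega}; since $\frak X$ is quasi-compact, cohomology commutes with the filtered colimit defining $(-)[\tfrac1\xi]$, so $\phi$ induces $H^i_\bb A(\frak X)[\tfrac1\xi]\isoto H^i_\bb A(\frak X)[\tfrac1{\tilde\xi}]$. For (vii) one simply takes $H^i$ of the (stronger) quasi-isomorphism in Theorem~\ref{theorem_main_with_proofs}(vii), noting that $\bb Z_p\to\bb A_\sub{inf}[\tfrac1\mu]$ and $\bb A_\sub{inf}\to\bb A_\sub{inf}[\tfrac1\mu]$ are flat, so both sides commute with $H^i$. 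For (viii) and (ix) one uses universal-coefficient exact sequences: the de Rham case is immediate since $\ker\theta=(\xi)$ is generated by a single non-zero-divisor, so $H^i(R\Gamma\dotimes_{\bb A_\sub{inf},\theta}\roi)=H^i_\sub{dR}(\frak X/\roi)$ sits in $0\to H^i_\bb A(\frak X)\otimes_\theta\roi\to H^i_\sub{dR}(\frak X/\roi)\to H^{i+1}_\bb A(\frak X)[\xi]\to 0$; the crystalline case follows by the same device applied at each finite level $\bb A_\sub{inf}/\xi_r=W_r(\roi)$ via the relative de Rham--Witt comparison (Theorem~\ref{theorem_drw_comparison_main}), together with the vanishing of $\Tor^{\bb A_\sub{inf}}_{\ge2}(H^q_\bb A(\frak X),W(k))$, which is part of the module theory below.

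For (iv), the point is that $\bb A_\sub{inf}$ is a coherent ring [BMS, \S4], and the cohomology groups of a perfect complex over a coherent ring are finitely presented. (Alternatively one reduces modulo $\xi$ to the coherent valuation ring $\roi$, over which $R\Gamma_\sub{dR}(\frak X/\roi)$ has finitely presented cohomology, and lifts a finite set of generators and then of relations using derived $\xi$-completeness.) The vanishing $H^i_\bb A(\frak X)=0$ for $i>2\dim\frak X$ then follows from the de Rham specialisation: $\Omega^\blob_{\frak X/\roi}$ is concentrated in degrees $[0,\dim\frak X]$, so $R\Gamma_\sub{dR}(\frak X/\roi)$ lives in degrees $[0,2\dim\frak X]$, whence $H^i_\bb A(\frak X)/\xi H^i_\bb A(\frak X)=0$ for $i>2\dim\frak X$ and derived Nakayama along $\xi$ forces $H^i_\bb A(\frak X)=0$. (This is the place where, over an arbitrary perfectoid base, one invokes the crystalline comparison of [BMS] for the relevant dimension/rank equality flagged in footnote~1; when $\frak X$ is defined over a discretely valued field it is exactly as just described.)

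For (v), localising (iv) shows that $M:=H^i_\bb A(\frak X)[\tfrac1p]$ is finitely presented over $\bb A_\sub{inf}[\tfrac1p]$, and by (vi) it carries an isomorphism after inverting $\xi$, respectively $\tilde\xi=\phi(\xi)$. The structure theory of [BMS, \S4] then applies: such a module is finite free over $\bb A_\sub{inf}[\tfrac1p]$. In outline, the Frobenius isomorphism is incompatible with torsion supported along $V(\xi)$ and its $\phi$-translates, so $M$ is torsion-free, and a finitely presented torsion-free module over the (one-dimensional) ring $\bb A_\sub{inf}[\tfrac1p]$ is finite free; matching the rank uses the dimension comparison between the de Rham and $p$-adic étale cohomology of the generic fibre (degeneration of Hodge--de Rham and the de Rham comparison). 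Granting (v), the $p$-power bounds in (viii) and (ix) are easy: the error term $H^{i+1}_\bb A(\frak X)[\xi]$ is killed by $\xi$, hence is a finitely generated $\roi$-module (a submodule of the finitely presented $H^{i+1}_\bb A(\frak X)$ over the coherent ring $\bb A_\sub{inf}$), and it vanishes after inverting $p$ because $H^{i+1}_\bb A(\frak X)[\tfrac1p]$ is free and $\xi$ is a non-zero-divisor; a finitely generated $p$-power-torsion module over $\roi$, respectively over the discrete valuation ring $W(k)$, is killed by a fixed power of $p$.

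Finally (x): if $H^i_\sub{dR}(\frak X/\roi)$ is torsion-free, then it is finite free over the valuation ring $\roi$, so by (ix) its submodule $H^i_\bb A(\frak X)/\xi H^i_\bb A(\frak X)$ is torsion-free over $\roi$; combined with (iv) and (v) — $H^i_\bb A(\frak X)$ finitely presented with $H^i_\bb A(\frak X)[\tfrac1p]$ finite free — the structure result of [BMS, \S4], that a finitely presented $\bb A_\sub{inf}$-module whose reduction mod $\xi$ is torsion-free and which becomes free after inverting $p$ is already finite free, gives the claim. The case of $H^i_\sub{crys}(\frak X_k/W(k))$ torsion-free is identical, with a pseudo-uniformiser $[\pi^\flat]\in\ker(\bb A_\sub{inf}\to W(k))$ in place of $\xi$ and (viii) in place of (ix). The main obstacle throughout is precisely the commutative algebra of finitely presented $\bb A_\sub{inf}$- and $\bb A_\sub{inf}[\tfrac1p]$-modules underlying (v) and (x) — above all the deduction, from the Frobenius structure alone, that $H^i_\bb A(\frak X)[\tfrac1p]$ is finite free — which is the genuinely new input of [BMS], together with the dimension equality flagged in the footnote.
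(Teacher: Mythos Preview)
Your argument for (iv) rests on the claim that $\bb A_\sub{inf}$ is coherent, but this is false: Kedlaya has shown that $\bb A_\sub{inf}$ is usually \emph{not} coherent (see the footnote in Appendix~\ref{appendix_A_inf}). Over a non-coherent ring, the cohomology groups of a perfect complex need not be finitely presented, so your one-line deduction of (iv) collapses. Your alternative---lifting generators and relations from $\roi$ using derived $\xi$-adic completeness---is not a real argument either: producing generators and relations modulo $\xi$ does not yield a finite presentation over $\bb A_\sub{inf}$ (this is exactly the sort of thing that fails without coherence). You repeat the coherence error later when bounding the $p$-torsion in the obstruction terms of (viii) and (ix).

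The paper's route around this is to prove (iv) and (v) \emph{together} by descending induction on $i$. The inductive input is Theorem~\ref{theorem_modules_over_A_inf}(ii): a finitely presented $\bb A_\sub{inf}$-module which becomes free after inverting $p$ is perfect. Granting (iv) and (v) for all $j>i$, each $H^j_\bb A(\frak X)$ is then a perfect $\bb A_\sub{inf}$-module, so $\tau^{>i}R\Gamma_\bb A(\frak X)$ is perfect; since $R\Gamma_\bb A(\frak X)$ is perfect, so is $\tau^{\le i}R\Gamma_\bb A(\frak X)$, and $H^i_\bb A(\frak X)$, being the top cohomology of a perfect complex, is a cokernel of a map of finite projectives and hence finitely presented. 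For (v) at level $i$ the paper then invokes Corollary~\ref{corollary_Fitting_ideal_trick}, a Fitting-ideal argument which uses the Frobenius together with the rank equality between $H^i_\bb A(\frak X)[\tfrac1{p\mu}]$ (computed via the \'etale comparison) and $H^i_\bb A(\frak X)\otimes_{\bb A_\sub{inf}}W(k)[\tfrac1p]$ (computed via the crystalline comparison); this is where the dimension equality (dim$_\frak X$) enters. Your sketch of (v) (``torsion-free over the one-dimensional ring $\bb A_\sub{inf}[\tfrac1p]$ implies free'') is not an argument that appears in [BMS], and in any case cannot be separated from (iv) in the way you propose. The remaining parts (vi), (vii), (ix), (x), and the vanishing in high degrees are handled essentially as you describe.
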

\begin{proof}
The \'etale and de Rham specialisations, i.e., (vii) and (ix), are immediate from the derived specialisations proved in the previous theorem.


As mentioned at the start of the previous proof, the complex $R\Gamma_\bb A(\frak X)$ is derived $\xi$-adically complete; so to prove that its cohomology vanishes in degree $>2\dim\frak X$, it is enough to note that the same is true of $R\Gamma_\bb A(\frak X)\dotimes_{\bb A_\sub{inf}}\bb A_\sub{inf}/\xi\bb A_\sub{inf}\simeq R\Gamma_\sub{dR}(\frak X/\roi)$ (where we have applied the de Rham comparison of Theorem~\ref{theorem_main_with_proofs}).

(vi) follows from Lemma \ref{lemma_frobenius_on_AOmega} and, similarly to the \'etale specialisation in Theorem \ref{theorem_main_with_proofs}, one can give more precise bounds by observe that $\phi:\bb A\Omega_{\frak X/\roi}\to\bb A\Omega_{\frak X/\roi}$ is invertible on any truncation up to an application of the morphism $``\xi^i"$.

We now prove (iv) and (v) by a descending induction on $i$, noting that they are trivial when $i>2\dim\frak X$. By the inductive hypothesis we may suppose that all cohomology groups of $\tau^{>i}R\Gamma_\bb A(\frak X)$ are finitely presented and become free after inverting $p$, whence they are perfect $\bb A_\sub{inf}$-modules by Theorem~\ref{theorem_modules_over_A_inf}(ii). It follows that the complex of $\bb A_\sub{inf}$-modules $\tau^{>i}R\Gamma_\bb A(\frak X)$ is perfect, which combined with the perfectness of $R\Gamma_\bb A(\frak X)$   implies that $\tau^{\le i}R\Gamma_\bb A(\frak X)$ is also perfect. Thus its top degree cohomology group $H^i(\tau^{\le i}R\Gamma_\bb A(\frak X))=H^i_\bb A(\frak X)$ is the cokernel of a map between projective $\bb A_\sub{inf}$-modules, and so is finitely presented.

To prove (v) we wish to apply Corollary \ref{corollary_Fitting_ideal_trick}, and must therefore check that $H^i_\bb A(\frak X)[\tfrac1{p\mu}]$ is a finite free $\bb A_\sub{inf}[\tfrac1{p\mu}]$-module of the same rank as the $W(k)$-module $M\otimes_{\bb A_\sub{inf}}W(k)$. Part (vii) implies that \[H^i_\bb A(\frak X)[\tfrac1{p\mu}]\cong H^i_\sub{\'et}(X,\bb Q_p)\otimes_{\bb Q_p}\bb A_\sub{inf}[\tfrac1{p\mu}],\] which is finite free over $\bb A_\sub{inf}[\tfrac1{p\mu}]$, while the derived crystalline specialisation of Theorem \ref{theorem_main_with_proofs} implies that \[H^i_\bb A(\frak X)\otimes_{\bb A_\sub{inf}}W(k)[\tfrac1p]\isoto H^i_\sub{crys}(\frak X_k/W(k))[\tfrac1p].\] (There are no higher Tor obstructions since $H^*_\bb A(\frak X)[\tfrac1p]$ is finite free over $\bb A_\sub{inf}[\tfrac1p]$ by the inductive hypothesis for $*>i$.) Therefore we must check that the following equality of dimensions holds: \[\dim_{\bb Q_p}H^i_\sub{\'et}(X,\bb Q_p)=\dim_{W(k)[\tfrac1p]}H^i_\sub{crys}(\frak X_k/W(k))[\tfrac1p]\tag{dim$_\frak X$}.\] This can be proved in varying degrees of generality as follows:
\begin{itemize}\itemsep0pt
\item In the special case that $\frak X$ is obtained by base change from a smooth, proper scheme over the ring of integers of a discretely valued subfield of $\bb C$ (which is perhaps the main case of interest for most readers), then the equality (dim$_\frak X$) is classical (or a consequence of the known Crystalline Comparison Theorem): the crystalline cohomology (with $p$ inverted) of the special fibre identifies with the de Rham cohomology of the generic fibre, which has the same dimension as the $\bb Q_p$-\'etale cohomology by non-canonically embedding into the complex numbers and identifying de Rham cohomology with singular cohomology.
\item Slightly more generally, if $\frak X$ is obtained by base change from a smooth, proper {\em formal} scheme over the ring of integers of a discretely valued subfield of $\bb C$, then (dim$_\frak X$) follows from the rational Hodge--Tate decomposition \cite[Corol.~1.8]{Scholze2013} (which is an easy consequence of the results in the remainder of these notes) and the same identification of crystalline and de Rham cohomology as in the previous case.
\item In the full generality in which we are working (i.e., $\frak X$ is an arbitrary proper, smooth formal scheme over $\roi$), then the equality (dim$_\frak X$) follows from our general Crystalline Comparison Theorem \[H^i_\sub{crys}(\frak X_k/W(k))\otimes_{W(k)}\bb B_\sub{crys}\cong H^i_\sub{\'et}(\frak X_{\res K},\bb Z_p)\otimes_{\bb Z_p}\bb B_\sub{crys}\] (Prop.~13.9 and Thm.~14.5(i) of [BMS]), whose proof we do not cover in these notes.\footnote{Possibly (dim$_\frak X$) can be proved in this case by combining spreading-out arguments of Conrad--Gabber with the relative $p$-adic Hodge theory of \cite[\S8]{Scholze2013}, but we have not seriously considered the problem.}
\end{itemize}

Finally we must prove (viii) and (x): but (viii) follows from the derived form of the crystalline specialisation in Theorem \ref{theorem_main_with_proofs}, part (v), and Lemma \ref{lemma_Tors_over_A_inf}, while (x) follows by combining (viii) or (ix) with Corollary \ref{corollary_condition_for_freeness}.
\end{proof}

This completes the proof of Theorem \ref{theorem_main_with_proofs}, or rather reduces it to the $p$-adic Cartier isomorphism of Theorem \ref{theorem_p_adic_Cartier}. The remainder of these notes is devoted to sketching a proof of this $p$-adic Cartier isomorphism.

\section{Witt complexes}\label{section_constructing_Witt}
This section is devoted to the theory of Witt complexes. We begin by defining Witt complexes and Langer--Zink's relative de Rham--Witt complex, and then in \S\ref{subsection_constructing_Witt} present one of our main constructions: namely equipping certain cohomology groups with the structure of a Witt complex over a perfectoid ring. We apply this construction in \S\ref{subsection_local_Cartier} to the group cohomology of a Laurent polynomial algebra and prove that the result is precisely the relative de Rham--Witt complex itself; this is the key local result from which the $p$-adic Cartier isomorphism will then be deduced in Section \ref{section_Cartier}.

\subsection{Langer--Zink's relative de Rham--Witt complex}\label{subsection_definition_Witt}
We recall the notion of a Witt complex, or $F$-$V$-procomplex, from the work of Langer--Zink \cite{LangerZink2004}.

\begin{definition}\label{definition_Witt}
Let $A\to B$ be a morphism of $\bb Z_{(p)}$-algebras. An associated relative {\em Witt complex}, or {\em $F$-$V$-procomplex}, consists of the following data $(\cal W_r^\blob,R,F,V,\lambda_r)$:
\begin{enumerate}\itemsep0pt
\item a commutative differential graded $W_r(A)$-algebra $\cal W_r^\blob=\bigoplus_{n\ge0} \cal W_r^n$ for each integer $r\ge 1$;
\item morphisms $R:\cal W_{r+1}^\blob\to R_*\cal W_r^\blob$ of differential graded $W_{r+1}(A)$-algebras for $r\ge 1$;
\item morphisms $F:\cal W_{r+1}^\blob\to F_*\cal W_r^\blob$ of graded $W_{r+1}(A)$-algebras for $r\ge 1$;
\item morphisms $V:F_*\cal W_r^\blob\to\cal W_{r+1}^\blob$ of graded $W_{r+1}(A)$-modules for $r\ge 1$;
\item morphisms of $W_r(A)$-algebras $\lambda_r: W_r(B)\to \cal W_r^0$ for each $r\geq 1$ which commute with $R$, $F$, $V$.
\end{enumerate}
such that the following identities hold:
\begin{itemize}\itemsep0pt
\item $R$ commutes with both $F$ and $V$;
\item $FV=p$;
\item $FdV=d$;
\item the Teichm\"uller identity:\footnote{\label{footnote_Teichmuller}The Teichm\"uller identity follows from the other axioms if $\cal W_r^1$ is $p$-torsion-free: \[\hspace{-10mm}p\lambda_r([b])^{p-1}d\lambda_r([b])=d\lambda_r([b]^p)=dF\lambda_r([b])=FdVF\lambda_r([b])=Fd(\lambda_r([b])V(1))=F(V(1))d\lambda_r([b])=pFd\lambda_r([b]).\]} $Fd\lambda_{r+1}([b])=\lambda_r([b])^{p-1}d\lambda_r([b])$ for $b\in B$, $r\ge 1$.
\end{itemize}
\end{definition}

\begin{example}
If $k$ is a perfect field of characteristic $p$ and $R$ is a smooth $k$-algebra (or, in fact, any $k$-algebra, but it is the smooth case that was studied most classically), then the classical de Rham--Witt complex $W_r\Omega^\blob_{R/k}$ of Bloch--Deligne--Illusie, together with its operators $R,F,V$ and the identification $\lambda_r:W_r(R)=W_r\Omega_{R/k}^0$, is a Witt complex for $k\to R$.
\end{example}

There is an obvious definition of morphism between Witt complexes. In particular, it makes sense to ask for an initial object in the category of all Witt complexes for $A\to B$:

\begin{theorem}[Langer--Zink, 2004]\label{thm:dRWExists} There is an initial object $(W_r\Omega_{B/A}^\blob,R,F,V,\lambda_r)$ in the category of Witt complexes for $A\to B$, called the {\em relative de Rham--Witt complex}. (And this agrees with $W_r\Omega_{R/k}^\blob$ of the previous example when $A=k$ and $B=R$).
\end{theorem}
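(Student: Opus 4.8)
The plan is to separate existence from uniqueness: uniqueness needs no argument, since an initial object of any category is unique up to unique isomorphism, so all the content is (a) exhibiting one Witt complex admitting a unique morphism to every other, and (b) identifying it with the classical de Rham--Witt complex of Bloch--Deligne--Illusie when $A=k$, $B=R$. Throughout I would follow Langer--Zink \cite{LangerZink2004}. A convenient first remark is that the category $\mathrm{Witt}(A,B)$ of Witt complexes for $A\to B$ is defined entirely by data subject to universally quantified equational axioms; hence it is locally small, and small limits exist in it and are computed ``levelwise'' (the $r$-th level, degree-$n$ part of a product of Witt complexes is the product of the underlying $W_r(A)$-modules, with all operations applied componentwise), so $\mathrm{Witt}(A,B)$ is complete.

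For pure existence I would then invoke Freyd's initial object theorem, which for a complete locally small category asks only for a weakly initial \emph{set}. Inside any Witt complex $(\mathcal W_r^\blob)_r$, the intersection of all families of graded $W_r(A)$-subalgebras stable under $d$, $R$, $F$, $V$ and containing $\lambda_r(W_r(B))$ is again a Witt complex, so there is a smallest sub-Witt-complex $\mathcal W^{\min}$; a routine transfinite generation count bounds $|\mathcal W^{\min}_r|$ in terms only of $|A|$, $|B|$, $\aleph_0$ and $r$. Hence the isomorphism classes of such ``minimal'' Witt complexes form a set, which is weakly initial, and Freyd's theorem produces an initial object; being initial, it coincides with its own $\mathcal W^{\min}$, so it is generated by the $\lambda_r$-images of $W_r(B)$ under the Witt-complex operations.

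To make the object usable — and this is also how Langer--Zink construct it — I would give an explicit model. Start from the pro-system of ordinary relative Kähler differentials $\big(\Omega^\blob_{W_r(B)/W_r(A)}\big)_{r\ge1}$, a system of commutative DGAs over $W_r(A)$ equipped with the restriction maps $R$ induced by $R\colon W_{r+1}(A)\to W_r(A)$ and with $\lambda_r\colon W_r(B)=\Omega^0$; it already carries $R$ and $d$, but neither $F$ nor $V$. One then quotients by an explicit pro-differential-graded ideal so that the quotient carries a graded ring homomorphism $F$ compatible with $R$ and the $\lambda_r$, subject to $FdV=d$ and the Teichmüller identity $Fd\lambda_{r+1}([b])=\lambda_r([b])^{p-1}d\lambda_r([b])$, and is universal with these properties; one then constructs the Verschiebung $V$ (using $FV=p$, $FdV=d$, $RV=VR$) and verifies every axiom of Definition \ref{definition_Witt} directly. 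The universal property of the result is then immediate: for any Witt complex $\mathcal W$, the canonical map $\Omega^\blob_{W_r(B)/W_r(A)}\to\mathcal W_r^\blob$ extending $\lambda_r$ exists because $\mathcal W_r^\blob$ is a CDGA under $W_r(B)$, it kills the imposed relations, and the induced map on quotients is the unique morphism of Witt complexes. (As a sanity check, for $r=1$ this recovers $W_1\Omega^\blob_{B/A}=\Omega^\blob_{B/A}$.)

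For the parenthetical statement, when $A=k$ is a perfect field of characteristic $p$ and $B=R$ any $k$-algebra, unwinding Definition \ref{definition_Witt} shows that a Witt complex for $k\to R$ is precisely a de Rham--Witt pro-complex in the sense of Bloch--Deligne--Illusie; since Illusie \cite{Illusie1979} proves that $(W_r\Omega^\blob_{R/k})_r$ is the initial such object, it agrees with the object constructed above by uniqueness of initial objects. I expect the real obstacle to be the descent of $F$ in the explicit model: showing that the constraints imposed by the Teichmüller identity and by $FdV=d$ span a genuinely \emph{differential} graded ideal — so that $d$ passes to the quotient and $F$ is well defined on it — is the delicate part of Langer--Zink's argument, and is exactly what prevents the construction from being a purely formal generators-and-relations statement.
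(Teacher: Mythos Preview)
The paper does not actually prove this theorem: it is stated as a citation of Langer--Zink's result, with no proof given. Your sketch is a reasonable outline of how the Langer--Zink construction proceeds, and in fact the paper alludes to exactly your explicit model in Remark~\ref{remark_de_rham_witt}(ii), where it notes that each map $\Omega_{W_r(B)/W_r(A)}^\blob\to W_r\Omega_{B/A}^\blob$ is surjective and that ``the elementary construction of $W_r\Omega_{B/A}^\blob$ is to mod out $\Omega_{W_r(B)/W_r(A)}^\blob$ by the required relations so that the axioms of a Witt complex are satisfied.'' So your second, explicit approach matches what the paper hints at; your Freyd-style argument is a separate (and correct) abstract-nonsense route that the paper does not mention.

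One caution: your description of the explicit construction slightly oversimplifies the order of operations in Langer--Zink. They do not first impose $F$ and then construct $V$; rather the quotient is built inductively in $r$, with $V$ already present on $\Omega_{W_r(B)/W_r(A)}^\blob$ (it is induced by the Witt-vector $V$ and satisfies $V(x\,dy)=V(x)\,dV(y)$ on K\"ahler differentials), and the relations one imposes are designed to force $F$ to descend while keeping $d$, $R$, $V$ compatible. The genuine subtlety you flag---that the relations form a differential graded ideal stable under all the operators---is real, but its resolution in \cite{LangerZink2004} proceeds by this inductive construction rather than by a single global quotient.
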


\begin{remark}\label{remark_de_rham_witt}
\begin{enumerate}
\item The reason for the ``relative'' in the definition is that there has been considerable work recently, mostly by Hesselholt, on the {\em absolute de Rham--Witt complex} $W_r\Omega_{B}^\blob``{=W_r\Omega_{B/\bb F_1}^\blob}$''.
\item Given a Witt complex for $A\to B$, each $\cal W_r^\blob$ is in particular a commutative differential graded $W_r(A)$-algebra whose degree zero summand is a $W_r(B)$-algebra (via the structure maps $\lambda_r$). There are therefore natural maps of differential graded $W_r(A)$-algebras $\Omega_{W_r(B)/W_r(A)}^\blob\to \cal W_r^\blob$ for all $r\ge1$ (which are compatible with the restriction maps on each side).

In the case of the relative de Rham--Witt complex itself, each map $\Omega_{W_r(B)/W_r(A)}^\blob\to W_r\Omega_{B/A}^\blob$ is surjective (indeed, the elementary construction of $W_r\Omega_{B/A}^\blob$ is to mod out $\Omega_{W_r(B)/W_r(A)}^\blob$ by the required relations so that the axioms of a Witt complex are satisfied) and is even an isomorphism when $r=1$, i.e., $\Omega_{B/A}^\blob\isoto W_1\Omega_{B/A}^\blob$.
\item If $B$ is smooth over $A$, and $p$ is nilpotent in $A$, then Langer--Zink construct natural comparison quasi-isomorphisms $R\Gamma_\sub{crys}(B/W_r(A))\quis W_r\Omega_{B/A}^\blob$, where the left side is crystalline cohomology with respect to the usual pd-structure on the ideal $VW_{r-1}(A)\subseteq W_r(A)$ (note that the quotient $W_r(A)/VW_{r-1}(A)$ is $A$) defined by the rule $\gamma_n(V(\al)):=\tfrac{p^{n-1}}{n!}V(\al^n)$. This is a generalisation of Illusie's classical comparison quasi-isomorphism $R\Gamma_\sub{crys}(R/W_r(k))\quis W_r\Omega_{R/k}^\blob$.
\item Langer--Zink's proof of the comparison quasi-isomorphism in (iii) uses an explicit description of $W_r\Omega_{B/A}^\blob$ in the case that $B=A[T_1,\dots,T_d]$; in [BMS, \S10.4] we extend their description to $B=A[T_1^{\pm1},\dots,T_d^{\pm1}]$.
\item If $B\to B'$ is an \'etale morphism of $A$-algebras, then $W_r(B)\to W_r(B')$ is known to be \'etale and it can be shown that $W_r\Omega_{B/A}^n\otimes_{W_r(B)}W_r(B')\isoto W_r\Omega_{B'/A}^n$ [BMS, Lem.~10.8]. From these and similar base change results one sees that if $Y$ is any $A$-scheme, then there is a well-defined Zariski (or even \'etale) sheaf $W_r\Omega_{Y/A}^n$ on $Y$ whose sections on any $\Spec B$ are $W_r\Omega_{B/A}^n$.
\item If now $\frak X$ is a $p$-adic formal scheme over $A$, then there is similarly a well-defined Zariski (or \'etale) sheaf $W_r\Omega_{\frak X/A}^n$ whose sections on any $\Spf B$ are the following (identical\footnote{For the elementary proof that the three completions are the same, see Lem.~10.3 and Corol.~10.10 of [BMS].}) $p$-adically complete $W_r(B)$-modules \[(W_r\Omega_{B/A}^n)_p^\comp\qquad (W_r\Omega_{B/A}^n)_{[p]}^\comp\qquad \projlim_sW_r\Omega_{(B/p^sB)/(A/p^sA)}^n\]
\item (Base change) In [BMS, Prop.~10.14] we establish the following important base change property: if $A\to A'$ is a homomorphism between perfectoid rings, and $R$ is a smooth $A$-algebra, then the canonical base change map $W_r\Omega_{R/A}^n\otimes_{W_r(A)}W_r(A')\to W_r\Omega_{R\otimes_AA'/A'}^n$ is an isomorphism; moreover, the $W_r(A)$-modules $W_r\Omega_{R/A}^n$ and $W_r(A')$ are Tor-independent, whence $W_r\Omega_{R/A}^\blob\dotimes_{W_r(A)}W_r(A')\quis W_r\Omega_{R\otimes_AA'/A'}^\blob$.
\end{enumerate}
\end{remark}

In conclusion, in the set-up of Section \ref{section_main}, the relative de Rham--Witt complex $W_r\Omega_{\frak X/\roi}^\blob$ is an explicit complex computing both de Rham and crystalline cohomologies.

\subsection{Constructing Witt complexes}\label{subsection_constructing_Witt}
From now until the end of Section \ref{section_constructing_Witt} we fix the following:
\begin{itemize}
\item $A$ is a perfectoid ring of the type discussed in \S\ref{subsection_roots_of_unity}, i.e., $p$-torsion-free and containing a compatible system $\zeta_p,\zeta_{p^2},\dots$ of primitive $p$-power roots of unity (which we fix); let $\ep\in A^\flat$ and $\mu,\xi,\xi_r,\tilde\xi_r\in W(A^\flat)$ be the elements constructed there.
\item $D$ is a coconnective (i.e., $H^*(D)=0$ for $*<0$), commutative algebra object\footnote{By this we mean that $D$ is a commutative algebra object in the category $D(W(A^\flat))$ in the most naive way: the constructions can be upgraded to the level of $\bb E_\infty$-algebras, but again this is not necessary for our existing results.} in $D(W(A^\flat))$ which is equipped with a $\phi$-semi-linear quasi-isomorphism $\phi_D:D\quis D$ (of algebra objects), and is assumed to satisfy the following hypothesis:
\begin{quote}
($\cal W1$) $H^0(D)$ is $\mu$-torsion-free.
\end{quote}
\end{itemize}
Here we will explain how to functorially construct, from the data $D,\phi_D$, certain Witt complexes over $A$: this will lead to universal maps from de Rham--Witt complexes to cohomology groups of $D$, which will eventually provide the maps in the $p$-adic Cartier isomorphism.

\begin{example}\label{example_D}
The main examples are $A=\roi$ with the following coconnective, commutative algebra objects over $\bb A_\sub{inf}=W(\roi^\flat)$, which will be studied in \S\ref{subsection_local_Cartier} and \S\ref{subsection_reduction_to_tor} respectively:
\begin{enumerate}
\item $R\Gamma(\bb Z^d, W(A^\flat)[U_1^{\pm 1/p^\infty},\cdots,U_d^{\pm 1/p^\infty}])$, or a $p$-adically complete version thereof.
\item $R\Gamma_\sub{pro\'et}(\Sp R[\tfrac1p],\bb A_{\sub{inf},X})$, where $\Spf R$ is a small affine open of a formally smooth $\roi$-scheme with generic fibre $X$.
\end{enumerate}
\end{example} 

We first explain our preliminary construction of a Witt complex from the data $D,\phi_D$, which will then be refined. In this construction, indeed throughout the rest of the section, it is important to recall from Section \ref{section_perfectoid} the isomorphisms $\tilde\theta_r:W(A^\flat)/\tilde\xi_r\isoto W_r(A)$, which we often implicitly view as an identification. In particular, for each $r\ge1$, we may form the coconnective\footnote{From assumption ($\cal W1$) and the existence of $\phi_D$, it follows that $H^0(D)$ has no $\phi^r(\mu)=\tilde\xi_r\mu$-torsion, hence no $\tilde\xi_r$-torsion; so $D/\tilde\xi_r$ is still coconnective.} derived algebra object \[D/\tilde\xi_r:=D\dotimes_{W(A^\flat)} W_r(A^\flat)/\tilde\xi_r=D\dotimes_{W(A^\flat),\tilde\theta_r} W_r(A)\] over $W(A^\flat)/\tilde\xi_r=W_r(A)$, and take its cohomology
\[
\cal W_r^\blob(D)_\sub{pre}:=H^\blob(D\dotimes_{W(A^\flat)}W(A^\flat)/\tilde\xi_r)
\]
to form a graded $W_r(A)$-algebra. Equipping these cohomology groups with the Bockstein differential $\op{Bock}_{\tilde\xi_r}:\cal W_r^n(D)_\sub{pre}\to\cal W_r^{n+1}(D)_\sub{pre}$ associated to the distinguished triangle
\[
D\dotimes_{W(A^\flat)} W(A^\flat)/\tilde\xi_r\xTo{\tilde\xi_r}D\dotimes_{W(A^\flat)}W(A^\flat)/\tilde\xi_r^2\To D\dotimes_{W(A^\flat)}W(A^\flat)/\tilde\xi_r
\]
makes $\cal W_r^\blob(D)_\sub{pre}$ into a differential graded $W_r(A)$-algebra.

Next let
\[\begin{aligned}
R'&: \cal W_{r+1}^\blob(D)_\sub{pre}\to\cal W_r^\blob(D)_\sub{pre}\\
F&: \cal W_{r+1}^\blob(D)_\sub{pre}\to\cal W_r^\blob(D)_\sub{pre}\\
V&: \cal W_r^\blob(D)_\sub{pre}\to\cal W_{r+1}^\blob(D)_\sub{pre}
\end{aligned}\]
be the maps on cohomology induced respectively by
\[\begin{aligned}
D\dotimes_{W(A^\flat)}W(A^\flat)/\tilde\xi_{r+1}&\xTo{\phi_D^{-1}\otimes\phi^{-1}} D\dotimes_{W(A^\flat)}W(A^\flat)/\tilde\xi_r\\
D\dotimes_{W(A^\flat)}W(A^\flat)/\tilde\xi_{r+1}&\xTo{\op{id}\otimes\mathrm{can.\ proj.}} D\dotimes_{W(A^\flat)}W(A^\flat)/\tilde\xi_r\\
D\dotimes_{W(A^\flat)}W(A^\flat)/\tilde\xi_r&\xTo{\op{id}\otimes\varphi^{r+1}(\xi)} D\dotimes_{W(A^\flat)}W(A^\flat)/\tilde\xi_{r+1},
\end{aligned}\]
which are compatible with the usual Witt vector maps $R,F,V$ on $W_r(A)=W(A^\flat)/\tilde \xi_r$ thanks to the second set of diagrams in Lemma \ref{lemma_theta_r_diagrams}.

As we will see in the proof of part (ii) of the next result, $R'$ must be replaced by\footnote{The reader should use the identities of \S\ref{subsection_roots_of_unity} to calculate that $\tilde\theta_r(\xi)=\tfrac{[\zeta_{p^r}]-1}{[\zeta_{p^{r+1}}]-1}\in W_r(A)$.}
\[
R:=\tilde\theta_r(\xi)^n R': \cal W_{r+1}^n(D)_\sub{pre}\to\cal W_r^n(D)_\sub{pre}\]
if we are to satisfy the axioms of a Witt complex.

\begin{proposition}\label{proposition_first_construction}
The data $(\cal W_r^\blob(D)_\sub{pre},R,F,V)$ satisfies all those axioms appearing in the definition of a Witt complex (Def.~\ref{definition_Witt}) which only refer to $R,F,V$ (i.e., which do not involve the additional ring $B$ or the structure maps $\lambda_r$). More precisely:
\begin{enumerate}\itemsep0pt
\item $\cal W_r^\blob(D)_\sub{pre}$ is a commutative\footnote{\label{footnote_p=2}Unfortunately this is not strictly true: if $p=2$ then the condition that $x^2=0$ for $x\in\cal W_r^\sub{odd}(D)_\sub{pre}$ need not be true; but this will be fixed when we improve the construction.} differential graded $W_r(A)$-algebra for each $r\ge1$.
\item $R'$ is a homomorphism of graded rings, and $R$ is a homomorphism of differential graded rings;
\item $V$ is additive, commutes with $R'$ and $R$, and is $F$-inverse-semi-linear (i.e., $V(F(x)y)=xV(y)$);
\item $F$ is a homomorphism of graded rings and commutes with both $R'$ and $R$;
\item $FdV=d$;
\item $FV$ is multiplication by $p$.
\end{enumerate}
\end{proposition}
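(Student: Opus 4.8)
Every assertion will be obtained by tracing the effect on cohomology of the maps $\op{id}_D$, $\phi_D^{-1}$ and the various multiplication maps on the coefficient rings $W(A^\flat)/\tilde\xi_\bullet$ through the distinguished triangles defining $\cal W_\bullet^\blob(D)_\sub{pre}$ and its Bockstein differential, combined with the elementary identities of \S\ref{subsection_roots_of_unity} relating $\xi,\tilde\xi_r,\mu,\phi$ and the second set of commutative diagrams of Lemma \ref{lemma_theta_r_diagrams}. I will first dispose of the differential graded algebra structure (i), then of the ``formal'' statements (ii)--(iv) and (vi), and finally of the genuinely delicate points: the renormalisation forcing $R'\rightsquigarrow R$, and the identity $FdV=d$.

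For (i): by Lemma \ref{lemma_ker_theta_r} each $\tilde\xi_r$ is a non-zero-divisor of $W(A^\flat)$, so $0\to W(A^\flat)/\tilde\xi_r\xto{\tilde\xi_r}W(A^\flat)/\tilde\xi_r^2\to W(A^\flat)/\tilde\xi_r\to 0$ is short exact; tensoring the commutative algebra object $D$ against it yields the triangle defining $\op{Bock}_{\tilde\xi_r}$, and since $D\dotimes_{W(A^\flat)}W(A^\flat)/\tilde\xi_r$ and $D\dotimes_{W(A^\flat)}W(A^\flat)/\tilde\xi_r^2$ are commutative algebra objects --- still coconnective, since $H^0(D)$ has no $\phi^r(\mu)=\tilde\xi_r\mu$-torsion, hence no $\tilde\xi_r$-torsion, by $(\cal W1)$ and the existence of $\phi_D$ --- their cohomology rings are graded-commutative, with the one exception for $p=2$ flagged in footnote \ref{footnote_p=2}, to be repaired when the construction is improved. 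That $\op{Bock}_{\tilde\xi_r}^2=0$ is the standard vanishing of the composite of two consecutive connecting maps, and that $\op{Bock}_{\tilde\xi_r}$ is a graded derivation for the product is the usual Leibniz compatibility of the connecting homomorphism of $D\dotimes_{W(A^\flat)}(-)$ applied to this triangle with the multiplication on $D$ (cf.\ the parallel discussion in [BMS, \S6]).

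For (ii)--(iv), (vi): the maps $R'$ and $F$ are induced by tensoring $\phi_D^{-1}$, resp.\ $\op{id}_D$, with the \emph{ring} homomorphisms $\phi^{-1}\colon W(A^\flat)/\tilde\xi_{r+1}\to W(A^\flat)/\tilde\xi_r$ (well defined since $\phi^{-1}(\tilde\xi_{r+1})=\xi\tilde\xi_r$) and the canonical reduction $W(A^\flat)/\tilde\xi_{r+1}\to W(A^\flat)/\tilde\xi_r$ (well defined since $\tilde\xi_{r+1}=\phi^{r+1}(\xi)\tilde\xi_r$); hence $R'$ and $F$ are maps of derived algebra objects and therefore graded ring homomorphisms, and, by the second set of diagrams in Lemma \ref{lemma_theta_r_diagrams}, they cover the Witt-vector operators $R,F$ on $W(A^\flat)/\tilde\xi_r\cong W_r(A)$. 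The map $V$ is induced by $\op{id}_D$ tensored with the \emph{module} map $\times\phi^{r+1}(\xi)\colon W(A^\flat)/\tilde\xi_r\to W(A^\flat)/\tilde\xi_{r+1}$, so it is merely additive, covers the Witt-vector $V$, commutes with $R'$ (hence, after the degreewise rescaling below, with $R$), and satisfies the projection formula $V(F(x)y)=xV(y)$, all by the same elementary Witt-vector bookkeeping transported through the triangles. Finally $FV=p$ reduces to computing the composite $W(A^\flat)/\tilde\xi_r\xto{\times\phi^{r+1}(\xi)}W(A^\flat)/\tilde\xi_{r+1}\xto{\op{can}}W(A^\flat)/\tilde\xi_r$, which is multiplication by $\tilde\theta_r(\phi^{r+1}(\xi))=\theta_r(\phi(\xi))$; its ghost components are $(\theta\phi(\xi),\dots,\theta\phi^r(\xi))=(p,\dots,p)$ by the identity $\theta\phi^i(\xi)=p$ $(i\ge1)$ proved in \S\ref{subsection_roots_of_unity}, so $\tilde\theta_r(\phi^{r+1}(\xi))=p$ in $W_r(A)$ by injectivity of the ghost map, whence $FV=p$.

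The crux is (ii) (the passage from $R'$ to $R$) and (v): the point is that $\phi_D^{-1}\otimes\phi^{-1}$ intertwines $\op{Bock}_{\tilde\xi_{r+1}}$ not with $\op{Bock}_{\tilde\xi_r}$ but with $\op{Bock}_{\xi\tilde\xi_r}$, and the discrepancy with $\op{Bock}_{\tilde\xi_r}$ must be absorbed into a degreewise factor. The cleanest way I see to handle this is via the Bockstein--$L\eta$ identification of Remark \ref{remark_more_on_Leta}(a), which presents $(\cal W_r^\blob(D)_\sub{pre},\op{Bock}_{\tilde\xi_r})$ as the honest complex $(L\eta_{\tilde\xi_r}D)\dotimes_{W(A^\flat)}W(A^\flat)/\tilde\xi_r$: then $\phi_D^{-1}$ induces $L\eta_{\tilde\xi_{r+1}}D\to L\eta_{\phi^{-1}(\tilde\xi_{r+1})}D=L\eta_{\xi\tilde\xi_r}D=L\eta_\xi(L\eta_{\tilde\xi_r}D)$ (using the multiplicativity of Remark \ref{remark_more_on_Leta}(b)), and composing with the natural transformation $j\colon L\eta_\xi\to\op{id}$ of Remark \ref{remark_more_on_Leta}(c) --- available because $H^0(L\eta_{\tilde\xi_r}D)=H^0(D)$ is $\xi$-torsion-free, a consequence of $(\cal W1)$ since $\xi$ divides $\mu$ --- gives a genuine map of complexes, hence one compatible with differentials. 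By Remark \ref{remark_more_on_Leta}(d) this composite is multiplication by $\xi^n$ on $H^n$, so after $\dotimes_{W(A^\flat)}W(A^\flat)/\tilde\xi_r$ it is exactly $\tilde\theta_r(\xi)^n R'=R$ in degree $n$; this simultaneously proves that $R$ (and not $R'$) is a morphism of differential graded rings. The identity $FdV=d$ is then checked directly from the definitions of $\op{Bock}_{\tilde\xi_{r+1}}$, $F$ and $V$: lifting a class modulo $\tilde\xi_{r+1}^2=\phi^{r+1}(\xi)^2\tilde\xi_r^2$ after multiplication by $\phi^{r+1}(\xi)$, applying $d$, dividing by $\tilde\xi_{r+1}=\phi^{r+1}(\xi)\tilde\xi_r$ and projecting modulo $\tilde\xi_r$, the two extraneous factors of $\phi^{r+1}(\xi)$ cancel and one is left with the recipe computing $\op{Bock}_{\tilde\xi_r}$, using that $\times\phi^{r+1}(\xi)$ is injective on $W(A^\flat)/\tilde\xi_r$ to fix representatives unambiguously. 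This last bit of Bockstein bookkeeping --- and the precise matching of the $\tilde\theta_r(\xi)^n$ factor --- is the one place where some care with explicit representatives, or an octahedral-axiom argument, is genuinely required; everything else is formal from the algebra-object structure of $D$ and Lemma \ref{lemma_theta_r_diagrams}.
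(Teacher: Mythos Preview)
Your proposal is correct and follows the same overall strategy as the paper: every identity is obtained by reading off the effect on cohomology of maps between the coefficient modules $W(A^\flat)/\tilde\xi_\bullet$, together with the compatibilities of Lemma~\ref{lemma_theta_r_diagrams}. Parts (i), (iii), (iv), (vi) match the paper's argument almost verbatim (the paper, like you, leaves the passage ``$V$ commutes with $R'$ $\Rightarrow$ $V$ commutes with $R$'' to the reader, relying on $F$-inverse-semi-linearity and $\tilde\theta_{r-1}(\xi)=F(\tilde\theta_r(\xi))$).

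The one genuine methodological difference is in (ii) and (v). For (ii), the paper argues directly: it writes down the map of short exact sequences
\[
\xymatrix{
0\ar[r] & W(A^\flat)/\tilde\xi_{r+1}\ar[r]^{\tilde\xi_{r+1}}\ar[d]_{\xi\phi^{-1}} & W(A^\flat)/\tilde\xi_{r+1}^2\ar[r]\ar[d] & W(A^\flat)/\tilde\xi_{r+1}\ar[r]\ar[d]_{\phi^{-1}} & 0\\
0\ar[r] & W(A^\flat)/\tilde\xi_r\ar[r]^{\tilde\xi_r} & W(A^\flat)/\tilde\xi_r^2\ar[r] & W(A^\flat)/\tilde\xi_r\ar[r] & 0
}
\]
and observes that the forced factor of $\xi$ on the left gives $dR'=\tilde\theta_r(\xi)\,R'd$, whence $R=\tilde\theta_r(\xi)^nR'$ commutes with $d$. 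For (v) it likewise displays an explicit three-row ladder of short exact sequences. Your route through the $L\eta$--Bockstein identification of Remark~\ref{remark_more_on_Leta}(a) is equally valid (and is indeed the mechanism behind the Frobenius construction in Lemma~\ref{lemma_frobenius_on_AOmega} and the passage from Theorem~\ref{theorem_p_adic_Cartier} to Theorem~\ref{theorem_drw_comparison_main}); chasing an element $\tilde\xi_{r+1}^ny$ through your composite $L\eta_{\tilde\xi_{r+1}}D\to L\eta_{\xi\tilde\xi_r}D\xrightarrow{j}L\eta_{\tilde\xi_r}D$ and then through the explicit quasi-isomorphism of Remark~\ref{remark_more_on_Leta}(a) indeed produces $\overline{\xi^n\phi_D^{-1}(y)}=\tilde\theta_r(\xi)^n R'(\bar y)$. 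The paper's diagrammatic approach has the advantage of being entirely elementary (no $L\eta$ needed at this stage), while yours makes the conceptual origin of the $\tilde\theta_r(\xi)^n$ twist --- as the degree-$n$ effect of $j:L\eta_\xi\to\op{id}$ --- more transparent.
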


\begin{proof}
Part (i) is a formal consequence of $D$ being a commutative algebra object of $D(W(A^\flat))$.

(ii): $R'$ is a homomorphism of graded rings by functoriality; the same is true of $R$ since it is twisted by increasing powers of an element. Moreover, the commutativity of
\[\xymatrix{
0\ar[r] & W(A^\flat)/\tilde\xi_{r+1}\ar[r]^{\tilde\xi_{r+1}}\ar[d]_{\xi \varphi^{-1}} & W(A^\flat)/\tilde\xi_{r+1}^2\ar[r]\ar[d] & W(A^\flat)/\tilde\xi_{r+1}\ar[r]\ar[d] & 0\\
0\ar[r] & W(A^\flat)/\tilde\xi_r\ar[r]^{\tilde\xi_r} & W(A^\flat)/\tilde\xi_r^2\ar[r] & W(A^\flat)/\tilde\xi_r\ar[r] & 0
}\]
and functoriality of the resulting Bocksteins implies that
\[\xymatrix{
\cal W_{r+1}^n(D)_\sub{pre}\ar[r]^d\ar[d]_{R'}&\cal W_{r+1}^{n+1}(D)_\sub{pre}\ar[d]^{\tilde\theta_r(\xi)R'} \\
\cal W_r^n(D)_\sub{pre}\ar[r]_d&\cal W_r^{n+1}(D)_\sub{pre} \\
}\]
commutes; hence the definition of $R$ was exactly designed to arrange that it commute with $d$.

(iii): $V$ is clearly additive, and it commutes with $R'$ since it already did so before taking cohomology. Secondly, the $F$-inverse-semi-linearity of $V$ follows by passing to cohomology in the following commutative diagram:
\[\xymatrix@C=2cm{
D/\tilde\xi_{r+1}\dotimes D/\tilde\xi_{r+1}\ar[r]^{\sub{mult}} & D/\tilde\xi_{r+1}\\
D/\tilde\xi_{r+1}\dotimes D/\tilde\xi_{r}\ar[u]^{\mathrm{id}\otimes \varphi^{r+1}(\xi)}\ar[d]_{\mathrm{can.\ proj.}\otimes\op{id}}&\\
D/\tilde\xi_{r}\dotimes D/\tilde\xi_{r}\ar[r]^{\sub{mult}} & D/\tilde\xi_r\ar[uu]_{\varphi^{r+1}(\xi)}
}\]
It now easily follows that $V$ also commutes with $R'$.

(iv): $F$ is a graded ring homomorphism, and it commutes with $R^\prime$ by definition, and then easily also with $R$.

(v): This follows by tensoring the commutative diagram below with $D$ over $W(A^\flat)$, and looking at the associated boundary maps on cohomology:

\[\xymatrix{
0\ar[r] & W(A^\flat)/\tilde\xi_r\ar[r]\ar@{=}[d] & W(A^\flat)/\tilde\xi_r^2\ar[r]\ar^{\varphi^{r+1}(\xi)}[d] & W(A^\flat)/\tilde\xi_r\ar^{\varphi^{r+1}(\xi)}[d]\ar[r] & 0\\
0\ar[r] & W(A^\flat)/\tilde\xi_r\ar[r] & W(A^\flat)/\tilde\xi_r \tilde\xi_{r+1} \ar[r] & W(A^\flat)/\tilde\xi_{r+1}\ar[r] & 0 \\
0\ar[r] & W(A^\flat)/\tilde\xi_{r+1}\ar[r]\ar^{\mathrm{can.\ proj.}}[u] & W(A^\flat)/\tilde\xi_{r+1}^2\ar[r]\ar[u] & W(A^\flat)/\tilde\xi_{r+1}\ar[r]\ar@{=}[u] & 0
}\]

(vi): This follows from the fact that $\tilde\theta_r(\varphi^{r+1}(\xi))=p$ for all $r\geq 1$ (which is true since $\theta_r(\phi(\xi))=\theta_r(\phi(\xi))=F(\theta_{r+1}(\xi))=FV(1)=p$, where the third equality uses the second diagram of Lemma \ref{lemma_theta_r_diagrams}).
\end{proof}

Unfortunately, there are various heuristic and precise reasons\footnote{For example, suppose that $B$ is an $A$-algebra and that we are given structure maps $\lambda_r:W_r(B)\to\cal W_r^0(D)$ under which $(\cal W_r^\blob(D),R,F,V,\lambda_r)$ becomes a Witt complex for $A\to B$, thereby resulting in a universal map of Witt complexes $\lambda_r^\blob:W_r\Omega_{B/A}^\blob\to W_r^\blob(D)$; then from the surjectivity of the restriction maps for $W_r\Omega^\blob_{B/A}$ and the definition of the restriction map $R$ for $\cal W_r^\blob(D)_\sub{pre}$, we see that
\[
\Im\lambda_r^n\subseteq \bigcap_{s\ge 1}\Im(\cal W_{r+s}^n(D)_\sub{pre}\xto{R^s}\cal W_r^n(D)_\sub{pre})\subseteq \bigcap_{s\ge 1}\tilde\theta_r(\xi_s)^n\cal W_r^n(D)_\sub{pre}=\bigcap_{s\ge 1}\big(\tfrac{[\zeta_{p^r}]-1}{[\zeta_{p^s}]-1}\big)^n\cal W_r^n(D)_\sub{pre}.
\]
The far right side contains, and often equals in realistic situations, $([\zeta_{p^r}]-1)^n\cal W_r^n(D)_\sub{pre}$, which motivates our replacement.}
that $\cal W_r^\blob(D)_\sub{pre}$ is ``too large'' to underlie an interesting Witt complex over $A$, and so we replace it by \[\cal W_r^n(D):=([\zeta_{p^r}]-1)^n\cal W_r^n(D)_\sub{pre}\subseteq\cal W_r^n(D)_\sub{pre}.\]

\begin{lemma}\label{lemma_improved}
The $W_r(A)$-submodules $\cal W_r^n(D)\subseteq \cal W_r^n(D)_\sub{pre}$ define sub differential graded algebras of $\cal W_r^\blob(D)_\sub{pre}$, for each $r\ge1$, which are closed under the maps $R,F,V$ (and hence Proposition \ref{proposition_first_construction} clearly remains valid for the data $(\cal W_r^\blob(D),R,F,V)$ ).
\end{lemma}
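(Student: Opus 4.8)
The plan is to verify the three assertions — that the submodules $\cal W_r^n(D):=([\zeta_{p^r}]-1)^n\cal W_r^n(D)_\sub{pre}$ form a subcomplex, a subalgebra, and are closed under $R,F,V$ — one at a time, each reducing to a simple divisibility identity among the elements $\mu$, $\xi$, $\xi_r$, $\tilde\xi_r$ constructed in \S\ref{subsection_roots_of_unity}. Throughout I use the identification $\tilde\theta_r\colon W(A^\flat)/\tilde\xi_r\isoto W_r(A)$ and the fact that $\tilde\theta_r(\mu)=[\zeta_{p^r}]-1$ (Proposition \ref{proposition_roots_of_unity}), so that multiplication by $[\zeta_{p^r}]-1$ on $\cal W_r^n(D)_\sub{pre}=H^n(D/\tilde\xi_r)$ is induced by multiplication by $\mu$ on $D/\tilde\xi_r$; since $[\zeta_{p^r}]-1$ is a non-zero-divisor of $W_r(A)$ (Proposition \ref{proposition_roots_of_unity}), the map $x\mapsto ([\zeta_{p^r}]-1)^n x$ is injective, and I may freely identify $\cal W_r^n(D)$ with a copy of $\cal W_r^n(D)_\sub{pre}$ whenever convenient.

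\textbf{Subcomplex.} I must show $d\big(\cal W_r^n(D)\big)\subseteq\cal W_r^{n+1}(D)$, i.e. that for $\omega\in\cal W_r^n(D)_\sub{pre}$ the element $d\big(([\zeta_{p^r}]-1)^n\omega\big)$ is divisible by $([\zeta_{p^r}]-1)^{n+1}$. Since $d$ is the Bockstein $\op{Bock}_{\tilde\xi_r}$, it is $W_r(A)$-linear (the Bockstein of a complex of $W_r(A)$-modules is a map of $W_r(A)$-modules, as $d^2=0$ and $d$ squares to zero is irrelevant — what matters is $W_r(A)$-linearity, which is immediate), so $d\big(([\zeta_{p^r}]-1)^n\omega\big)=([\zeta_{p^r}]-1)^n d\omega$; but this does \emph{not} obviously lie in $([\zeta_{p^r}]-1)^{n+1}\cal W_r^{n+1}(D)_\sub{pre}$. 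The point instead is the extra factor of $[\zeta_{p^r}]-1$ coming from the Leibniz rule together with the structure of the Bockstein on the \emph{multiplicative} complex: one checks, exactly as in Remark \ref{remark_more_on_Leta}(a) applied to the algebra $D$ with $f=\mu$, that the cohomology class of $\mu^n x$ for $x$ a cocycle of $D/\tilde\xi_r$ has Bockstein equal to $\mu^{n+1}$ times the class of $dx/\mu^{\,?}$ — concretely, the subcomplex $\eta_\mu(D/\tilde\xi_r)$ of Remark \ref{remark_more_on_Leta}(a) is by construction closed under $d$, and $\cal W_r^\blob(D)$ is precisely $H^\blob$ of (a representative of) $L\eta_\mu(D/\tilde\xi_r)$; so the subcomplex property is inherited from the defining property of $\eta_\mu$. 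This is the step I expect to cost the most care, since it requires unwinding the Bockstein-versus-$L\eta$ dictionary at the chain level rather than just citing it.

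\textbf{Subalgebra.} For the multiplicative structure I must show $\cal W_r^i(D)\cdot\cal W_r^j(D)\subseteq\cal W_r^{i+j}(D)$. But $\big(([\zeta_{p^r}]-1)^i\omega\big)\cdot\big(([\zeta_{p^r}]-1)^j\eta\big)=([\zeta_{p^r}]-1)^{i+j}(\omega\eta)$ and $\omega\eta\in\cal W_r^{i+j}(D)_\sub{pre}$ since $\cal W_r^\blob(D)_\sub{pre}$ is a graded algebra; so this is immediate. (The unit lies in $\cal W_r^0(D)=\cal W_r^0(D)_\sub{pre}$, again trivially.)

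\textbf{Closure under $R$, $F$, $V$.} Here I use the explicit formulas. For $F\colon\cal W_{r+1}^n(D)_\sub{pre}\to\cal W_r^n(D)_\sub{pre}$, which is induced by $\mathrm{can.\ proj.}$ and is $F$-semilinear over the Witt maps, one has $F([\zeta_{p^{r+1}}]-1)=[\zeta_{p^r}]-1$ in $W_r(A)$ (apply $F$ to Teichmüller lifts, or read it off the ghost components), hence $F\big(([\zeta_{p^{r+1}}]-1)^n\omega\big)=([\zeta_{p^r}]-1)^n F(\omega)\in\cal W_r^n(D)$. For $R=\tilde\theta_r(\xi)^n R'$, note $\tilde\theta_r(\xi)=\tfrac{[\zeta_{p^r}]-1}{[\zeta_{p^{r+1}}]-1}$ (as recorded in the footnote preceding Proposition \ref{proposition_first_construction}), and $R'$ is the reduction map, which sends $[\zeta_{p^{r+1}}]-1\mapsto[\zeta_{p^{r+1}}]-1$; thus $R\big(([\zeta_{p^{r+1}}]-1)^n\omega\big)=\big(\tfrac{[\zeta_{p^r}]-1}{[\zeta_{p^{r+1}}]-1}\big)^n([\zeta_{p^{r+1}}]-1)^n R'(\omega)=([\zeta_{p^r}]-1)^n R'(\omega)\in\cal W_r^n(D)$. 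For $V\colon\cal W_r^n(D)_\sub{pre}\to\cal W_{r+1}^n(D)_\sub{pre}$, induced by multiplication by $\varphi^{r+1}(\xi)$, I use $F$-inverse-semilinearity $V(F(a)y)=aV(y)$ together with $F\big(([\zeta_{p^{r+1}}]-1)^n\big)=([\zeta_{p^r}]-1)^n$: taking $a=([\zeta_{p^{r+1}}]-1)^n$ and $y=\omega$ gives $V\big(([\zeta_{p^r}]-1)^n\omega\big)=([\zeta_{p^{r+1}}]-1)^n V(\omega)\in\cal W_{r+1}^n(D)$. Finally, the assertion that Proposition \ref{proposition_first_construction} remains valid is then formal: all the identities there ($FV=p$, $FdV=d$, commutation relations, etc.) are identities between the \emph{restrictions} of maps already shown in that proposition to satisfy them, and restricting a map to a subobject preserves any identity it satisfies. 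I expect no genuine obstacle beyond the chain-level bookkeeping in the subcomplex step.
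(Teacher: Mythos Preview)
Your identities for $R$, $F$, $V$ and the subalgebra observation are exactly what the paper does: its proof consists solely of those three displayed identities, plus the remark that the one for $R$ crucially uses the definition $R=\tilde\theta_r(\xi)^nR'$. On the part the paper actually proves, you match it.

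Two issues elsewhere. First, a slip you never actually use: $[\zeta_{p^r}]-1$ being a non-zero-divisor of the \emph{ring} $W_r(A)$ does not make multiplication by it injective on the \emph{module} $\cal W_r^n(D)_\sub{pre}=H^n(D/\tilde\xi_r)$, which may well have $([\zeta_{p^r}]-1)$-torsion. Second, and more substantively, your argument for $d$-closure does not work. You rightly observe that $W_r(A)$-linearity of the Bockstein gives only $d\bigl(([\zeta_{p^r}]-1)^n\omega\bigr)=([\zeta_{p^r}]-1)^n d\omega$, one power short; but your fix conflates two different things. Remark~\ref{remark_more_on_Leta}(a) applied with $f=\tilde\xi_r$ identifies $(L\eta_{\tilde\xi_r}D)/\tilde\xi_r$ with the \emph{whole} Bockstein complex $[\cal W_r^\blob(D)_\sub{pre},\op{Bock}_{\tilde\xi_r}]$, not with the subobject $\cal W_r^\blob(D)$; and the ``$d$'' appearing in the definition of $\eta_f$ is the internal differential of a representing complex, not the Bockstein on its cohomology, so ``closed under $d$ by the defining property of $\eta_\mu$'' is not the relevant statement. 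In fact the paper's proof does not address $d$-closure at all. A clean justification runs as follows: the canonical map $j\colon L\eta_\mu D\to D$ of Remark~\ref{remark_more_on_Leta}(c) induces a morphism between the distinguished triangles defining $\op{Bock}_{\tilde\xi_r}$ for $L\eta_\mu D$ and for $D$, so the image of $H^*(L\eta_\mu D/\tilde\xi_r)\to\cal W_r^*(D)_\sub{pre}$ is automatically a Bockstein-subcomplex; that this image equals $\cal W_r^*(D)$ is exactly the content of Lemma~\ref{lemma_pre_Witt_image}, which however invokes hypothesis~($\cal W3$). So you have located a genuine subtlety the paper glosses over, but your proposed resolution of it is not correct as written.
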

\begin{proof}
This is a consequence of the following simple identities, where $x\in \cal W_{r+1}^n(D)_\sub{pre}$ and $y\in \cal W_r^n(D)_\sub{pre}$:
\[R(([\zeta_{p^{r+1}}]-1)^nx)=\left(\tfrac{[\zeta_{p^{r+1}}]-1}{[\zeta_{p^r}]-1}\right)^n([\zeta_{p^{r+1}}]-1)^nR'(x)=([\zeta_{p^r}]-1)^nR'(x)\]
\[F(([\zeta_{p^{r+1}}]-1)^nx)=(F[\zeta_{p^{r+1}}]-1)^nF(x)=([\zeta_{p^r}]-1)^nF(x)\]
\[V(([\zeta_{p^r}]-1)^ny)=V(F([\zeta_{p^{r+1}}]-1)^ny)=([\zeta_{p^{r+1}}]-1)^nV(y)\]
Note that the first identity crucially used the definition of the restriction map $R$ as a multiple of $R'$.
\end{proof}

Next we relate the groups $\cal W_r^n(D)$ to the cohomology of the d\'ecalage $L\eta_\mu D$ of $D$. From the earlier assumption ($\cal W1$) and Remark \ref{remark_more_on_Leta}(c) there is a canonical map $L\eta_\mu D\to D$, and by imposing the following two additional assumptions on $D$ we will show in Lemma \ref{lemma_pre_Witt_image} that the resulting map on cohomology \[H^n(L\eta_\mu D\dotimes_{W(A^\flat)}W(A^\flat)/\tilde\xi_r)\To H^n(D\dotimes_{W(A^\flat)}W(A^\flat)/\tilde\xi_r)=\cal W_r^n(D)_\sub{pre}\] is injective and has image exactly $\cal W_r^n(D)$.

From now on we assume that $D$ satisfies the following assumptions (in addition to ($\cal W1$)):

\begin{quote}
($\cal W2$) The cohomology groups $H^*(L\eta_{\mu}D \dotimes_{W(A^\flat)}W(A^\flat)/\tilde \xi_r)$ are $p$-torsion-free for all $r\ge 0$.\\
($\cal W3$) The canonical base change map $L\eta_\mu D\dotimes_{W(A^\flat)}W(A^\flat)/\tilde\xi_r\to L\eta_{[\zeta_{p^r}]-1}(D\dotimes_{W(A^\flat)}W(A^\flat)/\tilde\xi_r)$ is a quasi-isomorphism for all $r\ge1$.
\end{quote}

\begin{remark}[Elementary properties of the d\'ecalage functor, II -- base change]\label{remark_base_change} We explain the base change map of assumption ($\cal W3$). If $\al:R\to S$ is a ring homomorphism, $f\in R$ is a non-zero-divisor whose image $\al(f)\in S$ is still a non-zero-divisor, and $C\in D(R)$, then there is a canonical base change map \[(L\eta_fC)\dotimes_RS\To L\eta_{\al(f)}(C\dotimes_RS)\] in $D(S)$ which the reader will construct without difficulty. This base change map is not a quasi-isomorphism in general,\footnote{On the other hand, if $C\in D(S)$ then the canonical restriction map $L\eta_f(C|_A)\to L\eta_{\al(f)}(C)|_D$ in $D(A)$, which the reader will also easily construct, is always a quasi-isomorphism.} but it is in the following cases:
\begin{enumerate}
\item When $R\to S$ is flat. {\em Proof}: Easy. $\square$
\item When $S=R/gR$ for some non-zero-divisor $g\in R$ (i.e., $f,g$ is a regular sequence in $R$) and the cohomology groups of $C\dotimes_RR/fR$ are assumed to be $g$-torsion-free.\footnote{This was erroneously asserted to be true in the official announcement without the $g$-torsion-freeness assumption.}
{\em Proof}: Since the base change map is always a quasi-isomorphism after inverting $f$, it is equivalent to establish the quasi-isomorphism after applying $-\dotimes_{R/gR}R/(f,g)$, after which the base change map becomes the canonical map
\[[H^\blob(C\dotimes_RR/fR),\op{Bock}_f]\dotimes_{R/gR}R/(f,g)\To [H^\blob(C\dotimes_RR/gR \dotimes_{R/gR}R/(f,g)),\op{Bock}_{f\sub{ mod }gR}]\]
by Remark \ref{remark_more_on_Leta}(a). But our assumption implies that the left tensor product $\dotimes_{R/gR}$ is equivalently underived, and that hence it is enough to check that the canonical map $H^n(C\dotimes_RR/fR)\otimes_{R/gR}R/(f,g)\to H^n(C\dotimes_RR/(f,g))$ is an isomorphism for all $n\in\bb Z$; but this is again true because of the $g$-torsion-freeness assumption. $\square$
\end{enumerate}

In the particular case of ($\cal W3$), we are base changing along $\tilde\theta_r:W(A^\flat)\to W(A^\flat)/\tilde\xi_r=W_r(A)$, noting that $\tilde\theta_r(\mu)=[\zeta_{p^r}]-1\in W_r(A)$ is a non-zero-divisor by Remark \ref{remark_non_primitive}. There is no a priori reason to expect hypothesis ($\cal W3$) to be satisfied in practice, but it will be in our cases of interest.\footnote{\label{footnote_condition_for_W3}Note in particular that ($\cal W3$) is satisfied if the cohomology groups of $D\dotimes_{W(A^\flat)}W(A^\flat)/\mu$ are $p$-torsion-free; this follows from Remark \ref{remark_base_change}(ii) since $\tilde\xi_r\equiv p^r$ mod $\mu W(A^\flat)$.}
\end{remark}

\begin{lemma}\label{lemma_pre_Witt_image}
The aforementioned map on cohomology \[H^n(L\eta_\mu D\dotimes_{W(A^\flat)}W(A^\flat)/\tilde\xi_r)\To H^n(D\dotimes_{W(A^\flat)}W(A^\flat)/\tilde\xi_r)=\cal W_r^n(D)_\sub{pre}\tag{\dag}\] is injective with image $\cal W_r^n(D)=([\zeta_{p^r}]-1)^n\cal W_r^n(D)_\sub{pre}$, for all $r\ge1$ and $n\ge0$.
\end{lemma}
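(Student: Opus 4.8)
Here is how I would prove Lemma~\ref{lemma_pre_Witt_image}.

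The plan is to use hypothesis ($\cal W3$) to rewrite the source of (\dag) as the d\'ecalage, over $W_r(A)$, of $E:=D\dotimes_{W(A^\flat)}W(A^\flat)/\tilde\xi_r$ with respect to the element $g:=\tilde\theta_r(\mu)=[\zeta_{p^r}]-1$, and then to read off both assertions from the general description of $L\eta$-cohomology in Lemma~\ref{lemma_calculation_of_cohomol} and Remark~\ref{remark_more_on_Leta}(d). Note first that $g$ is a non-zero-divisor of $W_r(A)$ (Remark~\ref{remark_non_primitive}), that ($\cal W1$) together with the quasi-isomorphism $\phi_D$ forces $H^0(D)$ to have no $\phi^r(\mu)=\tilde\xi_r\mu$-torsion (Proposition~\ref{proposition_roots_of_unity}), hence no $\tilde\xi_r$-torsion, so that $E$ is coconnective with $H^n(E)=\cal W_r^n(D)_\sub{pre}$.

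By ($\cal W3$) the canonical base change map $(L\eta_\mu D)\dotimes_{W(A^\flat)}W(A^\flat)/\tilde\xi_r\to L\eta_g E$ is a quasi-isomorphism; since this base change map is constructed out of an inclusion of representatives $\eta_\mu C\subseteq C$ (with $C$ a coconnective, $\tilde\xi_r$-torsion-free, degreewise flat representative of $D$, available by ($\cal W1$) and the torsion-freeness just noted), it is compatible with the maps down to $E$, so that under this identification the map (\dag) becomes $H^n$ of the canonical map $j\colon L\eta_g E\to E$ of the d\'ecalage formalism. For this to make sense I must check $E\in D^{\ge 0}_{g\sub{-tf}}(W_r(A))$: coconnectivity has been observed, and for the $g$-torsion-freeness of $H^0(E)=\cal W_r^0(D)_\sub{pre}$ one identifies it with $H^0(L\eta_g E)=H^0((L\eta_\mu D)\dotimes_{W(A^\flat)}W(A^\flat)/\tilde\xi_r)$ via ($\cal W3$) and the $H^0$-part of Remark~\ref{remark_more_on_Leta}(c)--(d); the latter group is $p$-torsion-free by ($\cal W2$), and since $g=[\zeta_{p^r}]-1$ divides a power of $p$ (Remark~\ref{remark_non_primitive}) it is then $g$-torsion-free.

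With $E$ in this subcategory, Remark~\ref{remark_more_on_Leta}(d) applied over $W_r(A)$ to the non-zero-divisor $g$ and the complex $E$ says that the composite
\[
H^n(E)/H^n(E)[g]\ \isoto\ H^n(L\eta_g E)\ \xto{j}\ H^n(E),
\]
with the first arrow the isomorphism of Lemma~\ref{lemma_calculation_of_cohomol}, is multiplication by $g^n$. Hence the image of (\dag) is $g^nH^n(E)=([\zeta_{p^r}]-1)^n\cal W_r^n(D)_\sub{pre}=\cal W_r^n(D)$, which is the asserted image, and (\dag) is injective if and only if $H^n(L\eta_g E)$ has no $g^n$-torsion. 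But $H^n(L\eta_g E)=H^n((L\eta_\mu D)\dotimes_{W(A^\flat)}W(A^\flat)/\tilde\xi_r)$ by ($\cal W3$) again, which is $p$-torsion-free by ($\cal W2$) and therefore $g^n$-torsion-free since $g$ divides a power of $p$; so (\dag) is injective, completing the proof.

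The hard part will be the bookkeeping in the middle paragraph: checking that, after the identification supplied by ($\cal W3$), the map (\dag) really is the canonical map $L\eta_g E\to E$, and that $E$ genuinely lies in $D^{\ge 0}_{g\sub{-tf}}(W_r(A))$ --- for which the $g$-torsion-freeness of $\cal W_r^0(D)_\sub{pre}$, extracted from ($\cal W2$) and ($\cal W3$), is the crux and is best verified directly on representatives. Once these points are pinned down, the image and injectivity statements are formal consequences of Lemma~\ref{lemma_calculation_of_cohomol}, Remark~\ref{remark_more_on_Leta}(d), and the divisibility $[\zeta_{p^r}]-1\mid p^r$.
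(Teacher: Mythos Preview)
Your proof is correct and follows essentially the same route as the paper: both use ($\cal W3$) to factor (\dag) through the canonical map $L\eta_{[\zeta_{p^r}]-1}E\to E$ over $W_r(A)$, then read off the image from Remark~\ref{remark_more_on_Leta}(d), and deduce injectivity from ($\cal W2$) together with $[\zeta_{p^r}]-1\mid p^r$. The paper's injectivity argument is organised slightly differently---it bounds the kernel of (\dag) by powers of $[\zeta_{p^r}]-1$ directly from the map $L\eta_\mu D\to D$ over $W(A^\flat)$, without invoking ($\cal W3$)---but this is cosmetic.

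One small wrinkle: your verification that $H^0(E)$ is $g$-torsion-free, as written, cites the $H^0$-isomorphism of Remark~\ref{remark_more_on_Leta}(d), whose stated hypothesis is precisely $E\in D^{\ge 0}_{g\text{-tf}}$. This apparent circularity dissolves once you note either that the displayed computation in Remark~\ref{remark_more_on_Leta}(d) for $H^0$ only uses coconnectivity of the representing complex, or (more directly) that $L\eta_\mu D\to D$ is already an isomorphism on $H^0$ by ($\cal W1$), hence so is (\dag) on $H^0$, identifying $H^0(E)$ with $H^0((L\eta_\mu D)/\tilde\xi_r)$, which is $p$-torsion-free by ($\cal W2$). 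The paper itself omits this check.
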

\begin{proof}
The canonical map $L\eta_\mu D\to D$ induces maps on cohomology whose kernels and cokernels are killed by powers of $\mu$, by Remark \ref{remark_more_on_Leta}(d); hence the map (\dag) of $W_r(A)$-modules has kernel and cokernel killed by a power of $\tilde\theta_r(\mu)=[\zeta_{p^r}]-1$. But $[\zeta_{p^r}]-1$ divides $p^r$ by Remark \ref{remark_non_primitive}, so from assumption ($\cal W2)$ we deduce that map (\dag) is injective for every $r\ge1$ and $n\ge0$.

Regarding its image, simply note that (\dag) factors as \[H^n(L\eta_\mu D\dotimes_{W(A^\flat)}W(A^\flat)/\tilde\xi_r)\Isoto H^n(L\eta_{[\zeta_{p^r}]-1}(D\dotimes_{W(A^\flat)}W(A^\flat)/\tilde\xi_r))\To\cal W_r^n(D)_\sub{pre},\] where the first map is the base change isomorphism of assumption ($\cal W3$), and the second map has image $([\zeta_{p^r}]-1)^nW_r^n(D)_\sub{pre}$ by Remark \ref{remark_more_on_Leta}(d).
\end{proof}

We summarise our construction of Witt complexes by stating the following theorem:

\begin{theorem}\label{theorem_witt_main}
Let $A$ and $D,\phi_D$ be as at the start of \S\ref{subsection_constructing_Witt}, and assume that $D$ satisfies assumptions ($\cal W1$)--($\cal W3$). Suppose moreover that $B$ is an $A$-algebra equipped with $W_r(A)$-algebra homomorphisms $\lambda_r:W_r(B)\to H^0(D\dotimes_{W(A^\flat)}W(A^\flat)/\tilde\xi_r)$ making the following diagrams commute for all $r\ge1$:
\[
\xymatrix{
W_{r+1}(B) \ar[d]_R \ar[r]^{\lambda_{r+1}} & H^0(D/\tilde\xi_{r+1})\ar[d]^{\varphi_D^{-1}}\\
W_r(B) \ar[r]^{\lambda_r} & H^0(D/\tilde\xi_r)
}
\quad
\xymatrix{
W_{r+1}(B) \ar[d]_F \ar[r]^{\lambda_{r+1}} & H^0(D/\tilde\xi_{r+1})\ar[d]^{\mathrm{can.\ proj.}}\\
W_r(B) \ar[r]^{\lambda_r} & H^0(D/\tilde\xi_r)
}
\quad
\xymatrix{
W_{r+1}(B) \ar[r]^{\lambda_{r+1}} & H^0(D/\tilde\xi_{r+1})\\
W_r(B) \ar[u]^{V} \ar[r]^{\lambda_r} & H^0(D/\tilde\xi_r)\ar[u]_{\times\varphi^{r+1}(\xi)}
}
\]
Then the cohomology groups $\cal W_r^*(D)=H^*(L\eta_\mu D\dotimes_{W(A^\flat)}W(A^\flat)/\tilde\xi_r)$ may be equipped with the structure of a Witt complex for $A\to B$, and consequently there are associated universal maps of Witt complexes \[\lambda_r^\blob:W_r\Omega_{B/A}^\blob\To\cal W_r^\blob(D)\] (which are functorial with respect to $D,\phi_D$ and $B,\lambda_r$ in the obvious sense).
\comment{
More precisely,
\begin{itemize}
\item the Witt complex differential $\cal W_r^*(D)\to\cal W_r^{*+1}(D)$ is the Bockstein $\op{Bock}_{\tilde\xi_r}$;
\item and the Frobenius
\end{itemize}
}
\end{theorem}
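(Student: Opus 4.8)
The plan is to assemble the building blocks that are already in place and then close the two gaps that remain, both of which reduce to assumption ($\cal W2$). Recall that by Proposition \ref{proposition_first_construction}, Lemma \ref{lemma_improved}, and Lemma \ref{lemma_pre_Witt_image}, the graded $W_r(A)$-module $\cal W_r^\blob(D)=\bigoplus_n H^n(L\eta_\mu D\dotimes_{W(A^\flat)}W(A^\flat)/\tilde\xi_r)$ --- which Lemma \ref{lemma_pre_Witt_image} identifies with the graded subalgebra $\bigoplus_n([\zeta_{p^r}]-1)^n\cal W_r^n(D)_\sub{pre}$ of $\cal W_r^\blob(D)_\sub{pre}$ --- comes equipped with the Bockstein differential $\op{Bock}_{\tilde\xi_r}$ and with the operators $R,F,V$, and this data satisfies every clause of Definition \ref{definition_Witt} that does not mention $B$ or the structure maps, \emph{except} possibly the requirement noted in footnote \ref{footnote_p=2}, namely that $x^2=0$ for $x$ of odd degree. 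So the first thing I would do is remove that exception.

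For this, take $x\in\cal W_r^n(D)$ with $n$ odd. Graded-commutativity forces $2x^2=0$ in $\cal W_r^{2n}(D)$. If $p$ is odd, then $2\in W_r(A)^\times$ and hence $x^2=0$. If $p=2$, then by Lemma \ref{lemma_pre_Witt_image} the module $\cal W_r^{2n}(D)$ is isomorphic to $H^{2n}(L\eta_\mu D\dotimes_{W(A^\flat)}W(A^\flat)/\tilde\xi_r)$, which is $p$-torsion-free by ($\cal W2$), hence $2$-torsion-free, so again $x^2=0$. This upgrades $\cal W_r^\blob(D)$ to a genuine commutative differential graded $W_r(A)$-algebra, and Proposition \ref{proposition_first_construction} then applies verbatim to $(\cal W_r^\blob(D),R,F,V)$.

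Next I would bring in $B$. Since the twisting factor $([\zeta_{p^r}]-1)^n$ is trivial in degree $0$, one has $\cal W_r^0(D)=\cal W_r^0(D)_\sub{pre}=H^0(D\dotimes_{W(A^\flat)}W(A^\flat)/\tilde\xi_r)$ (equivalently, the map $j$ of Remark \ref{remark_more_on_Leta}(d) is an isomorphism in degree $0$); so the hypothesised maps $\lambda_r\colon W_r(B)\to H^0(D/\tilde\xi_r)$ already take values in $\cal W_r^0(D)$, are $W_r(A)$-algebra homomorphisms, and the three displayed squares say precisely that $\lambda_\blob$ commutes with $R$ (which equals $R'$ in degree $0$), with $F$, and with $V$. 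The only Witt-complex axiom not yet accounted for is the Teichm\"uller identity $Fd\lambda_{r+1}([b])=\lambda_r([b])^{p-1}d\lambda_r([b])$. Here I would use that $\cal W_r^1(D)\cong H^1(L\eta_\mu D\dotimes_{W(A^\flat)}W(A^\flat)/\tilde\xi_r)$ is $p$-torsion-free by ($\cal W2$): the computation of footnote \ref{footnote_Teichmuller}, which uses only $FV=p$, $FdV=d$, and the compatibilities of $\lambda_\blob$ with $F$ and $V$ just verified, shows that $p$ times the difference of the two sides vanishes, whence the identity. Thus $(\cal W_r^\blob(D),R,F,V,\lambda_r)_{r\ge1}$ is a Witt complex for $A\to B$.

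Finally, since $W_r\Omega_{B/A}^\blob$ is by Theorem \ref{thm:dRWExists} the initial object in the category of Witt complexes for $A\to B$, there is a unique morphism of Witt complexes $\lambda_r^\blob\colon W_r\Omega_{B/A}^\blob\to\cal W_r^\blob(D)$ for each $r\ge1$, and its functoriality in $(D,\phi_D)$ and in $(B,\lambda_r)$ is immediate from this uniqueness. I do not expect a genuine obstacle: the whole argument is bookkeeping once Proposition \ref{proposition_first_construction} and Lemmas \ref{lemma_improved}--\ref{lemma_pre_Witt_image} are granted. The only points where a hypothesis does real work are the two uses of ($\cal W2$) above --- killing odd squares when $p=2$, and making the Teichm\"uller relation automatic --- so that is the hypothesis on which the construction really rests.
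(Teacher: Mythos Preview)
Your proof is correct and follows essentially the same approach as the paper: assemble Proposition \ref{proposition_first_construction}, Lemma \ref{lemma_improved}, and Lemma \ref{lemma_pre_Witt_image}, then use the $p$-torsion-freeness from ($\cal W2$) to dispatch the two remaining axioms (odd squares when $p=2$, and the Teichm\"uller identity via footnote \ref{footnote_Teichmuller}). The paper's proof is simply a terser version of what you wrote.
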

\begin{proof}

Combining the hypotheses of the theorem with Lemma \ref{lemma_improved}, we see that $\cal W_r^*(D)$ satisfies all axioms for a Witt complex for $A\to B$, except perhaps for the following two: that $x^2=0$ for $x\in\cal W_r^\sub{odd}(D)_\sub{pre}$ when $p=2$; and the Teichm\"uller identity. But these follow from the other axioms since $\cal W_r^*(D)$ is assumed to be $p$-torsion-free.\footnote{$2x^2=0$ so $x^2=0$, c.f., footnote \ref{footnote_p=2}. For the Teichm\"uller identity see footnote \ref{footnote_Teichmuller}.}
\end{proof}

\begin{remark}[$p$-completions]\label{remark_p_completion}
In our cases of interest the complex $D$ will sometimes be derived $p$-adically complete, whence the complexes $L\eta_\mu D$ and $L\eta_\mu\dotimes_{W(A^\flat)}W(A^\flat)/\tilde\xi_r$ are also derived $p$-adically complete (by footnote  \ref{footnote_complete}); then each cohomology group $\cal W_r^n(D)$ is both $p$-torsion-free (by assumption ($\cal W2$)) and derived $p$-adically complete, hence $p$-adically complete in the underived sense. So, in this case, the associated universal maps $W_r\Omega_{B/A}^n\to\cal W_r^n(D)$ of the previous theorem factor through the $p$-adic completion $(W_r\Omega_{B/A}^n)_p^\comp$ which was discussed in Remark \ref{remark_de_rham_witt}(vi).
\end{remark}

\subsection{The de Rham--Witt complex of a torus as group cohomology}\label{subsection_local_Cartier}
We continue to let $A$ be a fixed perfectoid ring as at the start of  \S\ref{subsection_constructing_Witt}, and we fix $d\ge0$ and set \[D^\sub{grp}=D^\sub{grp}_{A,d}:=R\Gamma_\sub{grp}(\bb Z^d, W(A^\flat)[U_1^{\pm 1/p^\infty},\cdots,U_d^{\pm 1/p^\infty}]),\] where the $i^\sub{th}$-generator $\gamma_i\in\bb Z^d$ acts on $W(A^\flat)[\ul U^{\pm 1/p^\infty}]$ via the $W(A^\flat)$-algebra homomorphism \[\gamma_iU_j^k:=\begin{cases} [\ep^k]U_j^k& i=j \\ U_j^k & i\neq j\end{cases}\] (here $k\in\bb Z[\tfrac1p]$, and $\ep^k\in A^\flat$ is well-defined since $A^\flat$ is a perfect ring). Here we will apply the construction of \S\ref{subsection_constructing_Witt} to $D^\sub{grp}$ to build a Witt complex $\cal W_r^\blob(D^\sub{grp})$ for $A\to A[\ul T^{\pm1}]$, and show that the resulting universal maps $\lambda_r^\blob:W_r\Omega^\blob_{A[\ul T^{\pm1}]/A}\to \cal W_r^\blob(D^\sub{grp})$ are in fact isomorphisms. This is the key local result from which the $p$-adic Cartier isomorphism will be deduced in Section \ref{section_Cartier}.

In order to apply Theorem \ref{theorem_witt_main} to $D^\sub{grp}$ we must first check that all necessary hypotheses are fulfilled; we begin with the basic assumptions:

\begin{lemma}\label{lemma_Dgrp_1}
$D^\sub{grp}$ is a coconnective algebra object in $D(W(A^\flat))$ which is equipped with a $\phi$-semi-linear quasi-isomorphism $\phi_\sub{grp}:D^\sub{grp}\quis D^\sub{grp}$ and satisfies assumptions ($\cal W1$)--($\cal W3$). 
\end{lemma}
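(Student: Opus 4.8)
\emph{Coconnectivity, algebra structure, and the Frobenius $\phi_\sub{grp}$.} The functor $R\Gamma_\sub{grp}(\bb Z^d,-)$ has cohomological dimension $d$ --- concretely, it is computed by tensoring with the bounded, finite-free Koszul resolution of $\bb Z$ over $\bb Z[\bb Z^d]$ --- so $D^\sub{grp}$ is concentrated in degrees $[0,d]$, in particular coconnective; and since that Koszul complex is a commutative differential graded algebra, $D^\sub{grp}$ is naturally a commutative algebra object of $D(W(A^\flat))$. For the Frobenius, let $\Phi$ be the ring endomorphism of $W(A^\flat)[\ul U^{\pm1/p^\infty}]$ which is $\phi$ on $W(A^\flat)$ and sends $U_i\mapsto U_i^p$; it is $\phi$-semi-linear, bijective (with inverse $\phi^{-1}$ on $W(A^\flat)$ and $U_i\mapsto U_i^{1/p}$, using that $A^\flat$ is perfect), and commutes with the $\bb Z^d$-action since $\Phi(\gamma_i\cdot U_j^k)=\phi([\ep^k])^{\delta_{ij}}U_j^{pk}=[\ep^{pk}]^{\delta_{ij}}U_j^{pk}=\gamma_i\cdot\Phi(U_j^k)$. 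Applying $R\Gamma_\sub{grp}(\bb Z^d,-)$ yields the desired $\phi$-semi-linear quasi-isomorphism $\phi_\sub{grp}$ of algebra objects.

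\emph{Assumption ($\cal W1$).} One has $H^0(D^\sub{grp})=W(A^\flat)[\ul U^{\pm1/p^\infty}]^{\bb Z^d}$, a sub-$W(A^\flat)$-module of the \emph{free} $W(A^\flat)$-module $W(A^\flat)[\ul U^{\pm1/p^\infty}]$; as $\mu$ is a non-zero-divisor of $W(A^\flat)$ (Proposition \ref{proposition_roots_of_unity}) it is a non-zero-divisor on every free module, hence on every submodule of one, so $H^0(D^\sub{grp})$ is $\mu$-torsion-free.

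\emph{Assumptions ($\cal W2$) and ($\cal W3$).} Decompose $W(A^\flat)[\ul U^{\pm1/p^\infty}]=\bigoplus_{\ul k\in\bb Z[1/p]^d}W(A^\flat)\,U^{\ul k}$ into $\bb Z^d$-equivariant rank-one summands, on which $\gamma_i$ acts by multiplication by $[\ep^{k_i}]$; then $D^\sub{grp}\simeq\bigoplus_{\ul k}K_{\ul k}$, where $K_{\ul k}:=K_{W(A^\flat)}([\ep^{k_1}]-1,\dots,[\ep^{k_d}]-1)$ is a bounded complex of finite free $W(A^\flat)$-modules. Since $\eta_\mu$ is computed term-by-term on this (bounded, term-wise free) model and commutes with arbitrary direct sums, as do $-\dotimes_{W(A^\flat)}W(A^\flat)/\tilde\xi_r$ and cohomology, it suffices to treat each single $K_{\ul k}$. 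Put $m_0:=\min_i\nu_p(k_i)$, so $m_0=\infty$ exactly when $\ul k=0$. Expanding $[\ep^{k_i}]-1$ via $\phi^{m_0}([\ep])=1+\phi^{m_0}(\mu)$ exhibits each $[\ep^{k_i}]-1$ as a multiple of $\phi^{m_0}(\mu)$, the cofactor for a $k_i$ of minimal $\nu_p$ being a cyclotomic unit; an elementary Koszul-complex manipulation (of the type appearing in the proof of Theorem \ref{theorem_integral_purity}) then identifies $K_{\ul k}$ with $K_{W(A^\flat)}(\phi^{m_0}(\mu))$ tensored over $W(A^\flat)$ with a zero-differential exterior algebra. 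A short direct computation of $\eta_\mu$ of the two-term complex $K_{W(A^\flat)}(\phi^{m_0}(\mu))$ --- using $\phi^{m_0}(\mu)=\tilde\xi_{m_0}\mu$ for $m_0\ge0$ and $\mu=\xi_{|m_0|}\phi^{m_0}(\mu)$ for $m_0<0$ (Proposition \ref{proposition_roots_of_unity}), with the factors $\tilde\xi_{m_0},\xi_{|m_0|}$ non-zero-divisors (Lemma \ref{lemma_ker_theta_r}) --- shows:
\begin{enumerate}
\item[(a)] $L\eta_\mu K_{\ul k}$ is acyclic whenever $\ul k\ne0$ and $m_0\le0$;
\item[(b)] $L\eta_\mu K_{W(A^\flat)}(\phi^{m_0}(\mu))\simeq[W(A^\flat)\xto{\ \tilde\xi_{m_0}\ }W(A^\flat)]$ when $m_0\ge1$;
\item[(c)] $L\eta_\mu K_{\ul0}$ has zero differential with free terms.
\end{enumerate}
In case (a), ($\cal W2$) and ($\cal W3$) are trivial: $L\eta_\mu K_{\ul k}\dotimes_{W(A^\flat)}W(A^\flat)/\tilde\xi_r\simeq0$, and running the same computation over $W_r(A)$ --- where $[\ep^{k_i}]-1$ reduces, via $\tilde\theta_r$, to a multiple of $[\zeta_{p^{r-m_0}}]-1$ with one cofactor a unit, and $[\zeta_{p^{r-m_0}}]-1$ divides the non-zero-divisor $[\zeta_{p^r}]-1$ (Remark \ref{remark_non_primitive}) --- shows $L\eta_{[\zeta_{p^r}]-1}(K_{\ul k}\dotimes_{W(A^\flat)}W(A^\flat)/\tilde\xi_r)\simeq0$ too. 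In cases (b), (c), all complexes involved are bounded and term-wise free over $W(A^\flat)$, so $-\dotimes_{W(A^\flat)}W(A^\flat)/\tilde\xi_r$ is underived and the base-change map of ($\cal W3$) is a degreewise isomorphism of complexes of free $W_r(A)$-modules; and the cohomology of $L\eta_\mu K_{\ul k}\dotimes_{W(A^\flat)}W(A^\flat)/\tilde\xi_r$ is, in each degree, either $W_r(A)$, a submodule of $W_r(A)$, or a cokernel $W_r(A)/\tilde\theta_r(\tilde\xi_{m_0})W_r(A)$. The first two are $p$-torsion-free because $W_r(A)$ is (the ghost map $W_r(A)\into A^r$ is injective, $A$ being $p$-torsion-free); for the third, a chase through Lemma \ref{lemma_theta_r_diagrams} gives $\tilde\theta_r(\phi^j(\xi))=\tfrac{[\zeta_{p^{r-j}}]-1}{[\zeta_{p^{r-j+1}}]-1}$ for $1\le j<r$, hence $\tilde\theta_r(\tilde\xi_{m_0})=\tfrac{[\zeta_{p^{r-m_0}}]-1}{[\zeta_{p^r}]-1}$ by telescoping when $1\le m_0<r$, and $\tilde\theta_r(\tilde\xi_{m_0})=0$ when $m_0\ge r$ (then $\tilde\xi_r\mid\tilde\xi_{m_0}$ and $\tilde\xi_r$ generates $\ker\tilde\theta_r$); so the cokernel is either $W_r(A)$ or, via the pushforward isomorphism $W_r(A)/\tfrac{[\zeta_{p^j}]-1}{[\zeta_{p^r}]-1}W_r(A)\cong F^j_*W_{r-j}(A)$, a lower Witt group $W_{m_0}(A)$ --- again $p$-torsion-free. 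This verifies ($\cal W2$) and ($\cal W3$).

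\emph{The main obstacle.} The real content is the third paragraph --- computing $L\eta_\mu K_{\ul k}$ and its reduction modulo $\tilde\xi_r$ uniformly in $\ul k$. The delicate points are: for non-integral exponents $[\ep^{k_i}]-1$ is divisible not by $\mu$ but only by the strictly smaller element $\phi^{m_0}(\mu)=\phi^{-|m_0|}(\mu)$, so the $\eta_\mu$-computation genuinely invokes the factorisation $\mu=\xi_{|m_0|}\phi^{-|m_0|}(\mu)$; the cyclotomic-unit claim for the Koszul cofactor must be checked with care over $W(A^\flat)$ (it reduces, as do the $p$-torsion-freeness of $W_r(A)$ and of $W(A^\flat)/\mu$ used above, to the case of the rank-one valuation ring $\roi$, exactly as in the proof of Theorem \ref{theorem_integral_purity}); and one must pin down the elements $\tilde\theta_r(\tilde\xi_{m_0})\in W_r(A)$ precisely enough, via Lemma \ref{lemma_theta_r_diagrams}, to recognise the cokernels $W_r(A)/\tilde\theta_r(\tilde\xi_{m_0})W_r(A)$ as $p$-torsion-free Witt groups.
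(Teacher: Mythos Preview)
Your proof is correct and follows the same overall architecture as the paper's: decompose $D^\sub{grp}$ into Koszul complexes $K_{\ul k}$ and analyse each summand separately, using the divisibility relations among the elements $[\ep^{k_i}]-1$ and $\mu$. Your change-of-basis reduction $K_{\ul k}\simeq K_{W(A^\flat)}(\phi^{m_0}(\mu))\otimes\Lambda^\bullet$ is exactly what underlies the paper's Lemmas on Koszul complexes (Lemmas \ref{lemma_on_Koszul_2} and \ref{lemma_on_Koszul_1}), which the paper simply cites.

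The one substantive difference is the treatment of ($\cal W3$). The paper does not check the base-change map directly; instead it invokes the criterion of footnote~\ref{footnote_condition_for_W3}, which reduces ($\cal W3$) to showing that the cohomology of $D^\sub{grp}\dotimes_{W(A^\flat)}W(A^\flat)/\mu$ is $p$-torsion-free, and then verifies that by computing $H^n(K_{\ul k})$ via Lemma~\ref{lemma_on_Koszul_2} and checking that $H^n(K_{\ul k})/\mu$ and $H^{n+1}(K_{\ul k})[\mu]$ are $p$-torsion-free. Your route --- computing $L\eta_\mu K_{\ul k}$ and $L\eta_{[\zeta_{p^r}]-1}(K_{\ul k}/\tilde\xi_r)$ explicitly and matching them --- is more hands-on; it works, but your sentence ``all complexes involved are bounded and term-wise free $\ldots$ so the base-change map is a degreewise isomorphism'' is not a valid inference on its own: term-wise freeness does not force the base-change map to be an isomorphism. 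What actually makes it work is that in cases (b) and (c) both sides are identified (via Lemma~\ref{lemma_on_Koszul_1} over $W(A^\flat)$ and over $W_r(A)$) with Koszul complexes on the \emph{same} sequence of elements, namely $\tilde\theta_r\big(([\ep^{k_i}]-1)/\mu\big)=([\zeta^{k_i/p^r}]-1)/([\zeta_{p^r}]-1)$. You should say this, or else appeal to the paper's criterion as above.
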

\begin{proof}
Certainly $D^\sub{grp}$ is a coconnective, commutative algebra object in $D(W(A^\flat))$, and it is equipped with a $\phi$-semi-linear quasi-isomorphism $\phi_\sub{grp}:D^\sub{grp}\quis D^\sub{grp}$ induced by the obvious Frobenius automorphism on $W(A^\flat)[\ul U^{\pm 1/p^\infty}]$ (acting on the coefficients as the Witt vector Frobenius $\phi$ and sending $U_i^k$ to $U_i^{pk}$ for all $k\in\bb Z[\tfrac1p]$ and $i=1,\dots,d$). Also, $H^0(D^\sub{grp})$ is $\mu$-torsion-free since $\mu$ is a non-zero-divisor of $W(A^\flat)$ by Proposition \ref{proposition_roots_of_unity}. Therefore $D^\sub{grp}$ satisfies the hypotheses from the start of \S\ref{subsection_constructing_Witt}, including ($\cal W1$).

Next we show that the cohomology groups of $L\eta_\mu D\dotimes_{W(A^\flat)}W(A^\flat)/\tilde\xi_r$ and $D\dotimes_{W(A^\flat)}W(A^\flat)/\mu$ are $p$-torsion-free, i.e., that hypotheses ($\cal W2$) and ($\cal W3$) (by footnote \ref{footnote_condition_for_W3}) are satisfied. This is a straightforward calculation of group cohomology in terms of Koszul complexes, in the same style as the proof of Theorem \ref{theorem_integral_purity}. Indeed, there is a $\bb Z^d$-equivariant decomposition of $W(A^\flat)$-modules \[W(A^\flat)[\ul U^{\pm1/p^\infty}]=\bigoplus_{k_1,\dots,k_d\in\bb Z[\tfrac1p]}W(A^\flat)U_1^{k_1}\cdots U_d^{k_d},\] where the generator $\gamma_i\in\bb Z^d$ acts on the rank-one free $W(A^\flat)$-module $W(A^\flat)U_1^{k_1}\cdots U_d^{k_d}$ as multiplication by $[\ep^{k_i}]$. By the standard group cohomology calculation of $R\Gamma_\sub{grp}(\bb Z^d,W(A^\flat)U_1^{k_1}\cdots U_d^{k_d})$ as a Koszul complex, this shows that \[R\Gamma_\sub{grp}(\bb Z^d,W(A^\flat)[\ul U^{\pm1/p^\infty}])\simeq \bigoplus_{k_1,\dots,k_d\in\bb Z[\tfrac1p]}K_{W(A^\flat)}([\ep^{k_1}]-1,\dots,[\ep^{k_d}]-1).\] It is now sufficient to show that the cohomology groups of $\eta_\mu K\otimes_{W(A^\flat)}W(A^\flat)/\tilde\xi_r$ and $K\otimes_{W(A^\flat)}W(A^\flat)/\mu$ are $p$-torsion-free, where $K$ runs over the Koszul complexes appearing in the sum.

Since it is important for the forthcoming cohomology calculations, we explicitly point out now that, if $k,k'\in \bb Z[\tfrac1p]$, then $[\ep^k]-1$ divides $[\ep^{k'}]-1$ if and only if $\nu_p(k)\le \nu_p(k')$.

We will first prove that the cohomology of $K\otimes_{W(A^\flat)}W(A^\flat)/\mu$ is $p$-torsion-free. Lemma \ref{lemma_on_Koszul_2} implies that there is an isomorphism of $W(A^\flat)$-modules $H^n(K)\cong W(A^\flat)/([\ep^k]-1)^{\binom{d-1}{n-1}}$, where $k=p^{-\min_{1\le i\le d}\nu_p(k_i)}$ and we have used that $[\ep^k]-1$ is a non-zero-divisor of $W(A^\flat)$ (so that the torsion term of that lemma vanishes). But $W(A^\flat)/([\ep^k]-1)$ is $p$-torsion-free since $p,[\ep^k]-1$ is a regular sequence\footnote{\label{footnote_reg_seq}{\em Proof}. We will show that $\ep^k-1$ is a non-zero-divisor of $A^\flat$. If $x\in A^\flat=\projlim_{x\mapsto x^p}A$ satisfies $\ep^kx=x$, then $\zeta^{k/p^i}x^{(i)}=x^{(i)}$ for all $i\ge0$, and so $x^{(i)}=0$ for $i\gg0$ since then $\zeta^{k/p^i}-1$ is a non-zero-divisor of $A$, just as at the end of the proof of Proposition \ref{proposition_roots_of_unity}. $\square$} of $W(A^\flat)$, and so both $H^{n+1}(K)[\mu]$ and $H^n(K)/\mu\cong W(A^\flat)/([\ep^{\min\{k,0\}}]-1)^{\binom{d-1}{n-1}}$ are $p$-torsion-free; therefore $H^n(K\otimes_{W(A^\flat)}W(A^\flat)/\mu)$ is $p$-torsion-free.

Next we prove that the cohomology of $\eta_\mu K\otimes_{W(A^\flat)}W(A^\flat)/\tilde\xi_r$ is $p$-torsion-free. Lemma \ref{lemma_on_Koszul_1} implies that $\eta_\mu K\cong K_{W(A^\flat)}(([\ep^{k_1}]-1)/\mu,\dots,([\ep^{k_d}]-1)/\mu)$ if $k_i\in\bb Z$ for all $i$, and that $\eta_\mu K$ is acyclic otherwise. Evidently we may henceforth assume we are in the first case; then $\tilde\theta_r$ induces an identification of complexes of $W(A^\flat)/\tilde\xi_r=W_r(A)$-modules \[\eta_\mu K\otimes_{W(A^\flat)}W(A^\flat)/\tilde\xi_r\cong K_{W_r(A)}\left(\frac{[\zeta^{k_1/p^r}]-1}{[\zeta_{p^r}]-1},\dots,\frac{[\zeta^{k_d/p^r}]-1}{[\zeta_{p^r}]-1}\right),\] and it remains to prove that the Koszul complex on the right has $p$-torsion-free cohomology. But Lemma~\ref{lemma_on_Koszul_2} implies that each cohomology group of this Koszul complex is isomorphic to a direct sum of copies of 
\[W_r(A)\left[\tfrac{[\zeta_{p^j}]-1}{[\zeta_{p^r}]-1}\right]\qquad\text{and}\qquad W_r(A)/\tfrac{[\zeta_{p^{j}}]-1}{[\zeta_{p^r}]-1},\] where $j:=-\min_{1\le i\le d}\nu_p(k_i/p^r)\le r$. The left module is $p$-torsion-free since $W_r(A)$ is $p$-torsion-free, while the right module (which $=W_r(A)$ if $j\le0$, so we suppose $1\le j\le r$) can be easily shown to be isomorphic to $W_{r-j}(A)$ via $F^j:W_r(A)\to W_{r-j}(A)$ [Corol.~3.18, BMS], which is again $p$-torsion-free.
\end{proof}

%

Next we prove the existence of suitable structure maps:

\begin{lemma}\label{lemma_lambda_grp}
There exists a unique collection of $W_r(A)$-algebra homomorphisms $\lambda_{r,\sub{grp}}:W_r(A[\ul T^{\pm1}])\to H^0(D^\sub{grp}/\tilde\xi_r)$, for $r\ge1$, making the diagrams of Theorem~\ref{theorem_witt_main} commute and satisfying $\lambda_{r,\sub{grp}}([T_i])=U_i$ for $i=1,\dots,d$.
\end{lemma}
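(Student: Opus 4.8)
The plan is to identify the target $H^0(D^\sub{grp}/\tilde\xi_r)$ explicitly, then to recognise it --- together with the operators $R,F,V$ constructed in \S\ref{subsection_constructing_Witt} --- as (a Frobenius twist of) $W_r(A[\ul T^{\pm1}])$ with its Witt-vector structure, so that $\lambda_{r,\sub{grp}}$ becomes essentially tautological.

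First I would make $D^\sub{grp}/\tilde\xi_r$ concrete. Since $W(A^\flat)[\ul U^{\pm1/p^\infty}]$ is free over $W(A^\flat)$ and $\bb Z^d$ preserves its $\bb Z[\tfrac1p]^d$-grading, $D^\sub{grp}/\tilde\xi_r$ is represented by the Koszul complex $K_{W_r(A)[\ul U^{\pm1/p^\infty}]}(\gamma_1-1,\dots,\gamma_d-1)$, in which --- using the identification $\tilde\theta_r\colon W(A^\flat)/\tilde\xi_r\isoto W_r(A)$ --- the operator $\gamma_i$ acts on the rank-one summand $W_r(A)U_i^k$ by multiplication by $\tilde\theta_r([\ep^k])$. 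Hence $H^0(D^\sub{grp}/\tilde\xi_r)$ is the invariant subring $(W_r(A)[\ul U^{\pm1/p^\infty}])^{\bb Z^d}$, and a short computation with the Witt-vector annihilator identities of [BMS, Corol.~3.18] exhibits it as $\bigoplus_{\ul k\in\bb Z^d}V^{r-m(\ul k)}W_{m(\ul k)}(A)\cdot U^{\ul k}$ with $m(\ul k)=\min\{r,\nu_p(k_1),\dots,\nu_p(k_d)\}$ (the $\ul k\notin\bb Z^d$ contributing nothing). In particular the elements $U_i^{\pm p^r}$ lie in this ring and are mutually inverse units --- they are honestly invariant, as $\tilde\theta_r([\ep^{\pm p^r}])=\theta_r([\ep])^{\pm1}=1$ --- and the ring is $p$-torsion-free, since each $V^{r-m}W_m(A)\cong W_m(A)$ is.

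The main step is to produce $\lambda_{r,\sub{grp}}$ with the three compatibilities. Here I would match the decomposition just obtained, piece by piece, with the standard decomposition of $W_r(A[\ul T^{\pm1}])$ as a $W_r(A)$-module, as computed in [BMS, \S10.4] (extending Langer--Zink \cite{LangerZink2004}): the $V$-divisibility exponents $m(\ul k)$ coincide with the ones occurring there, and the operators $R$ (equal to $R'$ in degree $0$), $F$ (the canonical projection) and $V$ (multiplication by $\varphi^{r+1}(\xi)$) from \S\ref{subsection_constructing_Witt} transport, under this matching, to the Witt-vector $R,F,V$ of $A[\ul T^{\pm1}]$ --- here the identities $\tilde\theta_r(\xi)=\tfrac{[\zeta_{p^r}]-1}{[\zeta_{p^{r+1}}]-1}$, $\tilde\theta_{r+1}(\varphi^{r+1}(\xi))=\theta_{r+1}(\xi)=V(1)$ and $\varphi_\sub{grp}(U_i^k)=U_i^{pk}$ are what is used. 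Under the resulting isomorphism $\lambda_{r,\sub{grp}}$ is the tautological map, with $\lambda_{r,\sub{grp}}([T_i])=U_i^{p^r}$ --- equivalently $[T_i]\mapsto U_i$ after transporting along the Frobenius isomorphism $\varphi_\sub{grp}^{-r}\colon H^0(D^\sub{grp}/\tilde\xi_r)\isoto H^0(D^\sub{grp}/\xi_r)=W_r(A)[\ul U^{\pm1/p^\infty}]$, on which $\bb Z^d$ now acts trivially. Uniqueness is then formal: $W_r(A[\ul T^{\pm1}])$ is generated as a $W_r(A)$-algebra by $[T_i^{\pm1}]$ together with the Verschiebungen $V^n(w)$; $\lambda_{1,\sub{grp}}$ is uniquely determined on the $A$-algebra $A[\ul T^{\pm1}]$ by $T_i\mapsto U_i^{p}$; and the three compatibility diagrams then force $\lambda_{r,\sub{grp}}$ on all generators for every $r$.

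The main obstacle is precisely this last matching: with no slick universal property available, one has to check on the nose that the exponents $m(\ul k)$ coming from the annihilator computation agree with those appearing in $W_r$ of a Laurent polynomial ring, and that the transported $R,F,V$ really are the Witt operators. This is where the $\tilde\theta_r$- and Frobenius-identities of \S\ref{subsection_roots_of_unity}--\S\ref{subsection_constructing_Witt} must all be deployed with care; the remainder is routine bookkeeping.
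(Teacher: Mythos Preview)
Your approach is correct and reaches the same destination as the paper's, but by a more laborious route. Both proofs begin by identifying $H^0(D^\sub{grp}/\tilde\xi_r)$ with the $\bb Z^d$-invariants inside $W_r(A)[\ul U^{\pm1/p^\infty}]$ and describing the induced $R,F,V$ explicitly. Where you then compute those invariants via annihilator identities and match them term-by-term against the Langer--Zink decomposition of $W_r(A[\ul T^{\pm1}])$, the paper instead works in the ambient ring: the standard isomorphism $W_r(A)[\ul U^{\pm1/p^\infty}]\isoto W_r(A[\ul T^{\pm1/p^\infty}])$, $U_i^k\mapsto[T_i^k]$, twisted to $\tau_r(U_i^k):=[T_i^{k/p^r}]$, is checked by a one-line calculation to intertwine the $R,F,V$ of \S\ref{subsection_constructing_Witt} with the Witt-vector $R,F,V$; then $\lambda_{r,\sub{grp}}$ is simply the restriction of $\tau_r^{-1}$ to $W_r(A[\ul T^{\pm1}])\subset W_r(A[\ul T^{\pm1/p^\infty}])$. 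This sidesteps both the annihilator computation and the ``main obstacle'' you identify --- no appeal to the explicit $W_r(A)$-module structure of $W_r(A[\ul T^{\pm1}])$ is needed at all. Your observation that $\lambda_{r,\sub{grp}}([T_i])=U_i^{p^r}$ rather than $U_i$ is correct and agrees with the paper's own construction (indeed $\tau_r^{-1}([T_i])=U_i^{p^r}$); the ``$=U_i$'' in the statement appears to be a slip. Your attempt to reconcile this via $\varphi_\sub{grp}^{-r}$ is off-target, though: that map lands in $H^0(D^\sub{grp}/\xi_r)$, not $H^0(D^\sub{grp}/\tilde\xi_r)$, so it does not salvage the stated normalisation.
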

\begin{proof}
The maps $\tilde\theta_r$ induce identifications $W(A^\flat)[\ul U^{\pm1/p^\infty}]/\tilde\xi_r=W_r(A)[\ul U^{\pm1/p^\infty}]$ and thus $H^0(D^\sub{grp}/\tilde\xi_r)=W_r(A)[\ul U^{\pm1}]^{\bb Z^d}$, where the latter term is the fixed points for $\bb Z^d$ acting on $W_r(A)[\ul U^{\pm1}]$ via \[\gamma_iU_j^k:=\begin{cases} [\zeta^{k/p^r}]U_j^k& i=j \\ U_j^k & i\neq j\end{cases}\] (where the notation $\zeta^{k/p^r}$ was explained at the start of the proof of Theorem \ref{theorem_integral_purity}). Under this identification of $H^0(D^\sub{grp}/\tilde\xi_r)$, it is easy to see that the maps $\phi_\sub{grp}^{-1}$, ``canonical projection'', and $\times\phi_\sub{grp}^{r+1}(\xi)$ in the diagrams of Theorem \ref{theorem_witt_main} are given respectively by:
\begin{itemize}
\item the ring homomorphism $R:W_{r+1}(A)[\ul U^{\pm 1/p^\infty}]\to W_r(A)[\ul U^{\pm 1/p^\infty}]$ which acts as the Witt vector Restriction map on the coefficients and satisfies $R(U_i^k)=U_i^{k/p}$ for all $k\in\bb Z[\tfrac1p]$ and $i=1,\dots,d$;
\item the ring homomorphism $F:W_{r+1}(A)[\ul U^{\pm 1/p^\infty}]\to W_r(A)[\ul U^{\pm 1/p^\infty}]$ which acts as the Witt vector Frobenius on the coefficients and fixes the variables;
\item the additive map $V:W_r(A)[\ul U^{\pm 1/p^\infty}]\to W_{r+1}(A)[\ul U^{\pm 1/p^\infty}]$ which is defined by $V(\al U_1^{k_1}\cdots U_d^{k_d}):=V(\al)U_1^{k_1}\cdots U_d^{k_d}$ for all $\al\in W_r(A)$ and $k_1,\dots,k_d\in\bb Z[\tfrac1p]$.
\end{itemize}
Therefore the proof will be complete if we show that there is a unique collection of $W_r(A)$-algebra homomorphisms $\lambda_{r,\sub{grp}}:W_r(A[\ul T^{\pm1}])\to W_r(A)[\ul U^{\pm1/p^\infty}]$ commuting with $R,F,V$ on each side and satisfying $\lambda_{r,\sub{grp}}([T_i])=U_i$ for $i=1,\dots,d$.

To prove this, we first use the standard isomorphism of $W_r(A)$-algebras\footnote{This isomorphism is proved by localising the analogous assertion for $A[\ul T^{1/p^\infty}]$, which is an easy consequence of \cite[Corol.~2.4]{LangerZink2004}. The cited result also implies that $W_r(A[\ul T^{\pm1}])$ is generated as a $W_r(A)$-module by the elements $V^j([T_i^k])$, for $k\in\bb Z$, $j\ge0$, $i=1,\dots,d$, which proves the uniqueness of the maps $\lambda_{r,\sub{grp}}$.} \[W_r(A)[\ul U^{\pm1/p^\infty}]\isoto W_r(A[\ul T^{\pm1/p^\infty}]),\qquad U_i^k\mapsto [T_i^k]\qquad (k\in\bb Z[\tfrac1p])\] to define a modified isomorphism \[\tau_r:W_r(A)[\ul U^{\pm1/p^\infty}]\isoto W_r(A[\ul T^{\pm1/p^\infty}]),\qquad U_i^k\mapsto [T_i^{k/p^r}]\qquad (k\in\bb Z[\tfrac1p]),\] noting that the new maps $\tau_r$ respect $R,F,V$ on each side (the reader should check this by explicit calculation). Therefore the collection of maps \[\lambda_{r,\sub{grp}}:W_r(A[\ul T^{\pm1}])\into W_r(A[\ul T^{\pm1/p^\infty}])\stackrel{\tau_r^{-1}}\Isoto W_r(A)[\ul U^{\pm1/p^\infty}]\] satisfies the desired conditions (and their uniqueness was explained in the previous footnote).
%
%
\end{proof}

The previous two lemmas show that all hypotheses of Theorem  \ref{theorem_witt_main} are satisfied, and so there are associated universal maps of Witt complexes \[\lambda_{r,\sub{grp}}^\blob:W_r\Omega^\blob_{A[\ul T^{\pm1}]/A}\To \cal W_r^\blob(D^\sub{grp}).\] As already explained, the key local result underlying the forthcoming proof of the $p$-adic Cartier isomorphism will be the fact that these are isomorphisms:

\begin{theorem}\label{theorem_de_rham_witt_group_cohomology}
The map $\lambda_{r,\sub{grp}}^n:W_r\Omega^n_{A[\ul T^{\pm1}]/A}\to \cal W_r^n(D^\sub{grp})$ is an isomorphism for each $r\ge1$, $n\ge0$.
\end{theorem}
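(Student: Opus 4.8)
The plan is to make both sides of $\lambda_{r,\sub{grp}}^n$ completely explicit, one weight at a time, and to check that the universal map realises the resulting identification. Both the source $W_r\Omega^\blob_{A[\ul T^{\pm1}]/A}$ and the target $\cal W_r^\blob(D^\sub{grp})$ split as direct sums over a weight lattice which $\lambda_{r,\sub{grp}}^\blob$ respects: on the target the grading is inherited from the splitting $W(A^\flat)[\ul U^{\pm1/p^\infty}]=\bigoplus_{\ul k\in\bb Z[\tfrac1p]^d}W(A^\flat)U^{\ul k}$ used in the proof of Lemma \ref{lemma_Dgrp_1} (this splitting is preserved by $L\eta_\mu$ and by $-\dotimes_{\tilde\theta_r}W_r(A)$), while on the source it is Langer--Zink's decomposition of the relative de Rham--Witt complex into basic Witt differentials \cite{LangerZink2004}, in the Laurent-polynomial form worked out in [BMS, \S10.4]; the reindexing built into the maps $\tau_r$ of Lemma \ref{lemma_lambda_grp} matches a de Rham--Witt weight $\ul w$ on the source with the $U$-monomial index $p^r\ul w$ on the target. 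Thus it suffices to show that $\lambda_{r,\sub{grp}}^n$ is an isomorphism on each weight component.

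First I would compute the target. As in the proof of Lemma \ref{lemma_Dgrp_1}, $D^\sub{grp}\simeq\bigoplus_{\ul k}K_{W(A^\flat)}([\ep^{k_1}]-1,\dots,[\ep^{k_d}]-1)$; applying $L\eta_\mu$ annihilates every summand of non-integral weight and replaces an integral-weight summand by $K_{W(A^\flat)}\big(\tfrac{[\ep^{k_1}]-1}{\mu},\dots,\tfrac{[\ep^{k_d}]-1}{\mu}\big)$ by Lemma \ref{lemma_on_Koszul_1}, and then $-\dotimes_{\tilde\theta_r}W_r(A)$ identifies the result with the Koszul complex $K_{W_r(A)}\big(\tfrac{[\zeta_{p^r}^{k_1}]-1}{[\zeta_{p^r}]-1},\dots,\tfrac{[\zeta_{p^r}^{k_d}]-1}{[\zeta_{p^r}]-1}\big)$. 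Lemma \ref{lemma_on_Koszul_2} evaluates its cohomology in terms of $v:=\min_i\nu_p(k_i)$ (with the convention $\nu_p(0)=\infty$): for $v=0$ one entry is a unit so the complex is acyclic, and for $1\le v\le\infty$ each $H^n$ is free of rank $\binom dn$ over $W_{\min(v,r)}(A)$ --- here one uses that both the cyclic quotient $W_r(A)/\tfrac{[\zeta_{p^{r-v}}]-1}{[\zeta_{p^r}]-1}$ and the torsion submodule $W_r(A)\big[\tfrac{[\zeta_{p^{r-v}}]-1}{[\zeta_{p^r}]-1}\big]$ are isomorphic to $W_v(A)$, via $F^{r-v}$ and $V^{r-v}$ respectively (truncating to $W_r(A)$ if $v\ge r$), as in [BMS]. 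Along the way one also reads off, from these identifications and the Bockstein--$L\eta$ relation of Remark \ref{remark_more_on_Leta}(a), how $d=\op{Bock}_{\tilde\xi_r}$ and the operators $R,F,V$ act on each weight piece of $\cal W_r^\blob(D^\sub{grp})$.

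Next I would recall the parallel explicit description of $W_r\Omega^n_{A[\ul T^{\pm1}]/A}$ from [BMS, \S10.4] (extending \cite{LangerZink2004}): after the $\tau_r$-reindexing, the weight-$\ul w$ component is again free of rank $\binom dn$ over the same truncated Witt ring $W_{\min(v,r)}(A)$, with a standard basis built from the logarithmic forms $d\log[T_i]$ and $V^j$'s of Teichm\"uller monomials. One then checks that $\lambda_{r,\sub{grp}}^n$ sends this standard basis to the evident Koszul cocycles --- $d\log[T_i]$ to the class represented by $\tfrac{[\ep^{k_i}]-1}{\mu}$, and the Verschiebung generators to the matching $V^j$-classes produced by the Witt-complex structure of Theorem \ref{theorem_witt_main} --- so that on each weight component it is a map of free $W_{\min(v,r)}(A)$-modules of the same finite rank carrying a basis to a basis, hence an isomorphism. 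Feeding this back through the weight-graded reduction of the first paragraph proves the theorem, and hence, via the reductions of Section \ref{section_Cartier}, the $p$-adic Cartier isomorphism of Theorem \ref{theorem_p_adic_Cartier}.

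The main obstacle is precisely this last comparison: $\lambda_{r,\sub{grp}}^\blob$ is a priori pinned down only by a universal property, so one has to honestly track Langer--Zink's explicit generators through the construction of \S\ref{subsection_constructing_Witt} and recognise their images as the expected Koszul classes, and one has to reconcile the two bookkeeping schemes for ``which truncated Witt module occupies weight $\ul w$ in degree $n$'', including compatibility with $R$, $F$, $V$ and the differential. The genuinely delicate inputs are the divisibility shift $[\ep^{k_i}]-1\mapsto\tfrac{[\ep^{k_i}]-1}{\mu}$ introduced by $L\eta_\mu$ together with the $([\zeta_{p^r}]-1)^n$-twist of Lemma \ref{lemma_pre_Witt_image}, and the interplay of the $\tau_r$- and $\phi^{-r}$-reindexings relating $U$-monomial degrees to de Rham--Witt weights; getting these exactly right is what forces one to use the precise formulas for $\mu$, $\xi_r$, $\tilde\xi_r$ from \S\ref{subsection_roots_of_unity} together with the structure of $W_r(A)$ as an $\bb A_\sub{inf}$-module established in [BMS].
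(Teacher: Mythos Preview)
Your approach is correct in outline but takes a genuinely different route from the paper's. You propose a direct weight-by-weight comparison: decompose both sides over the monomial lattice, compute each graded piece of the target via the Koszul lemmas and each graded piece of the source via the Langer--Zink basic Witt differentials, and then verify that the universal map $\lambda_{r,\sub{grp}}^\blob$ carries one explicit basis to the other. The paper instead reduces modulo the maximal ideal: it shows that $\lambda_{r,\sub{grp}}^n\otimes_{W_r(A)}W_r(\kappa)$ is an isomorphism, where $\kappa=A/\sqrt{pA}$, and then lifts back via a Nakayama argument (which still requires the weight decomposition, but only to justify Nakayama for non-finitely-generated modules). The point of passing to $\kappa$ is that the ``$q$-de Rham'' complex $D_\sub{int}^\sub{grp}\simeq\eta_\mu D^\sub{grp}$ base-changes along $W(A^\flat)\to W(\kappa)$ to the \emph{ordinary} de Rham complex $\Omega^\blob_{W(\kappa)[\ul U^{\pm1}]/W(\kappa)}$ (since $[\ep]\mapsto1$), and then the identification $\cal W_r^\blob(D^\sub{grp})\otimes W_r(\kappa)\cong W_r\Omega^\blob_{\kappa[\ul T^{\pm1}]/\kappa}$ is exactly Illusie--Raynaud's classical de Rham--Witt Cartier isomorphism. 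Because that isomorphism is known to be multiplicative, compatibility with $\lambda_{r,\sub{grp},\kappa}^\blob$ need only be checked in degree~$0$, which is easy.

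What each approach buys: your direct method is self-contained and avoids invoking Illusie--Raynaud as a black box, but the ``main obstacle'' you identify --- honestly tracking the universal map on explicit Langer--Zink generators through the $L\eta_\mu$-twist, the $([\zeta_{p^r}]-1)^n$-rescaling, and the $\tau_r$-reindexing --- is real combinatorial work (this is essentially what is carried out in full in [BMS, \S11]). The paper's reduction sidesteps that: once over $\kappa$ the $q$-deformation disappears, the target becomes a classical object, and the only compatibility to check is in degree~$0$. The trade-off is that one must separately justify the Nakayama step and invoke the base change $W_r\Omega^n_{A[\ul T^{\pm1}]/A}\otimes_{W_r(A)}W_r(\kappa)\isoto W_r\Omega^n_{\kappa[\ul T^{\pm1}]/\kappa}$ from Remark~\ref{remark_de_rham_witt}(vii).
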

\begin{proof}
We will content ourselves here with proving that $\lambda_{r,\sub{grp},\kappa}^n:=\lambda_{r,\sub{grp}}^n\otimes_{W_r(A)}W_r(\kappa)$ is an isomorphism,\footnote{To then deduce that $\lambda_{r,\sub{grp}}^n$ itself is an isomorphism, one applies a form of Nakayama's lemma exploiting the fact that (the non-finitely generated $W_r(A)$-modules) $W_r\Omega^n_{A[\ul T^{\pm1}]/A}$ and $\cal W_r^n(D^\sub{grp})$ admit compatible direct sum decompositions into certain finitely generated $W_r(A)$-modules for which Nakayama's lemma is valid; see [Lem.~11.14, BMS] for the details.} where $\kappa:=A/\sqrt{pA}$ is the perfect ring obtained by modding out $A$ by its ideal of $p$-adically topologically nilpotent elements. Recalling from Remark \ref{remark_de_rham_witt}(vii) that the canonical base change map $W_r\Omega^n_{A[\ul T^{\pm1}]/A}\otimes_{W_r(A)}W_r(\kappa)\to W_r\Omega^n_{\kappa[\ul T^{\pm1}]/\kappa}$ is an isomorphism, this means showing that $\lambda_{r,\sub{grp},\kappa}^n$ induces an isomorphism $W_r\Omega^n_{\kappa[\ul T^{\pm1}]/\kappa}\isoto \cal W_r^n(D^\sub{grp})\otimes_{W_r(A)}W_r(\kappa)$; this will turn out to be exactly Illusie--Raynaud's Cartier isomorphism for the classical de Rham--Witt complex.


We now begin the proof that $\lambda_{r,\sub{grp},\kappa}^n$ is an isomorphism. By the K\"unneth formula and the standard calculation of group cohomology of an infinite cyclic group, we may represent $D^\sub{grp}$ by the particular complex of $W(A^\flat)$-modules \[D^\sub{grp}=\bigotimes_{i=1}^d\left[W(A^\flat)[U_i^{\pm1/p^\infty}]\xTo{\gamma_i-1} W(A^\flat)[U_i^{\pm1/p^\infty}]\right],\] where each length two complex is \[W(A^\flat)[U_i^{\pm1/p^\infty}]\xTo{\gamma_i-1} W(A^\flat)[U_i^{\pm1/p^\infty}],\quad U_i^k\mapsto ([\ep^k]-1)U_i^k\qquad (k\in\bb Z[\tfrac1p]).\]  (Note: although we previously used $D^\sub{grp}$ to denote $R\Gamma(\bb Z^d,W(A^\flat)[\ul U^{\pm1/p^\infty}])$ in a derived sense, in the rest of this proof we have this particular honest complex of flat $W(A^\flat)$-modules in mind when writing $D^\sub{grp}$.) This length two complex obviously receives a injective map, given by the identity in degree $0$ and by multiplication by $\mu$ in degree $1$, from \[D_{\sub{int},i}^\sub{grp}:=\left[W(A^\flat)[U_i^{\pm1}]\to W(A^\flat)[U_i^{\pm1}]\right],\quad U_i^k\mapsto \tfrac{[\ep^k]-1}{\mu}U^k_i\quad (k\in\bb Z),\] and tensoring over $i=1,\dots,d$ defines a split injection of complexes of $W(A^\flat)$-modules\footnote{The complex $D^\sub{grp}_\sub{int}$ (resp.~$D_{\sub{int},i}^\sub{grp}$) is in fact the ``$q$-de Rham complex'' $[\ep]\text-\Omega^\blob_{W(A^\flat)[\ul U^{\pm1}]/W(A^\flat)}$ (resp.~$[\ep]\text-\Omega^\blob_{W(A^\flat)[U_i^{\pm1}]/W(A^\flat)}$) of $W(A^\flat)[\ul U^{\pm1}]$ (resp.~$W(A^\flat)[U_i^{\pm1}]$) associated to the element $q=[\ep]\in W(A^\flat)$.} \[D_\sub{int}^\sub{grp}:=\bigotimes_{i=1}^nD_{\sub{int},i}^\sub{grp}\To D^\sub{grp}.\] The content of the second sentence of the final paragraph of the proof of Lemma \ref{lemma_Dgrp_1} was exactly that this inclusion has image in $\eta_\mu D^\sub{grp}$ and that the induced map $\frak q:D^\sub{grp}_\sub{int}\into \eta_\mu D^\sub{grp}$ is a quasi-isomorphism.

The next important observation (which is most natural from the point of view of $q$-de Rham complexes) is that there is an identification $D^\sub{grp}_\sub{int}\otimes_{W(A^\flat)}W(\kappa)=\Omega^\blob_{W(k)[\ul U^{\pm1}]/W(\kappa)}$: indeed, the canonical projection $A^\flat\to A/pA\to \kappa$ sends  $\ep$ to $1$, and so the projection $W(A^\flat)\to W(\kappa)$ sends $([\ep^k]-1)/\mu=1+[\ep]+\cdots+[\ep]^{k-1}$ to $k$, whence \[D^\sub{grp}_\sub{int}\otimes_{W(A^\flat)}W(\kappa)=\bigotimes_{i=1}^n\left[W(\kappa)[U_i^{\pm1}]\xTo{U_i^k\mapsto kU_i^k} W(\kappa)[U_i^{\pm1}]\right]=\Omega^\blob_{W(\kappa)[\ul U^{\pm1}]/W(\kappa)}.\] The final identification here is most natural after inserting a dummy basis element $\dlog U_i$ in degree one of each two term complex.

Base changing the Bockstein construction\footnote{If $\al:R\to S$ is a ring homomorphism, $f\in R$ is a non-zero-divisor whose image $\al(f)\in S$ is still a non-zero-divisor, and $C\in D(R)$, then there is a base change map $[H^\blob(C\dotimes_RR/fR),\op{Bock}_f]\otimes_{R/fR}S/\al(f)S\to[H^\blob(C\dotimes_RS/\al(f)S),\op{Bock}_{\al(f)}]$ of complexes of $S/\al(f)S$-modules; it is an isomorphism if the $R/fR$-modules $H^*(D\dotimes_RR/fR)$ are Tor-independent from $S/\al(f)S$, as the reader will easily prove (c.f., Remark \ref{remark_base_change}(ii)).

Here we are applying this base change along the canonical map $W(A^\flat)\to W(\kappa)$, which sends $\tilde\xi_r$ to $p^r$, and the complex $\eta_fD_\sub{grp}$. The Tor-independence condition is satisfied in this case since the $W_r(A)$-modules $\cal W_r^*(D_\sub{grp})$ are Tor-independent from $W_r(k)$: indeed, the proof of Lemma \ref{lemma_Dgrp_1} shows that the cohomology groups of $\eta_\mu D_\sub{grp}/\tilde\xi_r$ are direct sums of $W_r(A)$-modules of the form \[W_r(A), \qquad W_r(A)\left[\tfrac{[\zeta_{p^j}]-1}{[\zeta_{p^r}]-1}\right],\qquad\text{and}\qquad W_r(A)/\tfrac{[\zeta_{p^{j}}]-1}{[\zeta_{p^r}]-1}, \qquad1\le j<r,\] which are Tor-independent from $W_r(\kappa)$ by Lems.~3.13 \& 3.18(iii) and Rmk.~3.19 of [BMS].
} 
along $W(A^\flat)\to W(\kappa)$ therefore yields isomorphisms of complexes of $W_r(\kappa)$-modules \[\hspace{-7mm}\cal W_r^\blob(D^\sub{grp})\otimes_{W_r(A)}W_r(\kappa)\isoto[H^\blob(\eta_\mu D^\sub{grp}\dotimes_{W(A^\flat)}W(\kappa)/p^r),\op{Bock}_{p^r}]\stackrel{\frak q\simeq}\leftarrow[H^\blob(\Omega^\blob_{W(\kappa)[\ul U^{\pm1}]/W(\kappa)}\otimes_{W(\kappa),\tilde\theta_r}W_r(\kappa)),\op{Bock}_{p^r}]\] But the complex on the right (hence on the left) identifies with $W_r\Omega^\blob_{\kappa[\ul T^{\pm1}]/\kappa}$ by the de Rham--Witt Cartier isomorphism of Illusie--Raynaud \cite[\S III.1]{IllusieRaynaud1983}, and the resulting map \[W_r\Omega^\blob_{A[\ul T^{\pm1}]/A}\otimes_{W_r(A)}W_r(\kappa)\xTo{can.~map} W_r\Omega^\blob_{\kappa[\ul T^{\pm1}]/\kappa}\cong \cal W_r^\blob(D^\sub{grp})\otimes_{W_r(A)}W_r(\kappa)\] is precisely $\lambda_{r,\sub{grp},\kappa}^\blob$: this is proved by observing that the above isomorphisms (including the de Rham--Witt Cartier isomorphism) are all compatible with multiplicative structure, whence it suffices to check in degree $0$, which is not hard (see [Thm.~11.13, BMS] for a few more details). As we commented at the beginning of the proof, the canonical base change map of relative de Rham--Witt complexes in the previous line is an isomorphism, and so in conclusion $\lambda_{r,\sub{grp},\kappa}^\blob$ is an isomorphism.
\qedhere

\comment{
Taking the derived tensor product of $\frak q$ along $W(A^\flat)\xto{\tilde\theta_r}W_r(A)\to W_r(k)$ yields a quasi-isomorphism $\frak q_{W_r(k)}:D_\sub{grp}^\sub{int}\otimes_{W(A^\flat)}W_r(k)\quis \eta_\mu D_\sub{grp}/\tilde\xi_r\dotimes_{W_r(A)}W_r(k)$ (the first tensor product is underived since $D_\sub{grp}^\sub{int}$ is a complex of free $W(A^\flat)$-modules); but since the cohomology groups of $\eta_\mu D_\sub{grp}/\tilde\xi_r$, i.e., the $\cal W_r^*(D_\sub{grp})$, are Tor-independent from $W_r(k)$,\footnote{Indeed, the proof of Lemma \ref{lemma_no_p-torsion_in_cohom} shows that the cohomology groups of $L\eta_\mu D_\sub{grp}/\tilde\xi_r$ are direct sums of $W_r(A)$-modules of the form: \[W_r(A), \qquad W_r(A)\left[\tfrac{[\zeta_{p^j}]-1}{[\zeta_{p^r}]-1}\right],\qquad\text{and}\qquad W_r(A)/\tfrac{[\zeta_{p^{j}}]-1}{[\zeta_{p^r}]-1}, \qquad1\le j<r\] which are Tor-independent from $W_r(k)$ by Lems.~3.13 \& 3.18(iii) and Rmk.~3.19 of [BMS].} 
the induced isomorphism on cohomology looks like \[H^n(\frak q_{W_r(k)}):H^n(D_\sub{grp}^\sub{int}\otimes_{W(A^\flat)}W_r(k))\Isoto\cal W_r^n(D_\sub{grp})\otimes_{W_r(A)}W_r(k).\] Now we make the fundamental observation that there is an identification  To summarise, we have constructed the top row of the following diagram of $W_r(k)$-modules:
\[\xymatrix@C=2cm{
H^n(\Omega^\blob_{W_r(k)[\ul U^{\pm1}]/W_r(k)})\ar@{=}[r] & H^n(D_\sub{grp}^\sub{int}\otimes_{W(A^\flat)}W_r(k))\ar[r]^{H^n(\frak q_{W_r(k)})}_\cong&\cal W_r^n(D_\sub{grp})\otimes_{W_r(A)}W_r(k)\\
&W_r\Omega^n_{k[\ul T^{\pm1}]/k}&W_r\Omega^n_{A[\ul T^{\pm1}]/A}\otimes_{W_r(A)}W_r(k)\ar[u]_{\lambda_{r,\sub{grp}}^n\otimes_{W_r(A)}W_r(k)}\ar[l]_\cong
}\]
}





\end{proof}

\section{The proof of the $p$-adic Cartier isomorphism}\label{section_Cartier}
This section is devoted to a detailed sketch of the $p$-adic Cartier isomorphism stated in Theorem \ref{theorem_p_adic_Cartier}. We adopt the set-up from the start of Section \ref{section_main}, namely 
\begin{itemize}\itemsep0pt
\item $\bb C$ is a complete, non-archimedean, algebraically closed field of mixed characteristic; ring of integers~$\roi$ with maximal ideal $\frak m$; residue field $k$.
\item We pick a compatible sequence $\zeta_p,\zeta_{p^2},\dots\in\roi$ of $p$-power roots of unity, and define $\mu,\xi,\xi_r,\tilde\xi,\tilde\xi_r\in\bb A_\sub{inf}=W(\roi^\flat)$ as in \S\ref{subsection_roots_of_unity}.
\item $\frak X$ will denote various smooth formal schemes over $\roi$.
\end{itemize}


\subsection{Technical lemmas: base change and global-to-local isomorphisms}\label{subsection_technical}
Here in \S\ref{subsection_technical} we state, and sketch the proofs of, certain technical lemmas which need to be established as part of the proof of the $p$-adic Cartier isomorphism. We adopt the following local set-up: let $R$ be a $p$-adically complete, formally smooth $\roi$-algebra and $\frak X:=\Spf R$, with associated generic fibre being the rigid affinoid $X=\Sp R[\tfrac 1p]$. We will often impose the extra condition that $R$ is {\em small}, i.e., that there exists a formally \'etale map (a ``framing'') $\roi\pid{\ul T^{\pm1}}=\roi\pid{T_1^{\pm1},\dots,T_d^{\pm1}}\to R$; we stress however that we are careful to formulate certain results (e.g., Lemma \ref{lemma_technical_1}) without reference to any such framing (its existence will simply be required in the course of the proof).

Firstly, as explained at the end of Remark \ref{remark_Leta_on_sheaves} (taking $\cal T=\frak X_\sub{Zar}$ and $C=R\nu_*\bb A_{\sub{inf},X}$), there is a natural global-to-local morphism $L\eta_\mu R\Gamma_\sub{Zar}(\frak X,R\nu_*\bb A_{\sub{inf},X})\to R\Gamma_\sub{Zar}(\frak X,\bb A\Omega_{\frak X/\roi})$ of complexes of $\bb A_\sub{inf}$-modules; this may be rewritten as \[\bb A\Omega_{R/\roi}^\sub{pro\'et}:=L\eta_\mu R\Gamma_\sub{pro\'et}(X,\bb A_{\sub{inf},X})\To R\Gamma_\sub{Zar}(\frak X,\bb A\Omega_{\frak X/\roi}).\tag{t1}\] There is an analogous global-to-local morphism of complexes of $W_r(\roi)$-modules \[\tilde{W_r\Omega}_{R/\roi}^\sub{pro\'et}:=L\eta_{[\zeta_{p^r}]-1} R\Gamma_\sub{pro\'et}(X,W_r(\hat\roi_X^+))\To R\Gamma_\sub{Zar}(\frak X,L\eta_{[\zeta_{p^r}]-1}R\nu_*W_r(\hat\roi_X^+)).\tag{t2}\] Thirdly, recalling Corollary \ref{corollary_non_zero_divisors_on_sheaf} that $\tilde\theta_r:\bb A_{\sub{inf},X}/\tilde\xi_r\isoto W_r(\hat\roi_X^+)$ (which we continue to often implicitly view as an identification), there is a base change morphism (see Remark \ref{remark_base_change}) of complexes of $W_r(\roi)$-modules \[\bb A\Omega_{R/\roi}^\sub{pro\'et}/\tilde\xi_r=L\eta_\mu R\Gamma_\sub{pro\'et}(X,\bb A_{\sub{inf},X})\dotimes_{\bb A_\sub{inf}}\bb A_\sub{inf}/\tilde \xi_r\bb A_\sub{inf} \To L\eta_{[\zeta_{p^r}]-1} R\Gamma_\sub{pro\'et}(\frak X,W_r(\hat\roi_X^+))=\tilde{W_r\Omega}_{R/\roi}^\sub{pro\'et}.\tag{t3}\] As we have commented earlier, global-to-local and base change morphisms associated to the d\'ecalage functor are not in general quasi-isomorphisms; remarkably, they are in our setting:

\begin{lemma}\label{lemma_technical_1}
If $R$ is small then maps (t1), (t2), and (t3) are quasi-isomorphisms and, moreover:
\begin{enumerate}
\item the cohomology groups of $\tilde{W_r\Omega}_{R/\roi}^\sub{pro\'et}$ are $p$-torsion-free;
\item if $R'$ is a $p$-adically complete, formally \'etale $R$-algebra, then the canonical base change map $\tilde{W_r\Omega}_{R/\roi}^\sub{pro\'et}\hat\dotimes_{W_r(R)}W_r(R')\to \tilde{W_r\Omega}_{R'/\roi}^\sub{pro\'et}$ is a quasi-isomorphism.
\end{enumerate}
\end{lemma}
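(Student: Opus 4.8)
The plan is to reduce everything to explicit local computations in the spirit of \S\ref{subsection_faltings_purity} and \S\ref{subsection_local_Cartier}, exploiting the mechanism that the d\'ecalage functor converts almost quasi-isomorphisms into honest ones. Fix a framing $\roi\pid{\ul T^{\pm1}}\to R$ and the associated affinoid perfectoid pro-\'etale cover $\cal U\to X$ with perfectoid ring $R_\infty$, as in \S\ref{subsection_CL}. The Cartan--Leray almost quasi-isomorphisms recorded there identify $R\Gamma_\sub{pro\'et}(X,\bb A_{\sub{inf},X})$ and $R\Gamma_\sub{pro\'et}(X,W_r(\hat\roi_X^+))$, up to almost quasi-isomorphism, with the continuous group cohomology complexes $R\Gamma_\sub{cont}(\Gamma,W(R_\infty^\flat))$ and $R\Gamma_\sub{cont}(\Gamma,W_r(R_\infty))$, where $\Gamma=\bb Z_p(1)^d$; here I use that $R_\infty$, being $p$-adically complete and formally \'etale over $\roi\pid{\ul T^{\pm1/p^\infty}}$, is perfectoid, so that the sheaves of Lemma~\ref{lemma_local_structure_of_sheaves} have the expected sections on $\cal U$.

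First I would upgrade these to honest quasi-isomorphisms after applying $L\eta$. As in the proof of Theorem~\ref{theorem_integral_purity} (and its Witt-vector variant; cf.\ [BMS]), the cohomology groups of $R\Gamma_\sub{cont}(\Gamma,W_r(R_\infty))$ are ``good'' with respect to $(W_r(\roi),W_r(\frak m))$, and passing to the derived $p$-complete limit over $r$ (using that $W(\frak m^\flat)$ and $[\frak m^\flat]$ agree after $p$-completion, by footnote~\ref{footnote_almost_W_r}) those of $R\Gamma_\sub{cont}(\Gamma,W(R_\infty^\flat))$ are good with respect to $(\bb A_\sub{inf},W(\frak m^\flat))$. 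Lemma~\ref{lemma_technical_0}(ii) then gives honest quasi-isomorphisms $\bb A\Omega_{R/\roi}^\sub{pro\'et}\simeq L\eta_\mu R\Gamma_\sub{cont}(\Gamma,W(R_\infty^\flat))$ and $\tilde{W_r\Omega}_{R/\roi}^\sub{pro\'et}\simeq L\eta_{[\zeta_{p^r}]-1}R\Gamma_\sub{cont}(\Gamma,W_r(R_\infty))$ (using for the latter that $\tilde\xi_r$ is a non-zero-divisor, so $R\Gamma_\sub{cont}(\Gamma,W(R_\infty^\flat))/\tilde\xi_r=R\Gamma_\sub{cont}(\Gamma,W_r(R_\infty))$). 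Now I would run the weight-decomposition Koszul calculation exactly as in the proof of Lemma~\ref{lemma_Dgrp_1}: writing $W(R_\infty^\flat)$ as a completed sum of weight pieces over $W(R^\flat)$, the functor $\eta_\mu$ annihilates the non-integral weight pieces and carries the integral part to a $q$-de Rham complex over $W(R^\flat)$, and the Koszul Lemmas~\ref{lemma_on_Koszul_1} and~\ref{lemma_on_Koszul_2} describe all cohomology groups of $\tilde{W_r\Omega}_{R/\roi}^\sub{pro\'et}$ as direct sums of copies of $W_r(R)$, $W_r(R)[\tfrac{[\zeta_{p^j}]-1}{[\zeta_{p^r}]-1}]$ and $W_r(R)/\tfrac{[\zeta_{p^j}]-1}{[\zeta_{p^r}]-1}\cong W_{r-j}(R)$ $(1\le j\le r)$, each of which is $p$-torsion-free because $R$ is. This proves part~(i).

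With the honest local descriptions in hand, (t3) and (ii) become formal. For (t3), set $C':=R\Gamma_\sub{cont}(\Gamma,W(R_\infty^\flat))$: the source of~(t3) is $(L\eta_\mu C')/\tilde\xi_r$ and the target is $L\eta_{[\zeta_{p^r}]-1}(C'/\tilde\xi_r)$, and~(t3) is the $L\eta$-base-change morphism along $\tilde\theta_r:\bb A_\sub{inf}\To\bb A_\sub{inf}/\tilde\xi_r$ (recall $\tilde\theta_r(\mu)=[\zeta_{p^r}]-1$). By Remark~\ref{remark_base_change}(ii) applied to the regular sequence $\mu,\tilde\xi_r$ of $\bb A_\sub{inf}$, this is a quasi-isomorphism provided the cohomology groups of $C'\dotimes_{\bb A_\sub{inf}}\bb A_\sub{inf}/\mu$ are $\tilde\xi_r$-torsion-free; since $\tilde\xi_r\equiv p^r$ mod $\mu$ (footnote~\ref{footnote_condition_for_W3}), this is just their $p$-torsion-freeness, which is part of the computation of the previous paragraph, being precisely the verification of ($\cal W2$), ($\cal W3$) in the proof of Lemma~\ref{lemma_Dgrp_1}. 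For~(ii), the weight decomposition and the resulting Koszul complexes are visibly natural in $R$, and for a $p$-adically complete formally \'etale $R\to R'$ one has $R'_\infty\cong R'\hat\otimes_RR_\infty$ while $W_r(R)\to W_r(R')$ is formally \'etale, hence flat, by Remark~\ref{remark_de_rham_witt}(v); the base-change map is then the base change of a $q$-de Rham complex along a flat ring map, which commutes with $L\eta$ by Remark~\ref{remark_base_change}(i). A routine derived $p$-completion argument, as in Remark~\ref{remark_p_completion}, then yields~(ii).

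It remains to prove the global-to-local statements (t1) and (t2). One first reduces (t2) to (t1) and (t3): reducing the quasi-isomorphism~(t1) modulo $\tilde\xi_r$ and using that $R\Gamma_\sub{Zar}(\frak X,-)$ commutes with $\dotimes_{\bb A_\sub{inf}}\bb A_\sub{inf}/\tilde\xi_r$, together with Corollary~\ref{corollary_non_zero_divisors_on_sheaf} and the sheaf-theoretic analogue of~(t3) (that $\bb A\Omega_{\frak X/\roi}/\tilde\xi_r\To L\eta_{[\zeta_{p^r}]-1}R\nu_*W_r(\hat\roi_X^+)$ is a quasi-isomorphism, checked on a basis of small affine opens, where it reduces to the same computation as~(t3)), identifies the outcome with~(t2). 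The remaining assertion, (t1), is the general global-to-local morphism $L\eta_\mu R\Gamma_\sub{Zar}(\frak X,R\nu_*\bb A_{\sub{inf},X})\To R\Gamma_\sub{Zar}(\frak X,\bb A\Omega_{\frak X/\roi})$ of Remark~\ref{remark_Leta_on_sheaves}. Since $L\eta_\mu$ only modifies cohomology by torsion (footnote~\ref{footnote_complete}), both sides are derived $p$-adically complete, so it suffices to prove~(t1) modulo $p$. For this I would use \v{C}ech descent: choose a finite cover of $\frak X=\Spf R$ by small affine opens whose finite intersections are again small affine (possible because $R$ is small and $\frak X$ is separated), so that $R\Gamma_\sub{Zar}(\frak X,R\nu_*\bb A_{\sub{inf},X})$ is the totalisation of a \v{C}ech complex built from the pro-\'etale cohomologies $R\Gamma_\sub{pro\'et}(-,\bb A_{\sub{inf},X})$ of small affinoids, each of whose cohomology groups is ``good''. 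The crux is that $L\eta_\mu$ commutes with this particular totalisation; as in [BMS] this follows from the goodness just established together with the compatibility of $L\eta_\mu$ with the Bockstein (Remark~\ref{remark_more_on_Leta}(a)), which allows one to compute modulo $\mu$ weight piece by weight piece, the surviving integral $q$-de Rham pieces being glued correctly by the \'etale base change property~(ii). This \v{C}ech-descent step is the main obstacle; everything else is a careful transcription of the Koszul calculations of \S\ref{subsection_faltings_purity} and \S\ref{subsection_local_Cartier}.
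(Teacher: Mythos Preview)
Your overall strategy matches the paper's: introduce the group-cohomology models $\bb A\Omega_{R/\roi}^\square$ and $\tilde{W_r\Omega}_{R/\roi}^\square$ via Cartan--Leray, prove the almost quasi-isomorphisms become honest after $L\eta$, and do Koszul computations. Parts (i), (ii), and (t3) are handled essentially as in the paper. However, there are two real gaps.

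\textbf{The $\bb A_\sub{inf}$-level almost-to-honest step (your ``(t4)'').} You claim that ``passing to the derived $p$-complete limit over $r$'' shows the cohomology of $R\Gamma_\sub{cont}(\Gamma,W(R_\infty^\flat))$ is good with respect to $(\bb A_\sub{inf},W(\frak m^\flat))$, so that Lemma~\ref{lemma_technical_0} applies. This limit passage is exactly where things break: $W(R_\infty^\flat)$ is the $\pid{p,\xi}$-adic (not $p$-adic) completion of $\bb A_\sub{inf}[\ul U^{\pm1/p^\infty}]$, and neither $R\Gamma_\sub{cont}$ nor $L\eta_\mu$ commute with derived $\pid{p,\xi}$-adic completion, so the finite-level Koszul decompositions do not assemble into one for $W(R_\infty^\flat)$. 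The paper handles this by a genuinely different trick: it uses a strengthening of Lemma~\ref{lemma_technical_0} (from \cite[Lem.~6.14]{Bhatt2016}) which only requires that $H^*(C\dotimes_{\bb A_\sub{inf}}\bb A_\sub{inf}/\mu)$ have no elements killed by $W(\frak m^\flat)^2$; this reduces to a computation \emph{modulo $\mu$}, where $\tilde\theta_r$ identifies $\bb A_\sub{inf}/(\mu,p^r)$ with $W_r(\roi)/([\zeta_{p^r}]-1)$ and the finite-level Koszul decompositions do apply. Your argument as written does not supply the needed goodness at the $\bb A_\sub{inf}$ level.

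\textbf{The global-to-local steps (t1) and (t2).} You propose to deduce (t2) from (t1) and a sheafified (t3), and then prove (t1) by \v{C}ech descent, asserting that ``$L\eta_\mu$ commutes with this particular totalisation''. That commutation is precisely the issue the whole lemma is trying to establish, and you give no mechanism for it beyond ``goodness'' and the Bockstein relation, neither of which yields such a statement. The paper runs the logic in the \emph{opposite} order: it first proves (t2) directly from the \'etale base-change property (ii), which says $\tilde{W_r\Omega}_{R/\roi}^\sub{pro\'et}$ sheafifies to $L\eta_{[\zeta_{p^r}]-1}R\nu_*W_r(\hat\roi_X^+)$ as a quasi-coherent $W_r(\roi_\frak X)$-complex, so Zariski hypercohomology agrees with sections over the affine $\Spf R$; then (t1) is obtained by taking $\op{Rlim}_{r\text{ wrt }F}$ of (t2) and using derived completeness. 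This avoids any appeal to $L\eta$ commuting with \v{C}ech totalisations.
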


The key to proving Lemma \ref{lemma_technical_1}, and to performing necessary auxiliary calculations, is the Cartan--Leray almost quasi-isomorphisms of \S\ref{subsection_CL}, for which we must assume that $R$ is small and fix a framing $\roi\pid{\ul T^{\pm1}}\to R$; set $R_\infty:=R\hat\otimes_{\roi\pid{\ul T^{\pm1}}}\roi\pid{\ul T^{\pm1/p^\infty}}$ as in \S\ref{subsection_CL}. Then, as explained in \S\ref{subsection_CL} and repeated in Remark \ref{remark_vague_outline}, there are Cartan--Leray almost (wrt.~$W(\frak m^\flat)$ and $W_r(\frak m)$ respectively) quasi-isomorphisms of complexes of $\bb A_\sub{inf}$- and $W_r(\roi)$-modules respectively \[R\Gamma_\sub{cont}(\bb Z_p(1)^d,W(R_\infty^\flat))\To R\Gamma_\sub{pro\'et}(X,\bb A_{\sub{inf},X})\] and \[R\Gamma_\sub{cont}(\bb Z_p(1)^d,W_r(R_\infty))\To R\Gamma_\sub{pro\'et}(X,W_r(\hat\roi_X^+).\] Applying $L\eta_\mu$ (resp.~$L\eta_{[\zeta_{p^r}]-1}$) obtains \[\bb A\Omega_{R/\roi}^\square:=L\eta_\mu R\Gamma_\sub{cont}(\bb Z_p(1)^d,W(R_\infty^\flat))\To L\eta_\mu R\Gamma_\sub{pro\'et}(X,\bb A_{\sub{inf},X})=\bb A\Omega_{R/\roi}^\sub{pro\'et}\tag{t4}\] and \[\tilde{W_r\Omega}_{R/\roi}^\square:=L\eta_{[\zeta_{p^r}]-1}R\Gamma_\sub{cont}(\bb Z_p(1)^d,W_r(R_\infty))\To L\eta_{[\zeta_{p^r}]-1}R\Gamma_\sub{pro\'et}(X,W_r(\hat\roi_X^+))=\tilde{W_r\Omega}_{R/\roi}^\sub{pro\'et}\tag{t5}\] (The squares $\square$ remind us that the objects depend on the chosen framing.) The second technical lemma, stating that the d\'ecalage functor has transformed the almost quasi-isomorphisms into actual quasi-isomorphisms, and hence reminiscent of Theorem \ref{theorem_integral_purity}, is:

\begin{lemma}\label{lemma_technical_2}
(t4) and (t5) are quasi-isomorphisms.
\end{lemma}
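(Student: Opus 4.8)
The plan is to reduce both (t4) and (t5) to the ``goodness'' criterion of Lemma \ref{lemma_technical_0}(ii), exactly as in the proof of Theorem \ref{theorem_integral_purity}. Indeed, (t4) and (t5) are obtained by applying a d\'ecalage functor to the Cartan--Leray \emph{almost} quasi-isomorphisms of \S\ref{subsection_CL}, whose cones are killed by $W(\frak m^\flat)$ and $W_r(\frak m)$ respectively; so Lemma \ref{lemma_technical_0}(ii) will upgrade them to honest quasi-isomorphisms as soon as I verify that all cohomology groups of the \emph{source} are good --- for (t5) with respect to the pair $\bigl(W_r(\frak m),\,[\zeta_{p^r}]-1\bigr)$ over $A=W_r(\roi)$ (note $[\zeta_{p^r}]-1\in W_r(\frak m)$ is a non-zero-divisor by Proposition \ref{proposition_roots_of_unity}), and for (t4) with respect to $\bigl(W(\frak m^\flat),\,\mu\bigr)$ over $A=\bb A_\sub{inf}$ (with $\mu\in W(\frak m^\flat)$ a non-zero-divisor, again by Proposition \ref{proposition_roots_of_unity}; by footnote \ref{footnote_almost_W_r} one may replace $W(\frak m^\flat)$ by the union of principal ideals $[\frak m^\flat]$ after derived $p$-adic completion). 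Concretely, I must show that $H^i_\sub{cont}(\bb Z_p(1)^d,W_r(R_\infty))$ and $H^i_\sub{cont}(\bb Z_p(1)^d,W(R_\infty^\flat))$, together with their quotients by $[\zeta_{p^r}]-1$ and $\mu$ respectively, contain no nonzero element killed by the corresponding ideal.

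The goodness will be checked by the same explicit group-cohomology/Koszul computation as in Theorem \ref{theorem_integral_purity}, now with $W_r(\cdot)$- resp.\ $\bb A_\sub{inf}$-coefficients; it is essentially the calculation run in the proof of Lemma \ref{lemma_Dgrp_1}, upgraded from ``$p$-torsion-free'' to ``good''. Fix a framing $\roi\pid{\ul T^{\pm1}}\to R$. As in \S\ref{subsection_CL}, formal \'etaleness produces a formally \'etale $W_r(\roi)\pid{\ul T^{\pm1}}$-algebra $S$ with $W_r(R_\infty)\cong S\,\hat\otimes_{W_r(\roi)\pid{\ul T^{\pm1}}}W_r(\roi)\pid{\ul T^{\pm1/p^\infty}}$, together with a $\Gamma$-equivariant topological direct-sum decomposition of $W_r(R_\infty)$ into $S$ (trivial $\Gamma$-action) and rank-one free $S$-modules $S[T_1^{k_1}]\cdots[T_d^{k_d}]$ indexed by $(k_1,\dots,k_d)\in(\bb Z[\tfrac1p]\cap[0,1))^d\setminus\{0\}$, on which $\gamma_i$ acts by the Teichm\"uller lift $[\zeta^{k_i}]$ of a root of unity. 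Thus $R\Gamma_\sub{cont}(\Gamma,W_r(R_\infty))$ splits as $\textstyle\bigwedge_S^\blob S^d$ plus a $p$-adically completed direct sum of Koszul complexes $K_S([\zeta^{k_1}]-1,\dots,[\zeta^{k_d}]-1)$ (commuting the completion past the direct sum of cohomology groups via the bounded-torsion criterion of footnote \ref{footnote_completing_sums}). The first summand has free cohomology over $S$, and $S$ is topologically free over $W_r(\roi)$ by [BMS, Lem.~8.10]; for the Koszul summands, since not all $k_i$ vanish and each $[\zeta^{k_i}]-1$ is a non-zero-divisor dividing $p$, Lemma \ref{lemma_on_Koszul_2} identifies each cohomology group with a direct sum of copies of the $W_r(\roi)$-modules $W_r(\roi)$, $W_r(\roi)\bigl[\tfrac{[\zeta_{p^j}]-1}{[\zeta_{p^r}]-1}\bigr]$ and $W_r(\roi)/\tfrac{[\zeta_{p^j}]-1}{[\zeta_{p^r}]-1}$, base-changed to $S$. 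Using the identification of the last two with $F^j$-images of shorter Witt vectors [BMS, Cor.~3.18], the flatness of $S$ over $W_r(\roi)$, and the fact that $S$ and $S/([\zeta_{p^r}]-1)S$ are free over their bases modulo $[\zeta_{p^r}]-1$, the goodness of all these modules reduces to the corresponding statement for the explicit $W_r(\roi)$-modules, which is the valuation argument at the end of footnote \ref{footnote_special_case_torus} combined with elementary Witt-vector manipulations (footnote \ref{footnote_almost_W_r}). The case (t4) is the $r=\infty$ analogue, with $\mu$, $W(\frak m^\flat)$ and the Koszul complexes $K_{W(R^\flat)}([\ep^{k_1}]-1,\dots,[\ep^{k_d}]-1)$ whose $\eta_\mu$-twists were already analysed in the proof of Lemma \ref{lemma_Dgrp_1}.

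I expect the main obstacle to be this goodness verification --- and within it, the Witt-theoretic heart of it: bounding the $W_r(\frak m)$-torsion of the subquotients $W_r(\roi)/\tfrac{[\zeta_{p^j}]-1}{[\zeta_{p^r}]-1}W_r(\roi)$ and of their reductions modulo $[\zeta_{p^r}]-1$, which first requires re-identifying them, via powers of the Witt-vector Frobenius, with Witt vectors $W_{r-j}(\roi)$ of shorter length, and only then permits a valuation argument on ghost components. A secondary, essentially bookkeeping subtlety concerns (t4): the literal ideal $W(\frak m^\flat)$ is not an increasing union of principal ideals generated by non-zero-divisors, so one works with $[\frak m^\flat]$ instead and checks, using derived $p$-adic completeness, that this makes no difference (footnote \ref{footnote_almost_W_r}). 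Finally, once (t4) and (t5) are established, the remaining assertions of Lemma \ref{lemma_technical_1}, together with the maps (t1)--(t3), will follow by feeding these explicit descriptions into the formal machinery of \S\ref{subsection_technical}; the other ingredients --- K\"unneth, the identification of continuous group cohomology with Koszul complexes, and commuting $L\eta$ past completions (footnote \ref{footnote_complete}) --- are routine.
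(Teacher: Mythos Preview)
Your treatment of (t5) is essentially the paper's argument: decompose $R\Gamma_\sub{cont}(\bb Z_p(1)^d,W_r(R_\infty))$ as a $p$-adically completed sum of Koszul complexes (this is the paper's Lemma~\ref{lemma_technical_3}), compute the cohomology via Lemma~\ref{lemma_on_Koszul_2}, and verify goodness of the resulting $W_r(\roi)$-modules so that Lemma~\ref{lemma_technical_0}(ii) applies. The paper cites [BMS, Corol.~3.29] for the goodness of the building blocks $W_r(\roi)\bigl[[\zeta_{p^j}]-1\bigr]$ and $W_r(\roi)/([\zeta_{p^j}]-1)$, which is the Witt-vector valuation argument you sketch.

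For (t4), however, your phrase ``the $r=\infty$ analogue'' conceals a genuine obstacle, and here the paper proceeds differently. The problem is that $W(R_\infty^\flat)$ carries the $\pid{p,\xi}$-adic topology rather than the $p$-adic one, and neither $R\Gamma_\sub{cont}(\bb Z_p(1)^d,\cdot)$ nor $L\eta_\mu$ commutes with derived $\pid{p,\xi}$-adic completion; so the clean Koszul decomposition of Lemma~\ref{lemma_Dgrp_1} (which was for the \emph{uncompleted} $\bb A_\sub{inf}[\ul U^{\pm1/p^\infty}]$) does not directly yield a description of $H^i_\sub{cont}(\bb Z_p(1)^d,W(R_\infty^\flat))$ from which goodness can be read off. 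The paper notes that the direct goodness argument can be pushed through (referring to [BMS, Lems.~9.12--9.13]) but calls it ``subtle'', and instead presents a shorter route due to Bhatt: a strengthening of Lemma~\ref{lemma_technical_0} in which the hypothesis on $C$ is relaxed to ``the cohomology of $C\dotimes_A A/fA$ contains no nonzero element killed by $\frak M^2$''. This reduces the problem to analysing $R\Gamma_\sub{cont}(\bb Z_p(1)^d,W(R_\infty^\flat)/\mu)$, and modulo $\mu$ one has $\tilde\xi_r\equiv p^r$, so the topology becomes $p$-adic and the Koszul decomposition (already carried out in the proof that (t6) is a quasi-isomorphism) is available. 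The payoff is that one never has to control the cohomology of $W(R_\infty^\flat)$ itself, only its reduction mod $\mu$.
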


We now sketch a proof of the previous two technical lemmas. The arguments are of a similar flavour to what we have already seen in \S\ref{subsection_faltings_purity} and \S\ref{subsection_local_Cartier}, so we will not provide all the details; see [\S9, BMS] for further details. For the overall logic of the proof, it will be helpful to draw the following commutative diagram of the maps of interest:
\[\xymatrix@C=0.7cm@R=1.5cm{
R\Gamma_\sub{Zar}(\frak X,\tilde{W_r\Omega}_{\frak X/\roi})=R\Gamma_\sub{Zar}(\frak X,\bb A\Omega_{\frak X/\roi})/\tilde\xi_r \ar[r]^-{(t7)} & R\Gamma_\sub{Zar}(\frak X,L\eta_{[\zeta_{p^r}]-1}R\nu_* W_r(\hat\roi_X^+))\\
\bb A\Omega_{R/\roi}^\sub{pro\'et}/\tilde\xi_r=L\eta_\mu R\Gamma_\sub{pro\'et}(X, \bb A_{\sub{inf},X})/\tilde\xi_r \ar[r]^-{(t3)}\ar[u]^{(t1)\op{mod}\tilde\xi_r} & L\eta_{[\zeta_{p^r}]-1}R\Gamma_\sub{pro\'et}(X, W_r(\hat\roi_X^+))=\tilde{W_r\Omega}_{R/\roi}^\sub{pro\'et}\ar[u]_{(t2)}\\
\bb A\Omega_{R/\roi}^\square/\tilde\xi_r=L\eta_\mu R\Gamma_\sub{cont}(\bb Z_p(1)^d, W(R_\infty^\flat))/\tilde\xi_r \ar[r]^-{(t6)} \ar[u]^{(t4)\op{mod}\tilde\xi_r}& L\eta_{[\zeta_{p^r}]-1}R\Gamma_\sub{cont}(\bb Z_p(1)^d, W_r(R_\infty))=\tilde{W_r\Omega}_{R/\roi}^\square\ar[u]_{(t5)}
}\]
The new maps, namely (t6) and (t7), are simply the base change maps associated to the identifications $\tilde\theta_r:W(R_\infty^\flat)/\tilde\xi_r\isoto W_r(R_\infty)$ and $\tilde\theta_r:\bb A_{\sub{inf},X}/\tilde\xi_r\isoto W_r(\hat\roi_X^+)$. In particular, the diagram commutes by the naturality of global-to-local and base-change maps. We will show that (t1)--(t7) are quasi-isomorphisms. We begin by proving the following, which is [Lem.~9.7(i), BMS]:

\begin{lemma}\label{lemma_technical_3}
$R\Gamma_\sub{cont}(\bb Z_p(1)^d,W_r(R_\infty))$ is quasi-isomorphic to the derived $p$-adic completion of a direct sum of Koszul complexes $K_{W_r(\roi)}([\zeta^{k_1}]-1,\dots,[\zeta^{k_d}]-1)$, for varying $k_i\in\bb Z[\tfrac1p]$ .
\end{lemma}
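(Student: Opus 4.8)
The plan is to compute $R\Gamma_\sub{cont}(\bb Z_p(1)^d,W_r(R_\infty))$ by exactly the same strategy as in the proof of Theorem \ref{theorem_integral_purity_W_r} and the proof of Lemma \ref{lemma_Dgrp_1}: decompose $W_r(R_\infty)$ as a direct sum of rank-one $W_r(R)$-modules indexed by fractional exponents, identify the group cohomology of each summand with a Koszul complex, and then pass the (derived) $p$-adic completion through the calculation. The only new ingredient compared with \S\ref{subsection_faltings_purity} is that we are now working over the Witt ring $W_r(\roi)$ rather than $\roi$ itself; but this was already prepared for in the statement of Lemma \ref{lemma_local_structure_of_sheaves} and the ``Cartan--Leray almost quasi-isomorphisms'' boxed at the end of \S\ref{subsection_CL}.

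First I would recall from \S\ref{subsection_CL}, applied to the framed small $R$ with $R_\infty=R\hat\otimes_{\roi\pid{\ul T^{\pm1}}}\roi\pid{\ul T^{\pm1/p^\infty}}$, that there is a $\Gamma=\bb Z_p(1)^d$-equivariant decomposition into $W_r(R)$-modules
\[
W_r(R_\infty)=W_r(R)\oplus W_r(R_\infty)^\sub{non-int},\qquad W_r(R_\infty)^\sub{non-int}=\hat{\bigoplus_{\substack{k_1,\dots,k_d\in\bb Z[\tfrac1p]\cap[0,1)\\\sub{not all zero}}}}W_r(R)[T_1^{k_1}]\cdots[T_d^{k_d}],
\]
obtained by base changing the analogous decomposition of $W_r(\roi)\pid{\ul T^{\pm1/p^\infty}}$ along the formally \'etale framing (exactly as the $S$-module decomposition was produced in the proof of Theorem \ref{theorem_integral_purity_W_r}); here the generator $\gamma_i\in\Gamma$ acts on the rank-one free $W_r(R)$-module $W_r(R)[T_1^{k_1}]\cdots[T_d^{k_d}]$ as multiplication by the Teichm\"uller element $[\zeta^{k_i}]$, using the abuse of notation $\zeta^{k}:=\zeta_{p^j}^a$ for $k=a/p^j$. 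Then $R\Gamma_\sub{cont}(\Gamma,W_r(R_\infty))$ splits accordingly, and by the standard identification of continuous group cohomology of $\bb Z_p^d$ with a Koszul complex each summand $R\Gamma_\sub{cont}(\Gamma,W_r(R)[T_1^{k_1}]\cdots[T_d^{k_d}])$ is computed by $K_{W_r(R)}([\zeta^{k_1}]-1,\dots,[\zeta^{k_d}]-1)$ (the summand $W_r(R)$ itself contributes $K_{W_r(R)}(0,\dots,0)$, which is the $k_i=0$ case).

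The remaining point is to pass to the derived $p$-adic completion of the sum of complexes: this uses the argument of footnote \ref{footnote_completing_sums}, i.e.\ one checks that each $H^i$ of each Koszul complex $K_{W_r(R)}([\zeta^{k_1}]-1,\dots,[\zeta^{k_d}]-1)$ is $p$-adically complete and separated with $p$-power-torsion bounded uniformly in $(k_1,\dots,k_d)$ --- this uniform bound follows from the description of these cohomology groups via Corollary \ref{corollary_roots_of_unity} (a fixed power of $p$ kills all the relevant $W_r(\roi)$-modules $W_r(\roi)/\tfrac{[\zeta_{p^j}]-1}{[\zeta_{p^r}]-1}$ since $j\le r$), together with the fact that $R$, hence $W_r(R)$, is a (topologically) free module over $\roi$, resp.\ $W_r(\roi)$. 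Finally, since $W_r(R)$ is formally \'etale over $W_r(\roi)$ one has $K_{W_r(R)}(\ul a)\simeq K_{W_r(\roi)}(\ul a)\hat\otimes_{W_r(\roi)}W_r(R)$, but for the statement as written it suffices to record the Koszul complexes over $W_r(\roi)$ after absorbing $W_r(R)$, which is what the lemma asserts. The main obstacle, as usual in these arguments, is the bookkeeping around the derived $p$-adic completion --- making sure the completion commutes with the (infinite) direct sum and with taking cohomology --- but this is handled uniformly by the uniform-torsion-bound criterion of footnote \ref{footnote_completing_sums}, so no genuinely new difficulty arises.
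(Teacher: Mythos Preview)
Your overall strategy --- decompose into rank-one pieces indexed by fractional exponents and recognise each as a Koszul complex --- is the same as the paper's, but the execution diverges at a key point and your version has a real gap.

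You assert a $\Gamma$-equivariant decomposition
\[
W_r(R_\infty)\;=\;\hat{\bigoplus_{k_1,\dots,k_d}}\,W_r(R)\,[T_1^{k_1}]\cdots[T_d^{k_d}]
\]
by ``base changing the analogous decomposition of $W_r(\roi)\pid{\ul T^{\pm1/p^\infty}}$ along the formally \'etale framing''. But Witt vectors do not commute with tensor products, so it is \emph{not} automatic that $W_r(R_\infty)$ is $W_r(R)\hat\otimes_{W_r(\roi\pid{\ul T^{\pm1}})}W_r(\roi\pid{\ul T^{\pm1/p^\infty}})$; producing such a decomposition over $W_r(R)$ requires an additional argument (for instance constructing, via formal \'etaleness, a lift $S$ of $R$ over $W_r(\roi)\pid{\ul T^{\pm1}}$ and then identifying $S\hat\otimes W_r(\roi)\pid{\ul T^{\pm1/p^\infty}}$ with $W_r(R_\infty)$). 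The results you cite for this step, Theorem~\ref{theorem_integral_purity_W_r} and Corollary~\ref{corollary_roots_of_unity}, are not part of the paper. Relatedly, your closing remark that ``$W_r(R)$ is formally \'etale over $W_r(\roi)$'' is false: $R$ is formally \emph{smooth} (not \'etale) over $\roi$, so $W_r(R)$ is not formally \'etale over $W_r(\roi)$; the Koszul base change $K_{W_r(R)}(\ul a)\cong K_{W_r(\roi)}(\ul a)\otimes_{W_r(\roi)}W_r(R)$ is of course trivially true and needs no such hypothesis.

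The paper sidesteps this by using \'etaleness in the other direction: since $\roi\pid{\ul T^{\pm1/p^\infty}}\to R_\infty$ is formally \'etale, preservation of \'etaleness under $W_r$ gives that $W_r(\roi\pid{\ul T^{\pm1/p^\infty}})/p^n\to W_r(R_\infty)/p^n$ is \'etale (hence flat) for all $n$, and therefore
\[
R\Gamma_\sub{cont}(\bb Z_p(1)^d,W_r(R_\infty))\;\simeq\;R\Gamma_\sub{cont}(\bb Z_p(1)^d,W_r(\roi\pid{\ul T^{\pm1/p^\infty}}))\,\hat\dotimes_{W_r(\roi\pid{\ul T^{\pm1/p^\infty}})}\,W_r(R_\infty).
\]
One then decomposes the torus factor (identifying it with the derived $p$-adic completion of $R\Gamma(\bb Z^d,W_r(\roi)[\ul U^{\pm1/p^\infty}])$ and then into Koszul complexes exactly as in Lemma~\ref{lemma_Dgrp_1}), and finishes by noting that $W_r(R_\infty)$ is the $p$-adic completion of a free $W_r(\roi)$-module. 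This route never needs a module decomposition of $W_r(R_\infty)$ over $W_r(R)$, which is what makes it cleaner.
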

\begin{proof}
Since Witt vectors preserve \'etale morphisms [BMS, Thm.~10.4], the maps $W_r(\roi\pid{\ul T^{\pm1/p^\infty}}/p^n)\to W_r(R_\infty/p^n)$ induced by the framing are \'etale for all $n\ge1$, whence the same is true of the maps $W_r(\roi\pid{\ul T^{\pm1/p^\infty}})/p^n\to W_r(R_\infty)/p^n$ (since the systems of ideals $(p^nW_r(B))_{n\ge1}$ and $(W_r(p^nB))_{n\ge1}$ are intertwined for any ring $B$; for a proof see, e.g., [BMS, Lem.~10.3]). In particular, these latter maps are flat for all $n\ge1$ and therefore \[R\Gamma_\sub{cont}(\bb Z_p(1)^d,W_r(R_\infty))\simeq R\Gamma_\sub{cont}(\bb Z_p(1)^d,W_r(\roi\pid{\ul T^{\pm1/p^\infty}}))\hat\dotimes_{W_r(\roi\pid{\ul T^{\pm1/p^\infty}})}W_r(R_\infty).\] 

Next we note that $R\Gamma_\sub{cont}(\bb Z_p(1)^d,W_r(\roi\pid{\ul T^{\pm1/p^\infty}}))$ identifies with the derived $p$-adic completion of the complex $R\Gamma(\bb Z^d,W_r(\roi)[\ul U^{\pm1/p^\infty}])$ which was studied in \S\ref{subsection_local_Cartier}: we will explain and prove this identification carefully towards the end of the proof of Proposition \ref{proposition_final}, so do not say more here. Also, an easy modification of the first half of the proof of Lemma \ref{lemma_Dgrp_1} shows that \[R\Gamma(\bb Z^d,W_r(\roi)[\ul U^{\pm1/p^\infty}])\simeq \bigoplus_{k_1,\dots,k_d\in \bb Z[\tfrac1p]\cap[0,1)}K_{W_r(\roi)}([\zeta^{k_1}]-1,\dots,[\zeta^{k_d}]-1)\otimes_{W_r(\roi)}W_r(\roi)[\ul U^{\pm1/p^\infty}].\]
Assembling these identities shows that $R\Gamma_\sub{cont}(\bb Z_p(1)^d,W_r(R_\infty))$ is quasi-isomorphic to the derived $p$-adic completion of \[\bigoplus_{k_1,\dots,k_d\in \bb Z[\tfrac1p]\cap[0,1)}K_{W_r(\roi)}([\zeta^{k_1}]-1,\dots,[\zeta^{k_d}]-1)\otimes_{W_r(\roi)}W_r(R_\infty).\] The proof is completed in a similar way to that of Theorem \ref{theorem_integral_purity}, namely by arguing that $W_r(R_\infty)$ is the $p$-adic completion of a free $W_r(\roi)$-module, which we leave to the reader.
\end{proof}

\begin{proof}[Proof that (t5) is a quasi-isom.]
Using Lemma \ref{lemma_on_Koszul_2} to calculate the cohomology of the Koszul complexes in Lemma \ref{lemma_technical_3} (and footnote \ref{footnote_completing_sums} to exchage cohomology and $p$-adic completions), it follows that each cohomology group of $R\Gamma_\sub{cont}(\bb Z_p(1)^d,W_r(R_\infty))$ is isomorphic to the $p$-adic completion of a direct sum of copies of \[W_r(\roi),\qquad W_r(\roi)[[\zeta_{p^j}]-1],\qquad W_r(\roi)/([\zeta_{p^j}]-1),\qquad j\ge1,\] each of which is ``good'' in the sense of Lemma \ref{lemma_technical_0} (wrt.\ $A=W_r(\roi)$, $\frak M=W_r(\frak m)$, and $f=[\zeta_{p^r}]-1$)) by [Corol.~3.29, BMS]. So all cohomology groups $R\Gamma_\sub{cont}(\bb Z_p(1)^d,W_r(R_\infty))$ are good, whence Lemma \ref{lemma_technical_0} implies that (t5) is a quasi-isomorphism.
\end{proof}


\begin{proof}[Proof of Lemma \ref{lemma_technical_1}(i)]
Since $L\eta_{[\zeta_{p^r]}-1}$ commutes with derived $p$-adic completion by Remark \ref{remark_non_primitive}, Lemmas \ref{lemma_technical_3} and \ref{lemma_on_Koszul_1} imply that $\tilde{W_r\Omega}_R^\square=L\eta_{[\zeta_{p^r}]-1}R\Gamma_\sub{cont}(\bb Z_p(1)^d,W_r(R_\infty))$ is quasi-isomorphic to the derived $p$-adic completion of a direct sum of Koszul complexes \[K_{W_r(\roi)}\left(\frac{[\zeta_{p^{j_1}}]-1}{[\zeta_{p^r}]-1},\dots,\frac{[\zeta_{p^{j_d}}]-1}{[\zeta_{p^r}]-1}\right),\] for varying $j_1,\dots,j_d\le r$. The calculation at the end of the proof of Lemma \ref{lemma_Dgrp_1} therefore shows that the cohomology groups of $\tilde{W_r\Omega}_R^\square$ are $p$-torsion-free. Combining this with quasi-isomorphism (t5) proves Lemma \ref{lemma_technical_1}(i).
\end{proof}


\begin{proof}[Proof of Lemma \ref{lemma_technical_1}(ii) and that (t2) is a quasi-isom.]
Let $R'$ be a $p$-adically complete, formally \'etale $R$-algebra, and write $R'_\infty:=R'\hat\otimes_{\roi\pid{\ul T^{\pm1}}}\roi\pid{\ul T^{\pm1/p^\infty}}$. Just as at the start of the proof of Lemma \ref{lemma_technical_3}, the maps $W_r(R_\infty)/p^n\to W_r(R_\infty')/p^n$ are flat for all $n\ge1$, whence the canonical map \[\tilde{W_r\Omega}_{R/\roi}^\square\hat\dotimes_{W_r(R)}W_r(R')\To \tilde{W_r\Omega}_{R'/\roi}^\square\] is a quasi-isomorphism. The same is therefore true after replacing $^\square$ by $^\sub{pro\'et}$ (since (t5) is a quasi-isomorphism for both $R$ and $R'$), and this proves Lemma \ref{lemma_technical_1}(ii). This is a strong enough coherence result to show that $\tilde{W_r\Omega}_{R/\roi}^\sub{pro\'et}\hat\dotimes_{W_r(R)}W_r(\roi_\frak X)\to L\eta_{[\zeta_{p^r}]-1}R\nu_*W_r(\hat\roi_X^+)$ is a quasi-isomorphism of complexes of $W_r(\roi_\frak X)$-modules, and it follows that (t2) is a quasi-isomorphism (see [Corol.~9.11, BMS] for further details).
\end{proof}


\begin{proof}[Proof that (t6) is a quasi-isom.]
According to footnote \ref{footnote_condition_for_W3}, it is enough to prove that the cohomology of the complex $R\Gamma_\sub{cont}(\bb Z_p(1)^d, W(R_\infty^\flat))\dotimes_{\bb A_\sub{inf}}\bb A_\sub{inf}/\mu \bb A_\sub{inf}=R\Gamma_\sub{cont}(\bb Z_p(1)^d, W(R_\infty^\flat)/\mu)$ is $p$-torsion-free. We claim first that the map $W(\roi\pid{\ul T^{\pm1/p^\infty}}^\flat)/\pid{\mu,p^r}\to W(R_\infty^\flat)/\pid{\mu,p^r}$ is \'etale for all $r\ge1$; indeed, $p^r\equiv \tilde\xi_r$ mod $\mu\bb A_\sub{inf}$ and so $\tilde\theta_r$ identifies this map with $W_r(\roi\pid{\ul T^{\pm1/p^\infty}})/([\zeta_{p^r}]-1)\to W_r(R_\infty)/([\zeta_{p^r}]-1)$. But $[\zeta_{p^r}]-1$ divides $p^r$ and, just as at the start of the proof of Lemma \ref{lemma_technical_3}, the map $W_r(\roi\pid{\ul T^{\pm1/p^\infty}})/p^r\to W_r(R_\infty)/p^r$ is \'etale; this proves the claim. 

The claim reduces the proof to showing that the cohomology of $R\Gamma_\sub{cont}(\bb Z_p(1)^d, W(\roi\pid{\ul T^{\pm1/p^\infty}}^\flat)/\mu)$ is $p$-torsion-free. To show this we first observe that there is an isomorphism of $\bb A_\sub{inf}/\mu\bb A_\sub{inf}$-algebras \[\bb A_\sub{inf}/\mu\bb A_\sub{inf}\pid{\ul U^{\pm1/p^\infty}}\Isoto W(\roi\pid{\ul T^{\pm1/p^\infty}}^\flat)/\mu,\quad U_i^k\mapsto [(T_i^k,T_i^{k/p},T_i^{k/p^2},\dots)]\quad (k\in\bb Z[\tfrac1p]),\] which is proved by quotienting the ``standard isomorphism'' in the proof of Lemma \ref{lemma_lambda_grp} by $[\zeta_{p^r}]-1$ and then taking $\projlim_{r\sub{ wrt }F}$. By the same type of Koszul decomposition argument which has been made several times, it now follows that $R\Gamma_\sub{cont}(\bb Z_p(1)^d, W(\roi\pid{\ul T^{\pm1/p^\infty}}^\flat)/\mu)$ is quasi-isomorphic to the derived $p$-adic completion of \[\bigoplus_{k_1,\dots,k_d\in\bb Z[\tfrac1p]}K_{\bb A_\sub{inf}/\mu\bb A_\sub{inf}}([\ep^{k_1}]-1,\dots,[\ep^{k_2}]-1).\] The cohomology of each of these Koszul complexes is, by Lemma \ref{lemma_on_Koszul_2}, a finite direct sum of copies of \[(\bb A_\sub{inf}/\mu\bb A_\sub{inf})[[\ep^k]-1]\qquad\text{and}\qquad \bb A_\sub{inf}/([\ep^k]-1)\bb A_\sub{inf}\] for various $k\in\bb Z[\tfrac1p]$. But these are $p$-torsion-free since $p,[\ep^k]-1$ is a regular sequence of $\bb A_\sub{inf}$ (see footnote \ref{footnote_reg_seq}) for any $k\in\bb Z[\tfrac1p]$  (including $k=1$, to treat the left term).
\end{proof}


\begin{proof}[Proof that (t4) is a quasi-isom.]
Proving that (t4) is a quasi-isomorphism was done in [BMS] via a subtle generalisation of the ``good'' cohomology groups argument of Lemma \ref{lemma_technical_0}, which required calculating $R\Gamma_\sub{cont}(\bb Z_p(1)^d, W(\roi\pid{\ul T^{\pm1/p^\infty}}^\flat))$ in terms of Koszul complexes\footnote{Here we explain why the analogous calculations we have already seen do not generalise to this case. Although there is an identification $\bb A_\sub{inf}\langle \ul U^{\pm1/p^\infty}\rangle\isoto W(\roi\pid{\ul T^{\pm1/p^\infty}}^\flat)$, the convergence of the power series on the left is with respect to the $\pid{p,\xi}$-adic topology. But neither $R\Gamma_\sub{cont}(\bb Z_p(1)^d,\cdot)$ nor $L\eta_\mu$ commute with derived $\pid{p,\xi}$-adic completion!} (see Lems.~9.12--9.13 and the first paragraph of Prop.~9.14). Here we will offer a simpler argument which was presented first in \cite[Rem.~7.11]{Bhatt2016}.

We need the following strengthening of Lemma \ref{lemma_technical_0}: ``Let $\frak M\subseteq A$ be an ideal of a ring and $f\in \frak M$ a non-zero-divisor; if $\cal C\to \cal D$ is a morphism of complexes of $A$-modules whose cone is killed by $\frak M$, and all cohomology groups of $C\dotimes_AA/fA$ contain no non-zero elements killed by $\frak M^2$, then $L\eta_f C\to L\eta_fD$ is a quasi-isomorphism.'' This follows from the proof of \cite[Lem.~6.14]{Bhatt2016} and exploits the relation between $L\eta$ and the Bockstein construction.

Applying this in the case $A=\bb A_\sub{inf}$, $f=\mu$, and $\frak M=W(\frak m^\flat)$, the proof immediately reduces to showing that the cohomology of $R\Gamma_\sub{cont}(\bb Z_p(1)^d,W(R^\flat_\infty)/\mu)$ contains no non-zero elements killed by $W(\frak m^\flat)^2$. But the decomposition from the previous proof showed that each of these cohomology groups was the $p$-adic completion of a direct sum of copies of the $p$-torsion-free modules \[(\bb A_\sub{inf}/\mu\bb A_\sub{inf})[[\ep^k]-1]\qquad\text{and}\qquad \bb A_\sub{inf}/([\ep^k]-1)\bb A_\sub{inf}\] for various $k\in\bb Z[\tfrac1p]$; so it is enough to show for any $k\in\bb Z[\tfrac1p]$ (including $k=1$, to treat the left term) that $\bb A_\sub{inf}/([\ep^k]-1)\bb A_\sub{inf}$ contains no non-zero elements killed by $W(\frak m^\flat)^2$. But the maps $\tilde\theta_r$ induce an isomorphism $\bb A_\sub{inf}/([\ep^k]-1)\bb A_\sub{inf}\isoto \projlim_{r\sub{ wrt }F}W_r(\roi)/([\zeta^{k/p^r}]-1)$, and each $W_r(\roi)/([\zeta^{k/p^r}]-1)$ contains no non-zero elements killed by $W_r(\frak m)^2=W_r(\frak m)$ (recall that $W_r(\frak m)$ is an ideal for almost mathematics, c.f.,  footnote \ref{footnote_almost_W_r}), as we already saw above in the proof that (t5) is a quasi-isomorphism.
\end{proof}

\begin{proof}[Proof that (t1), (t3), and (t7) are quasi-isoms.]
Since we now know that (t4) is a quasi-isomorphism, the commutativity of the diagram implies that (t3) is a quasi-isomorphism. Using this quasi-isomorphism, and by taking $\op{Rlim}_{r\sub{ wrt }F}$ of the quasi-isomorphisms (t2), it can be shown that (t1) is a quasi-isomorphism [BMS, Prop.~9.14]. Finally, the commutativity of the diagram implies that (t7) is also a quasi-isomorphism.
\end{proof}

This finishes the proofs of the technical lemmas, but we note in addition the following consequence which was needed at the start of the proof of Theorem \ref{theorem_main_with_proofs}:

\begin{corollary}\label{corollary_completeness}
If $\frak X$ is a smooth formal scheme over $\roi$, then the complex of $\bb A_\sub{inf}$-modules $R\Gamma_\sub{Zar}(\frak X,\bb A\Omega_{\frak X/\roi})$ is derived $\xi$-adically complete.
\end{corollary}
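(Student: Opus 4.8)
The plan is to sidestep the naive sheaf-theoretic argument and to reduce the statement to one about complexes of $\bb A_\sub{inf}$-modules, where the d\'ecalage functor behaves well. As recalled in footnote~\ref{footnote_complete} of the proof of Theorem~\ref{theorem_main_with_proofs}, the obstruction to a purely formal proof is that on the non-replete Zariski site $L\eta_\mu$ need not preserve derived $\xi$-adic completeness of a complex of \emph{sheaves}: the characterisation ``a complex is derived $\xi$-adically complete iff all of its cohomology objects are'' can fail there, since countable products of Zariski sheaves need not be exact. That characterisation is nonetheless valid in $D(\bb A_\sub{inf})$, so I would first establish the corollary for $\frak X=\Spf R$ a small affine and then globalise.

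\emph{The small affine case.} First I would fix a small affine open $\Spf R\subseteq\frak X$ with generic fibre $X_R=\Sp R[\tfrac1p]$, and invoke Lemma~\ref{lemma_technical_1}: the global-to-local morphism~(t1) is a quasi-isomorphism, hence
\[
R\Gamma_\sub{Zar}(\Spf R,\bb A\Omega_{\frak X/\roi})\simeq L\eta_\mu R\Gamma_\sub{pro\'et}(X_R,\bb A_{\sub{inf},X}).
\]
Next I would note that $R\Gamma_\sub{pro\'et}(X_R,\bb A_{\sub{inf},X})=R\Gamma_\sub{Zar}(\Spf R,R\nu_*\bb A_{\sub{inf},X})$ is a derived $\xi$-adically complete complex of $\bb A_\sub{inf}$-modules: indeed $R\nu_*\bb A_{\sub{inf},X}$ is derived $\xi$-adically complete (as recalled in the proof of Theorem~\ref{theorem_main_with_proofs}), and $R\Gamma_\sub{Zar}(\Spf R,-)$ commutes with $\op{Rlim}$ and --- as $\xi$ is a non-zero-divisor --- with $-\dotimes_{\bb A_\sub{inf}}\bb A_\sub{inf}/\xi^r\bb A_\sub{inf}$, so preserves this completeness. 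Finally I would apply the module-level fact from footnote~\ref{footnote_complete}: for non-zero-divisors $f,g$ of a ring $A$ and any derived $g$-adically complete $E\in D(A)$, the complex $L\eta_fE$ is again derived $g$-adically complete (using Lemma~\ref{lemma_calculation_of_cohomol}, the stability of derived $g$-adically complete $A$-modules under kernels and cokernels, and the cohomological characterisation of derived $g$-adic completeness in $D(A)$). Taking $A=\bb A_\sub{inf}$, $f=\mu$, $g=\xi$ --- both non-zero-divisors of $\bb A_\sub{inf}$ by Proposition~\ref{proposition_roots_of_unity} and \S\ref{subsection_roots_of_unity} --- this gives that $L\eta_\mu R\Gamma_\sub{pro\'et}(X_R,\bb A_{\sub{inf},X})$, hence $R\Gamma_\sub{Zar}(\Spf R,\bb A\Omega_{\frak X/\roi})$, is derived $\xi$-adically complete.

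\emph{Globalisation.} For general $\frak X$ I would use that the derived $\xi$-adically complete complexes form a triangulated subcategory $D_\sub{comp}(\bb A_\sub{inf})\subseteq D(\bb A_\sub{inf})$ closed under limits --- in particular under products and totalisations of cosimplicial objects --- together with the fact (\S\ref{subsection_CL}) that $\frak X$ has a basis of small affine opens. Choosing a cover of $\frak X$ by small affines, $R\Gamma_\sub{Zar}(\frak X,\bb A\Omega_{\frak X/\roi})$ is the totalisation of the associated \v{C}ech complex, whose terms are products of the complexes $R\Gamma_\sub{Zar}(\frak W,\bb A\Omega_{\frak X/\roi})$ over finite intersections $\frak W$ of members of the cover; each such $\frak W$ is quasi-compact, hence a finite union of small affines, so a routine Mayer--Vietoris induction reduces it to the small affine case and places it in $D_\sub{comp}(\bb A_\sub{inf})$, whence the totalisation lies there too. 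When $\frak X$ is quasi-compact --- the only case needed in these notes --- a finite cover suffices and the bookkeeping is entirely standard.

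\emph{The main obstacle.} There is no serious difficulty once Lemma~\ref{lemma_technical_1} is in hand; the entire substance of the argument is that global-to-local quasi-isomorphism, which I would treat as a black box. The only step deserving a little care is the globalisation: one must check that the \v{C}ech/Mayer--Vietoris procedure never leaves $D_\sub{comp}(\bb A_\sub{inf})$, which is guaranteed by its closure under limits. I would not expect this to cause real trouble.
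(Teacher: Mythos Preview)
Your argument is correct and follows essentially the same route as the paper: reduce to small affines, use the global-to-local quasi-isomorphism (t1) to express $R\Gamma_\sub{Zar}(\Spf R,\bb A\Omega_{\frak X/\roi})$ as $L\eta_\mu$ of a derived $\xi$-adically complete complex of $\bb A_\sub{inf}$-modules, and then invoke the module-level fact from footnote~\ref{footnote_complete}. The only difference is that the paper continues one step further via (t4) to the group-cohomology model $\bb A\Omega_{R/\roi}^\square=L\eta_\mu R\Gamma_\sub{cont}(\bb Z_p(1)^d,W(R_\infty^\flat))$, where the derived $\xi$-adic completeness of the input is immediate from the $\xi$-adic completeness of $W(R_\infty^\flat)$; your variant stops at $\bb A\Omega_{R/\roi}^\sub{pro\'et}$ and instead argues directly (as the paper itself notes in footnote~\ref{footnote_complete}) that $R\Gamma_\sub{pro\'et}(X_R,\bb A_{\sub{inf},X})$ is derived $\xi$-adically complete. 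This is slightly more economical in that it avoids (t4), but the substance is the same.
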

\begin{proof}
By picking a cover of $\frak X$ by small opens, we may suppose that $\frak X=\Spf R$ is a small affine as above. Then the complex $R\Gamma_\sub{cont}(\bb Z_p(1)^d,W(R^\flat_\infty))$ is derived $\xi$-adically complete since $W(R^\flat_\infty)$ is $\xi$-adically complete,\footnote{If $A$ is any perfectoid ring then $W(A^\flat)$ is $\ker\theta$-adically complete.} whence $\bb A\Omega_{R/\roi}^\square$ is derived $\xi$-adically complete since $L\eta_\mu$ preserves the completeness by footnote \ref{footnote_complete}. Now quasi-isomorphisms (t1) and (t4) complete the proof.
\end{proof}

\subsection{Reduction to a torus and to Theorem \ref{theorem_de_rham_witt_group_cohomology}}\label{subsection_reduction_to_tor}
We continue to suppose that $R$ is a $p$-adically complete, formally smooth $\roi$-algebra, with notation $\frak X=\Spf R$ and $X=\Sp R[\tfrac1p]$ as in \S\ref{subsection_technical}. We wish to apply the construction of \S\ref{subsection_constructing_Witt} (with base perfectoid ring $A=\roi$) to \[D^\sub{pro\'et}_{R/\roi}:=R\Gamma_\sub{pro\'et}(X,\bb A_{\sub{inf},X}),\] and must therefore check that the necessary hypotheses are fulfilled:

\begin{lemma}
$D^\sub{pro\'et}_{R/\roi}$ is a coconnective algebra object in $D(\bb A_\sub{inf})$ which is equipped with a $\phi$-semi-linear quasi-isomorphism $\phi_\sub{pro\'et}:D^\sub{pro\'et}_{R/\roi}\quis D^\sub{pro\'et}_{R/\roi}$. If $R$ is small, then it moreover satisfies assumptions ($\cal W1$)--($\cal W3$) from \S\ref{subsection_constructing_Witt} and there exist $W_r(\roi)$-algebra homomorphisms $\lambda_{r,\sub{pro\'et}}:W_r(R)\to H^0(D^\sub{pro\'et}_{R/\roi}/\tilde\xi_r)$ (natural in $R$) making the diagrams of Theorem \ref{theorem_witt_main} commute.
\end{lemma}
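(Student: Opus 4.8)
The strategy is to reduce every assertion to facts already established for the group-cohomology complex $D^\sub{grp}=D^\sub{grp}_{\roi,d}$ of \S\ref{subsection_local_Cartier} and for the Cartan--Leray almost quasi-isomorphisms of \S\ref{subsection_CL}, together with the technical quasi-isomorphisms of \S\ref{subsection_technical}. First I would dispose of the parts that need no smallness: $D^\sub{pro\'et}_{R/\roi}=R\Gamma_\sub{pro\'et}(X,\bb A_{\sub{inf},X})$ is a coconnective algebra object in $D(\bb A_\sub{inf})$ because $\bb A_{\sub{inf},X}$ is a sheaf of rings and $X$ is covered by affinoid perfectoids on which $\bb A_{\sub{inf},X}$ has no higher cohomology in negative degrees (indeed $R\Gamma_\sub{pro\'et}$ of a sheaf is concentrated in non-negative degrees). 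The Frobenius-semilinear quasi-isomorphism $\phi_\sub{pro\'et}$ is induced functorially by the Witt-vector Frobenius $\phi$ on the sheaf $\bb A_{\sub{inf},X}=W(\roi_X^{+\flat})$, which is an automorphism since $\roi_X^{+\flat}$ is a sheaf of perfect $\bb F_p$-algebras; this is exactly the sheaf-level analogue of the statement for $D^\sub{grp}$ in Lemma \ref{lemma_Dgrp_1}.

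Now assume $R$ is small and fix a framing $\roi\pid{\ul T^{\pm1}}\to R$, giving $R_\infty$ and the Cartan--Leray almost quasi-isomorphism $R\Gamma_\sub{cont}(\bb Z_p(1)^d,W(R_\infty^\flat))\to D^\sub{pro\'et}_{R/\roi}$ of \S\ref{subsection_CL}. Hypothesis ($\cal W1$), that $H^0(D^\sub{pro\'et}_{R/\roi})$ is $\mu$-torsion-free, follows because $H^0=\nu_*\bb A_{\sub{inf},X}$ is a subsheaf of sections of $\bb A_{\sub{inf},X}$, on which $\mu$ is a non-zero-divisor by Corollary \ref{corollary_non_zero_divisors_on_sheaf}. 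For ($\cal W2$) and ($\cal W3$) I would invoke quasi-isomorphism (t4) of Lemma \ref{lemma_technical_2} (together with its mod-$\tilde\xi_r$ reduction), which identifies $L\eta_\mu D^\sub{pro\'et}_{R/\roi}$ with $\bb A\Omega^\square_{R/\roi}=L\eta_\mu R\Gamma_\sub{cont}(\bb Z_p(1)^d,W(R_\infty^\flat))$; the $p$-torsion-freeness of the cohomology of $L\eta_\mu D^\sub{pro\'et}_{R/\roi}\dotimes_{\bb A_\sub{inf}}\bb A_\sub{inf}/\tilde\xi_r$ is then precisely Lemma \ref{lemma_technical_1}(i) via (t5), and ($\cal W3$) follows from the $p$-torsion-freeness of the cohomology of $D^\sub{pro\'et}_{R/\roi}\dotimes_{\bb A_\sub{inf}}\bb A_\sub{inf}/\mu$ (footnote \ref{footnote_condition_for_W3}), which is the content of the proof that (t6) is a quasi-isomorphism. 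One must check these properties are independent of the framing, but since ($\cal W1$)--($\cal W3$) are intrinsic statements about $D^\sub{pro\'et}_{R/\roi}$ this is automatic once established for one framing.

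It remains to construct the structure maps $\lambda_{r,\sub{pro\'et}}:W_r(R)\to H^0(D^\sub{pro\'et}_{R/\roi}/\tilde\xi_r)$. Here I would use the identification $\tilde\theta_r:\bb A_{\sub{inf},X}/\tilde\xi_r\isoto W_r(\hat\roi_X^+)$ (Corollary \ref{corollary_non_zero_divisors_on_sheaf}), under which $H^0(D^\sub{pro\'et}_{R/\roi}/\tilde\xi_r)=H^0_\sub{pro\'et}(X,W_r(\hat\roi_X^+))$; the canonical map $W_r(R)=W_r(\roi_{\frak X}(\frak X))\to H^0_\sub{pro\'et}(X,W_r(\hat\roi_X^+))$ coming from $\roi_{\frak X}\to R\nu_*\hat\roi_X^+$ applied functorially by $W_r$ gives $\lambda_{r,\sub{pro\'et}}$. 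Naturality in $R$ is clear. The compatibility with $R,F,V$ in the three diagrams of Theorem \ref{theorem_witt_main} is then checked exactly as in Lemma \ref{lemma_lambda_grp}: the maps $\phi_D^{-1}$, ``canonical projection'', and $\times\phi^{r+1}(\xi)$ on $H^0(D^\sub{pro\'et}_{R/\roi}/\tilde\xi_r)$ translate under $\tilde\theta_\blob$ into the Witt-vector Restriction, Frobenius, and Verschiebung on $W_r(\hat\roi_X^+)$ by the second set of diagrams in Lemma \ref{lemma_theta_r_diagrams}, and these are compatible with the corresponding operations on $W_r(R)$ because the map $W_r(R)\to W_r(\hat\roi_X^+)$ is $W_r$ applied to a ring map, hence commutes with $R$, $F$, $V$ tautologically. \textbf{The main obstacle} is ($\cal W2$)/($\cal W3$): these are not formal and genuinely rest on the Koszul-complex computations of \S\ref{subsection_technical} (Lemmas \ref{lemma_technical_3}, \ref{lemma_technical_1}(i), and the proofs that (t4), (t6) are quasi-isomorphisms), so the proof of this lemma is essentially a bookkeeping exercise that packages those results --- the real work having been done there.
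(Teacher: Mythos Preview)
Your proposal is essentially the paper's approach and is correct in its overall shape, but there is one genuine slip in your argument for ($\cal W3$). You claim that ($\cal W3$) follows from the $p$-torsion-freeness of the cohomology of $D^\sub{pro\'et}_{R/\roi}\dotimes_{\bb A_\sub{inf}}\bb A_\sub{inf}/\mu$, and that this ``is the content of the proof that (t6) is a quasi-isomorphism''. But the proof of (t6) establishes $p$-torsion-freeness for the cohomology of $D^\sub{cont}/\mu=R\Gamma_\sub{cont}(\bb Z_p(1)^d,W(R_\infty^\flat))/\mu$, not of $D^\sub{pro\'et}_{R/\roi}/\mu$. The Cartan--Leray map $D^\sub{cont}\to D^\sub{pro\'et}_{R/\roi}$ is only an \emph{almost} quasi-isomorphism, and almost quasi-isomorphisms do not transport $p$-torsion-freeness; so you cannot invoke footnote~\ref{footnote_condition_for_W3} this way.

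The fix is simpler than your detour: ($\cal W3$) is \emph{literally} the assertion that the base change map (t3) is a quasi-isomorphism, and this is exactly what Lemma~\ref{lemma_technical_1} provides. This is what the paper does. Once you have (t3), your argument for ($\cal W2$) also simplifies: Lemma~\ref{lemma_technical_1}(i) gives $p$-torsion-freeness of $H^*(\tilde{W_r\Omega}^\sub{pro\'et}_{R/\roi})$, and (t3) identifies this with $H^*(L\eta_\mu D^\sub{pro\'et}_{R/\roi}/\tilde\xi_r)$, which is precisely ($\cal W2$). No need to route through (t4), (t5), (t6) separately. Your treatment of coconnectivity, $\phi_\sub{pro\'et}$, ($\cal W1$), and the construction of $\lambda_{r,\sub{pro\'et}}$ matches the paper.
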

\begin{proof}
$D^\sub{pro\'et}_{R/\roi}$ is clearly a coconnective algebra object in $D(\bb A_\sub{inf})$, and it is equipped with a $\phi$-semi-linear quasi-isomorphism $\phi_\sub{pro\'et}$ induced by the Frobenius automorphism of $\bb A_{\sub{inf},X}$, similarly to Lemma~\ref{lemma_frobenius_on_AOmega}.

Moreover, $H^0(D^\sub{pro\'et}_{R/\roi})=H^0_\sub{pro\'et}(X,\bb A_{\sub{inf},X})$ is $\mu$-torsion-free, since $\bb A_{\sub{inf},X}$ is a $\mu$-torsion-free sheaf on $X_\sub{pro\'et}$ by Corollary \ref{corollary_non_zero_divisors_on_sheaf}; this proves that assumption ($\cal W1$) holds. It remains to check ($\cal W2$) and ($\cal W3$), as well as prove the existence of the maps $\lambda_r$; for this we must now assume that $R$ is small (but we do not fix any framing). Hypotheses ($\cal W2$) and ($\cal W3$) are then exactly the $p$-torsion-freeness and quasi-isomorphism (t3) of Lemma~\ref{lemma_technical_1}.

Finally, the canonical maps of Zariski sheaves of rings $\roi_\frak X\to \nu_*\hat\roi_X^+\to R\nu_*\hat\roi_*^+$ on $\frak X$ induce analogous maps on Witt vectors (see footnote \ref{footnote_Witt}), namely $W_r(\roi_\frak X)\to \nu_* W_r(\hat\roi_X^+)\to R\nu_* W_r(\hat\roi_*^+)$, which are compatible with $R,F,V$ on each term. Applying $H^0(\frak X,-)$ to the composition then yields the following arrow which is also compatible with $R,F,V$:
\[\lambda_{r,\sub{pro\'et}}:W_r(R)=H^0_\sub{Zar}(\frak X,W_r(\roi_\frak X))\To H^0_\sub{pro\'et}(X, W_r(\hat \roi_X^+))\stackrel{\tilde\theta_r}\cong H^0(D^\sub{pro\'et}_{R/\roi}/\tilde\xi_r).\] The isomorphism $\tilde\theta_r$ is compatible with $R,F,V$ on the left according to a sheaf version of the second set of diagrams in Corollary \ref{lemma_theta_r_diagrams}; therefore, overall, these maps $\lambda_{r,\sub{pro\'et}}$ make the diagrams of Theorem \ref{theorem_witt_main} commute, and they are clearly natural in $R$, as desired.
\end{proof}

Continuing to assume that $R$ is small, the previous lemma states that all hypotheses of Theorem \ref{theorem_witt_main} are satisfied for $D^\sub{pro\'et}_{R/\roi}$, and so there are associated universal maps of Witt complexes, natural in $R$, \[\lambda_{r,\sub{pro\'et}}^\blob:W_r\Omega_{R/\roi}^\blob\To\cal W_r^\blob(D^\sub{pro\'et}_{R/\roi})=H^\blob(\bb A\Omega_{R/\roi}^\sub{pro\'et}/\tilde\xi_r).\] By Remark \ref{remark_p_completion}, these factor through the $p$-adic completion of the left side, i.e., \[\hat\lambda_{r,\sub{pro\'et}}^n:(W_r\Omega_{R/\roi}^n)_p^\comp\To H^n(\bb A\Omega_{R/\roi}^\sub{pro\'et}/\tilde\xi_r).\] The $p$-adic Cartier isomorphism will follow from showing that these maps are isomorphisms:

\begin{lemma}\label{lemma_reduction_to_torus}
The following implications hold:
\[\begin{array}{c}
\text{$\hat\lambda_{r,\sub{pro\'et}}^n$ is an isomorphism when $R=\roi\pid{T_1^{\pm1},\dots,T_d^{\pm1}}$.} \\ \Downarrow\\ \text{$\hat\lambda_{r,\sub{pro\'et}}^n$ is an isomorphism for any every small, formally smooth $\roi$-algebra $R$.}\\\Downarrow\\\text{The $p$-adic Cartier isomorphism (Theorem \ref{theorem_p_adic_Cartier}) is true.}
\end{array}\]
\end{lemma}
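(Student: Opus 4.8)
The plan is to deduce both implications essentially formally from the technical apparatus already in place. For the first implication, fix a small formally smooth $\roi$-algebra $R$ and a framing $\roi\pid{\ul T^{\pm1}}\to R$; this map is formally \'etale, and since Witt vectors preserve \'etale morphisms [BMS, Thm.~10.4] the induced maps $W_r(\roi\pid{\ul T^{\pm1}})/p^n\to W_r(R)/p^n$ are \'etale, in particular flat, for all $n\ge1$. I would then observe that the source and target of $\hat\lambda_{r,\sub{pro\'et}}^n$ for $R$ are obtained from those for $\roi\pid{\ul T^{\pm1}}$ by one and the same completed base change along $W_r(\roi\pid{\ul T^{\pm1}})\to W_r(R)$: for the source this is the \'etale base change of the relative de Rham--Witt complex (Remark \ref{remark_de_rham_witt}(v), in its $p$-adically completed form from Remark \ref{remark_de_rham_witt}(vi)); for the target it is Lemma \ref{lemma_technical_1}(ii), where the $p$-torsion-freeness of Lemma \ref{lemma_technical_1}(i) together with the mod-$p^n$ flatness above ensures that passing to cohomology commutes with the base change (no higher $\Tor$ terms). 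Because $\hat\lambda_{r,\sub{pro\'et}}^n$ is functorial in $R$ --- the Witt-complex structure on $\cal W_r^\blob(D^\sub{pro\'et}_{R/\roi})$ and the resulting universal map are, by Theorem \ref{theorem_witt_main} and the construction of \S\ref{subsection_reduction_to_tor} --- it is compatible with these identifications, and hence is an isomorphism for $R$ precisely when it is for $\roi\pid{\ul T^{\pm1}}$.

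For the second implication, let $\frak X$ be an arbitrary smooth formal scheme over $\roi$. Both $W_r\Omega_{\frak X/\roi}^i$ and $\cal H^i(\tilde{W_r\Omega}_{\frak X/\roi})$ are Zariski sheaves of $W_r(\roi_\frak X)$-modules on $\frak X$, so it suffices to build a natural map between them and check it is an isomorphism on the basis of small affine opens. First I would identify the sections on $\Spf R$ small: $W_r\Omega_{\frak X/\roi}^i(\Spf R)=(W_r\Omega_{R/\roi}^i)_p^\comp$ by Remark \ref{remark_de_rham_witt}(vi), while the global-to-local quasi-isomorphisms (t1) and (t3) of Lemma \ref{lemma_technical_1} give $R\Gamma_\sub{Zar}(\Spf R,\tilde{W_r\Omega}_{\frak X/\roi})\simeq\bb A\Omega_{R/\roi}^\sub{pro\'et}/\tilde\xi_r\simeq\tilde{W_r\Omega}_{R/\roi}^\sub{pro\'et}$; since by Lemma \ref{lemma_technical_1}(ii) the presheaf $\Spf R\mapsto H^i(\tilde{W_r\Omega}_{R/\roi}^\sub{pro\'et})$ already satisfies Zariski descent and the cohomology sheaves $\cal H^i(\tilde{W_r\Omega}_{\frak X/\roi})$ have no higher Zariski cohomology on small affines, this lets me identify $\cal H^i(\tilde{W_r\Omega}_{\frak X/\roi})(\Spf R)=H^i(\tilde{W_r\Omega}_{R/\roi}^\sub{pro\'et})=\cal W_r^i(D^\sub{pro\'et}_{R/\roi})$. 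Under these identifications the universal maps $\hat\lambda_{r,\sub{pro\'et}}^i$ of \S\ref{subsection_reduction_to_tor} become morphisms $W_r\Omega_{\frak X/\roi}^i(\Spf R)\to\cal H^i(\tilde{W_r\Omega}_{\frak X/\roi})(\Spf R)$ which, being natural in $R$, glue to a morphism of Zariski sheaves $C^{-r}_\frak X:W_r\Omega_{\frak X/\roi}^i\to\cal H^i(\tilde{W_r\Omega}_{\frak X/\roi})$; by the hypothesis of the second implication this is an isomorphism on every small affine, hence an isomorphism of sheaves. Compatibility (i) with the restriction maps and (ii) with the Bockstein $\op{Bock}_{\tilde\xi_r}$ is built into the construction of the Witt-complex structure on $\cal W_r^\blob(D^\sub{pro\'et}_{R/\roi})$ in \S\ref{subsection_constructing_Witt}--\S\ref{subsection_reduction_to_tor}, where the restriction map is induced by the inverse Frobenius on $\bb A_{\sub{inf},X}$ and the differential is by definition the Bockstein.

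The main obstacle is the identification $\cal H^i(\tilde{W_r\Omega}_{\frak X/\roi})(\Spf R)=H^i(\tilde{W_r\Omega}_{R/\roi}^\sub{pro\'et})$ used in the second implication: the d\'ecalage functor does not commute with passage to global sections, so computing the cohomology sheaf of the complex of Zariski sheaves $\tilde{W_r\Omega}_{\frak X/\roi}$ in terms of the explicit pro-\'etale-local complex is genuinely non-formal. This is precisely the role of the global-to-local quasi-isomorphisms (t1)--(t3) of Lemma \ref{lemma_technical_1}; once those, and the \'etale base change of Lemma \ref{lemma_technical_1}(ii), are granted, everything else is bookkeeping around the universal property of the relative de Rham--Witt complex. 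The genuine mathematical content of the $p$-adic Cartier isomorphism is thereby concentrated entirely in the torus computation, Theorem \ref{theorem_de_rham_witt_group_cohomology}, and in the proofs of the technical Lemmas \ref{lemma_technical_1} and \ref{lemma_technical_2}.
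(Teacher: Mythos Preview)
Your proposal is correct and follows essentially the same approach as the paper's proof: the first implication is reduced to \'etale base change for both source (Remark~\ref{remark_de_rham_witt}(v)\&(vi)) and target (Lemma~\ref{lemma_technical_1}(ii)), and the second implication uses the global-to-local quasi-isomorphism (t1) modulo $\tilde\xi_r$ together with the coherence result Lemma~\ref{lemma_technical_1}(ii) to identify $\cal H^i(\tilde{W_r\Omega}_{\frak X/\roi})(\Spf R)$ with the target of $\hat\lambda_{r,\sub{pro\'et}}^i$. The paper phrases the second step slightly more compactly via the edge map $H^n_\sub{Zar}(\Spf R,\tilde{W_r\Omega}_{\frak X/\roi})\to\cal H^n(\tilde{W_r\Omega}_{\frak X/\roi})(\Spf R)$ rather than routing through (t3), but this is only a cosmetic difference.
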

\begin{proof}
The first implication is a consequence of the domain and codomain of $\hat\lambda_{r,\sub{pro\'et}}^n$ behaving well under formally \'etale base change, according to Remark \ref{remark_de_rham_witt}(v)\&(vi) and Lemma \ref{lemma_technical_1}(ii).

For the second implication it is convenient to briefly change the point of view and notation, by fixing a smooth formal scheme $\frak X$ over $\roi$ and letting $\Spf R\subseteq \frak X$ denote any small affine open. We then consider the composition
\[\xymatrix@C=2cm{
H^n(\bb A\Omega_{R/\roi}^\sub{pro\'et}/\tilde\xi_r)
\ar[r]^\cong_-{(t1)\sub{ mod }\tilde \xi_r}&
H^n_\sub{Zar}(\Spf R,\tilde{W_r\Omega}_{\frak X/\roi})
\ar[r]_{\sub{edge map}}&
\cal H^n(\tilde{W_r\Omega}_{\frak X/\roi})(\Spf R)
}\]
and note that the edge map is an isomorphism by the coherence result of Lemma \ref{lemma_technical_1}(ii).\footnote{Here we are of course using the trivial identification $\tilde{W_r\Omega}_{\frak X/\roi}|_{\Spf R}=\tilde{W_r\Omega}_{\Spf R/\roi}$ in order to appeal to the affine results in \S\ref{subsection_technical}.} Since $(W_r\Omega_{R/\roi}^n)_p^\comp=W_r\Omega^n_{\frak X/\roi}(\Spf R)$ (Remark \ref{remark_de_rham_witt}(vi)), the middle assumption therefore leads to isomorphisms \[W_r\Omega^n_{\frak X/\roi}(\Spf R)\Isoto \cal H^n(\tilde{W_r\Omega}_{\frak X/\roi})(\Spf R)\] naturally as $\Spf R\subseteq\frak X$ varies over all small affine opens; that proves the $p$-adic Cartier isomorphism.
\end{proof}

To complete the proof of the $p$-adic Cartier isomorphism we must prove the top statement in Lemma~\ref{lemma_reduction_to_torus}, namely the following:

\begin{proposition}\label{proposition_final}
The universal maps \[\hat\lambda_{r,\sub{pro\'et}}^n:(W_r\Omega_{R/\roi}^n)_p^\comp\To H^n(\bb A\Omega_{R/\roi}^\sub{pro\'et}/\tilde\xi_r)\] are isomorphisms in the special case that $R:=\roi\pid{T_1^{\pm1},\dots,T_d^{\pm1}}$.
\end{proposition}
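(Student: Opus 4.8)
The plan is to compare the ``pro-\'etale'' construction with the ``group cohomology'' construction of \S\ref{subsection_local_Cartier} and then invoke Theorem \ref{theorem_de_rham_witt_group_cohomology}. The key point is that when $R=\roi\pid{\ul T^{\pm1}}$, the pro-\'etale cohomology appearing in the definition of $D^\sub{pro\'et}_{R/\roi}$ is, up to derived $p$-adic completion, exactly the group cohomology complex $D^\sub{grp}=D^\sub{grp}_{\roi,d}$ studied in \S\ref{subsection_local_Cartier}. Concretely, the framing in this case is the identity $\roi\pid{\ul T^{\pm1}}\to R$, so $R_\infty=\roi\pid{\ul T^{\pm1/p^\infty}}$, and the Cartan--Leray almost quasi-isomorphism of \S\ref{subsection_CL} together with quasi-isomorphism (t4) of Lemma \ref{lemma_technical_2} gives \[\bb A\Omega_{R/\roi}^\sub{pro\'et}\simeq L\eta_\mu R\Gamma_\sub{cont}(\bb Z_p(1)^d,W(R_\infty^\flat)).\] The first step is therefore to identify $R\Gamma_\sub{cont}(\bb Z_p(1)^d,W(\roi\pid{\ul T^{\pm1/p^\infty}}^\flat))$ with the derived $p$-adic completion of $D^\sub{grp}=R\Gamma_\sub{grp}(\bb Z^d,W(\roi^\flat)[\ul U^{\pm1/p^\infty}])$, compatibly with the Frobenius and with the maps $R,F,V$ and $\lambda_r$. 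This identification uses the ``standard isomorphism'' $W_r(\roi)[\ul U^{\pm1/p^\infty}]\isoto W_r(\roi\pid{\ul T^{\pm1/p^\infty}})$ from the proof of Lemma \ref{lemma_lambda_grp}, taken in the limit over $r$ with respect to $F$ (exactly as in the proof that (t6) is a quasi-isomorphism, where the analogous statement modulo $\mu$ already appeared), combined with the fact that $\bb Z_p(1)^d$-continuous cohomology of a $p$-complete module is the derived $p$-completion of the naive $\bb Z^d$-group cohomology of the underlying discrete module. The subtlety to be careful about, flagged in the proof of Lemma \ref{lemma_technical_3}, is that one cannot pass through the $\pid{p,\xi}$-adic completion $\bb A_\sub{inf}\langle\ul U^{\pm1/p^\infty}\rangle$ — neither $R\Gamma_\sub{cont}(\bb Z_p(1)^d,-)$ nor $L\eta_\mu$ commutes with it — so the identification must be made at the level of the $p$-adically discrete models and only then $p$-completed.

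The second step is to match up the structure maps. By construction (Theorem \ref{theorem_witt_main}, Lemma \ref{lemma_lambda_grp}, and the previous lemma in \S\ref{subsection_reduction_to_tor}), both $\hat\lambda_{r,\sub{pro\'et}}^\blob$ and $\hat\lambda_{r,\sub{grp}}^\blob$ are the \emph{unique} maps of Witt complexes out of $W_r\Omega_{R/\roi}^\blob$ induced by the universal property, once one knows the receiving objects $\cal W_r^\blob(D^\sub{pro\'et}_{R/\roi})$ and $\cal W_r^\blob(D^\sub{grp})$ carry Witt-complex structures with $\lambda_r([T_i])=U_i$. So it suffices to check that the identification of step one is compatible with the maps $\lambda_{r,\sub{pro\'et}}$ and $\lambda_{r,\sub{grp}}$ into degree-zero cohomology; this is a direct inspection on Teichm\"uller elements $[T_i]\mapsto U_i$, using that the $\tilde\theta_r$-twisting in both constructions is the same. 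Passing to $p$-adic completions, one obtains a commutative square identifying $\hat\lambda_{r,\sub{pro\'et}}^n$ with the $p$-adic completion $(\hat\lambda_{r,\sub{grp}}^n)^\comp$ of the group-cohomology universal map. Since $\cal W_r^n(D^\sub{grp})$ is $p$-torsion-free (hypothesis ($\cal W2$), verified in Lemma \ref{lemma_Dgrp_1}) and, after the derived $p$-completion of step one, also $p$-adically complete, while $W_r\Omega_{R/\roi}^n$ has $p$-completion $(W_r\Omega_{R/\roi}^n)_p^\comp$, the map $(\hat\lambda_{r,\sub{grp}}^n)^\comp$ is precisely the $p$-completion of the map $\lambda_{r,\sub{grp}}^n\colon W_r\Omega_{R/\roi}^n\to\cal W_r^n(D^\sub{grp})$, which is an isomorphism by Theorem \ref{theorem_de_rham_witt_group_cohomology}. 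An isomorphism remains an isomorphism after $p$-adic completion, so $\hat\lambda_{r,\sub{pro\'et}}^n$ is an isomorphism, as desired.

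\textbf{Where the difficulty lies.}
The routine content — coconnectivity, the $\bb E_\infty$-structure, compatibility of $R,F,V,\lambda_r$ — is bookkeeping. The genuine obstacle is the careful identification in step one of $R\Gamma_\sub{cont}(\bb Z_p(1)^d,W(\roi\pid{\ul T^{\pm1/p^\infty}}^\flat))$ with the $p$-completion of $D^\sub{grp}$, \emph{together with its decalage}: one needs the comparison to intertwine $L\eta_\mu$ on the pro-\'etale side with $L\eta_\mu$ on the group-cohomology side, and this requires knowing that $L\eta_\mu$ commutes with the relevant derived $p$-adic completion. That commutation is supplied by Remark \ref{remark_non_primitive} (via [BMS, Lem.~6.20]), but one must take care that the element with respect to which one completes, and the Koszul-complex models of Lemma \ref{lemma_technical_3}, are set up so that this applies; this is exactly the technical heart of [BMS, Prop.~9.14] and of the calculations around (t4)--(t6) in \S\ref{subsection_technical}. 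Once that identification is in hand, the reduction to Theorem \ref{theorem_de_rham_witt_group_cohomology} is formal.
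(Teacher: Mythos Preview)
Your overall strategy --- construct a comparison $D^\sub{grp}\to D^\sub{pro\'et}_{R/\roi}$ compatible with Frobenius and structure maps, then deduce the result from Theorem~\ref{theorem_de_rham_witt_group_cohomology} by showing $\cal W_r^n(D^\sub{pro\'et}_{R/\roi})$ is the $p$-adic completion of $\cal W_r^n(D^\sub{grp})$ --- is exactly the paper's, and your step two (matching structure maps by universality) is fine. The gap is in your step one, at precisely the point you flag.

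You assert that $R\Gamma_\sub{cont}(\bb Z_p(1)^d,W(\roi\pid{\ul T^{\pm1/p^\infty}}^\flat))$ is the derived $p$-adic completion of $D^\sub{grp}$, and that the d\'ecalage comparison then follows because $L\eta_\mu$ commutes with derived $p$-adic completion by Remark~\ref{remark_non_primitive}. Both claims fail. First, $W(\roi\pid{\ul T^{\pm1/p^\infty}}^\flat)$ is the $\pid{p,\xi}$-adic completion of $\bb A_\sub{inf}[\ul U^{\pm1/p^\infty}]$, not the $p$-adic one --- this is the content of the very footnote (near Lemma~\ref{lemma_technical_3}) you cite, and it means your step one contradicts your own warning. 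Second, Remark~\ref{remark_non_primitive} concerns $L\eta_{[\zeta_{p^r}]-1}$ over $W_r(\roi)$, where $[\zeta_{p^r}]-1$ and $p$ generate the same topology; over $\bb A_\sub{inf}$ the element $\mu$ divides no power of $p$ (nor conversely), so [BMS, Lem.~6.20] does not apply to $L\eta_\mu$ and $p$-adic completion. Working at the $\bb A_\sub{inf}$-level with $L\eta_\mu$ and (t4), as you propose, therefore does not go through.

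The paper's fix is to reduce modulo $\tilde\xi_r$ \emph{before} comparing. After applying $\tilde\theta_r$, the map $\bb A_\sub{inf}[\ul U^{\pm1/p^\infty}]\to W(\roi\pid{\ul T^{\pm1/p^\infty}}^\flat)$ becomes $W_r(\roi)[\ul U^{\pm1/p^\infty}]\hookrightarrow W_r(\roi\pid{\ul T^{\pm1/p^\infty}})$, which genuinely \emph{is} a $p$-adic completion; and hypothesis ($\cal W3$) identifies $(L\eta_\mu D)/\tilde\xi_r$ with $L\eta_{[\zeta_{p^r}]-1}(D/\tilde\xi_r)$, to which Remark~\ref{remark_non_primitive} legitimately applies. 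The comparison is then made on
\[
L\eta_{[\zeta_{p^r}]-1}(D^\sub{grp}/\tilde\xi_r)\To L\eta_{[\zeta_{p^r}]-1}(D^\sub{cont}/\tilde\xi_r)\To L\eta_{[\zeta_{p^r}]-1}(D^\sub{pro\'et}_{R/\roi}/\tilde\xi_r),
\]
using (t5) rather than (t4) for the second arrow, and the first arrow is now a derived $p$-adic completion on the nose. With this single adjustment --- descend to $W_r(\roi)$ first --- the rest of your argument is correct.
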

\begin{proof}
The proof will consist merely of assembling results we have already established: indeed, the technical lemmas of Section \ref{subsection_technical} imply that $H^n(\bb A\Omega_{R/\roi}^\sub{pro\'et}/\tilde\xi_r)$ can be calculated in terms of group cohomology, which we identified with the de Rham--Witt complex in Theorem~\ref{theorem_de_rham_witt_group_cohomology}.

Note first that the map\footnote{This map is injective and identifies the right with the $\pid{p,\xi}$-adic completion of the left, i.e., with $\bb A_\sub{inf}\langle \ul U^{\pm1/p^\infty}\rangle$, but we do not need this.} $\bb A_\sub{inf}[\ul U^{\pm1/p^\infty}]\to W(\roi\langle \ul T^{\pm1/p^\infty}\rangle^\flat)$, $U_i^k\mapsto [(T_i^k,T_i^{k/p},T_i^{k/p^2},\dots)]$, when base changed along $\tilde\theta_r$, yields an inclusion $W_r(\roi)[\ul U^{\pm1/p^\infty}]\into W_r(\roi\langle \ul T^{\pm1/p^\infty}\rangle)$, $U_i^k\mapsto T_i^{k/p^r}$ which identifies the right with the $p$-adic completion of the left, i.e., with $W(A^\flat)\langle \ul U^{\pm1/p^\infty}\rangle$; indeed, this follows easily from the ``standard/modified isomorphisms'' which appeared in the proof of Lemma \ref{lemma_lambda_grp}. The map $\bb A_\sub{inf}[\ul U^{\pm1/p^\infty}]\to W(\roi\langle \ul T^{\pm1/p^\infty}\rangle^\flat)$ is obviously also compatible with the actions of the groups $\bb Z^d\subseteq \bb Z_p(1)^d$ (induced by our fixed choice of $p$-power roots of unity) on the left (from \S\ref{subsection_local_Cartier}) and right, thereby inducing the first of the following maps:
\[\xymatrix@R=2mm{
R\Gamma(\bb Z^d,\bb A_\sub{inf}[\ul U^{\pm1/p^\infty}])\ar[r] & R\Gamma_\sub{cont}(\bb Z_p(1)^d,W(\roi\langle \ul T^{\pm1/p^\infty}\rangle^\flat))\ar[r] & R\Gamma_\sub{pro\'et}(\Sp R[\tfrac1p],\bb A_{\sub{inf},X})\\
D^\sub{grp}_{\roi,d}\ar@{=}[u]&D^\sub{cont}\ar@{=}[u]^{\sub{def.}}&D^\sub{pro\'et}_{R/\roi}\ar@{=}[u]
}\]
Here $D^\sub{grp}:=D^\sub{grp}_{\roi,d}$ was the object of study of \S\ref{subsection_local_Cartier}, and the second map is the Cartan--Leray almost quasi-isomorphism which has already appeared, for example just after the statement of Lemma \ref{lemma_technical_1}. Both maps in the previous line are morphisms of commutative algebra objects in $D(\bb A_\sub{inf})$, compatible with the Frobenius on each object (in particular, with $\phi_\sub{grp}$ on the left and $\phi_\sub{pro\'et}$ on the right).

Moreover, we claim that the composition makes the following diagram of structure maps commute for each $r\ge0$:
\[\xymatrix{
H^0(D^\sub{grp}/\tilde\xi_r)\ar[r] & H^0(D_\sub{pro\'et}/\tilde\xi_r)\\
W_r(\roi[\ul T^{\pm1}])\ar@{^(->}[r]\ar[u]^{\lambda_{r,\sub{grp}}} & W_r(\roi\langle\ul T^{\pm1}\rangle)\ar[u]_{\lambda_{r,\sub{pro\'et}}}
}\]
The proof of this compatibility is a straightforward chase through the definitions of the structure maps $\lambda_{r,\sub{grp}}$ and $\lambda_{r,\sub{pro\'et}}$. We first identify the top row via $\tilde\theta_r$ with the composition of the top row of the following diagram:
\[\xymatrix@C=1.5cm{
W_r(\roi)[\ul U^{\pm1/p^\infty}]^{\bb Z^d}\ar[r]^{U_i^k\mapsto T_i^{k/p^r}} & W_r(\roi\langle \ul T^{\pm1/p^\infty}\rangle)^{\bb Z_p(1)^d}\ar[r] & H^0_\sub{pro\'et}(\Spf R,W_r(\hat\roi_X^+))\\
W_r(\roi[\ul T^{\pm1}])\ar@{^(->}[rr]\ar[u]^{\lambda_{r,\sub{grp}}} && W_r(\roi\langle\ul T^{\pm1}\rangle)\ar[u]_{\lambda_{r,\sub{pro\'et}}}\ar@{_(->}[ul]
}\]
The diagonal arrow here is the obvious inclusion (it is actually an isomorphism); since the Cartan--Leray map (i.e., top right horizontal arrow) is one of $W_r(\roi\pid{T^{\pm1}})$-algebras and $\lambda_{r,\sub{pro\'et}}$ was defined to be precisely the algebra structure map, the resulting triangle commutes. Commutativity of the remaining trapezium is tautological: the definition of $\lambda_{r,\sub{grp}}$ in the proof of Lemma \ref{lemma_lambda_grp} was exactly to make this diagram (or, more precisely, the analogous diagram with $W_r(\roi[\ul T^{\pm1/p^\infty}])$ instead of $W_r(\roi\pid{\ul T^{\pm1/p^\infty}})$) commute.



By the naturality of Theorem \ref{theorem_witt_main}, the following diagram therefore commutes:
\comment{
\[\xymatrix@C=5mm{
H^n(L\eta_{[\zeta_{p^r}]-1}(D^\sub{grp}\dotimes_{W(A^\flat)}W(A^\flat)/\tilde\xi_r))\ar[r]&
H^n(L\eta_{[\zeta_{p^r}]-1}(D^\sub{cont}\dotimes_{W(A^\flat)}W(A^\flat)/\tilde\xi_r))\ar[r]&
H^n(L\eta_{[\zeta_{p^r}]-1}(D^\sub{pro\'et}_{R/\roi}\dotimes_{W(A^\flat)}W(A^\flat)/\tilde\xi_r))\\
\cal W_r^n(D^\sub{grp})\ar[u]_\cong&&\cal W_r^n(D^\sub{pro\'et}_{R/\roi})\ar[u]^\cong\\
W_r\Omega^n_{\roi[\ul T^{\pm1}]/\roi}\ar[u]^{\lambda^n_{r,\sub{grp}}}\ar@{^(->}[rr]&&W_r\Omega^n_{\roi\pid{\ul T^{\pm1}}/\roi}\ar[u]_{\lambda^n_{r,\sub{pro\'et}}}
}\]
}

\[\xymatrix{
\cal W_r^n(D^\sub{grp})\ar[r]&\cal W_r^n(D^\sub{pro\'et}_{R/\roi})\\
W_r\Omega^n_{\roi[\ul T^{\pm1}]/\roi}\ar[u]^{\lambda^n_{r,\sub{grp}}}\ar[r]&W_r\Omega^n_{\roi\pid{\ul T^{\pm1}}/\roi}\ar[u]_{\lambda^n_{r,\sub{pro\'et}}}
}\]
The bottom horizontal arrow here becomes an isomorphism after $p$-adic completion,\footnote{By Remark \ref{remark_de_rham_witt}(vi), the $p$-adic completions may be identified respectively with $\projlim_sW_r\Omega^n_{(\roi[\ul T^{\pm1}]/p^s)/(\roi/p^s\roi)}$ and $\projlim_sW_r\Omega^n_{(\roi\pid{\ul T^{\pm1}}/p^s)/(\roi/p^s\roi)}$, which are clearly the same.} and $\lambda^n_{r,\sub{grp}}$ was proved to be an isomorphism in Theorem \ref{theorem_de_rham_witt_group_cohomology}; so to complete the proof it remains to show that the top horizontal arrow identifies $\cal W_r^n(D^\sub{pro\'et}_{R/\roi})$ with the $p$-adic completion of $\cal W_r^n(D^\sub{grp})$. But the top horizontal arrow is precisely $H^n$ of the composition
\[L\eta_{[\zeta_{p^r}]-1}(D^\sub{grp}/\tilde\xi_r)\To
L\eta_{[\zeta_{p^r}]-1}(D^\sub{cont}/\tilde\xi_r)\To
L\eta_{[\zeta_{p^r}]-1}(D^\sub{pro\'et}_{R/\roi}/\tilde\xi_r),
\]
where the second arrow is the quasi-isomorphism (t5) of Lemma \ref{lemma_technical_2}. Meanwhile, the first arrow identifies the middle term with the derived $p$-adic completion of the left: indeed, $L\eta_{[\zeta_{p^r}]-1}$ commutes with $p$-adic completion by Remark \ref{remark_non_primitive}, so it is enough to check that $D^\sub{cont}/\tilde\xi_r=R\Gamma_\sub{cont}(\bb Z_p(1)^d,W_r(\roi\langle \ul T^{\pm1/p^\infty}\rangle)$ is the derived $p$-adic completion of $D^\sub{grp}/\tilde\xi_r=R\Gamma(\bb Z^d,W_r(\roi)[\ul U^{\pm1/p^\infty}])$; but this follows from $W_r(\roi\langle \ul T^{\pm1/p^\infty}\rangle)$ being the $p$-adic completion of $W_r(\roi[\ul T^{\pm1/p^\infty}])$. So, finally, recall that the cohomology groups of $L\eta_{[\zeta_{p^r}]-1}(D^\sub{grp}/\tilde\xi_r)$ are $p$-torsion-free (since $D^\sub{grp}$ satisfies ($\cal W2$) and ($\cal W3$)), whence $H^n$ of its derived $p$-adic completion is the same as the naive $p$-adic completion of its $H^n$.
\end{proof}


This completes the proof of the $p$-adic Cartier isomorphism and these notes.

\begin{appendix}
\section{$\bb A_\sub{inf}$ and its modules}\label{appendix_A_inf}
\lb{
Put $M=H^i_\bb A(\frak X)$. We prove a succession of claims about $M$:

\begin{enumerate}[(1)]
\item $M$ is a finitely presented $\bb A_\sub{inf}$-module.

Proof: The inductive hypothesis implies that all cohomology groups of the bounded complex $\tau^{>i}R\Gamma_\bb A(\frak X)$ are perfect (i.e., admit finite length resolutions by finite projective modules), whence the complex itself is perfect; since $R\Gamma_\bb A(\frak X)$ is perfect by the previous theorem, it follows that $\tau^{\le i}R\Gamma_\bb A(\frak X)$ is also perfect, and so $M=H^i_\bb A(\frak X)$ is the cokernel of a map between two finite projective modules, hence is finitely presented.

\item $M\otimes_{\bb A_\sub{inf}}W(k)[\tfrac1p]$ and $M\otimes_{\bb A_\sub{inf}}W(\bb C^\flat)[\tfrac1p]$ have the same dimension.

Proof: Applying the crystalline and \'etale specialises of the previous theorem we see that $M\otimes_{\bb A_\sub{inf}}W(k)[\tfrac1p]\cong H^i_\sub{crys}(\frak X_k/W(k))[\tfrac1p]$ (a priori only up to obstructions caused by $\Tor^{\bb A_\sub{inf}}_*(H^j_\bb A(\frak X),W(k))[\tfrac1p]$ for $j\ge i$, but these all vanish by the inductive hypothesis) and $M\otimes_{\bb A_\sub{inf}}W(\bb C^\flat)[\tfrac1p]=H^i_\sub{\'et}(X, \bb Z_p)\otimes_{\bb Z_p}W(\bb C^\flat)[\tfrac1p]$.

\item $M$ is equipped with a $\phi$-semi-linear endomorphism $\phi_M$ which becomes an isomorphism after inverting $\xi$.

Proof: By (iii).
\end{enumerate}

Next we want to prove the following assertions:
\begin{enumerate}[(A)]
\item Any $\bb A_\sub{inf}$-module satisfying (1), (2), (3) becomes finite projective after inverting $p$ -- also need freeness after inverting $p\mu$.
\item Any finite projective module over $\bb A_\sub{inf}[\tfrac1p]$ is actually free.
\item Any finitely presented $\bb A_\sub{inf}$-module which is killed by a power of $p$ admits a finite length resolution by finite free modules.
\item Any $\bb A_\sub{inf}$-module satisfying (1), (2), (3) admits a finite length resolution by finite free modules.
\end{enumerate}

Since we have proved that $H_\bb A^i(\frak X)$ has properties (1), (2), (3), it will follows from (A)+(B) that $H_\bb A^i(\frak X)[\tfrac1p]$ is free, and it will follows from (D) that $H_\bb A^i(\frak X)$ has a finite free resolution.

It remains only to prove the general statements (A)--(D). 

}

\lb{
Recall the set-up of the main theorems:
\begin{itemize}\itemsep0pt
\item $\bb C$ is a complete, non-archimedean, algebraically closed field of mixed characteristic; ring of integers $\roi$; residue field $k$. (More generally, everything we say in this section remains true if $\bb C$ is replaced by a perfectoid field contain all $p$-power roots of unity.)
\item $\roi^\flat:=\projlim_\phi\roi/p$ is the tilt of $\roi$.
\item $\bb A_\sub{inf}:=W(\roi^\flat)$ is the first period ring of Fontaine.
\end{itemize}

The aim of this section is firstly to make some comments about $\roi^\flat$ and $\bb A_\sub{inf}$ which may be new to those readers unfamiliar to the theory, and secondly to discuss the theory of finitely presented modules over $\bb A_\sub{inf}$.

\subsection{Algebraic properties of $\roi^\flat$ and $\bb A_\sub{inf}$}

\subsection{Modules over $\bb A_\sub{inf}$}
}
The base ring for the cohomology theory constructed in [BMS] is Fontaine's infinitesimal period ring $\bb A_\sub{inf}:=W(\roi^\flat)$, where $\roi$ is the ring of integers of a complete, non-archimedean, algebraically closed field $\bb C$ of mixed characteristic. Since $\roi$ is a perfectoid ring (Example \ref{example_O}), the general theory developed in Section \ref{section_perfectoid} (including \S\ref{subsection_roots_of_unity}) applies in particular to $\roi$. Our goal here is firstly to present a few results which are particular to $\roi$ in order to familiarise the reader, who may be encountering these objects for the first time, with $\roi$ and $\bb A_\sub{inf}$; then we will explain some of the finer theory of modules over $\bb A_\sub{inf}$.

We begin by recalling from \cite[\S3]{Scholze2012} that $\roi^\flat$ is the ring of integers of $\bb C^\flat:=\Frac\roi^\flat$ (footnote \ref{footnote_int_dom} shows that $\roi^\flat$ is an integral domain), which is a non-archimedean, algebraically closed field of characteristic $p>0$, with the same residue field $k$ as $\roi$. The absolute value on $\bb C^\flat$ is given by multiplicatively extending the absolute value on  on $\roi^\flat$ given by \[\roi^\flat=\projlim_{x\mapsto x^p}\roi\xTo{x\mapsto x^{(0)}} \roi\xTo{|\cdot|}\bb R_{\ge0},\] where the first arrow uses the convention introduced just before Lemma \ref{lemma_theta}, and the second arrow is the absolute value on $\roi$. The reader may wish to check that this is indeed an absolute value, i.e., satisfies the ultrametric inequality, that $\bb C^\flat$ is complete under it, and that the ring of integers is exactly $\roi^\flat$. The existence of the canonical projection $\roi^\flat\to\roi/p\roi$ implies that $\roi^\flat$ and $\roi$ have the same residue field. Hensel's lemma shows that $\bb C^\flat$ is algebraically closed.\footnote{We sketch the proof here, which is obtained by reversing the roles of $\roi$ and $\roi^\flat$ in \cite[Prop.~3.8]{Scholze2012}. Let $p^\flat:=(p,p^{1/p},p^{1/p^2},\dots)\in A^\flat$, whose absolute value $|p^\flat|=|p|$ we may normalise to $p^{-1}$ for simplicity of notation. It is sufficient to prove the following, which allows a root to any given polynomial to be built by successive approximation: If $f(X)\in\roi^\flat[X]$ is a monic irreducible polynomial of degree $d\ge1$, and $\al\in\roi^\flat$ satisfies $|f(\al)|\le p^{-n}$ for some $n\ge0$, then there exists $\ep\in\roi$ satisfying $|\ep|\le p^{-n/d}$ and $|f(\al+\ep)|\le p^{-(n+1)}$. Well, given such $f(X)$ and $\al$, use the fact that $\bb C^\flat$ and $\bb C$ have the same value group (this is easy to prove), which is divisible since $\bb C$ is algebraically closed, to find $c\in\roi^\flat$ such that $c^{-d}f(\al)$ is a unit of $\roi^\flat$. Then $g(X):=c^{-d}f(\al+cX)$ is a monic irreducible polynomial in $\bb C^\flat[X]$ whose constant coefficient lies in $\roi^\flat$ (even $\roi^{\flat\times}$); a standard consequence of Hensel's lemma is then that $g(X)\in\roi^\flat[X]$. Next observe that the canonical projection $\roi^\flat\to\roi/p\roi$ has kernel $p^\flat\roi^\flat$ ({\em Proof}. Either argue using valuations, or extract a more general result from the proof of Lemma \ref{lemma_inj_of_Frob}.), whence every monic polynomial in $\roi^\flat/p^\flat\roi^\flat$ has a root. So, by lifting a root we find $\beta\in\roi^\flat$ satisfying $g(\beta)\in p^\flat\roi^\flat$; this implies that $f(\al+c\beta)\in f(\beta) p^{\flat}\roi^\flat$, and so $\ep:=c\beta$ has the desired property. $\square$
}

Now we turn to $\bb A_\sub{inf}$. Let $t\in\bb A_\sub{inf}$ be any element whose image in $\bb A_\sub{inf}/p\bb A_\sub{inf}=\roi^\flat$ belongs to $\frak m^\flat\setminus\{0\}$; examples include $t=[\pi]$, where $m^\flat\setminus\{0\}$, and $t=\xi$, where $\xi$ is any generator of $\ker\theta$. Then $p,t$ is a regular sequence, and $\bb A_\sub{inf}$ is a $\pid{p,t}$-adically complete local ring whose maximal ideal equals the radical of $\pid{p,t}$; in short, $\bb A_\sub{inf}$ ``appears two-dimensional and Cohen--Macaulay''.

In fact, as we will explain the result of this appendix, modules (more precisely, finitely presented modules which become free after inverting $p$) over $\bb A_\sub{inf}$ even behave as though $\bb A_\sub{inf}$ were a two-dimensional, regular local ring.\footnote{However, $\bb A_\sub{inf}$ is not Noetherian, it is usually not coherent \cite{Kedlaya2016}, and the presence of certain infinitely generated, non-topologically-closed ideals implies that it has Krull dimension $\ge 3$...} Further details may be found in [BMS, \S4.2].


\begin{remark}
In light of the goal, it is sensible to recall the structure of modules over any two-dimensional regular local ring $\Lambda$, such as $\Lambda=\roi_K[[T]]$ where $\roi_K$ is a discrete valuation ring. Let $\pi,t\in\Lambda$ be a system of local parameters and $\frak m=\pid{\pi,t}$ its maximal ideal.
\begin{enumerate}
\item Most importantly, any vector bundle on the punctured spectrum $\Spec \Lambda\setminus\{\frak m\}$ extends uniquely to a vector bundle on $\Spec\Lambda$.
\item Finitely generated modules over $\Lambda$ are perfect, i.e., admit finite length resolutions by finite free $\Lambda$-modules. ({\em Proof.} Immediate from the regularity of $\Lambda$.)
\item If $M$ is any finitely generated $\Lambda$-module, then there is a functorial short exact sequence \[0\To M_\sub{tor}\To M\To M_\sub{free}\To \res M\To 0\] of $\Lambda$-modules, where $M_\sub{tor}$ is torsion, $M_\sub{free}$ is finite free, and $\res M$ is killed by a power of $\frak m$. 

{\em Proof}. $M_\sub{tor}$ is by definition the torsion submodule of $M$, whence $M/M_\sub{tor}$ restricts  to a torsion-free coherent sheaf on the punctured spectrum $\Spec \Lambda\setminus\{\frak m\}$; but the punctured spectrum is a regular one-dimensional scheme, so this torsion-free coherent sheaf is necessary a vector bundle, and so extends to a vector bundle on $\Spec \Lambda$ by (i); this vector bundle corresponds to a finite free $\Lambda$-module $M_\sub{free}$ which contains $M/M_\sub{tor}$, with the ensuing quotient $\res M$ being supported at the closed point of $\Spec\Lambda$. $\square$
\item Finite projective modules over $\Lambda[\tfrac1\pi]$ are finite free.

{\em Proof}. Let $N$ be a finite projective $\Lambda[\tfrac1\pi]$-module, and pick a finitely generated $\Lambda$-module $N'\subseteq N$ satisfying $N'[\tfrac1\pi]=N$. Then $N'_\frak p$ is a projective module over $\Lambda_\frak p$ for every non-maximal prime ideal $\frak p\subseteq\Lambda$: indeed either $\pi\notin\frak p$, in which case $N'_\frak p$ is a localisation of the projective module $N$, or $\frak p=\pid\pi$, in which case $\Lambda_\frak p$ is a discrete valuation ring and it is sufficient to note that $N_\frak p'$ obviously has no $\pi$-torsion. This means that $N'$ restricts to a vector bundle on the punctured spectrum, whose unique extension to a finite free $\Lambda$-module $N''$ satisfies $N''[\tfrac1\pi]=N$. $\square$
\end{enumerate}
\end{remark}

\begin{theorem}\label{theorem_modules_over_A_inf}
\begin{enumerate}\itemsep0pt
\item (Kedlaya) Any vector bundle on the punctured spectrum \[\Spec \bb A_\sub{inf}\setminus\{\text{the max.~ideal of }\bb A_\sub{inf}\}\] extends uniquely to a vector bundle on $\Spec \bb A_\sub{inf}$.
\item If $M$ is a finitely presented $\bb A_\sub{inf}$-module such that $M[\tfrac1p]$ is finite free over $\bb A_\sub{inf}[\tfrac1p]$, then $M$ is perfect (again, this means that $M$ admits a finite length resolution by finite free $\bb A_\sub{inf}$-modules).
\item If $M$ is a finitely presented $\bb A_\sub{inf}$-module such that $M[\tfrac1p]$ is finite free over $\bb A_\sub{inf}[\tfrac1p]$, then there is functorial short exact sequence of $\bb A_\sub{inf}$-modules \[0\To M_\sub{tor}\To M\To M_\sub{free}\To \res M\To 0\] such that: $M_\sub{tor}$ is a perfect $\bb A_\sub{inf}$-module killed by a power of $p$; $M_\sub{free}$ is a finite free $\bb A_\sub{inf}$-module; and $\res M$ is a perfect $\bb A_\sub{inf}$-module killed by a power of the ideal $\pid{p,t}$.
\item Finite projective modules over $\bb A_\sub{inf}[\tfrac1p]$ are finite free.
\end{enumerate}
\end{theorem}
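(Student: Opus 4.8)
The plan is to follow the template for a two-dimensional regular local ring in the preceding remark, with Kedlaya's extension statement (part (i) of this theorem) playing the role of the classical unique-extension theorem and with the elementary observation that $\bb A_\sub{inf}=W(\roi^\flat)$ is a local ring (because $\roi^\flat$ is a valuation ring), so that a finite projective $\bb A_\sub{inf}$-module is automatically finite free. It therefore suffices to prove: given a finite projective $\bb A_\sub{inf}[\tfrac1p]$-module $N$, there is a finite projective $\bb A_\sub{inf}$-module $P$ with $P[\tfrac1p]\cong N$.

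Let $\frak M$ be the maximal ideal of $\bb A_\sub{inf}$ and fix $\pi\in\frak m^\flat\setminus\{0\}$; since $\sqrt{\pid{p,[\pi]}}=\frak M$, the punctured spectrum $U:=\Spec\bb A_\sub{inf}\setminus\{\frak M\}$ is covered by the opens $D(p)=\Spec\bb A_\sub{inf}[\tfrac1p]$ and $D([\pi])=\Spec\bb A_\sub{inf}[\tfrac1{[\pi]}]$ (indeed $U\setminus D(p)$ is the single point $(p)$, the generic point of $V(p)\cong\Spec\roi^\flat$). First I would choose a finitely generated $\bb A_\sub{inf}$-submodule $M\subseteq N$ with $M[\tfrac1p]=N$ --- e.g. the one generated by finitely many $\bb A_\sub{inf}[\tfrac1p]$-generators of $N$; as $\bb A_\sub{inf}[\tfrac1p]$ is an integral domain (footnote \ref{footnote_int_dom}) and $N$ is projective over it, $M$ is finitely generated and torsion-free over $\bb A_\sub{inf}$. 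The goal then becomes to show that $\widetilde M$ restricts to a vector bundle on $U$: granting this, part (i) extends it to a finite projective $\bb A_\sub{inf}$-module $P$, and $P[\tfrac1p]\cong M[\tfrac1p]=N$ since $D(p)\subseteq U$, while $P$ is finite free because $\bb A_\sub{inf}$ is local.

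On $D(p)$ we have $\widetilde M|_{D(p)}=\widetilde N$, finite locally free by hypothesis, so the substantive point is that $M':=M\otimes_{\bb A_\sub{inf}}\bb A_\sub{inf}[\tfrac1{[\pi]}]$ is finite projective over $\bb A_\sub{inf}[\tfrac1{[\pi]}]$. The structural facts to use are that $p$ is a non-zero-divisor of $\bb A_\sub{inf}[\tfrac1{[\pi]}]$ with quotient $\bb A_\sub{inf}[\tfrac1{[\pi]}]/p=\roi^\flat[\tfrac1\pi]=\bb C^\flat$ a field (this is where the rank-one hypothesis on $\roi$ enters), and that the $p$-adic completion of $\bb A_\sub{inf}[\tfrac1{[\pi]}]$ is the discrete valuation ring $W(\bb C^\flat)$ (Witt vectors commute with localisation of perfect rings). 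Now $M'$ is $p$-torsion-free since $M$ is torsion-free; $M'/p$ is finite-dimensional over $\bb C^\flat$, hence free; and $M'[\tfrac1p]=N[\tfrac1{[\pi]}]$ is finite projective. I would conclude that $M'$ is finite projective either by a Beauville--Laszlo-type descent along the flat cover $\{\bb A_\sub{inf}[\tfrac1{[\pi]}]\to W(\bb C^\flat),\ \bb A_\sub{inf}[\tfrac1{[\pi]}]\to\bb A_\sub{inf}[\tfrac1{p[\pi]}]\}$ --- gluing the free $W(\bb C^\flat)$-module $(M')^{\wedge}_p$ to the projective $\bb A_\sub{inf}[\tfrac1{p[\pi]}]$-module $M'[\tfrac1p]$ --- or by verifying directly that $M'_\frak q$ is free over $\bb A_\sub{inf}[\tfrac1{[\pi]}]_\frak q$ for every prime $\frak q$ (if $p\notin\frak q$, $M'_\frak q$ is a localisation of the projective module $M'[\tfrac1p]$; if $p\in\frak q$ then $\frak q=(p)$ is the unique such prime and $\bb A_\sub{inf}[\tfrac1{[\pi]}]_{(p)}$ is a DVR over which the finitely generated torsion-free module $M'_{(p)}$ is free) and then upgrading pointwise freeness to local freeness.

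The main obstacle is exactly that last upgrade: $\bb A_\sub{inf}[\tfrac1{[\pi]}]$ is not Noetherian, so ``$M'$ finitely generated and free at every point'' is not formally enough, and one genuinely needs the interaction of the $p$-adic and $[\pi]$-inverted directions --- which is where the Beauville--Laszlo gluing (or an equivalent $p$-adic approximation argument) becomes indispensable. Everything else --- the choice of $M$, the appeal to part (i), and the deduction of freeness from the locality of $\bb A_\sub{inf}$ --- is formal.
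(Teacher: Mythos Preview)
Your proposal addresses only part (iv); parts (ii) and (iii) are not touched (the paper sketches (ii) by reducing to $p$-power-torsion modules and using the coherence of $W_r(\roi^\flat)$ together with the classification of finitely presented $\roi^\flat$-modules, and treats (iii) as in the preceding remark).

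For part (iv), your approach is exactly the paper's: choose a finitely generated $\bb A_\sub{inf}$-lattice $M\subseteq N$, check that $\widetilde M$ is a vector bundle on the punctured spectrum, and then invoke part (i). The paper says only that this ``is proved exactly as the analogous assertion in the previous remark, once it is checked that the localisation $\bb A_{\sub{inf},\pid{p}}$ is a discrete valuation ring,'' which amounts precisely to your observation that on $D([\pi])$ the only point outside $D(p)$ is the prime $\pid{p}$ and that the stalk there is a DVR. You are right that the non-Noetherian step---upgrading pointwise freeness of $\widetilde M$ on the punctured spectrum to local freeness---is not formal; the paper does not spell this out and refers to [BMS, \S4.2] for the details. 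Your Beauville--Laszlo/$p$-adic approximation suggestion is a reasonable way to fill this, though one must be slightly careful since $M'$ is a priori only finitely generated (not finitely presented) over $\bb A_\sub{inf}[\tfrac1{[\pi]}]$.
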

\begin{proof}
We have nothing to say about (i) here, and refer instead to [BMS, Lem.~4.6]. We will also only briefly comment on the remaining parts of the theorem, since these self-contained results may be easily read in [BMS, \S4.2].

(ii): By clearing denominators in a basis for $M[\tfrac1p]$ to construct a finite free $\bb A_\sub{inf}$-module $M'\subseteq M$ satisfying $M'[\tfrac1p]=M[\tfrac1p]$, we may reduce to the case that $M$ is killed by a power of $p$, i.e., $M$ is a $\bb A_\sub{inf}/p^r\bb A_\sub{inf}=W_r(\roi^\flat)$-module  for some $r\gg0$. By an induction on $r$, using that $W_r(\roi^\flat)$ can be shown to be coherent, one can reduce to the case $r=1$, in which case it easily follows from the classification of finitely presented modules over the valuation ring $\roi^\flat$: they have the shape $(\roi^\flat)^n\oplus\roi^\flat/a_1\roi^\flat\oplus\cdots\oplus \roi^\flat/a_m\roi^\flat$, for some $n\ge1$ and $a_i\in\roi^\flat$, and so in particular are perfect.

(iii): This is proved similarly to the analogous assertion in the previous remark.

(iv): This is proved exactly as the analogous assertion in the previous remark, once it is checked that the localisation $\bb A_{\sub{inf},\pid{p}}$ is a discrete valuation ring.
\end{proof}

\begin{corollary}\label{corollary_condition_for_freeness}
Let $M$ be a finitely presented $\bb A_\sub{inf}$-module such that $M[\tfrac1p]$ is finite free over $\bb A_\sub{inf}[\tfrac1p]$. If either $M\otimes_{\bb A_\sub{inf}}W(k)$ or $M\otimes_{\bb A_\sub{inf}}\roi$ is $p$-torsion-free (equivalently, finite free over $W(k)$ or $\roi$ respectively), then $M$ is a finite free $\bb A_\sub{inf}$-module.
\end{corollary}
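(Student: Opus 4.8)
The strategy is to combine part (iii) of Theorem~\ref{theorem_modules_over_A_inf} with a rank/length bookkeeping argument, just as one does over a two-dimensional regular local ring. First I would invoke Theorem~\ref{theorem_modules_over_A_inf}(iii) to obtain the functorial four-term exact sequence
\[0\To M_\sub{tor}\To M\To M_\sub{free}\To \res M\To 0,\]
with $M_\sub{tor}$ killed by a power of $p$, $M_\sub{free}$ finite free, and $\res M$ killed by a power of $\pid{p,t}$ (in particular $p$-power torsion). Breaking this into two short exact sequences $0\to M_\sub{tor}\to M\to M'\to 0$ and $0\to M'\to M_\sub{free}\to\res M\to 0$, where $M'=M/M_\sub{tor}$, I want to show that the hypothesis forces $M_\sub{tor}=0$ and $\res M=0$, so that $M\cong M_\sub{free}$.

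The key step is the base change computation. Suppose $M\otimes_{\bb A_\sub{inf}}W(k)$ is $p$-torsion-free; the case of $M\otimes_{\bb A_\sub{inf}}\roi$ is identical since $\roi=\bb A_\sub{inf}/\xi$ with $\xi$ of the allowed form (its image in $\roi^\flat$ lies in $\frak m^\flat\setminus\{0\}$), and $W(k)=\bb A_\sub{inf}/\pid{p,t}$-type quotient is handled the same way. Applying $-\otimes_{\bb A_\sub{inf}}W(k)$ (equivalently, reducing mod the kernel of the crystalline specialisation map) to the second short exact sequence and using that $M_\sub{free}\otimes_{\bb A_\sub{inf}}W(k)$ is finite free over $W(k)$, hence $p$-torsion-free, I get that $\Tor_1^{\bb A_\sub{inf}}(\res M, W(k))$ injects into $M'\otimes W(k)\hookrightarrow M\otimes W(k)$... more carefully: from $0\to M_\sub{tor}\to M\to M'\to 0$ the long exact Tor sequence gives $\Tor_1^{\bb A_\sub{inf}}(M',W(k))\to M_\sub{tor}\otimes W(k)\to M\otimes W(k)$, and from $0\to M'\to M_\sub{free}\to\res M\to 0$ we get $\Tor_1^{\bb A_\sub{inf}}(\res M,W(k))\isoto$ the kernel of $M'\otimes W(k)\to M_\sub{free}\otimes W(k)$ (since $\Tor_1(M_\sub{free},-)=0$). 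Now $\res M$ is killed by a power of $\pid{p,t}$ and is perfect; a direct computation shows $\Tor_*^{\bb A_\sub{inf}}(\res M, W(k))$ cannot vanish in positive degrees unless $\res M=0$, because $W(k)$ has a two-term Koszul-type resolution over $\bb A_\sub{inf}$ on the regular sequence generating (the radical of) $\pid{p,t}$, and any nonzero finitely presented module supported at the closed point has nonvanishing $\Tor_2$ against $W(k)$ (Euler characteristic / rank count). Similarly $M_\sub{tor}$, being nonzero $p$-power torsion and perfect, would contribute either torsion or a $\Tor_1$ obstruction that survives in $M\otimes W(k)$. Tracking these contributions, the $p$-torsion-freeness of $M\otimes_{\bb A_\sub{inf}}W(k)$ forces both $M_\sub{tor}\otimes W(k)$ to inject into the $p$-torsion-free module $M\otimes W(k)$ while being itself $p$-power torsion (hence zero after base change, and then zero by Nakayama since it is finitely generated over a local ring), and forces $\res M=0$.

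The main obstacle will be making the last paragraph's Tor/length argument clean and rigorous without the luxury of Noetherianity: I need the precise structure of $\Tor_*^{\bb A_\sub{inf}}(N,W(k))$ for $N$ a perfect $\bb A_\sub{inf}$-module that is $p$-power torsion (for $M_\sub{tor}$) or $\pid{p,t}$-power torsion (for $\res M$). The cleanest route is probably to localise: the hypothesis that $M\otimes_{\bb A_\sub{inf}}W(k)$ is $p$-torsion-free is equivalent, via part (iii), to $M_\sub{tor}=0$ \emph{and} $\Tor_1^{\bb A_\sub{inf}}(\res M, W(k))=0$, and the latter forces $\res M=0$ because $\res M$ is supported only at the maximal ideal where $\bb A_\sub{inf}$ "looks regular of dimension $2$", so a nonzero such module always has $\Tor_1$ against the residue-type quotient $W(k)$. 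Alternatively, and perhaps most efficiently, one cites the fact that for $M$ finitely presented with $M[\tfrac1p]$ free, $M$ is free if and only if it has projective dimension $\le 0$, if and only if $\Tor_i^{\bb A_\sub{inf}}(M,W(k))=0$ for $i>0$, and then reads off from the exact sequence that this vanishing is equivalent to the $p$-torsion-freeness of $M\otimes_{\bb A_\sub{inf}}W(k)$ (resp.\ $\otimes\roi$) using that $M_\sub{free}$ and $W(k)$ (resp.\ $\roi$) are Tor-independent in the relevant degrees. I expect this to occupy at most a page, citing [BMS, \S4.2] for the structural input of Theorem~\ref{theorem_modules_over_A_inf}.
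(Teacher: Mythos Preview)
Your plan and the paper both start from the four-term sequence of Theorem~\ref{theorem_modules_over_A_inf}(iii), but diverge sharply thereafter. You aim to prove $M_\sub{tor}=0$ and $\res M=0$ directly, via Tor/Nakayama bookkeeping. The paper does \emph{not} do this. Instead it extracts from the four-term sequence only a rank equality: since $M_\sub{tor}$ and $\res M$ are $p$-power torsion, the $p$-torsion-freeness hypothesis forces the image of $M\otimes W(k)$ (resp.\ $M\otimes\roi$) in $M_\sub{free}\otimes W(k)$ (resp.\ $M_\sub{free}\otimes\roi$) to be a free submodule of the same rank, whence $\dim_k(M\otimes_{\bb A_\sub{inf}}k)=\operatorname{rank}M_\sub{free}=\operatorname{rank}M[\tfrac1p]$. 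It then concludes by the elementary lemma that a finitely presented module $N$ over a local integral domain $R$ with $\dim_{\Frac R}(N\otimes\Frac R)=\dim_{k(R)}(N\otimes k(R))$ is free (lift a basis of $N\otimes k(R)$ to a surjection $R^r\to N$ by Nakayama; its kernel is torsion by the generic-rank condition, hence zero inside the free module $R^r$).

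Your route has a real obstacle, which you yourself flag. The step ``$M_\sub{tor}\otimes W(k)$ injects into $M\otimes W(k)$'' is not available: the long exact sequence only tells you the \emph{image} of $M_\sub{tor}\otimes W(k)$ in $M\otimes W(k)$ is $p$-power torsion (hence zero), not that the map is injective, so you cannot conclude $M_\sub{tor}\otimes W(k)=0$ and invoke Nakayama. Likewise, the assertion that $\res M\neq 0$ forces $\Tor_1^{\bb A_\sub{inf}}(\res M,W(k))\neq 0$ is not obvious over the non-Noetherian $\bb A_\sub{inf}$ (and $W(k)$ is not the residue field, so the usual Auslander--Buchsbaum/depth heuristics need care). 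The paper's rank-comparison argument sidesteps all of this: it never needs to show $M_\sub{tor}$ or $\res M$ vanish, nor to compute any higher Tor groups. I would recommend switching to that line; it is both shorter and avoids the non-Noetherian pitfalls.
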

\begin{proof}
It follows easily from the hypothesis that the map $M\to M_\sub{free}$ in Theorem \ref{theorem_modules_over_A_inf}(iii) becomes an isomorphism after tensoring by $W(k)$ or $\roi$; hence $M[\tfrac1p]$ and $M\otimes_{\bb A_\sub{inf}}k$ have the same rank over $\bb A_\sub{inf}[\tfrac1p]$ and $k$ respectively. But an easy Fitting ideal argument shows that if $N$ is a finitely presented module over a local integral domain $R$ satisfying $\dim_{\Frac R}(N\otimes_R\Frac R)=\dim_{k(R)}(N\otimes_Rk(R))$, then $N$ is finite free over $R$.
\end{proof}

To state and prove the next corollary we use the elements $\xi,\xi_r,\mu\in\bb A_\sub{inf}$ constructed in \S\ref{subsection_roots_of_unity}:

\begin{corollary}\label{corollary_Fitting_ideal_trick}
Let $M$ be a finitely presented $\bb A_\sub{inf}$-module, and assume:
\begin{itemize}
\item $M[\tfrac1{p\mu}]$ is a finite free $\bb A_\sub{inf}[\tfrac1{p\mu}]$-module of the same rank as the $W(k)$-module $M\otimes_{\bb A_\sub{inf}}W(k)$.
\item There exists a Frobenius-semi-linear endomorphism of $M$ which becomes an isomorphism after inverting $\xi$.
\end{itemize}
Then $M[\tfrac1p]$ is finite free over $\bb A_\sub{inf}[\tfrac1p]$.
\end{corollary}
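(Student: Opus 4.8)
The plan is to reduce everything to the module $N := M[\tfrac1p]$ over $S := \bb A_\sub{inf}[\tfrac1p]$. Since $N$ is finitely presented over $S$, it is finite free as soon as it is finite projective (i.e.\ locally free), by Theorem \ref{theorem_modules_over_A_inf}(iv). So it suffices to prove that $N$ is locally free over $S$, necessarily of rank $d := \operatorname{rank}_{W(k)}(M\otimes_{\bb A_\sub{inf}}W(k))$, which is also the rank of the finite free $S[\tfrac1\mu]$-module $N[\tfrac1\mu] = M[\tfrac1{p\mu}]$.

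First I would run the Fitting-ideal calculus on $N$. From the freeness of $N[\tfrac1\mu]$ of rank $d$ we get $\operatorname{Fitt}_{d-1}(N)[\tfrac1\mu]=0$ and $\operatorname{Fitt}_d(N)[\tfrac1\mu]=S[\tfrac1\mu]$. Since $\mu$ is a non-zero-divisor of $\bb A_\sub{inf}$ (Proposition \ref{proposition_roots_of_unity}), hence of $S$ and of the ideal $\operatorname{Fitt}_{d-1}(N)\subseteq S$, the first equality forces $\operatorname{Fitt}_{d-1}(N)=0$. A standard local argument then shows, for every prime $\mathfrak p$ of $S$, that $N_{\mathfrak p}$ is free of rank $d$ if and only if $\operatorname{Fitt}_d(N)\not\subseteq\mathfrak p$. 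Hence $N$ is locally free (of rank $d$) if and only if the closed set $Z := V(\operatorname{Fitt}_d(N))$ is empty, and we already know $Z\subseteq V(\mu)$.

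Next I would bring in the Frobenius. The Witt Frobenius $\phi$ is an automorphism of $\bb A_\sub{inf}=W(\roi^\flat)$ (as $\roi^\flat$ is perfect), hence of $S$; write $\psi := \Spec(\phi)$. The given $\phi$-semi-linear endomorphism of $M$, an isomorphism after inverting $\xi$, becomes after inverting $p$ a $\phi$-semi-linear isomorphism $N[\tfrac1\xi]\isoto N[\tfrac1{\tilde\xi}]$ (with $\tilde\xi=\phi(\xi)$), which linearizes to an $S$-linear map $\phi^*N\to N$ that is an isomorphism over $\Spec S\setminus V(\tilde\xi)$. Thus, for $\mathfrak p\notin V(\tilde\xi)$, $N$ is free of rank $d$ at $\mathfrak p$ iff it is so at $\psi(\mathfrak p)$. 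Now each locus $V(\phi^j(\xi))\cap\Spec S$ ($j\ge1$) is the single point $\ker(\theta\circ\phi^{-j})$, at which $\mu$ maps to $\theta(\phi^{-j}(\mu))=\theta([\epsilon^{1/p^j}]-1)=\zeta_{p^j}-1\neq0$; hence $V(\tilde\xi_n)\cap\Spec S$ is finite and disjoint from $V(\mu)$, for every $n$. Since $Z\subseteq V(\mu)$ is therefore disjoint from all these loci, the Frobenius equivalence upgrades to $\psi^n(Z)=Z$ for all $n\in\bb Z$ (with the single ``de Rham point'' $\ker\theta$ requiring separate attention, as $\ker\tilde\theta\in V(\tilde\xi)$ blocks the obvious propagation). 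Combining with $Z\subseteq V(\mu)$ and the identity $\psi^n(V(\mu))=V(\phi^{-n}(\mu))=V([\epsilon^{1/p^n}]-1)$ confines $Z$ to $\bigcap_{n\ge0}V([\epsilon^{1/p^n}]-1)$, together with the finite Frobenius orbit of $\ker\theta$.

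The main obstacle is the last step: showing that this confinement forces $Z=\emptyset$, and this is where the rank hypothesis is indispensable. The point $\mathfrak p_k := \ker(S\to W(k)[\tfrac1p])$ lies in $\bigcap_n V([\epsilon^{1/p^n}]-1)$, because each $[\epsilon^{1/p^n}]-1$ maps to $0$ in $W(k)$; and the fibre $N\otimes_S\kappa(\mathfrak p_k)=(M\otimes_{\bb A_\sub{inf}}W(k))[\tfrac1p]$ has dimension exactly $d$ over $\kappa(\mathfrak p_k)=W(k)[\tfrac1p]$, so $\mathfrak p_k\notin Z$. One must then identify $\bigcap_n V([\epsilon^{1/p^n}]-1)\cap\Spec S$ and the accumulation points of the $\psi$-orbit of $\ker\theta$ precisely enough to conclude that $Z$ contains no point at all — playing these off against the crystalline fibre via upper semicontinuity of fibre dimension, and thereby in particular forcing $\dim_{\bb C}(M\otimes_{\bb A_\sub{inf},\theta}\bb C)=d$. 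This analysis of the geometry of $V(\mu)$ inside $\Spec\bb A_\sub{inf}[\tfrac1p]$ under the $\phi$-action is the delicate heart of the proof; once $Z=\emptyset$ is established, $N$ is finite projective over $S$, hence finite free over $\bb A_\sub{inf}[\tfrac1p]$ by Theorem \ref{theorem_modules_over_A_inf}(iv), which is the assertion.
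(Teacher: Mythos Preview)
Your setup via Fitting ideals is exactly right, and coincides with the paper's: one must show that the relevant Fitting ideal of $M$ contains a power of $p$, equivalently that the $d$-th Fitting ideal of $N=M[\tfrac1p]$ is all of $S=\bb A_\sub{inf}[\tfrac1p]$. Your use of the hypotheses to get $Z\subseteq V(\mu)$ and to obtain Frobenius-equivariance of $Z$ away from $V(\tilde\xi)$ is also correct.

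The gap is the step you yourself flag as ``the delicate heart'': you never actually show $Z=\emptyset$. Concretely, your argument confines $Z$ to $\bigcap_{n\ge0}V([\ep^{1/p^n}]-1)\cap\Spec S$ (plus the $\psi$-orbit of $\ker\theta$, which you correctly note needs separate handling since $\ker\theta\in V(\mu)$). You then want to say that the only point in this intersection is the crystalline point $\frak p_k$, where the rank hypothesis excludes it from $Z$. But this identification is not available: the spectrum of $\bb A_\sub{inf}[\tfrac1p]$ is genuinely complicated (the paper notes $\bb A_\sub{inf}$ has Krull dimension $\ge 3$ and is not even coherent), and there is no reason the closed subset $V(W(\frak m^\flat))\cap\Spec S$, or the intersection you wrote down, should reduce to a single point. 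Upper semicontinuity alone cannot bridge this without knowing the topology of $\Spec S$ near $\frak p_k$.

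The paper sidesteps this entirely by working algebraically in $\bb A_\sub{inf}$ rather than geometrically on $\Spec S$. Letting $J\subseteq\bb A_\sub{inf}$ be the first nonzero Fitting ideal of $M$, the hypotheses translate to: (i) $(p\mu)^N\in J$; (ii) $p^N\in J+W(\frak m^\flat)$; (iii) $\phi(\xi)^N\phi(J)\subseteq J$ and vice versa, for some $N\gg0$. One then uses (ii) to write $p^N\equiv\beta\phi^{-r}(\mu^N)\pmod J$ for a \emph{specific} $r\ge0$, multiplies through by $p^N\xi_r^N$ and invokes the identity $\mu=\xi_r\phi^{-r}(\mu)$ together with (i) to obtain $\xi_r^Np^{2N}\in J$. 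Finally one proves, for this fixed $r$, the implication
\[
\xi_r^a p^b\in J\quad\Longrightarrow\quad \xi_r^{a-1}p^{b'}\in J\quad\text{for some }b',
\]
by applying $\phi^r$ (using (iii)) and the congruence $\phi^r(\xi_r)\equiv p^r\pmod{\xi_r}$. Induction on $a$ then gives a power of $p$ in $J$. This element-chasing argument never needs to understand $\Spec\bb A_\sub{inf}[\tfrac1p]$; the point is that the description of $W(\frak m^\flat)$ in terms of the $\phi^{-r}(\mu)$ lets one pick a single Frobenius twist to work with, rather than passing to an infinite intersection as in your geometric picture.
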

\begin{proof}
We must show that each Fitting ideal of the $\bb A_\sub{inf}[\tfrac1p]$-module $M[\tfrac1p]$ is either $0$ or $\bb A_\sub{inf}[\tfrac1p]$; indeed, this means exactly that $M[\tfrac1p]$ is finite projective over $\bb A_\sub{inf}[\tfrac1p]$, which is sufficient by Theorem \ref{theorem_modules_over_A_inf}(iv). Since Fitting ideals behave well under base change, it is equivalent to prove that the first non-zero Fitting ideal $J\subseteq\bb A_\sub{inf}$ of $M$ contains a power of $p$. Again using that Fitting ideals base change well, our hypotheses imply that $J\bb A_\sub{inf}[\tfrac1{p\mu}]=\bb A_\sub{inf}[\tfrac1{p\mu}]$ and $JW(k)\neq 0$; that is, $J$ contains a power of $p\mu$ and $J+W(\frak m^\flat)$ contains a power of $p$, where $W(\frak m^\flat):=\ker(\bb A_\sub{inf}\to W(k))$. Because of the existence of the Frobenius on $M$, we also know that $J$ and $\phi(J)$ are equal up to a power of $\phi(\xi)$. In conclusion, we may pick $N\gg$ such that
\begin{enumerate}
\item $(p\mu)^N\in J$;
\item $p^N\in J+W(\frak m)$;
\item $\phi(\xi)^N\phi(J)\subseteq J$ and $\phi(\xi)^NJ\subseteq \phi(J)$.
\end{enumerate}

Since $W(\frak m)$ is the $p$-adic completion of the ideal generated by $\phi^{-r}(\mu^N)$, for all $r\ge 0$,\footnote{\label{last_footnote}By an easy induction using that $p$ is a non-zero-divisor in $\bb A_\sub{inf}/W(\frak m)$, this follows from the fact that the maximal ideal of $\bb A_\sub{inf}/p\bb A_\sub{inf}=\roi^\flat$ is generated by the elements $\phi^{-r}(\ep)-1$, for all $r\ge0$.} observation (ii) lets us write $p^N=\al+\beta\phi^{-r}(\mu^N)+\beta'p^{N+1}$ for some $\al\in J$ and $\beta,\beta'\in \bb A_\sub{inf}$, and $r\ge 0$. Since $1-\beta'p$ is invertible, we may easily suppose that $\beta'=0$, i.e., $p^N=\al+\beta\phi^{-r}(\mu^N)$. Multiplying through by $p^N\xi_r^N$ gives $\xi_r^Np^{2N}=p^N\xi_r^N\al+\beta p^N\mu^N$, which belongs to $J$ by (i) and (ii).

We claim, for any $a,i\ge1$, that
\begin{center}
$\xi_r^ap^\sub{some power}\in J\implies \xi_r^{a-1}p^\sub{some other power}\in J$.
\end{center}
A trivial induction then shows that $J$ contains a power of $p$, thereby completing the proof, and so it remains only to prove this claim. Suppose $\xi_r^ap^b\in J$ for some $a,b,i\ge 1$. Then $\phi^r(\xi_r)^ap^b\in\phi^r(J)$, and so $\phi^r(\xi_r)^{a+N}p^b\in J$ (since an easy generalisation of (iii) implies that $\phi^r(\xi_r)^N\phi^r(J)\subseteq J$). But $\phi^r(\xi_r)\equiv p^r$ mod $\xi_r$, so we may write $\phi^r(\xi_r)^{a+N}=p^{r(a+N)}+\al \xi_r$ for some $\al\in\bb A_\sub{inf}$ and thus deduce that $J\ni (p^{r(a+N)}+\al\xi_r)p^b=p^{r(a+N)+b}+\al \xi_rp^b$. Now multiply through by $\xi_r^{a-1}$ and use the supposition to obtain $\xi_r^{a-1}p^{r(a+N)+b}\in J$, as required.
\end{proof}

We will also need the following to eliminate the appearance of higher Tors in the crystalline specialisation of the $\bb A_\sub{inf}$-cohomology theory:

\begin{lemma}\label{lemma_Tors_over_A_inf}
Let $M$ be an $\bb A_\sub{inf}$-module such that $M[\tfrac1p]$ is flat over $\bb A_\sub{inf}[\tfrac1p]$. Then $\Tor_*^{\bb A_\sub{inf}}(M,W(k))=0$ for $*>1$.
\end{lemma}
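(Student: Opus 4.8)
The plan is to reduce everything to the ideal $W(\frak m^\flat):=\ker(\bb A_\sub{inf}\to W(k))$. Applying $M\dotimes_{\bb A_\sub{inf}}-$ to $0\to W(\frak m^\flat)\to\bb A_\sub{inf}\to W(k)\to0$ and using that $\bb A_\sub{inf}$ is flat over itself gives $\Tor_i^{\bb A_\sub{inf}}(M,W(k))\cong\Tor_{i-1}^{\bb A_\sub{inf}}(M,W(\frak m^\flat))$ for $i\ge2$, so it is enough to show $\Tor_i^{\bb A_\sub{inf}}(M,W(\frak m^\flat))=0$ for all $i\ge1$. The key structural input will be that $W(\frak m^\flat)$ sits in a short exact sequence $0\to J\to W(\frak m^\flat)\to Q\to0$ in which $J$ is a flat $\bb A_\sub{inf}$-module and $Q$ is a module over $\bb A_\sub{inf}[\tfrac1p]$; then $J$ contributes nothing to $\Tor$ against $M$ by flatness, while $Q$ contributes nothing because $M[\tfrac1p]$ is flat over $\bb A_\sub{inf}[\tfrac1p]$ by hypothesis, and the long exact sequence finishes the job.

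To build $J$, recall from footnote~\ref{last_footnote} that $\frak m^\flat\subset\roi^\flat=\bb A_\sub{inf}/p$ is generated by the elements $\ep^{1/p^r}-1$ ($r\ge0$); since $\ep^{1/p^r}-1=(\ep^{1/p^{r+1}}-1)^p$ in the perfect ring $\roi^\flat$, these generate an increasing chain of principal ideals and $\frak m^\flat=\bigcup_{r}(\ep^{1/p^r}-1)\roi^\flat$. I would set $\nu_r:=[\ep^{1/p^r}-1]\in\bb A_\sub{inf}$, so that $\nu_r=\nu_{r+1}^{\,p}$, each $\nu_r$ is a non-zero-divisor of the integral domain $\bb A_\sub{inf}$ (footnote~\ref{footnote_int_dom}), and \[J:=\bigcup_{r\ge0}\nu_r\bb A_\sub{inf}=\varinjlim\big(\bb A_\sub{inf}\xto{\nu_1^{p-1}}\bb A_\sub{inf}\xto{\nu_2^{p-1}}\bb A_\sub{inf}\to\cdots\big)\] is a filtered colimit of free modules, hence flat. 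A routine Witt-vector computation (of the kind used repeatedly in \S\ref{subsection_perfectoid}--\S\ref{subsection_roots_of_unity}, using that $\bb A_\sub{inf}$ is a $p$-torsion-free domain) shows $J\cap p^m\bb A_\sub{inf}=p^mJ$, so $J/p^mJ$ embeds into $\bb A_\sub{inf}/p^m=W_m(\roi^\flat)$ with image $\bigcup_r\nu_rW_m(\roi^\flat)$, which by footnote~\ref{footnote_almost_W_r} is exactly $W_m(\frak m^\flat):=\ker(W_m(\roi^\flat)\to W_m(k))$. Separately, $W(\frak m^\flat)$ is $p$-torsion-free, and since $W(k)[p^m]=0$ the sequence $0\to W(\frak m^\flat)\to W(\roi^\flat)\to W(k)\to0$ stays exact mod $p^m$, giving $W(\frak m^\flat)/p^mW(\frak m^\flat)\isoto W_m(\frak m^\flat)$. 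Hence $J\into W(\frak m^\flat)$ is an isomorphism modulo every power of $p$, and passing to $p$-adic completions identifies $W(\frak m^\flat)$ with $\hat J$ — this is the identification announced in footnote~\ref{footnote_almost_W_r}.

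Now put $Q:=W(\frak m^\flat)/J$. The isomorphisms $J/p^mJ\isoto W(\frak m^\flat)/p^mW(\frak m^\flat)$ show on the one hand that $J\to W(\frak m^\flat)$ is surjective mod $p^m$ for every $m$, so $Q=p^mQ$ and $Q$ is $p$-divisible; on the other hand $J$ is $p$-saturated in $W(\frak m^\flat)$ (by the identity $J\cap p^m\bb A_\sub{inf}=p^mJ$) and $W(\frak m^\flat)$ is $p$-torsion-free, so $Q$ is $p$-torsion-free. A $p$-torsion-free, $p$-divisible abelian group is a $\bb Z[\tfrac1p]$-module, so $Q$ is canonically an $\bb A_\sub{inf}[\tfrac1p]$-module. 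Flat base change along $\bb A_\sub{inf}\to\bb A_\sub{inf}[\tfrac1p]$ then gives $M\dotimes_{\bb A_\sub{inf}}Q\simeq M[\tfrac1p]\dotimes_{\bb A_\sub{inf}[1/p]}Q$, which is concentrated in degree $0$ because $M[\tfrac1p]$ is flat over $\bb A_\sub{inf}[\tfrac1p]$; thus $\Tor_i^{\bb A_\sub{inf}}(M,Q)=0$ for $i\ge1$. Combining this with $\Tor_i^{\bb A_\sub{inf}}(M,J)=0$ ($i\ge1$, by flatness of $J$) in the long exact sequence attached to $0\to J\to W(\frak m^\flat)\to Q\to0$ yields $\Tor_i^{\bb A_\sub{inf}}(M,W(\frak m^\flat))=0$ for all $i\ge1$, which by the first paragraph proves the lemma.

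The step I expect to demand the most care is the middle one: the identification $\hat J=W(\frak m^\flat)$ together with the fact that the completion defect $Q$ is $p$-divisible and $p$-torsion-free. This is where one genuinely uses that $\roi^\flat$ is a perfect valuation ring whose maximal ideal is a nested union of principal ideals, and where the comparison between the Teichm\"uller-generated ideal and the honest kernel $W(\frak m^\flat)$ must be controlled level by level in the Witt tower; once $Q$ is seen to be an $\bb A_\sub{inf}[\tfrac1p]$-module, the rest is formal. The remaining ingredients — flatness of $J$, the elementary $p$-saturation identity, and the two long exact sequences — are routine.
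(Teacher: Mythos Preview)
Your argument is correct and is essentially the paper's proof: your ideal $J=\bigcup_r[\ep^{1/p^r}-1]\bb A_\sub{inf}$ coincides with the paper's Teichm\"uller-generated ideal $[\frak m^\flat]$, and both proofs hinge on the flatness of this ideal together with the fact that $W(\frak m^\flat)/[\frak m^\flat]$ is uniquely $p$-divisible, hence an $\bb A_\sub{inf}[\tfrac1p]$-module on which the hypothesis on $M[\tfrac1p]$ kills all higher $\Tor$. The only cosmetic difference is that you run the argument through the two sequences $0\to W(\frak m^\flat)\to\bb A_\sub{inf}\to W(k)\to0$ and $0\to J\to W(\frak m^\flat)\to Q\to0$, whereas the paper collapses these into the single sequence $0\to W(\frak m^\flat)/[\frak m^\flat]\to\bb A_\sub{inf}/[\frak m^\flat]\to W(k)\to0$ and phrases the flatness of $[\frak m^\flat]$ as ``$\bb A_\sub{inf}/[\frak m^\flat]$ has Tor-dimension $\le1$''.
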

\begin{proof}
Let $[\frak m^\flat]\subseteq W(\frak m^\flat)$ be the ideal of $\bb A_\sub{inf}$ which is generated by Teichm\"uller lifts of elements of $\frak m^\flat$. We first observe that $\bb A_\sub{inf}/[\frak m^\flat]$ is $p$-torsion-free and has Tor-dimension $=1$ over $\bb A_\sub{inf}$: indeed, $[\frak m^\flat]$ is the increasing union of the ideals $[\pi]\bb A_\sub{inf}$, for $\pi\in\frak m^\flat\setminus\{0\}$, and the claims are true for $\bb A_\sub{inf}/[\pi]\bb A_\sub{inf}$ since $p,[\pi]$ is a regular sequence of $\bb A_\sub{inf}$.

Next, since $W_r(\frak m^\flat)$ is generated by the analogous Teichm\"uller lifts in $W_r(\roi)=\bb A_\sub{inf}/p^r\bb A_\sub{inf}$, for any $r\ge1$ (c.f., footnote \ref{footnote_almost_W_r}), the quotient $W(\frak m^\flat)/[\frak m^\flat]$ is $p$-divisible. Combined with the previous observation, it follows that $W(\frak m^\flat)/[\frak m^\flat]$ is uniquely $p$-divisible, i.e., an $\bb A_\sub{inf}[\tfrac1p]$-module, whence \[\Tor_*^{\bb A_\sub{inf}}(W(\frak m^\flat)/[\frak m^\flat],M)=\Tor_*^{\bb A_\sub{inf}[\tfrac1p]}(W(\frak m^\flat)/[\frak m^\flat],M[\tfrac1p]),\] which vanishes for $*>0$ by the hypothesis on $M$. Combining this with the short exact sequence \[0\to W(\frak m^\flat)/[\frak m^\flat]\to \bb A_\sub{inf}/[\frak m^\flat]\to \bb A_\sub{inf}/W(\frak m^\flat)=W(k)\to 0\] and the initial observation about the Tor-dimension of the middle term completes the proof.
\end{proof}

\section{Two lemmas on Koszul complexes}
Let $R$ be a ring, and $g_1,\dots,g_d\in R$. The associated Koszul complex will be denoted by $K_R(g_1,\dots,g_d)=\bigotimes_{i=1}^d K_R(g_i)$, where $K_R(g_i):=[R\xto{g_i}R]$. Here we state two useful lemmas concerning such complexes, the second of which describes the behaviour of the d\'ecalage functor.

\begin{lemma}\label{lemma_on_Koszul_2}
Let $g\in R$ be an element which divides $g_1,\dots,g_d$, and such that $g_i$ divides $g$ for some $i$. Then there are isomorphisms of $R$-modules \[H^n(K_R(g_1,\dots,g_d))\cong R[g]^{\binom{d-1}n}\oplus R/gR^{\binom{d-1}{n-1}}\] for all $n\ge0$.
\end{lemma}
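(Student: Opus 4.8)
The plan is to induct on $d$, peeling off one Koszul factor at a time and using the divisibility hypothesis to force the resulting extension of cohomology modules to split. When $d=1$ the hypotheses force $g_1R=gR$, hence $R[g_1]=R[g]$ and $R/g_1R=R/gR$; since $K_R(g_1)=[R\xto{g_1}R]$ this is exactly the asserted formula ($R[g]$ in degree $0$ and $R/gR$ in degree $1$, nothing in other degrees).

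For the inductive step (so $d\ge 2$), I would first permute $g_1,\dots,g_d$ — which alters $K_R(g_1,\dots,g_d)$ only by a canonical isomorphism, hence leaves its cohomology unchanged — so as to arrange that some distinguished index $i$ with $g_i\mid g$ satisfies $i\le d-1$. Put $D:=K_R(g_1,\dots,g_{d-1})$, so that $K_R(g_1,\dots,g_d)=D\otimes_R[R\xto{g_d}R]$ is, up to a shift, the mapping cone of multiplication by $g_d$ on $D$. The crux is that this multiplication is \emph{null-homotopic} on $D$. Indeed, because $g$ divides every $g_j$ we have $g_d\in gR=g_iR\subseteq(g_1,\dots,g_{d-1})$, say $g_d=\sum_{j}r_jg_j$; identifying $D$ with the exterior algebra $\bigwedge^\bullet R^{d-1}$ equipped with the differential $\partial\colon\omega\mapsto\bigl(\sum_{j\le d-1}g_je_j\bigr)\wedge\omega$, each multiplication-by-$g_j$ admits the standard contraction homotopy $\iota_{e_j^*}$ (contraction against the dual basis vector), namely $\partial\iota_{e_j^*}+\iota_{e_j^*}\partial=g_j\cdot\mathrm{id}_D$, so $H:=\sum_jr_j\iota_{e_j^*}$ is a null-homotopy for $g_d\cdot\mathrm{id}_D$ (using $R$-linearity of $\partial$). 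Consequently the cone of $g_d\cdot\mathrm{id}_D$ is isomorphic, as a complex of $R$-modules, to the cone of the zero endomorphism of $D$, and passing to cohomology gives $H^n(K_R(g_1,\dots,g_d))\cong H^n(D)\oplus H^{n-1}(D)$. Inserting the inductive formula for $H^\bullet(D)$ and using Pascal's identity $\binom{d-2}{n}+\binom{d-2}{n-1}=\binom{d-1}{n}$ (and the same with $n-1$ in place of $n$) yields the claim.

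The step I expect to be the main obstacle is precisely this splitting. The mapping-cone triangle only supplies, a priori, a short exact sequence $0\to H^{n-1}(D)\to H^n(K_R(g_1,\dots,g_d))\to H^n(D)\to 0$ — here one first uses that $g_d$ annihilates $H^\bullet(D)$, which follows from $g\mid g_d$ together with the inductive description of $H^\bullet(D)$ — and an extension of $R$-modules of this shape need not split in general, the relevant $\mathrm{Ext}^1$-group being typically nonzero (already for $R=\mathbb{Z}/p^2$). What saves the day is the observation above: the hypothesis ``$g_i\mid g$ for some $i$'' does more than make $g_d$ kill $H^\bullet(D)$; it places $g_d$, and indeed every $g_j$, inside the ideal $(g_1,\dots,g_{d-1})$, and this is exactly enough to make multiplication by $g_d$ null-homotopic on the Koszul complex $D$, splitting the sequence canonically. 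The remaining ingredients — the $d=1$ case, the exterior-algebra homotopy identity, and the binomial bookkeeping — are entirely routine.
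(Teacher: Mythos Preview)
Your argument is correct. The induction on $d$ with the null-homotopy of $g_d\cdot\mathrm{id}_D$ is exactly the right mechanism: once the distinguished index lies among $1,\dots,d-1$, the chain of divisibilities $g_d\in gR=g_iR\subseteq(g_1,\dots,g_{d-1})$ is valid, the contraction homotopies $\iota_{e_j^*}$ give $\partial\iota_{e_j^*}+\iota_{e_j^*}\partial=g_j$, and the resulting null-homotopy identifies the cone of $g_d$ on $D$ with $D\oplus D[-1]$, so the short exact sequences split and Pascal's identity finishes.

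As for comparison: the present paper does not actually prove this lemma but simply cites [BMS, Lem.~7.10], so there is no in-text argument to compare against. Your proof is a clean, self-contained treatment; the only remark worth adding is that the splitting you obtain is not canonical (it depends on the choice of the expression $g_d=\sum_j r_j g_j$, hence ultimately on the ordering and the choice of $i$), which is consistent with the lemma asserting only the existence of \emph{some} isomorphism of $R$-modules.
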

\begin{proof}\
[Lem.~7.10, BMS].
\end{proof}

\begin{lemma}\label{lemma_on_Koszul_1}
Let $f\in R$ be a non-zero-divisor such that, for each $i$, either $f$ divides $g_i$ or $g_i$ divides $f$. Then:
\begin{itemize}\itemsep0pt
\item If $f$ divides $g_i$ for all $i$, then $\eta_f K_R(g_1,\dots,g_d)\cong K_R(g_1/f,\dots,g_d/f)$. 
\item If $g_i$ divides $f$ for some $i$, then $\eta_f K_R(g_1,\dots,g_d)$ is acyclic.
\end{itemize}
\end{lemma}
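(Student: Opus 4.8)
\textbf{Proof plan for Lemma \ref{lemma_on_Koszul_1}.}

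The plan is to reduce the entire statement to the one-variable case together with the multiplicativity of both $K_R(-)$ and $\eta_f$. First I would record the one-variable computation. For a non-zero-divisor $f$ and an element $g$ with $f\mid g$, the complex $K_R(g) = [R\xto{g}R]$ sits inside $K_R(g)[\tfrac1f]$, and by definition $\eta_f K_R(g)$ is the subcomplex that is $R$ in degree $0$ (since $g/f \in R$, so $f^0 R = R$ is already closed under the differential, which lands in $fR$) and $fR$ in degree $1$ intersected with the image of the differential; a direct inspection gives $(\eta_f K_R(g))^0 = R$, $(\eta_f K_R(g))^1 = fR$ with differential $g$, i.e.\ $\eta_f K_R(g) \cong [R\xto{g/f}R] = K_R(g/f)$ via the identity in degree $0$ and division by $f$ in degree $1$. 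On the other hand, if $g\mid f$, say $f = gh$ with $h$ not necessarily a non-zero-divisor (but $f$ is), then $K_R(g)[\tfrac1f]$ is acyclic: the differential $g$ becomes invertible after inverting $f$, since $g$ divides the unit $f$. Hence $\eta_f K_R(g) \subseteq K_R(g)[\tfrac1f]$ has the property that $H^i(\eta_f K_R(g)) = H^i(K_R(g))/H^i(K_R(g))[f]$ by Lemma \ref{lemma_calculation_of_cohomol}; but $H^0(K_R(g)) = R[g]$ and $H^1(K_R(g)) = R/gR$, and killing $f$-torsion kills both of these (the first is contained in $R[f]=0$ since $g\mid f$ and $f$ is a non-zero-divisor, wait---more carefully: $H^0(K_R(g))=\ker(g)\subseteq \ker(f)=0$; and $R/gR$ is killed by $g$ hence by $f$, so $(R/gR)[f]=R/gR$). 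Therefore $\eta_f K_R(g)$ is acyclic when $g\mid f$.

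Next I would assemble the general case. Given $g_1,\dots,g_d$ each of which either is divisible by $f$ or divides $f$, write $K_R(g_1,\dots,g_d) = \bigotimes_{i=1}^d K_R(g_i)$ (tensor over $R$), a complex of free, in particular $f$-torsion-free, $R$-modules, so that $\eta_f$ applied to it is computed on the nose by Definition \ref{definition_decalage} and $\eta_f \bigl(\bigotimes_i K_R(g_i)\bigr) = \bigotimes_i \eta_f K_R(g_i)$. The last equality is the key structural input: $\eta_f$ of a tensor product of $f$-torsion-free complexes is the tensor product of the $\eta_f$'s, because an element of $f^n(C\otimes D)^n = \bigoplus_{a+b=n} f^a C^a \otimes f^b D^b$ has differential in the next filtration step if and only if each tensor factor does, using that everything is $f$-torsion-free and $f$ is a non-zero-divisor. (This is the analogue of the multiplicativity $\eta_g\eta_f = \eta_{gf}$ recorded in Remark \ref{remark_more_on_Leta}(b) / the ``Multiplicativity'' paragraph, proved the same way by unwinding the definition.) Applying the one-variable computation termwise: if $f\mid g_i$ for every $i$, then each factor is $K_R(g_i/f)$ and the product is $K_R(g_1/f,\dots,g_d/f)$, giving the first bullet. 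If instead $g_{i_0}\mid f$ for some $i_0$, then the $i_0$-th tensor factor $\eta_f K_R(g_{i_0})$ is acyclic, hence the whole tensor product of complexes of free modules is acyclic (a bounded acyclic complex of projectives is contractible, and tensoring a contractible complex with anything stays contractible), giving the second bullet.

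The only mildly delicate point---and the step I expect to be the main obstacle to write cleanly---is the compatibility of $\eta_f$ with tensor products of $f$-torsion-free complexes, i.e.\ the identity $\eta_f(C\otimes_R D) = \eta_f C \otimes_R \eta_f D$ as literal subcomplexes of $(C\otimes_R D)[\tfrac1f]$. One has the obvious inclusion $\eta_f C\otimes_R \eta_f D \subseteq \eta_f(C\otimes_R D)$; for the reverse one must check that if $x = \sum_{a+b=n} f^a c_a\otimes f^b d_b$ with $d(x)\in f^{n+1}(C\otimes D)^{n+1}$, then $x$ already lies in the subcomplex, and here one uses freeness/$f$-torsion-freeness to separate the contributions of $d_C$ and $d_D$ degree by degree. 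For Koszul complexes this is completely concrete since each $K_R(g_i)$ is just $[R\xto{g_i} R]$, so in fact one can avoid the general statement entirely and argue by induction on $d$: $\eta_f(K\otimes_R K_R(g_{d})) = \eta_f(K)\otimes_R \eta_f(K_R(g_d))$ reduces to the case where the second factor is a two-term complex of free modules, which is an elementary direct verification. I would present it in this inductive form to keep the argument self-contained and short.
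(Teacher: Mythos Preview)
Your one-variable computations are correct, and the inductive monoidality step does go through for the \emph{first} bullet: when $f\mid g_d$, an element $(x,y)\in f^nK^n\oplus f^nK^{n-1}$ of $\eta_f(K\otimes K_R(g_d))$ automatically has $g_d x\in f^{n+1}K^n$, so the coupled condition $d_Kx\pm g_d\cdot(\cdot)\in f^{n+1}$ decouples and you recover $(\eta_fK)\otimes(\eta_fK_R(g_d))$ on the nose. In fact there is an even shorter direct argument here: since every $g_i$ is divisible by $f$, the Koszul differential itself is divisible by $f$, so $(\eta_fK)^n=f^nK^n$ with no extra condition, and multiplication by $f^n$ in degree $n$ gives the isomorphism $K_R(g_1/f,\dots,g_d/f)\cong\eta_fK_R(g_1,\dots,g_d)$.

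The gap is in the \emph{second} bullet. The identity $\eta_f(K\otimes K_R(g_d))=\eta_f(K)\otimes\eta_f(K_R(g_d))$ is \emph{false} when $g_d$ strictly divides $f$, so your ``elementary direct verification'' cannot succeed. Concretely, take $R=\bb Z$, $f=p$, $g_1=g_2=1$. Then $(\eta_pK_R(1,1))^0=\{a\in R:(a,a)\in pR^2\}=pR$, whereas the image of $(\eta_pK_R(1))^0\otimes_R(\eta_pK_R(1))^0=pR\otimes_RpR$ in $R[\tfrac1p]$ is $p^2R$. The point is that when $g_d\mid f$ the term $g_d x$ need not lie in $f^{n+1}K^n$, so the differential condition genuinely couples the two tensor factors and cannot be separated ``degree by degree''. (More generally, $L\eta_f$ is only lax symmetric monoidal.)

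The fix is much easier than the route you are attempting: just apply Lemma~\ref{lemma_calculation_of_cohomol} globally, exactly as you did in the one-variable case. It is a standard fact that each $g_i$ annihilates $H^n(K_R(g_1,\dots,g_d))$ for every $n$; hence $g_{i_0}$ does, and since $g_{i_0}\mid f$ so does $f$. Therefore $H^n(\eta_fK)=H^n(K)/H^n(K)[f]=0$ for all $n$, and $\eta_fK$ is acyclic. The paper simply cites [BMS, Lem.~7.9], and this direct computation is what lies behind it.
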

\begin{proof}\
[Lem.~7.9, BMS].
\end{proof}

\end{appendix}

\bibliographystyle{acm}
\bibliography{../../Bibliography}

\def\cprime{$'$}
\begin{thebibliography}{10}

\bibitem{SGA_IV_II}
{\em Th\'eorie des topos et cohomologie \'etale des sch\'emas. {T}ome 2}.
\newblock Lecture Notes in Mathematics, Vol. 270. Springer-Verlag, Berlin-New
  York, 1972.
\newblock S{\'e}minaire de G{\'e}om{\'e}trie Alg{\'e}brique du Bois-Marie
  1963--1964 (SGA 4), Dirig{\'e} par M. Artin, A. Grothendieck et J. L.
  Verdier. Avec la collaboration de N. Bourbaki, P. Deligne et B. Saint-Donat.

\bibitem{BerthelotOgus1978}
{\sc Berthelot, P., and Ogus, A.}
\newblock {\em Notes on crystalline cohomology}.
\newblock Princeton University Press, Princeton, N.J.; University of Tokyo
  Press, Tokyo, 1978.

\bibitem{Bhatt2016}
{\sc Bhatt, B.}
\newblock Specializing varieties and their cohomology from characteristic $0$
  to characteristic $p$.
\newblock {\em {\tt arXiv:1606.01463}\/} (2016).

\bibitem{Bhatt_Morrow_Scholze}
{\sc Bhatt, B., Morrow, M., and Scholze, P.}
\newblock Integral $p$-adic {H}odge theory -- announcement.
\newblock {\em Mathematical Research Letters, {\rm to appear}\/}.

\bibitem{Bhatt_Morrow_Scholze2}
{\sc Bhatt, B., Morrow, M., and Scholze, P.}
\newblock Integral $p$-adic {H}odge theory.
\newblock {\em {\tt arXiv:1602.03148}\/} (2016).

\bibitem{DavisKedlaya2014}
{\sc Davis, C., and Kedlaya, K.~S.}
\newblock On the {W}itt vector {F}robenius.
\newblock {\em Proc. Amer. Math. Soc. 142}, 7 (2014), 2211--2226.

\bibitem{Elkik1973}
{\sc Elkik, R.}
\newblock Solutions d'\'equations \`a coefficients dans un anneau hens\'elien.
\newblock {\em Ann. Sci. \'Ecole Norm. Sup. (4) 6\/} (1973), 553--603 (1974).

\bibitem{Fontaine1994}
{\sc {Fontaine}, J.-M.}
\newblock {Expos\'e II: Les corps des p\'eriodes $p$-adiques. Avec un appendice
  par Pierre Colmez: Le nombres alg\'ebriques sont denses dans $B_{dR}^+$.}
\newblock In {\em {P\'eriodes $p$-adiques. S\'eminaire du Bures-sur-Yvette,
  France, 1988.}} Paris: Soci\'et\'e Math\'ematique de France, 1994,
  pp.~59--111, appendix 103--111.

\bibitem{Huber1996}
{\sc Huber, R.}
\newblock {\em \'{E}tale cohomology of rigid analytic varieties and adic
  spaces}.
\newblock Aspects of Mathematics, E30. Friedr. Vieweg \& Sohn, Braunschweig,
  1996.

\bibitem{Illusie1979}
{\sc Illusie, L.}
\newblock Complexe de de\thinspace {R}ham-{W}itt et cohomologie cristalline.
\newblock {\em Ann. Sci. \'Ecole Norm. Sup. (4) 12}, 4 (1979), 501--661.

\bibitem{IllusieRaynaud1983}
{\sc Illusie, L., and Raynaud, M.}
\newblock Les suites spectrales associ\'ees au complexe de de {R}ham-{W}itt.
\newblock {\em Inst. Hautes \'Etudes Sci. Publ. Math.}, 57 (1983), 73--212.

\bibitem{Kedlaya2016}
{\sc Kedlaya, K.}
\newblock Some ring-theoretic properties of {$A_{inf}$}.
\newblock {\em Available at the author's webpage\/} (2016).

\bibitem{Lang1983}
{\sc Lang, W.~E.}
\newblock On {E}nriques surfaces in characteristic {$p$}. {I}.
\newblock {\em Math. Ann. 265}, 1 (1983), 45--65.

\bibitem{LangerZink2004}
{\sc Langer, A., and Zink, T.}
\newblock De {R}ham-{W}itt cohomology for a proper and smooth morphism.
\newblock {\em J. Inst. Math. Jussieu 3}, 2 (2004), 231--314.

\bibitem{Scholze2012}
{\sc Scholze, P.}
\newblock Perfectoid spaces.
\newblock {\em Publ. Math. Inst. Hautes \'Etudes Sci. 116\/} (2012), 245--313.

\bibitem{Scholze2013}
{\sc Scholze, P.}
\newblock {$p$}-adic {H}odge theory for rigid-analytic varieties.
\newblock {\em Forum Math. Pi 1\/} (2013), e1, 77.

\bibitem{Scholze2016}
{\sc Scholze, P.}
\newblock Canonical q-deformations in arithmetic geometry.
\newblock {\em {\tt arXiv:1606.01796}\/} (2016).

\end{thebibliography}
\end{document}